\pdfoutput=1
\documentclass[12pt]{article}

\usepackage{amsmath,amscd,amsfonts,amsthm, xypic, amssymb, graphicx, mathdots, hyperref}
\input{xy}
\usepackage{color}
\usepackage{cleveref}
\usepackage{tikz}
\usepackage{pgfplots}
\pgfplotsset{compat=1.13}
\usepackage{braids}

\newcommand{\shrinkmargins}[1]{
  \addtolength{\textheight}{#1\topmargin}
  \addtolength{\textheight}{#1\topmargin}
  \addtolength{\textwidth}{#1\oddsidemargin}
  \addtolength{\textwidth}{#1\evensidemargin}
  \addtolength{\topmargin}{-#1\topmargin}
  \addtolength{\oddsidemargin}{-#1\oddsidemargin}
  \addtolength{\evensidemargin}{-#1\evensidemargin}
  }

\shrinkmargins{.7}

\DeclareMathOperator{\Int}{Int}
\DeclareMathOperator{\Alg}{Alg}
\DeclareMathOperator{\Conf}{Conf}
\DeclareMathOperator{\Quot}{Quot}
\DeclareMathOperator{\PConf}{PConf}

\DeclareMathOperator{\Hom}{Hom}
\DeclareMathOperator{\Set}{Set}

\DeclareMathOperator{\car}{char}
\DeclareMathOperator{\fin}{fin}
\DeclareMathOperator{\inv}{inv}
\DeclareMathOperator{\topp}{top}

\DeclareMathOperator{\const}{const}

\DeclareMathOperator{\End}{End}
\DeclareMathOperator{\Lie}{Lie}
\DeclareMathOperator{\Nic}{Nic}

\DeclareMathOperator{\Sym}{Sym}

\DeclareMathOperator{\Ind}{Ind}

\DeclareMathOperator{\pro}{pro}
\DeclareMathOperator{\Aut}{Aut}

\DeclareMathOperator{\BrAss}{BrAss}

\DeclareMathOperator{\BrLie}{BrLie}
\DeclareMathOperator{\BrPrim}{BrPrim}
\DeclareMathOperator{\BrCom}{BrCom}
\DeclareMathOperator{\BrOp}{BrOp}

\DeclareMathOperator{\Pgr}{Pgr}

\DeclareMathOperator{\gr}{gr}
\DeclareMathOperator{\co}{co}
\DeclareMathOperator{\cok}{cok}

\DeclareMathOperator{\Vect}{Vect}

\DeclareMathOperator{\hattimes}{\widehat{\otimes}}

\newcommand{\field}[1]{\mathbb{#1}}

\newcommand{\Z}{\field{Z}}

\newcommand{\R}{\field{R}}
\newcommand{\C}{\field{C}}

\newcommand{\Ahat}{\widehat{A}}

\newcommand{\BrAsshat}{\widehat{\BrAss}} 

\newcommand{\BrOphat}{\widehat{\BrOp}}

\newcommand{\kbar}{\overline{k}}
\newcommand{\Lbar}{\overline{L}}

\newcommand{\That}{\widehat{T}}
\newcommand{\Rhat}{\widehat{R}}

\newcommand{\CC} {\mathcal{C}}
\newcommand{\CChat}{\widehat{\CC}}
\newcommand{\Ct} {\widetilde{E}}
\newcommand{\DD} {\mathcal{D}}

\newcommand{\bC} {\mathbf{C}}
\newcommand{\one} {\mathbf{1}}
\newcommand{\bD} {\mathbf{D}}
\newcommand{\proC}{\pro\!-\bC}
\newcommand{\proD}{\pro\!-\bD}

\newcommand{\LL} {\mathcal{L}}

\newcommand{\YD} {\mathcal{YD}}
\newcommand{\bS} {\mathbf{S}}

\newcommand{\OO}{\mathcal{O}}
\newcommand{\OOhat}{\widehat{\OO}}

\newcommand{\mB}{\mathfrak{B}}

\newcommand{\mW}{\mathfrak{W}}

\newcommand{\id}{\mbox{id}}
\newcommand{\im}{{\rm im \,}}

\newcommand{\II}{\mathcal{I}}

\newcommand{\MM}{\mathcal{M}}
\newcommand{\tautilde}{\widetilde{\tau}}

\newcommand{\beq}{\begin{displaymath}}
\newcommand{\eeq}{\end{displaymath}}
\newcommand{\beqn}{\begin{equation}}
\newcommand{\eeqn}{\end{equation}}
\newcommand{\Xbar}{\overline{X}}

\newcommand{\nth}{\textsuperscript{th}}
\newcommand{\st}{\textsuperscript{st}}

\theoremstyle{plain}
\newtheorem{thm}{Theorem}[section]
\newtheorem{prop}[thm]{Proposition}
\newtheorem{cor}[thm]{Corollary}
\newtheorem{lem}[thm]{Lemma}
\newtheorem{intro}{Theorem}

\theoremstyle{definition}
\newtheorem{defn}[thm]{Definition}

\newtheorem{exmp}[thm]{Example}
\newtheorem{exmps}[thm]{Examples}

\theoremstyle{remark}
\newtheorem{rem}[thm]{Remark}
\newtheorem{ques}[thm]{Question}

\newtheorem{notation}[thm]{Notation}

\title{Structure theorems for braided Hopf algebras}
\author{Craig Westerland}

\begin{document}
\bibliographystyle{amsalpha}

\maketitle
\begin{abstract} We develop versions of the Poincar\'{e}-Birkhoff-Witt and Cartier-Milnor-Moore theorems in the setting of braided Hopf algebras.  To do so, we introduce new analogues of a Lie algebra in the setting of a braided monoidal category, using the notion of a braided operad. \end{abstract}

The purpose of this paper is to establish analogues of the Poincar\'{e}-Birkhoff-Witt and Cartier-Milnor-Moore theorems \cite{cartier_62, milnor-moore} in the context of \emph{braided} Hopf algebras, that is: Hopf algebra objects in a linear braided monoidal category.  Recall that Cartier-Milnor-Moore identify primitively generated Hopf algebras with universal enveloping algebras of Lie algebras, and Poincar\'{e}-Birkhoff-Witt give a filtration on these algebras whose associated graded is a symmetric algebra.  Results of this sort in the braided setting have been established by Kharchenko and Ardizzoni in \cite{kharchenko, ardizzoni_mm, ardizzoni_prim_gen, ardizzoni_glasgow}.  Our work, while based on a different foundation and having a somewhat different formulation, owes a lot to these papers.

Central to the Cartier-Milnor-Moore theorem is the fact that the primitives $P(A)$ of a Hopf algebra $A$ in the category of vector spaces form a Lie algebra, or more accurately, a sub-Lie algebra of $A$.  One can see this easily: if $a, b \in P(A)$, then 
\begin{eqnarray*}
\Delta(ab-ba) & = & (a \otimes 1 + 1 \otimes a)(b \otimes 1 +1 \otimes b) - (b \otimes 1 +1 \otimes b)(a \otimes 1 + 1 \otimes a) \\
 & = & (ab-ba) \otimes 1 + (a \otimes b + b \otimes a) - (b \otimes a + a \otimes b) + 1 \otimes (ab -ba) 
\end{eqnarray*}
The cross terms cancel, showing that $[a, b] = ab-ba$ is primitive.  

Notice that to achieve this cancellation, we have used the fact that the multiplication in $A \otimes A$ gives $(1 \otimes b) (a \otimes 1) = a \otimes b$, owing to the fact that the braiding on $A \otimes A$ is simply the permutation of the two factors.  A similar argument works in a general linear symmetric monoidal category, but not in a braided category where the braiding is not an involution.

Consequently, the same argument that $P(A)$ is a Lie algebra fails in the braided context.  A great deal of the technical work in this paper is spent developing the notion of a \emph{braided primitive algebra}\footnote{The term ``braided Lie algebra" is already in use in multiple (inequivalent) forms; see section \ref{ardizzoni_section} for a review of some of these terms and their relation to the constructions in this paper.}, which encapsulates the algebraic properties of $P(A)$.  This structure is encoded by the data of an action of a \emph{braided operad} which we notate as $\BrPrim$ and define in Section \ref{br_prim_section}.

Operads govern algebraic objects which are equipped with $n$-ary operations.  Classically, associative, commutative, and Lie algebras are governed by corresponding operads.  These are \emph{symmetric} operads, parameterizing operations in a symmetric monoidal category.  The more general concept of a braided operad was introduced by Fiedorowicz in \cite{fiedorowicz}; these are the correct gadgets to encode algebraic structures on objects in braided monoidal categories.

Our understanding of the operad $\BrPrim$ is far from complete.  For many familiar symmetric operads, it is possible to give a presentation of the operad.  For instance, the Lie operad is generated by the antisymmetric Lie bracket, subject to the Jacobi identity.  We can offer little of the sort for $\BrPrim$ at this point; see section \ref{brprim_structure_section} for partial results.  However, $\BrPrim$ is characterized by the fact that the free braided primitive algebra on a braided vector space $V$ is the space of primitives in the tensor algebra $T(V)$:
$$\BrPrim[V] \cong P(T(V)).$$
This is a braided analogue of the familiar fact that in (characteristic zero, linear) symmetric monoidal categories, the free Lie algebra on $V$ is isomorphic to the primitives in $T(V)$.

The primitives $P(A)$ sit inside a larger space $P_{\mW_*}(A) \subseteq A$ whose definition (in section \ref{mm_gen_section}) requires some care, but can loosely be described as ``all of the elements of $A$ that become primitive after iteratively quotienting by decomposable primitives in $A$" (see Proposition \ref{alt_desc_prop} for a more precise description).  It supports an algebraic structure we are calling\footnote{This term is also used in the setting of $C^*$-algebras; these are different notions, and it should be clear from context which is meant.} a \emph{Woronowicz algebra}.  The corresponding operad is denoted $\mW_*$; it is defined in section \ref{w_section} in terms of the ``quantum symmetrizer" which gives the relations in Nichols algebras.  It is larger and even less well-understood than $\BrPrim$. 

For a large class of braided operads $\CC$ (including $\BrPrim$ and $\mW_*$), we introduce a universal enveloping algebra functor $U_{\CC}$ in Definition \ref{gen_enveloping_defn}.  Our analogue of the Cartier-Milnor-Moore theorem is: 

\begin{intro} \label{mm_intro}

If $A$ is a primitively generated, finitely braided Hopf algebra over a field $k$ of characteristic zero, there is an algebra isomorphism
$$\mu: U_{\mW_*}(P_{\mW_*}(A)) \to A.$$ 

\end{intro}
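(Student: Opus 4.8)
The plan is to produce $\mu$ from the universal property of $U_{\mW_*}$, deduce surjectivity from the hypothesis that $A$ is primitively generated, and prove injectivity by comparing associated graded objects for a suitable filtration, using a Poincar\'e--Birkhoff--Witt description of $U_{\mW_*}$. For the construction: the subobject $P_{\mW_*}(A) \subseteq A$ carries its canonical $\mW_*$-algebra structure, and the inclusion $P_{\mW_*}(A) \hookrightarrow A$ is a morphism of $\mW_*$-algebras; since $U_{\mW_*}$ of Definition \ref{gen_enveloping_defn} is set up so that $\mW_*$-algebra morphisms $L \to P_{\mW_*}(H)$ correspond naturally to braided Hopf algebra morphisms $U_{\mW_*}(L) \to H$, the morphism adjoint to $\mathrm{id}_{P_{\mW_*}(A)}$ is a morphism of braided Hopf algebras $\mu : U_{\mW_*}(P_{\mW_*}(A)) \to A$ whose composite with the canonical map from $P_{\mW_*}(A)$ onto the generators of $U_{\mW_*}(P_{\mW_*}(A))$ is the inclusion. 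Since $k$ is a field, a primitively generated $A$ is connected, and the ordinary primitives $P(A) \subseteq P_{\mW_*}(A)$ already generate $A$ as an algebra; hence $\mathrm{im}(\mu)$ is a subalgebra of $A$ containing $P(A)$, so $\mu$ is surjective.

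For injectivity I would compare associated graded objects for the filtration $F_\bullet$ introduced in Section \ref{mm_gen_section} that underlies the construction of $P_{\mW_*}$: on $A$ this is the filtration built by iteratively dividing out decomposable primitives (cf.\ Proposition \ref{alt_desc_prop}), and on $U_{\mW_*}(P_{\mW_*}(A))$ it is the canonical length filtration with the generators $P_{\mW_*}(A)$ in degree one. The morphism $\mu$ is filtered and induces $\gr\mu : \gr U_{\mW_*}(P_{\mW_*}(A)) \to \gr A$; as both filtrations are exhaustive and bounded below (indeed $F_0 = k$), it is enough to show $\gr\mu$ is an isomorphism. I would do this by identifying both sides with the Nichols algebra $\BB(P_{\mW_*}(A))$. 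For the domain this is the Poincar\'e--Birkhoff--Witt statement of the paper: a PBW filtration on $U_{\mW_*}(L)$ whose associated graded is $\BB(L)$ --- the braided analogue of $\gr U(\mathfrak g) \cong \Sym(\mathfrak g)$, with $\BB(L)$ in the role of $\Sym(\mathfrak g)$ precisely because $\mW_*$ is built from the quantum symmetrizer defining the relations of Nichols algebras (Section \ref{w_section}); I would prove this first, in parallel with Theorem \ref{mm_intro}. For the target I need that $\gr A$, for the $\mW_*$-filtration, is generated in degree one with $\gr_1 A \cong P_{\mW_*}(A)$ and with relations exactly the quantum-symmetrizer relations, i.e.\ $\gr A \cong \BB(P_{\mW_*}(A))$ --- which is what this filtration is engineered to achieve, with characteristic zero playing the same role as in the classical Milnor--Moore argument, namely preventing new generators from appearing in positive degree. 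Under these identifications $\gr\mu$ becomes the identity of $\BB(P_{\mW_*}(A))$, hence an isomorphism, and therefore $\mu$ is an isomorphism.

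The formal structure above is the classical one, and nearly all of the difficulty is concentrated in identifying $\gr A$ for the $\mW_*$-filtration. I expect this to require: (i) that passing to the associated graded commutes appropriately with forming $P_{\mW_*}$, so that $\gr_1 A \cong P_{\mW_*}(\gr A) \cong P_{\mW_*}(A)$ and the potential higher-degree generators of $\gr A$ are absorbed into degree one; (ii) a genuine use of characteristic zero to exclude primitives in positive degree of $\gr A$ beyond those already forced, compensating for the absence of divided-power or restricted structures; and (iii) the \emph{finitely braided} hypothesis, which is what makes the quantum symmetrizers --- and hence the Hilbert series of the free object $\BB(L)$ --- behave polynomially, so that a rank or flatness comparison actually forces $\gr\mu$ to be bijective rather than merely surjective. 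Together with arranging that $F_\bullet$ is well behaved (exhaustive, bounded below, and compatible with the coproduct), verifying (i)--(iii) is where the real work lies; the rest is the classical argument transported into the braided monoidal setting.
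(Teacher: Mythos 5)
Your construction of $\mu$ and the surjectivity argument are fine and agree with the paper, but the injectivity step has a genuine gap. First, the two filtrations you propose to compare are not set up coherently: ``iteratively dividing out decomposable primitives'' produces a tower of Hopf \emph{quotients} $A \twoheadrightarrow A_1 \twoheadrightarrow A_2 \twoheadrightarrow \cdots$ (this is exactly the construction in Lemma \ref{factor_lem}), not an exhaustive increasing filtration of $A$ with $F_0=\one$; and a length filtration placing all of $P_{\mW_*}(A)$ in degree one is not a Hopf filtration, since $P_{\mW_*}(A)$ is not contained in $P(A)$ (it is full of decomposable elements such as $h_n$ applied to primitives). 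Second, and more fundamentally, the identification $\gr A \cong \mB(P_{\mW_*}(A))$ after a \emph{single} associated graded is not available in the braided setting: even in characteristic zero, new primitives appear in $A^{\Pgr}$ (see the footnote in section \ref{iter_P_sec}); this is precisely why the paper's PBW statement (Theorem \ref{pbw_intro}, Proposition \ref{pbw_C_prop}) requires the stabilized iterate $U_{\mW_*}(L)^{(\infty)}$ and identifies it with $\mB(P(L^{(\infty)}))$, not with $\mB(L)$. So characteristic zero does not play the role you assign it, and ``finitely braided'' is used for the profinite-completion machinery, not for a Hilbert-series or flatness comparison. Finally, your plan to ``prove PBW first'' is circular relative to the paper: Proposition \ref{pbw_C_prop} is proved \emph{using} the Milnor--Moore-type input (Propositions \ref{perfect_prop} and \ref{perfect_enveloping_prop}), and you give no independent route to it.

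What is missing is the mechanism the paper actually uses for injectivity, and which is where the specific operad $\mW_*$ enters: one shows that $\mW_*$ has a \emph{perfect structure theory}, i.e.\ every primitive of any Hopf quotient $\That(V)/I$ has a representative in $\mW_*[V]$. This is proved via the factorization $a:A \to \Nic[Q(A)]$ obtained by iterated quotients by decomposable primitives (Lemma \ref{factor_lem}, Proposition \ref{alt_desc_prop}), which identifies the non-unit part $\mW[V]$ with $\ker(a)$ and hence shows $P(A) \subseteq V \oplus \mW[V] = \mW_*[V]$. Combined with Lemma \ref{filt_ideal_lem} (every Hopf ideal of $\That(V)$ is an increasing union of ideals generated stage-by-stage by elements that are primitive modulo the previous stage), this gives $I = (I \cap \mW_*[V])$, so $U_{\mW_*}(P_{\mW_*}(A)) = \That(V)/(R) \cong \That(V)/I = A$ as in Proposition \ref{perfect_prop}. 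Some ingredient of this kind is unavoidable: the final proposition of section \ref{W_axioms_section} shows that for ideals inside $\mW$ the perfect-structure-theory condition is not only sufficient but necessary for the Cartier--Milnor--Moore conclusion, so an argument for injectivity that never engages with how $\mW_*[V]$ captures primitives of quotients cannot close the gap.
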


Here, ``finitely braided'' means that the action of the braid group on tensor powers of $A$ factors through a finite quotient.  Theorem \ref{mm_intro} can be promoted to an isomorphism of Hopf algebras if one incorporates the diagonal on $P_{\mW_*}(A)$.  In that setting, this extends to an equivalence between the categories of primitively generated, finitely braided Hopf algebras and Woronowicz algebras over $\mW_*$ equipped with a diagonal.

Classically, the Poincar\'{e}-Birkhoff-Witt theorem equips the enveloping algebra $U(L)$ of a Lie algebra $L$ with a Hopf filtration by powers of $L = P(U(L))$, and gives an isomorphism $U(L)^{\gr} \cong \Sym(L)$ between the associated graded of the filtration with the symmetric algebra on $L$.  We establish an analogue in the braided setting.

If $L$ is a Woronowicz algebra equipped with a diagonal, there is a filtration on $U_{\mW_*}(L)$ by powers of the primitives $P(L) = P(U_{\mW_*}(L))$.  Unlike the classical setting, it is possible that more primitives are produced in the associated graded Hopf algebra $U_{\mW_*}(L)^{\gr}$.  We may, however, iterate this procedure, producing a graded Hopf algebra $U_{\mW_*}(L)^{(\infty)}$.  Our analogue of the Poincar\'{e}-Birkhoff-Witt theorem is:

\begin{intro} \label{pbw_intro}

Let $L$ be a Woronowicz algebra equipped with a diagonal which makes it primitively generated.  There is an isomorphism of graded, braided Hopf algebras
$$U_{\mW_*}(L)^{(\infty)} \to \mB(P(L^{(\infty)})).$$

\end{intro}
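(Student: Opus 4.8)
The plan is to reduce the statement to the case of a \emph{stable} graded Woronowicz algebra and then to recognize the enveloping algebra of such an object as a Nichols algebra. Concretely, I would first show that $U_{\mW_*}$ is compatible with the operation of passing to the associated graded of the filtration by powers of primitives; this gives isomorphisms $U_{\mW_*}(L)^{(n)} \cong U_{\mW_*}(L^{(n)})$ of graded braided Hopf algebras for every $n$, and hence, passing to the stable value of the tower (which converges in each internal degree), $U_{\mW_*}(L)^{(\infty)} \cong U_{\mW_*}(L^{(\infty)})$. Setting $M := L^{(\infty)}$ and $V := P(M) = P(L^{(\infty)})$, it then suffices to produce an isomorphism $U_{\mW_*}(M) \xrightarrow{\sim} \mB(V)$ of graded braided Hopf algebras, since $M$ is by construction a fixed point of the iteration.

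For the reduction step I would argue as follows. By Definition \ref{gen_enveloping_defn}, $U_{\mW_*}(L)$ is a braided Hopf algebra; since the diagonal makes $L$ primitively generated, $U_{\mW_*}(L)$ is connected and generated by the image of $P(L)$, and one checks that the filtration $F_\bullet$ by powers of $P(U_{\mW_*}(L))$ is a Hopf filtration — multiplication is additive on $F$-degree and the coproduct is filtered because each primitive is grouplike modulo lower filtration. The essential point is that $\gr_F U_{\mW_*}(L)$ is again an enveloping algebra: the Woronowicz structure on $P_{\mW_*}$ of the associated graded restricts, on the degree-one part, to the associated graded of the structure on $L$, giving $\gr_F U_{\mW_*}(L) \cong U_{\mW_*}(L^{(1)})$. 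This rests on the homogeneity of the quantum symmetrizer defining $\mW_*$ (Section \ref{w_section}), so that the operadic relations survive passage to $\gr_F$, together with the description of $P_{\mW_*}$ in Proposition \ref{alt_desc_prop} as the elements that become primitive after iteratively quotienting by decomposable primitives — exactly the elements detected in the associated graded. Iterating yields the isomorphisms claimed above.

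For the stable case, stability of $M$ means precisely that $\gr_F U_{\mW_*}(M) \cong U_{\mW_*}(M)$, equivalently that $P(U_{\mW_*}(M))$ is concentrated in $F$-degree one and equals the image of $V$. Thus $U_{\mW_*}(M)$, graded by the $F$-filtration, is a connected $\N$-graded braided Hopf algebra generated in degree one by the braided vector space $V$, whose primitives are exactly $V$. This is precisely the universal characterization of the Nichols algebra: $\mB(V)$ is the quotient of $T(V)$ by the maximal graded Hopf ideal supported in degrees $\geq 2$, and by the construction of $\mW_*$ from the quantum symmetrizer this ideal is exactly the ideal cut out in $U_{\mW_*}$. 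Hence the canonical surjection $T(V) \twoheadrightarrow U_{\mW_*}(M)$ factors through an isomorphism $\mB(V) \xrightarrow{\sim} U_{\mW_*}(M)$; composing with the identifications of the previous paragraph produces the desired map $\mu$, an isomorphism of graded braided Hopf algebras by construction.

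The main obstacle is \emph{stability} — showing that the iterated associated-graded construction converges and that in the limit no new primitives are produced. Unlike the symmetric setting, where $\gr U(L)$ is at once commutative and primitively generated in degree one, so that one iteration suffices and equals $\Sym(L)$, a nontrivial braiding can create fresh primitives at every stage — the higher relations of a Nichols algebra are born as primitives — and one must show these new primitives lie strictly deeper in the internal grading inherited from $L$ than those at the previous stage, so that the process terminates in each fixed degree; this is where the hypotheses that $L$ is connected and primitively generated do the real work, by forcing the internal degree to bound the primitive-power degree. A secondary difficulty, needed to run the reduction at all, is checking that $\gr_F$ preserves both the structure of a Woronowicz algebra over $\mW_*$ and the property of being primitively generated, which again comes down to the homogeneity of the quantum symmetrizer and to Proposition \ref{alt_desc_prop}.
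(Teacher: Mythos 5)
Your overall architecture---commute $U_{\mW_*}$ with the associated graded of the filtration by powers of primitives, iterate, stabilize degree by degree, and identify the stable object as a Nichols algebra via primitives $=$ indecomposables---is the paper's own route (Proposition \ref{perfect_enveloping_prop} and section \ref{filt_env_sec}, Theorem \ref{infty_Nichols_thm}, Proposition \ref{pbw_C_prop}); the stabilization issue you single out is settled there exactly as you suggest, by a degree bound exploiting connectivity of the graded bialgebra (the lemma preceding Corollary \ref{stability_cor}).

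The genuine gap is in your justification of the pivotal step $U_{\mW_*}(L)^{(1)} \cong U_{\mW_*}(L^{(1)})$ (and, downstream, of $P(U_{\mW_*}(M)) = V$ for $M = L^{(\infty)}$). Homogeneity of the quantum symmetrizer only shows that the $\mW_*$-relations of $L^{(1)}$ hold in the associated graded, i.e.\ that the comparison map $\beta$ of (\ref{beta_eqn}) is \emph{surjective}; the paper is explicit that for a general suboperad it fails to be injective. Injectivity is precisely what the paper packages as $\mW_*$ having a \emph{perfect structure theory}: writing the primitively generated bialgebra $U_{\mW_*}(L)^{\Pgr}$ as $\That(V)/I$, one must know $I$ is generated by $I \cap \mW_*[V]$. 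That rests on two nonformal inputs your proposal does not supply: Lemma \ref{filt_ideal_lem} (every Hopf ideal of $\That(V)$ is exhausted by a chain of ideals, each generated over the previous one by primitives; this is where splitness of $\proC$ enters) and Lemma \ref{factor_lem} (the map $A \to \Nic[Q(A)]$ built by iteratively quotienting decomposable primitives identifies $\mW[V]$ as its kernel, so every primitive of a primitively generated quotient of $\That(V)$ has a representative in $V \oplus \mW[V] = \mW_*[V]$). Proposition \ref{alt_desc_prop}, which you cite, is a corollary of that argument, not a replacement for it; the same input yields primordiality of each stage $L^{(n)}$, which is what actually licenses your claim that the primitives of $U_{\mW_*}(M)$ are exactly $V$, after which your appeal to the characterization of $\mB(V)$ by $P = Q = V$ agrees with Theorem \ref{infty_Nichols_thm}. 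Finally, your parenthetical that ``by construction of $\mW_*$'' the ideal cut out in $U_{\mW_*}$ is the Nichols ideal is false for a general Woronowicz algebra---by Theorem \ref{mm_intro}, $U_{\mW_*}(P_{\mW_*}(A)) \cong A$ for \emph{any} primitively generated $A$, which need not be Nichols---it is true only at the stable stage, i.e.\ it is the conclusion of the theorem rather than an available input.
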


Here $L^{(\infty)}$ is the Woronowicz algebra produced in a similar fashion by iterating the associated graded under powers of the primitives.  Further, $P(L^{(\infty)})$ is its space of primitives, and $\mB(P(L^{(\infty)}))$ denotes the associated \emph{Nichols algebra} -- this is the braided Hopf algebra analogue of a symmetric algebra. 

We actually prove versions of Theorems \ref{mm_intro} and \ref{pbw_intro} for a large class of operads $\CC$; see Theorem \ref{mm_equiv_thm} and Proposition \ref{pbw_C_prop} for precise statements.  The Woronowicz ideal $\mW$ is the smallest operad $\CC$ that we have identified for which these results hold, but still very large.  It is very plausible that there is a class of simpler (or at least smaller) operads $\CC$ for which these structure theorems hold; see Question \ref{smaller_ques} for specifics on how this might be the case. 

Theorem \ref{mm_intro} is closely related to the main result of \cite{ardizzoni_mm}.  The essential difference between our construction and Ardizzoni's is that our enveloping algebra is defined so that it has a universal property (see Proposition \ref{univ_prop}).  As a result, it is constructed in a single step rather than as a limit (although the proof of Lemma \ref{factor_lem} makes it clear these constructions are equivalent), and has good functorial properties.  A result similar to Theorem \ref{pbw_intro} is proved by Ardizzoni in Theorem 5.2 of \cite{ardizzoni_glasgow} in the special case that the enveloping algebra is cosymmetric (extending \cite{ardizzoni_prim_gen} in combinatorial rank one).  For a more detailed comparison of these results, see section \ref{ardizzoni_section}.  

\subsubsection*{Acknowledgements}

This project arose during a long collaboration with TriThang Tran and Jordan Ellenberg that resulted in \cite{etw}.  Our discussions about quantum shuffle algebras as part of that paper strongly informed the research presented here, and I thank them for their insights.  Additionally, it is a pleasure to thank Nicol\'{a}s Andruskiewitsch for a number of very helpful conversations about Nichols algebras.  I also thank Andruskiewitsch as well as Alessandro Ardizzoni, Istv\'an Heckenberger, Dan Petersen, and Markus Szymik for several helpful comments on a draft of this article, as well as Calista Bernard for her deep understanding of free $E_2$-algebras. 

Part of this work was completed while I was on sabbatical at the University of Melbourne; I would like to thank Melbourne's math department for its hospitality, particularly Marcy Robertson for sharing her extensive expertise with operads and Christian Haesemeyer for enlightening conversations about profinite completion and enveloping algebras.

This material is based upon work supported by the National Science Foundation under Grant No. DMS-1712470. 

\section{Background and technical lemmas}

\subsection{Braid groups and the Artin representation} \label{Bn_section}

Recall that the \emph{$n\nth$ braid group} $B_n$ is presented by
\[ B_n := \langle \sigma_1, \dots, \sigma_{n-1} \; | \; \sigma_i \sigma_{i+1} \sigma_i =  \sigma_{i+1} \sigma_i \sigma_{i+1}, \; \sigma_i \sigma_j =\sigma_j \sigma_i, \mbox{if $|i-j|>1$} \rangle. \]
Pictorially, $\sigma_i$ is the braid which permutes the $i\nth$ and $i+1\st$ endpoints, where the strand beginning at the latter passes over the one beginning at the former.  Additionally, we interpret $B_0$ and $B_1$ to be the trivial group.  

There is a surjective homomorphism $B_n \to S_n$ to the symmetric group.  The symmetric group has the same presentation, with additionally $\sigma_i^2=1$; here $\sigma_i$ corresponds to the transposition $(i,i+1)$.  This map carries a braid to the permutation of its endpoints.  The \emph{Matsumoto section} 
$$S_n \to B_n, \; \; \; \tau \mapsto \tautilde$$
of this projection is a one-sided inverse; it is merely a map of sets and not a group homomorphism.  If we write $\tau \in S_n$ as a minimal length word in the transpositions $(i, i+1)$, $\tautilde \in B_{n}$ is gotten by replacing each of these with the braid $\sigma_i$.

The braid groups have a geometric interpretation: $B_n$ is isomorphic to the mapping class group of diffeomorphisms of the $n$-punctured unit disk $D^2_n$ which fix the boundary circle pointwise.  This gives rise to an action of $B_n$ on $\pi_1(D^2_n, 1)$.  

We may (non-canonically) identify $\pi_1(D^2_n, 1)$ with the free group $F_n = \langle x_1, \dots, x_n \rangle$ by picking as generators loops $x_1, \dots x_n$ based at 1 which encircle only one puncture.  This action is faithful, and Artin \cite{artin} proved that $B_n$ may be identified with the subgroup of $\Aut(F_n)$ consisting of automorphisms of $F_n$ which carry the generators $x_i$ to conjugates of each other, and which preserves the product $x_1 \dots x_n$.  Explicitly, the action of $\sigma_i$ fixes $x_j$ for $j \neq i, i+1$, and 
$$\sigma_i(x_i) = x_{i+1} \; \mbox{ and } \; \sigma_i(x_{i+1}) = x_{i+1}^{-1} x_i x_{i+1} =: x_i^{x_{i+1}}.$$

Consider the $n$-fold product $F_n^{\times n}$; this may be identified with the set $\End(F_n)$ of self-maps of $F_n$:
$$F_n^{\times n} = \Hom(F_n, F_n) = \End(F_n).$$
Let $B_n\leq \Aut(F_n)$ act on $\End(F_n)$ by pre-composition.  Identifying this set with $F_n^{\times n}$, the action on tuples $(g_1, \dots, g_n) \in F_n^{\times n}$ is given by the Hurwitz representation:
$$\sigma_i(g_i) = g_{i+1} \; \mbox{ and } \; \sigma_i(g_{i+1}) = g_i^{g_{i+1}}.$$

Define 
$$\OO_n := B_n \cdot(x_1, \dots, x_n) \subseteq F_n^{\times n}$$
to be the orbit of our chosen (ordered) set of generators.  That is, $\OO_n$ is the set of $n$-tuples $(g_1, \dots, g_n)$ such that there exists an element $\gamma \in B_n$ with $\gamma(x_i) = g_i$ for all $i$. 

\begin{prop}

$\OO_n$ is a free, transitive $B_n$-set.

\end{prop}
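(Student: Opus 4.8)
The plan is to show transitivity and freeness separately; transitivity is immediate from the definition, so the real content is freeness, i.e.\ that the stabilizer of $(x_1,\dots,x_n)$ in $B_n$ is trivial. Since $\OO_n$ is by definition the $B_n$-orbit of $(x_1,\dots,x_n)$, transitivity holds tautologically, and it suffices to prove that if $\gamma \in B_n$ satisfies $\gamma(x_i) = x_i$ for all $i$ under the Hurwitz action, then $\gamma = 1$. The key observation is that the Hurwitz representation on $F_n^{\times n}$ is exactly the action of $B_n \leq \Aut(F_n)$ on $\End(F_n) = \Hom(F_n, F_n)$ by precomposition, where $(g_1,\dots,g_n)$ corresponds to the endomorphism sending $x_i \mapsto g_i$. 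The tuple $(x_1,\dots,x_n)$ corresponds to the identity endomorphism $\id_{F_n}$.

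So I would argue as follows. Let $\gamma \in B_n$ be in the stabilizer of $(x_1,\dots,x_n)$. Under the identification with $\End(F_n)$, the Hurwitz action of $\gamma$ on a tuple $g = (g_1,\dots,g_n)$, viewed as an endomorphism $\phi_g \in \End(F_n)$, is precomposition: $\gamma \cdot \phi_g = \phi_g \circ \gamma$ (where we regard $\gamma$ as an automorphism of $F_n$ via the Artin representation). Applying this to $g = (x_1,\dots,x_n)$, i.e.\ $\phi_g = \id$, we get $\gamma \cdot \id = \id \circ \gamma = \gamma$ as an element of $\End(F_n) = F_n^{\times n}$. The hypothesis that $\gamma$ stabilizes $(x_1,\dots,x_n)$ thus says precisely that $\gamma = \id$ as an endomorphism of $F_n$, i.e.\ $\gamma(x_i) = x_i$ for all $i$. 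But the Artin representation $B_n \to \Aut(F_n)$ is faithful (this is Artin's theorem, quoted in the excerpt), so $\gamma$ is the identity element of $B_n$. This gives freeness.

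The only subtlety — and the one point I would be careful about — is the bookkeeping of the two $B_n$-actions: the Artin action on $F_n$ itself (used to realize $B_n \inj \Aut(F_n)$) versus the Hurwitz action on $F_n^{\times n}$. One must check that, under the set-level identification $F_n^{\times n} = \End(F_n)$ sending $(g_1,\dots,g_n)$ to the endomorphism $x_i \mapsto g_i$, the Hurwitz formulas $\sigma_i(g_i) = g_{i+1}$, $\sigma_i(g_{i+1}) = g_i^{g_{i+1}}$ are exactly what one gets by precomposing the endomorphism with the Artin automorphism $\sigma_i$; this is a direct comparison with the formulas $\sigma_i(x_i) = x_{i+1}$, $\sigma_i(x_{i+1}) = x_i^{x_{i+1}}$ already recorded in the excerpt, together with the observation that precomposition is a right action converted to a left action in the standard way (or, said differently, that the endomorphism $\phi_g \circ \sigma_i$ sends $x_i \mapsto \phi_g(x_{i+1}) = g_{i+1}$ and $x_{i+1} \mapsto \phi_g(x_i^{x_{i+1}}) = g_i^{g_{i+1}}$). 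Once this compatibility is in hand, the argument above is essentially a one-liner, and I expect this compatibility check to be the main (though routine) obstacle. I would not expect to need any deeper property of $F_n$ or of the braid group beyond faithfulness of the Artin representation.
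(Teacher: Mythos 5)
Your proof is correct and takes essentially the same route as the paper: transitivity is tautological, and freeness reduces, via the identification of the Hurwitz action with precomposition on $\End(F_n)$, to the faithfulness of the Artin representation $B_n \hookrightarrow \Aut(F_n)$. The only cosmetic difference is that you compute the stabilizer of the base point $(x_1,\dots,x_n)$ alone (which suffices, since stabilizers along a transitive orbit are conjugate), whereas the paper argues directly that any $\delta$ fixing an arbitrary tuple $(g_1,\dots,g_n)$ in the orbit must be trivial because such a tuple generates $F_n$.
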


\begin{proof} 

The action on $\OO_n$ is transitive by construction.  Notice that any $(g_1, \dots, g_n) \in \OO_n$ must also be an ordered set of generators of $F_n$.  If $\delta \in B_n$ fixes $(g_1, \dots g_n)$, it also acts trivially on the subgroup generated by $\{g_1, \dots, g_n\}$, which is all of $F_n$.  Thus $\delta$ is the unit in $\Aut(F_n)$, and hence in $B_n$, so the action is free.

\end{proof}

\subsection{Braided vector spaces and braided monoidal categories} \label{br_mon_sec}

Let $k$ be a field. A \emph{braided vector space} $V$ over $k$ is a vector space equipped with an automorphism\footnote{Some authors do not insist that $\sigma$ be invertible; we will always require this to be the case.}
$$\sigma: V \otimes V \to V \otimes V$$
which satisfies the braid equation on $V^{\otimes 3}$:
$$(\sigma \otimes \id) \circ (\id \otimes \sigma) \circ (\sigma \otimes \id) =  (\id \otimes \sigma) \circ (\sigma \otimes \id) \circ (\id \otimes \sigma).$$
Braided vector spaces form a category, where the morphisms are linear maps $f:V \to W$ which satisfy $(f \otimes f) \circ \sigma_V = \sigma_W \circ (f \otimes f)$.  

\begin{exmp} \label{rack_exmp} A common example of a braided vector space comes from taking $G$ to be a group, and $V := k[G]$ to be its group ring.  We define the braiding $\sigma$ on generators of $V \otimes V \cong k[G \times G]$ by 
$$\sigma(g \otimes h) = h \otimes g^h$$
where $g^h = h^{-1} g h$.  Notice that if $c \subseteq G$ is a conjugation-closed subset, then this action restricts to make $kc$ into a braided vector space.  Such $V$ are called braided vector spaces of \emph{rack type}.  More generally, we may twist racks by 2-dimensional rack cocycles to obtain variants on this construction (see \cite{andruskiewitsch-grana} in particular for details).

\end{exmp}

A general method for producing braided vector spaces is as follows.  Let $(\bC, \otimes, \one)$ be a braided monoidal category equipped with a strongly monoidal forgetful functor 
$$U: (\bC, \otimes, \one) \to (\Vect_k, \otimes, k)$$
to the category of $k$-vector spaces equipped with the usual tensor product (note that we do \emph{not} insist that the braiding be preserved by $U$).  Then for any object $X \in \bC$, $U(X)$ supports the structure of a braided vector space; here, $\sigma: U(X) \otimes U(X) \to U(X) \otimes U(X)$ is given by $U$ applied to the braiding in $\bC$.

Henceforth, we will suppress the use of $U$ in notating braided vector spaces gotten this way.

\begin{defn}

If $\Gamma$ is a Hopf algebra over $k$, the category of (right-right) \emph{Yetter-Drinfeld modules} for $\Gamma$ is written as $\mathcal{YD}^{\Gamma}_{\Gamma}$.  This consists of right $\Gamma$-modules $M$ which are simultaneously right $\Gamma$-comodules in such a fashion that
$$\delta(m \cdot \gamma ) = m_{(0)} \cdot \gamma_{(2)} \otimes S(\gamma_{(1)}) m_{(1)} \gamma_{(3)}$$ 
using Sweedler notation for the diagonal of $M$ and $\Gamma$, whose antipode is $S$.  The braiding $\sigma: M \otimes N \to N \otimes M$ is given by
$$\sigma(m \otimes n) = n_{(0)} \otimes (m \cdot n_{(1)}).$$

\end{defn}

\begin{exmp} Let $G$ be a group; we will write $\mathcal{YD}_G^G$ for the category of Yetter-Drinfeld modules for the group ring $k[G]$.  This is our main example of a braided (but not necessarily symmetric) monoidal category.  Unwinding the definition in this case, we see that objects in this category are right $k[G]$-modules $X$ which decompose into a direct sum $X = \oplus_g X_g$ over $g \in G$ in such a fashion that $X_g \cdot h = X_{g^h}$. 

If we take homogenous elements $a \in X_g$ and $b \in X_h$, the braiding is given by 
$$\sigma(a \otimes b) = b \otimes (a \cdot h).$$ 
The braided vector spaces of Example \ref{rack_exmp} are Yetter-Drinfeld modules for $G$; they are summands of the tautological such module, $k[G]$.
\end{exmp}

We will write $\fin({\YD}^{\Gamma}_{\Gamma})$ for the full subcategory of Yetter-Drinfeld modules whose underlying $k$-vector space is finite dimensional.  This is an example of a braided fusion category: 

\begin{defn}

A braided monoidal, abelian category $(\bC, \otimes, \one)$ is 
\begin{itemize} 
\item \emph{$k$-linear} if $\bC$ is enriched over the category of $k$-vector spaces, for a field\footnote{Often taken to be algebraically closed, and of characteristic zero.} $k$, and
\item \emph{semisimple} if every object is a direct sum of simple objects. 
\end{itemize}
$\bC$ is a \emph{braided fusion category} if these conditions hold, and if, further, it is \emph{rigid} (every object has a left and right dual), has finitely many isomorphism classes of simple objects, finite dimensional morphism spaces, and $\End_{\bC}(\one) = k$. 

\end{defn}

See \cite{eno, egno, meir} for more background on fusion categories.  With the exception of this and the next section, we will not need this notion beyond the examples it provides.

\begin{exmp} If $k$ is an algebraically closed field of characteristic zero and $\Gamma$ is finite dimensional and semisimple, then $\fin(\YD_{\Gamma}^{\Gamma})$ is a braided fusion category.  Recall that $\fin(\YD_{\Gamma}^{\Gamma})$ may be identified with the category of finite dimensional modules for the Drinfeld double $D(\Gamma)$.  This Hopf algebra is finite-dimensional and semisimple if $\Gamma$ is \cite{radford_drinfeld}, so it has finitely many simple modules, all of which are finite dimensional.  \end{exmp} 

The vast majority of our results can be phrased in terms of $k$-linear, abelian, braided monoidal categories $\bC$; occasionally we will require that $\bC$ is semisimple.

\subsection{Finiteness conditions}

If $V$ is an object in a braided category $\bC$, then we may define an action of $B_n$ on $V^{\otimes n}$ via 
$$\sigma_i \mapsto \id^{\otimes i-1} \otimes \sigma \otimes \id^{\otimes n-i-1}$$
This leads a definition explored in \cite{naidu-rowell} and related papers:

\begin{defn} \label{fin_braid_defn}

An object $V$ in $\bC$ is \emph{finitely braided} if, for each $n$, the action of $B_n$ on $V^{\otimes n}$ factors through a finite quotient.  The category $\bC$ has \emph{property {\bf F}} if all of its objects are finitely braided.

\end{defn}

Naidu and Rowell conjecture that a braided fusion category $\bC$ has property {\bf F} precisely when the Frobenius-Perron dimension of $\bC$ is an integer.  In \cite{green-nikshych}, Green and Nikshych show that if $\bC$ is weakly group-theoretical, then it has property {\bf F}.  In particular, this holds for categories of Yetter-Drinfeld modules for forms of finite groups; we include a brief proof of this fact here:

\begin{prop} \label{YD_profinite_prop} 

Let $\Gamma$ be a finite dimensional Hopf algebra with the property that the basechange to the algebraic closure 
$$\Gamma \otimes_k \kbar \cong \kbar[G]$$
is a (finite) group ring.  Then $\fin(\mathcal{YD}_\Gamma^\Gamma)$ has property \bf{F}.

\end{prop}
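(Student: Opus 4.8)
The plan is to reduce to the case where the Hopf algebra is a group ring over $\kbar$, and then to read off the answer from the explicit description of the braiding on $\YD_G^G$ together with the Hurwitz representation of Section~\ref{Bn_section}. For the reduction: given $V \in \fin(\YD_\Gamma^\Gamma)$, the scalar extension $V \otimes_k \kbar$ is a finite-dimensional Yetter--Drinfeld module over $\Gamma \otimes_k \kbar \cong \kbar[G]$, that is, an object of $\fin(\YD_G^G)$, and both the braiding and hence the $B_n$-action on $V^{\otimes n}$ are compatible with extension of scalars; under the injection $\GL_k(V^{\otimes n}) \hookrightarrow \GL_{\kbar}\bigl((V\otimes_k\kbar)^{\otimes n}\bigr)$ the image of $B_n$ maps onto the image of $B_n$ acting on $(V \otimes_k \kbar)^{\otimes n}$. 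So it suffices to show that every object of $\fin(\YD_G^G)$ is finitely braided when $G$ is a finite group.

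Fix then a finite-dimensional $X = \bigoplus_g X_g$ in $\YD_G^G$, with $X_g \cdot h = X_{g^h}$; write $\rho(h) \in \GL(X)$ for the operator $x \mapsto x\cdot h$ and $S = \{ g : X_g \neq 0 \}$. Then $X^{\otimes n} = \bigoplus_{\underline g \in S^n} X_{\underline g}$ with $X_{\underline g} := X_{g_1} \otimes \cdots \otimes X_{g_n}$, and the formula $\sigma(a \otimes b) = b \otimes (a\cdot h)$ for $b \in X_h$ shows that the standard generator $\sigma_i \in B_n$ carries $X_{\underline g}$ isomorphically onto $X_{\sigma_i \cdot \underline g}$, where $\sigma_i \cdot \underline g$ is obtained from $\underline g$ by the rule $(g_i, g_{i+1}) \mapsto (g_{i+1}, g_i^{g_{i+1}})$ defining the Hurwitz representation of Section~\ref{Bn_section}, applied to $S^n \subseteq G^n = \Hom(F_n, G)$; more precisely, restricted to $X_{\underline g}$ the operator $\sigma_i$ first applies $\rho(g_{i+1})$ in the $i$-th tensor slot and then transposes the $i$-th and $(i+1)$-st slots. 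In particular $\sigma_i|_{X_{\underline g}}$ is the restriction of an element of the finite group
\[
\mathcal G_n := \bigl\langle\ \tau_\pi \ (\pi \in S_n), \ \ \rho(h_1) \otimes \cdots \otimes \rho(h_n) \ (h_j \in G) \ \bigr\rangle \ \le\ \GL(X^{\otimes n}),
\]
where $\tau_\pi$ permutes tensor factors and $\mathcal G_n$ is finite, being a quotient of the wreath product $G \wr S_n$.

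To finish, let $\mathcal T \le \GL(X^{\otimes n})$ be the set of invertible operators that permute the nonzero summands $X_{\underline g}$ among themselves and whose restriction to each such summand coincides with the restriction of some element of $\mathcal G_n$. One checks that $\mathcal T$ is a subgroup, and it is finite, since an element of $\mathcal T$ is pinned down by a permutation of the finite index set $\{ \underline g : X_{\underline g} \neq 0 \}$ together with, for each such $\underline g$, an element of the finite group $\mathcal G_n$. By the previous paragraph every $\sigma_i$ lies in $\mathcal T$, so the image of $B_n \to \GL(X^{\otimes n})$ is contained in $\mathcal T$ and is therefore finite; as $n$ and $X$ are arbitrary this yields property~\textbf{F}. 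I expect the main obstacle to be exactly the claim that $\sigma_i|_{X_{\underline g}}$ is a restriction of an element of $\mathcal G_n$, together with the bookkeeping needed to compose such restrictions inside $\mathcal G_n$ while respecting the permutation of summands: globally $\sigma_i$ is \emph{not} a fixed element of $G \wr S_n$ acting on $X^{\otimes n}$, because the braiding ``twist'' $\rho(g_{i+1})$ depends on the $G$-degree of the factor being braided past, and organizing this dependence cleanly by the grading --- compatibly with the Hurwitz representation --- is where care is needed; the rest is formal.
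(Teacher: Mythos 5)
Your argument is correct, and its first half --- base changing to $\kbar$, noting that the braiding (hence the $B_n$-action on tensor powers) is compatible with extension of scalars, and descending finiteness through the injection $\GL_k(V^{\otimes n}) \hookrightarrow \GL_{\kbar}\bigl((V\otimes_k\kbar)^{\otimes n}\bigr)$ --- is exactly the reduction made in the paper. Where you diverge is in how finiteness is extracted over $\kbar[G]$: the paper chooses a finite, homogeneous, $G$-invariant spanning set of $\Xbar$ (the $G$-orbit of a homogeneous basis) and observes from $\sigma(a \otimes b) = b \otimes (a\cdot h)$ that the resulting elementary tensors in $\Xbar^{\otimes n}$ are literally permuted by each braid generator, so the image of $B_n$ is immediately a quotient of a subgroup of $S_{d^n}$; no auxiliary operators ever appear, and the degree-dependence of the twist is absorbed into which spanning vector one lands on. You instead work basis-free with the decomposition $X^{\otimes n} = \bigoplus_{\underline{g}} X_{\underline{g}}$ and trap the image of $B_n$ in the finite group $\mathcal{T}$ of block-permuting operators whose blocks are restrictions of elements of $\mathcal{G}_n$; the difficulty you flag --- that $\sigma_i$ is not globally a single element of $G \wr S_n$ because the twist $\rho(g_{i+1})$ depends on the degree of the strand being crossed --- is real, and your $\mathcal{T}$ is the right fix: closure under composition and inverses does go through because every element of $\mathcal{G}_n$ itself permutes the summands $X_{\underline{g}}$, so restrictions compose and invert within $\mathcal{G}_n$, and $\mathcal{T}$ is finite since an element is determined by its (at most $|\mathcal{G}_n|$ possible) restrictions to the finitely many nonzero summands. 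The trade-off is that the paper's spanning-set argument is shorter, while yours avoids choosing a generating set and makes the wreath-product structure and the Hurwitz action on indices explicit. (One minor point: since the $G$-action is a right action, $\rho$ is an antihomomorphism, so $\mathcal{G}_n$ is a quotient of $G^{\mathrm{op}} \wr S_n$ rather than $G \wr S_n$; this changes nothing about finiteness.)
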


We recall from the theorem of Cartier-Gabriel (e.g., Theorem 3.8.2 of \cite{cartier}) that if $\car(k)=0$, the assumption that $\Gamma$ is a form of a finite group ring is equivalent to it being cocommutative.

\begin{proof}

Let $X \in \fin(\YD_\Gamma^\Gamma)$, and consider the $\kbar[G]$-module $\Xbar := X \otimes_k \kbar$. Since $X$ is finite dimensional, $\Xbar$ admits a finite, homogenous (with respect to the $G$-grading), $G$-invariant generating set.  For instance, we may pick an arbitrary homogenous basis for $\Xbar$, and take its orbit under $G$.  

Let $\{x_1, \dots, x_d\}$ be such a generating set.  Then elements of the form $x_{i_1} \otimes \dots \otimes x_{i_n}$ form a generating set for $\Xbar^{\otimes n}$.  Further, the action of $B_n$ preserves this set of generators by inspection of the formula for the braiding in $\mathcal{YD}_G^G$.  Thus its action on $\Xbar^{\otimes n}$ factors through a subgroup of $S_{d^n}$, the symmetric group on $d^n$ elements.

We conclude that the action of $B_n$ on 
$$\Xbar^{\otimes_{\kbar} n} \cong X^{\otimes_k n} \otimes_k \kbar$$
factors through a finite quotient.  But since this action is induced from the action on $X^{\otimes_k n}$, it must be that that action factors through the same finite quotient.

\end{proof}

\begin{prop} \label{sums_prop}

Let $\bC$ be a $k$-linear, abelian, braided monoidal category, and let $\bC' \leq \bC$ be a finitely braided, semisimple subcategory with finitely many isomorphism classes of simple objects.  Then any object of $\bC$ which may be written as a filtered colimit of objects in $\bC'$ is finitely braided. 

\end{prop}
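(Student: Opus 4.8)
To prove Proposition~\ref{sums_prop} I would first upgrade the hypothesis on $\bC'$ to a \emph{uniform} finite-braiding statement, and then transport it across the colimit. Fix representatives $S_1,\dots,S_r$ of the isomorphism classes of simple objects of $\bC'$ and set $Y := S_1 \oplus \dots \oplus S_r$, an object of $\bC'$. Since $\bC'$ is finitely braided, the action of $B_n$ on $Y^{\otimes n}$ factors through a finite quotient; write $\Lambda_n \trianglelefteq B_n$ for its kernel and set
$$K_n := \Lambda_n \cap \ker(B_n \to S_n),$$
a normal subgroup of finite index in $B_n$.

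The key claim is that $K_n$ acts trivially on $W^{\otimes n}$ for \emph{every} object $W$ of $\bC'$. Writing $W$ as a finite direct sum of the $S_i$, the evident componentwise inclusion exhibits $W$ as a direct summand of $Y^{\oplus m}$ for a suitable $m$. Because the braiding of $\bC$ is a natural transformation, every morphism of $\bC$ -- in particular this split monomorphism -- intertwines the $B_n$-actions on $n$-fold tensor powers, so $W^{\otimes n}$ is a $B_n$-equivariant retract of
$$(Y^{\oplus m})^{\otimes n} \;\cong\; \bigoplus_{f\colon \{1,\dots,n\}\to\{1,\dots,m\}} Y^{\otimes n},$$
on which $B_n$ acts by permuting the $Y^{\otimes n}$-blocks through the homomorphism $B_n \to S_n$ (acting on the indexing functions $f$) while simultaneously acting on each block by its braiding action on $Y^{\otimes n}$. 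Hence elements of $\ker(B_n \to S_n)$ fix every block, elements of $\Lambda_n$ act trivially within each block, and so $K_n$ acts trivially on $(Y^{\oplus m})^{\otimes n}$, and therefore on its retract $W^{\otimes n}$.

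For the colimit step, let $X = \colim_\alpha X_\alpha$ be a filtered colimit with every $X_\alpha$ in $\bC'$. The structure maps $X_\alpha \to X$ are morphisms of $\bC$, hence induce $B_n$-equivariant maps $X_\alpha^{\otimes n} \to X^{\otimes n}$; since the $n$-fold tensor power preserves the filtered colimit in question (as it does in the categories of interest, e.g. Yetter-Drinfeld modules, where $\otimes$ is computed on underlying vector spaces), $X^{\otimes n} \cong \colim_\alpha X_\alpha^{\otimes n}$. By the claim each $X_\alpha^{\otimes n}$ carries a $B_n$-action trivial on $K_n$, compatibly with the transition maps, so the same holds for the colimit; thus $B_n \to \Aut(X^{\otimes n})$ factors through the finite group $B_n/K_n$. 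As $n$ was arbitrary, $X$ is finitely braided.

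I expect the main obstacle to be the uniformity claim of the second paragraph: extracting a single $K_n$ valid across all of $\bC'$. The finitely-many-simples hypothesis makes this possible by letting us concentrate everything in the one object $Y$, but one must still account for the braid group permuting the $Y^{\otimes n}$-blocks inside $(Y^{\oplus m})^{\otimes n}$ -- which is exactly why $K_n$ is taken to lie in $\ker(B_n \to S_n)$; a general element of $\Lambda_n$ would otherwise shuffle blocks nontrivially. The only other delicate point is the commutation of $(-)^{\otimes n}$ with the filtered colimit in the last step, which should be recorded as part of the ambient hypotheses on $\bC$.
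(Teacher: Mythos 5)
Your argument is correct, and its engine is the same as the paper's: the finitely-many-simples hypothesis is used to manufacture, for each $n$, a single finite-index normal subgroup of $B_n$ contained in the pure braid group $PB_n$ that acts trivially on the $n$-th tensor power of every relevant object, and triviality is then transported to the colimit. The differences are in the packaging. Where you concentrate everything in $Y = S_1 \oplus \cdots \oplus S_r$ and take $K_n = \Lambda_n \cap PB_n$ (automatically normal in $B_n$, being an intersection of normal subgroups), the paper works directly with an arbitrary direct sum $X$ of simples, decomposes $X^{\otimes n}$ into blocks $Y_{i_1} \otimes \cdots \otimes Y_{i_n}$ preserved by $PB_n$, and invokes finite generation of $PB_n$ to find a finite-index \emph{characteristic} subgroup contained in all the per-block kernels; your intersection trick is arguably cleaner, and your retract-of-$(Y^{\oplus m})^{\otimes n}$ step is a correct substitute for the paper's block decomposition, using only that $\otimes$ is additive and hence distributes over finite sums. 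The genuine divergence is the colimit step: the paper uses abelianness to write the colimit as a coequalizer of coproducts, so it needs only that $\otimes$ distributes over (possibly infinite) direct sums and preserves epimorphisms, whereas you commute $(-)^{\otimes n}$ past the filtered colimit, which you rightly flag as an extra hypothesis not literally contained in ``$k$-linear, abelian, braided monoidal.'' You could weaken your requirement to match the paper's: it suffices that the canonical map $\colim_\alpha X_\alpha^{\otimes n} \to X^{\otimes n}$ be an epimorphism, since a $K_n$-equivariant epimorphism out of an object with trivial $K_n$-action forces the action on the target to be trivial; in either formulation some such cocontinuity or exactness of $\otimes$ is being used silently, and it holds in the paper's intended examples such as Yetter-Drinfeld modules.
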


\begin{proof}

Since $\bC$ is abelian, colimits can be written as coequalizers of maps between coproducts (direct sums).  So it suffices to show that an arbitrary direct sum of objects in $\bC'$ is finitely braided.  Let $X$ be such a direct sum.  Since $\bC'$ is semisimple, we may in fact write $X$ as a direct sum of simple objects.  There are finitely many such objects; call them $Y_1, \dots, Y_m$.  Then we may write $X^{\otimes n}$ as a direct sum of objects of the form
$$Y_I := Y_{i_1} \otimes \cdots \otimes Y_{i_n}$$
for $I \in \{1, \dots, m\}^{\times n}$.  Note that there are only finitely many possibilities for $I$.

Since objects in $\bC'$ are finitely braided, the action of the pure braid group $PB_n$ on $Y_I$ trivializes on a finite index, normal subgroup $K_I \leq PB_n$.  Define $K_n$ to be a finite index, characteristic subgroup of $PB_n$ which is contained in all of the $K_I$ (this is possible since $PB_n$ is finitely generated).  Then $K_n$ is normal in $B_n$, and the action of $B_n$ on $X^{\otimes n}$ factors through the finite group $B_n / K_n$.

\end{proof} 

\begin{exmp}

If $\bC'$ is as in the previous Proposition, then the ind-completion $\Ind(\bC')$ has property {\bf F}.  Recall that $\Ind(\bC')$ is the universal example to which Proposition \ref{sums_prop} applies; it adjoins to $\bC'$ all colimits of filtered diagrams in $\bC'$.  

\end{exmp}

\begin{prop} \label{powers_fb_prop}

If $V\in \bC$ is finitely braided, so is any finite tensor power of $V$.

\end{prop}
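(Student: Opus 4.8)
The plan is to exhibit the $B_n$-action on $(V^{\otimes m})^{\otimes n}$ as the restriction, along a \emph{cabling} homomorphism $B_n \to B_{mn}$, of the $B_{mn}$-action on $V^{\otimes mn}$, and then invoke the finite-braidedness of $V$ for the latter.

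First I would fix $m$, set $W := V^{\otimes m}$, and reduce to showing that for each $n$ the action of $B_n$ on $W^{\otimes n}$ factors through a finite quotient. Using associativity of $\otimes$ I would identify $W^{\otimes n}$ with $V^{\otimes mn}$ and picture the $mn$ tensor factors as $n$ consecutive blocks of size $m$. Under this identification $\sigma_i \in B_n$ acts on $W^{\otimes n}$ by the braiding $\sigma \colon W \otimes W \to W \otimes W$ in the $i$th and $(i{+}1)$st copies of $W$; expanding this via repeated use of the hexagon axioms, it becomes the \emph{block transposition} $\beta_i$ of $V^{\otimes mn}$ that carries the $(i{+}1)$st block of $m$ factors over the $i$th block, assembled from $m^2$ instances of $\sigma \colon V \otimes V \to V \otimes V$ in consecutive positions.

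Next I would observe that $\sigma_i \mapsto \beta_i$ defines a group homomorphism $\phi = \phi_{m,n} \colon B_n \to B_{mn}$ — the $m$-fold cabling map, obtained geometrically by replacing each strand of an $n$-strand braid with $m$ parallel strands. Checking this amounts to the relations $\beta_i \beta_j = \beta_j \beta_i$ for $|i-j|>1$ (the blocks involved are disjoint) and $\beta_i \beta_{i+1} \beta_i = \beta_{i+1} \beta_i \beta_{i+1}$ (the braid relation applied to three consecutive blocks), both transparent on braid diagrams. By the preceding step the $B_n$-action on $W^{\otimes n}$ is then the composite $B_n \xrightarrow{\phi} B_{mn} \to \Aut(V^{\otimes mn})$, the second arrow being the $B_{mn}$-action on $V^{\otimes mn}$. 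Finally, since $V$ is finitely braided this $B_{mn}$-action factors through a finite quotient $Q$, so $B_n \xrightarrow{\phi} B_{mn} \to Q$ is a homomorphism to a finite group through which the $B_n$-action on $W^{\otimes n} = V^{\otimes mn}$ factors; as $n$ was arbitrary, $V^{\otimes m}$ is finitely braided.

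The main obstacle is the bookkeeping in the second paragraph: verifying carefully, through the hexagon axioms, that the monoidal braiding of the composite object $W = V^{\otimes m}$ is literally the cabled generator $\beta_i$, and correspondingly that $\phi$ is a genuine group homomorphism rather than merely a map of sets. It is worth noting that the argument is special to powers of a single object: for a tensor product $X \otimes Y$ of two distinct finitely braided objects the analogous decomposition would involve the cross-braidings $\sigma \colon X \otimes Y \to Y \otimes X$ and $\sigma \colon Y \otimes X \to X \otimes Y$, which are not controlled by finite-braidedness of $X$ and $Y$ separately.
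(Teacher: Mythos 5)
Your proof is correct and follows essentially the same route as the paper: identify $(V^{\otimes m})^{\otimes n}$ with $V^{\otimes mn}$, observe that the $B_n$-action is the restriction of the $B_{mn}$-action along the cabling homomorphism $B_n \to B_{mn}$, and conclude from finite braidedness of $V$. The paper phrases the last step as intersecting a finite-index subgroup of $B_{mn}$ with $B_n$ rather than composing with the finite quotient, but these are interchangeable; your extra care with the hexagon axioms and the homomorphism property of cabling only fills in details the paper leaves implicit.
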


\begin{proof}

For any $n$ and $m$, the action of $B_n$ on $(V^{\otimes m})^{\otimes n}$ is through the latter's identification with $V^{\otimes mn}$ and the cabling homomorphism $B_n \hookrightarrow B_{mn}$.  Since the action of $B_{mn}$ trivializes on a finite index subgroup, the same holds for its intersection with $B_n$.

\end{proof}

\subsection{Finitely braided vector spaces detect the braid group}

\begin{prop} \label{detect_prop}

The map 
$$k[B_n] \to \prod_V \End(V^{\otimes n})$$
is injective, where $V$ ranges over all isomorphism classes of finitely braided vector spaces.  In fact, this result holds upon restriction to braided vector spaces $V = k[Q]$ of rack type associated to a finite group $Q$.

\end{prop}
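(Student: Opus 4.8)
The plan is to show that if $z = \sum_{\beta \in B_n} c_\beta \beta \in k[B_n]$ acts as zero on $V^{\otimes n}$ for every rack-type braided vector space $V = k[Q]$ with $Q$ a finite group, then all $c_\beta = 0$. The key observation is that the action of $B_n$ on $(k[Q])^{\otimes n}$ is exactly the linearization of the Hurwitz action of $B_n$ on $Q^{\times n}$: a basis element $q_1 \otimes \cdots \otimes q_n$ is sent to $\gamma \cdot (q_1, \dots, q_n) \in Q^{\times n}$ (with the conjugation formula from Example \ref{rack_exmp}), extended linearly. So it suffices to produce, for a suitable finite group $Q$, a single tuple $\bg \in Q^{\times n}$ whose stabilizer in $B_n$ is trivial; then the orbit map $B_n \to Q^{\times n}$, $\gamma \mapsto \gamma \cdot \bg$ is injective, hence the induced map $k[B_n] \to (k[Q])^{\otimes n}$, $\gamma \mapsto \gamma(q_1 \otimes \cdots \otimes q_n)$ sends the basis $B_n$ to a \emph{linearly independent} set (distinct basis vectors of $(k[Q])^{\otimes n}$), forcing $z$ to act nontrivially unless $z = 0$. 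This automatically also proves the weaker first assertion, since $k[Q]$ is finitely braided (its braiding permutes the basis $Q \times Q$, so $B_n$ acts through a subgroup of $S_{|Q|^n}$).

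The heart of the matter is therefore: find a finite group $Q$ and a tuple $(q_1, \dots, q_n) \in Q^{\times n}$ with trivial $B_n$-stabilizer. Here I would invoke the Artin picture from Section \ref{Bn_section}: the free-group tuple $(x_1, \dots, x_n) \in F_n^{\times n}$ has trivial stabilizer (this is the content of the Proposition on $\OO_n$ being a free $B_n$-set — $\OO_n \cong B_n$). So the strategy is to approximate $F_n$ by a finite quotient finely enough that triviality of the stabilizer survives. Concretely, for $\gamma \neq 1$ in $B_n$ we have $\gamma \cdot (x_1, \dots, x_n) \neq (x_1, \dots, x_n)$ in $F_n^{\times n}$, i.e. some coordinate $\gamma(x_i) \ne x_i$ in $F_n$. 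Since $F_n$ is residually finite, there is a finite quotient $\pi_\gamma \colon F_n \twoheadrightarrow Q_\gamma$ detecting this inequality. The subtlety is that $\gamma$ ranges over the infinite set $B_n \setminus \{1\}$, so one cannot naively take a product of finitely many $Q_\gamma$. The fix is to make the problem finite first: work relative to the \emph{Hurwitz orbit set} $\OO_n$. For each pair $(\bg, \bg')$ of \emph{distinct} elements of $\OO_n$ we only need one finite quotient distinguishing them, but $\OO_n$ is itself infinite — so instead I would argue directly that for each $\gamma\neq 1$ there is a finite characteristic quotient $Q$ of $F_n$ such that $\gamma$ acts nontrivially on $Q^{\times n}$ (this uses that the Artin action descends to $\Aut(Q)$ on characteristic quotients), and then note that we need this \emph{simultaneously for all $\gamma$}: equivalently, we need a finite characteristic quotient $Q$ of $F_n$ on which $B_n \to \Aut(Q^{\times n})$ is injective.

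This last point is where I expect the real work to lie, and I would handle it by a compactness/profinite argument rather than a single explicit $Q$. The Artin embedding $B_n \hookrightarrow \Aut(F_n)$ composes with $\Aut(F_n) \to \Aut(\widehat{F_n})$ on the profinite completion; the latter map is injective because $B_n$ is residually finite (indeed, $B_n$ embeds in $\Aut(F_n)$ which acts faithfully on the residually finite group $F_n$, and one checks $\bigcap_\gamma$ over characteristic finite-index subgroups suffices), and $\Aut(\widehat{F_n}) = \varprojlim_Q \Aut(Q)$ over finite characteristic quotients $Q$ of $F_n$. Now, a distinguished element $\gamma \in B_n$, $\gamma \neq 1$, has nonzero image in $\Aut(\widehat{F_n})$, hence nontrivial image in $\Aut(Q)$ for some finite characteristic quotient $Q = Q_\gamma$; then $\gamma$ acts nontrivially on $Q_\gamma^{\times n}$ already on the tuple $(\bar x_1, \dots, \bar x_n)$, because the Artin formula shows $\gamma(\bar x_i) = \bar x_i$ for all $i$ would force $\gamma$ to act trivially on the generators of $Q$, i.e. $\gamma = 1$ in $\Aut(Q)$. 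So for \emph{each} $\gamma \neq 1$, the element $\gamma(\bar x_1 \otimes \cdots \otimes \bar x_n) \neq \bar x_1 \otimes \cdots \otimes \bar x_n$ in $(k[Q_\gamma])^{\otimes n}$. To finish with a single ambient object: given $z = \sum c_\beta \beta \neq 0$, pick $\beta_0$ with $c_{\beta_0} \neq 0$, and for every other $\beta$ with $c_\beta \neq 0$ let $\gamma = \beta_0^{-1}\beta \neq 1$ and choose $Q_{\beta}$ as above; let $Q$ be the finite group $\prod_\beta Q_\beta$ (product over the finitely many $\beta$ in the support of $z$), which is again a finite quotient of $F_n$, hence rack type via its group ring. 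Then all the tuples $\beta(\bar x_1 \otimes \cdots \otimes \bar x_n)$, $\beta \in \mathrm{supp}(z)$, are \emph{distinct} basis vectors of $(k[Q])^{\otimes n}$, so $z$ acts nontrivially on $(k[Q])^{\otimes n}$ — a contradiction. This proves both the general statement and the refinement to rack-type $V$ attached to finite groups. The one genuinely delicate ingredient — the injectivity of $B_n \to \varprojlim_Q \Aut(Q)$, equivalently that $F_n$ has "enough" finite characteristic quotients to detect every nonidentity element of $B_n$ acting on it — is exactly the residual finiteness of $B_n$ packaged through the faithful Artin action, and I would cite or reprove it in a sentence.
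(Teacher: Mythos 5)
Your argument is correct, and it rests on the same two pillars as the paper's own proof: the freeness of the Hurwitz orbit $\OO_n$ of $(x_1,\dots,x_n)$ in $F_n^{\times n}$, and residual finiteness of $F_n$ to descend to a finite quotient that is allowed to depend on the element of $k[B_n]$ being detected. Two remarks on your write-up. First, as you yourself notice midway, the opening reduction (``find one finite $Q$ and a tuple with trivial $B_n$-stabilizer'') can never work: for finite $Q$ every Hurwitz orbit in $Q^{\times n}$ is finite, so every stabilizer has finite index and $k[B_n]$ never acts faithfully on a single $k[Q]^{\otimes n}$; the quotient must be chosen after fixing $z$, which is exactly what your final argument does. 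Second, your last step claims more than you arranged: choosing $Q_\beta$ to detect $\beta_0^{-1}\beta$ only guarantees $\beta\cdot(\bar x_1,\dots,\bar x_n)\neq \beta_0\cdot(\bar x_1,\dots,\bar x_n)$, not that \emph{all} tuples indexed by the support of $z$ are pairwise distinct. This is harmless: the coefficient of the basis vector $\beta_0\cdot(\bar x_1,\dots,\bar x_n)$ in $z\cdot(\bar x_1\otimes\cdots\otimes\bar x_n)$ is then exactly $c_{\beta_0}\neq 0$, so $z$ acts nontrivially anyway; alternatively, separate all pairwise ratios $\beta'^{-1}\beta$ over the finite support. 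Where you differ in implementation, the paper's route is a bit lighter than yours: instead of characteristic quotients and descent of the Artin action to $\Aut(Q)$ (plus the profinite packaging), it picks a witness tuple $f\in F_n^{\times n}=\End(F_n)$ on which $\alpha$ acts nontrivially (available by the free-orbit step), and applies residual finiteness once to embed the finite set $T=\{f(\gamma_i(x_j))\}$ into a finite quotient $Q$; since the Hurwitz action on $Q^{\times n}=\Hom(F_n,Q)$ is just precomposition on the source, no invariance of the kernel under $B_n$ is needed, and all the relevant tuples remain distinct in $Q^{\times n}$ in one stroke.
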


\begin{proof}

We first show that $k[B_n]$ injects into $\End(V^{\otimes n})$, when $V$ is an infinitely braided vector space.  Specifically, let $V = k[F_n]$ be the braided vector space of rack type associated to the free group on $n$ elements.  Then there is an isomorphism of vector spaces
$$V^{\otimes n} \cong k[F_n^{\times n}].$$
Notice that this is a permutation representation: $B_n$ acts on $F_n^{\times n}$ by
$$\sigma_i(g_1, \dots, g_n) = (g_1, \dots, g_{i-1}, g_{i+1}, g_i^{g_{i+1}}, g_{i+2}, \dots, g_n).$$
This is the same action of $B_n$ on $F_n^{\times n} = \End(F_n)$ as described in section \ref{Bn_section} by pre-composition with mapping classes.  Therefore $V^{\otimes n}$ contains the vector space $k[\OO_n]$ as a sub-representation.  Since this is a permutation representation and the action on its basis is free, $k[B_n]$ injects into $\End(V^{\otimes n})$.

Now, $V = k[F_n]$ is evidently not a finitely braided vector space.  However, for any finite quotient $Q$ of $F_n$, $V_Q := k[Q]$ is finitely braided.  As before, we may regard $F_n^{\times n}$ as $\End(F_n)$.  Similarly, $V_Q^{\otimes n} = k[Q^{\times n}]$, and $Q^{\times n} = \Hom(F_n, Q)$.

Consider an arbitrary nonzero element 
$$\alpha = \sum_{i=1}^m a_i \gamma_i \in k[B_n].$$ 
The previous argument ensures that there is some endomorphism $f: F_n \to F_n$ for which $\alpha(f) = \sum a_i [f \circ \gamma_i] \in k[\End(F_n)]$ is nonzero.  Define a subset 
$$T = \{f(\gamma_i(x_j))\; | \; i=1, \dots, m, \, j = 1, \dots,n\} \subseteq F_n.$$  
Since $F_n$ is residually finite, there is some finite quotient $Q$ of $F_n$ into which $T$ injects.  Let $q \circ f \in \Hom(F_n, Q)$ be the composite of $f$ with the quotient map $q: F_n \to Q$; then by construction 
$$\alpha(q \circ f) = \sum a_i [q \circ f \circ \gamma_i] \in k[\Hom(F_n, Q)]$$
is nonzero.

Therefore the image of $\alpha$ in $\End(k[Q]^{\otimes n})$ is nonzero, and so $k[B_n]$ embeds into 
$$\prod_Q \End(V_Q^{\otimes n}),$$
which yields the result.

\end{proof}

\subsection{Pro-objects} \label{pro-section}

We will need to study inverse systems in a category $\bC$ in order to define certain classes of  algebras.  We recall from \cite{SGA4tome1} Expos\'e I.8.10 the basics of this material here.  

\begin{defn} A \emph{pro-object} in $\bC$ is a functor 
$$V: I \to \bC$$
from a small cofiltered category $I$ to $\bC$.  \end{defn}

Less compactly, $I$ has the property that for each pair of objects $i$ and $j$, there exists an object $k$, and morphisms $k \to i$ and $k \to j$.  Furthermore, for any pair of morphisms $f, g: i \to j$, there exists an object $k$ and a morphism $h:k \to i$ with the property that $fh = gh$.  Then a pro-object is a family of objects $V_i$ in $\bC$ indexed by objects $i \in I$, along with maps $V_i \to V_j$ for each morphism $i \to j$.

The category $\proC$ of pro-objects consists of pairs $(I, V)$, where $I$ is a cofiltered category, and $V$ a functor.  Morphisms\footnote{In the following definition, these (co)limits are computed in $\Set$.} include natural transformations of such functors, but are not limited to them:
$$\Hom_{\proC}((I, V), (J, W)) = \varprojlim_{j\in J} \varinjlim_{i \in I} \Hom_{\bC}(V_i, W_j).$$

In the case that $\bC$ is monoidal, so too is $\proC$: if $(I, V)$ and $(J, W)$ are pro-objects, their tensor product is a pro-object associated to the cofiltered category $I \times J$, whose value on $(i, j)$ is $V_i \otimes W_j$.  Similarly, the direct sum is also indexed on $I \times J$, with value $V_i \oplus W_j$.  The latter definition also extends to infinite sums.

\begin{defn} 
For an object $V$ in $\bC$, we may regard $V$ as the constant inverse system in $\proC$.  Further, we will say that an arbitrary $X\in \proC$ is \emph{pro-constant} if it is isomorphic to such a constant system.
\end{defn}

Notice that the functor $\const: \bC \to \proC$ which carries an object to the constant inverse system it defines is strongly symmetric monoidal.

\begin{defn} \label{strict_defn}

A pro-object $V$ in an abelian category $\bC$ is \emph{strict} if for each $i \to j$ in the indexing category, $V_i \to V_j$ is an epimorphism.  Further, $W$ is \emph{essentially strict} if it is isomorphic in $\proC$ to a strict object.

\end{defn}

\section{Braided Hopf algebras} \label{br_Hopf_sec}

Throughout this section, $(\bC, \otimes, \one)$ is taken to be a $k$-linear, abelian, braided monoidal category.  We refer the reader to \cite{schauenburg, meir} for details on (co)tensor and Nichols algebras in such a category beyond what is presented here.

\subsection{Definitions} 

We recall some of the basic notions in the subject; strictly better references are \cite{montgomery, cartier, andruskiewitsch-schneider}.

Following Majid in \cite{majid_braided} and other references, one defines a \emph{braided bialgebra} $A$ to be a bialgebra object in $\bC$.  That is, $A$ is a unital, associative algebra, a counital, coassociative coalgebra, and the counit $\epsilon:A \to \one$ and diagonal $\Delta: A \to A \otimes A$ are algebra maps, where the product on $A \otimes A$ 
$$\mu_{A \otimes A}: (A \otimes A) \otimes (A \otimes A) \to A \otimes A$$
is the composite $(\mu_A \otimes \mu_A) \circ (\id_A \otimes \sigma \otimes \id_A)$.

A braided bialgebra $A$ is a \emph{braided Hopf algebra} if, additionally, it admits an \emph{antipode}, a map $S: A \to A$ with the property that
\beqn \label{antipode_eqn} \mu \circ (S \otimes \id_A) \circ \Delta = \eta \circ \epsilon = \mu \circ (\id_A \otimes S) \circ \Delta. \eeqn
Equivalently, $S$ serves as an inverse to the identity in the convolution product on $\End(A)$.  We will refer to  Hopf algebras in symmetric monoidal categories as \emph{symmetric Hopf algebras} unless the meaning is clear.

\begin{rem}

In \cite{takeuchi}, Takeuchi gives an alternate definition of braided bialgebras and Hopf algebras as braided vector spaces with suitably compatible products, coproducts, and so on.  If $(\bC, \otimes, \one)$ is a braided category and $U: \bC \to \Vect_k$ is a strongly monoidal functor, and if $A\in \bC$ is a braided Hopf algebra in the sense of Majid, then $U(A)$ is a braided Hopf algebra in the sense of Takeuchi.  However, it is not the case that every braided category $\bC$ admits such a fiber functor $U$, nor must such a functor be unique (see, e.g., \cite{ostrik_module}).  As such neither Majid's nor Takeuchi's braided Hopf algebras give a more general notion than the other.

In this paper, we use the braided categorical definition almost exclusively, as it seems to us that the structural results that we prove are more fundamentally statements about the Hopf algebras, and not their underlying braided vector space.  That said, all of the constructions that we perform make perfect sense for braided Hopf algebras in Takeuchi's sense.  Indeed, working with objects which have an underlying vector space makes these ideas more transparent.  We will endeavor to present definitions in terms of elements as well as in categorical terms when that is clarifying. 

Finally, some of our results (Theorem \ref{br_lie_prim_thm}, Proposition \ref{nic_prop}) about braided operads become biconditional when formulated in terms of braided vector spaces, thanks to the fact that finitely braided vector spaces detect the braid group.  The converse statements don't hold in the categorical setting, as $\bC$ may simply not be a rich enough category for the analog of Proposition \ref{detect_prop} to hold.

\end{rem}

Recall that the set of \emph{grouplike} elements of a coalgebra $C$ in a category of vector spaces is 
$$G(C) := \{g \in C \; | \; \Delta(g) = g \otimes g, \, \epsilon(g) = 1\}.$$
If $C$ is a Hopf algebra, this forms a group.  The \emph{primitives} of $C$ are the set\
$$P(C) := \{x \in C \; | \; \Delta(x) = x \otimes 1 + 1 \otimes x\}.$$
One may formulate this definition for a coalgebra $C$ in an abelian monoidal category by setting $P(C) = \ker(\Delta - \id \otimes 1 -1 \otimes \id)$.

The \emph{coradical} of a coalgebra $C$ is the subspace $C_0 \subseteq C$ which is the sum of all simple subcoalgebras.  Note that the line generated by a grouplike element is a simple subcoalgebra, so $k[G(C)] \subseteq C_0$. 
We say that $C$ is \emph{connected} if the coradical $C_0$ is one dimensional; when $C$ is a bialgebra, this must be the unit $\eta: \one \to C$.   Non-unit grouplike elements clearly obstruct connectivity.  In contrast, if $C$ is a primitively generated bialgebra, then it is connected (see Proposition 5.8 of \cite{ardizzoni_prim_gen}).  

\begin{rem}

For connected, braided bialgebras, (\ref{antipode_eqn}) can be recursively (in the coradical filtration) solved for $S$, so the existence and uniqueness of an antipode is automatic.  Thus, as Kharchenko points out \cite{kharchenko} (pg.28), the terms ``\emph{connected braided Hopf algebra} and \emph{connected braided bialgebra} are synonymous."  Essentially all of the bialgebras we will consider in this paper are primitively generated, where these notions are equivalent.  We will use the term ``Hopf algebra'' rather than ``bialgebra,'' even though we suppress any discussion of the antipode. 

\end{rem}

An augmented algebra is an algebra $A$ with an algebra homomorphism $\epsilon: A \to \one$.  If $A$ is a bialgebra, then the counit defines such an augmentation.  Generally, the module of \emph{indecomposables} is defined to be
$$Q(A) := \ker(\epsilon)/\ker(\epsilon)^2.$$
Finally, there is another common use of the word ``connected" that we will require.  If $A$ is a graded algebra in $\bC$, then $A$ is said to be \emph{connected as a graded algebra} if $A_n=0$ when $n<0$, and $A_0 = \one$.   In this case, $A$ is naturally augmented via projection onto $A_0$.  

\subsection{The tensor algebra and its dual} \label{tensor_section} 

For an object $V \in \bC$, the \emph{tensor algebra} 
$$T(V) = \bigoplus_{n=0}^{\infty} V^{\otimes n}$$
may be equipped with the structure of a braided Hopf algebra, where the diagonal is specified by insisting that $V$ is primitive.  An explicit formula for the diagonal involves lifts of unshuffle permutations to the braid group: 

\begin{defn} A $(p, q)$-\emph{shuffle} is an element $\tau \in S_{p+q}$ with the property that if $1 \leq i < j \leq p$ or $p+1 \leq
i < j \leq p+q$, then $\tau(i) < \tau(j)$.  The inverses of such elements are \emph{$(p,q)$-unshuffles}.  Define elements of $k[B_{p+q}]$ by
$$\bS_{p, q}:= \sum_{\tau} \tautilde \; \mbox{ and } \; \overline{\bS}_{p, q}:= \sum_{\tau} \tautilde^{-1}$$
where $\tau$ ranges over $(p,q)$-unshuffles, and $\tautilde$ is its Matsumoto lift. 

\end{defn}

\begin{thm}[\cite{schauenburg}] \label{shauenburg_thm}

In degree $n=p+q$, the $(p, q)$-component of $\Delta: T(V)_n \to T(V)_p \otimes T(V)_q$ (with $p, q>0$) is given by 
$$\bS_{p, q}:V^{\otimes n} \to V^{\otimes n} = V^{\otimes p} \otimes V^{\otimes q}.$$

\end{thm}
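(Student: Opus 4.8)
The plan is to pin $\Delta$ down by its universal property and then evaluate it one tensor factor at a time, proving the formula by induction on $n=p+q$. Since $T(V)$ is the free associative algebra on $V$ in $\bC$ and $T(V)\otimes T(V)$ is an associative algebra under $\mu_{T(V)\otimes T(V)}=(\mu\otimes\mu)\circ(\id\otimes\sigma\otimes\id)$, the morphism $V\to T(V)\otimes T(V)$, $v\mapsto v\otimes 1+1\otimes v$, extends uniquely to an algebra map, which is the diagonal on $T(V)$. Hence on $T(V)_n=V^{\otimes n}$ one has, on elements, $\Delta(v_1\cdots v_n)=\prod_{i=1}^{n}(v_i\otimes 1+1\otimes v_i)$, the product being formed in the braided tensor product. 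For $n\le 1$ the claim is immediate (the only $(1,0)$- and $(0,1)$-unshuffle is the identity, so $\bS_{1,0}=\bS_{0,1}=1$), and for every $n$ the extreme components of $\Delta$ are $v_1\cdots v_n\otimes 1$ and $1\otimes v_1\cdots v_n$, so there is no harm in allowing $p$ or $q$ to vanish under the convention $\bS_{n,0}=\bS_{0,n}=1$.

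For the inductive step I would write $\Delta(v_1\cdots v_n)=\Delta(v_1\cdots v_{n-1})\cdot\Delta(v_n)$ and substitute $\Delta(v_1\cdots v_{n-1})=\sum_{a+b=n-1}\bS_{a,b}(v_1\otimes\cdots\otimes v_{n-1})$, where $\bS_{a,b}(-)\in V^{\otimes a}\otimes V^{\otimes b}$. Multiplying each summand on the right by $v_n\otimes 1$ and by $1\otimes v_n$ and applying the formula for $\mu_{T(V)\otimes T(V)}$: the term $(x\otimes y)(1\otimes v_n)=x\otimes(yv_n)$ involves no braiding and contributes, in the $(a,b+1)$-slot, the operator $\bS_{a,b}\otimes\id$ on $V^{\otimes n}$; the term $(x\otimes y)(v_n\otimes 1)$ requires braiding the last tensor factor $v_n$ to the front of the $b$-letter word $y$, and so contributes, in the $(a+1,b)$-slot, the composite of $\bS_{a,b}\otimes\id$ with the braid $\widetilde{c}:=\sigma_{a+1}\sigma_{a+2}\cdots\sigma_{n-1}$ (the Matsumoto lift of the cycle $(a{+}1,\,a{+}2,\,\dots,\,n)$) that moves strand $n$ to position $a+1$. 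Collecting the $(p,q)$-component (with $p,q\ge 1$, so now $a+1=p$) therefore gives, as an identity of operators $V^{\otimes n}\to V^{\otimes n}$, $$\Delta_{p,q}=\iota(\bS_{p,q-1})+\widetilde{c}\cdot\iota(\bS_{p-1,q}),\qquad \widetilde{c}=\sigma_p\sigma_{p+1}\cdots\sigma_{n-1},$$ where $\iota\colon k[B_{n-1}]\hookrightarrow k[B_n]$ is the inclusion adding a strand at position $n$.

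It then remains to check that $\bS_{p,q}$ satisfies this same recursion, which is now a purely combinatorial statement in $k[B_n]$. I would partition the $(p,q)$-unshuffles $\rho\in S_n$ by the value $\rho(n)$, which is forced to lie in $\{p,n\}$ (it is the maximum of one of the two increasing blocks of the shuffle $\rho^{-1}$). If $\rho(n)=n$, then $\rho$ fixes $n$, restricts to a $(p,q-1)$-unshuffle, and has reduced words in $\sigma_1,\dots,\sigma_{n-2}$ only, so $\widetilde{\rho}=\iota(\widetilde{\rho'})$; summing over these gives $\iota(\bS_{p,q-1})$. If $\rho(n)=p$, then deleting the entry $\rho(n)=p$ and reindexing yields a $(p-1,q)$-unshuffle $\rho'$; a short count of inversions gives $\ell(\rho)=\ell(\rho')+q$, and $\rho$ factors as $c$ composed with the extension of $\rho'$ fixing $n$, a length-additive factorization with $c$ the same cycle as above. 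By the Matsumoto--Tits theorem, any two reduced expressions for a permutation are connected using only braid relations, so the Matsumoto lift is multiplicative along length-additive factorizations; hence $\widetilde{\rho}=\widetilde{c}\cdot\iota(\widetilde{\rho'})$, and summing over these gives $\widetilde{c}\cdot\iota(\bS_{p-1,q})$. Adding the two cases yields the recursion for $\bS_{p,q}$, so by induction $\Delta_{p,q}=\bS_{p,q}$ on $V^{\otimes n}$.

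The step I expect to be the main obstacle is the bookkeeping in the inductive step: one must track carefully how the braided multiplication on $T(V)\otimes T(V)$ reorganizes an interleaved word into $V^{\otimes p}\otimes V^{\otimes q}$-form, and confirm that the resulting braid is exactly the (positive) Matsumoto lift of the relevant unshuffle rather than its inverse or its mirror image. This is where the orientation conventions for $\sigma$, for the braid action on $V^{\otimes n}$, and for the Matsumoto section must all be fixed compatibly, and the fact that the same braid $\widetilde{c}$ appears on both sides of the recursion is the built-in consistency check. One can also bypass the induction and expand $\prod_i(v_i\otimes 1+1\otimes v_i)$ all at once: the summand indexed by a $p$-element subset $S\subseteq\{1,\dots,n\}$ calls for exactly one positive crossing between each strand of $S^{c}$ and each strand of $S$ to its right, and such a braid is the Matsumoto lift of the unshuffle that sorts $S$ to the front; as $S$ ranges over all $p$-element subsets these unshuffles are precisely the $(p,q)$-unshuffles. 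This presentation is more transparent, but still relies on Matsumoto--Tits to know that ``each pair of strands crosses once, positively'' determines the braid.
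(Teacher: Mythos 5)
The paper itself offers no proof of this statement—it is quoted from Schauenburg—so there is no internal argument to compare against; your proposal must stand on its own, and it does. The argument is correct and is essentially the standard one: $\Delta$ is pinned down as the unique algebra map $T(V)\to T(V)\otimes T(V)$ (with the braided multiplication $(\mu\otimes\mu)\circ(\mathrm{id}\otimes\sigma\otimes\mathrm{id})$) sending $V$ to $V\otimes 1+1\otimes V$, and the degreewise computation is then an induction. Your two recursions do match under the paper's conventions: on the algebra side, multiplying a $(p-1,q)$-term by $v_n\otimes 1$ invokes $\sigma_{V^{\otimes q},V}$, which by the hexagon axiom is the operator $\sigma_{p}\circ\sigma_{p+1}\circ\cdots\circ\sigma_{n-1}$ with $\sigma_{n-1}$ applied first; on the combinatorial side, partitioning $(p,q)$-unshuffles $\rho$ by $\rho(n)\in\{p,n\}$, the inversion count $\ell(\rho)=\ell(\rho')+q$ and length-additivity of the Matsumoto section give $\tautilde[\rho]=(\sigma_p\cdots\sigma_{n-1})\cdot\tautilde[\rho']$ with the same positive word. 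Since the paper lets $\sigma_i$ act by the braiding itself and the Matsumoto lift uses positive generators, the orientation bookkeeping you flag as the main risk does close up, and your ``all at once'' expansion over $p$-element subsets is a legitimate shortcut for the same reason. The only caveat is presentational: $\bC$ is a general $k$-linear braided monoidal category with no fiber functor assumed, so expressions like $v_1\cdots v_n$ and $v\mapsto v\otimes 1+1\otimes v$ should be read as shorthand for the corresponding morphism-level (diagrammatic) computation on the component $V^{\otimes n}$; the induction transcribes verbatim into that language, so this is a matter of wording rather than a gap.
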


The \emph{quantum shuffle algebra} $T^{\co}(V)$ was studied by Rosso \cite{rosso}, who showed that it was a braided Hopf algebra.  The underlying object of $\bC$ is the same as that of $T(V)$, but as a coalgebra, $T^{\co}(V)$ is the graded, cofree (tensor) coalgebra $T^{\co}(V)$ on $V$.  Its multiplication is a braided analogue of the shuffle product:
$$[a_1| \dots | a_p] \star [b_1 | \dots | b_q] = \overline{\bS}_{p, q} \cdot [a_1| \dots | a_p | b_1 | \dots | b_q].$$

If $\bC$ is rigid, and $V$ an object of $\bC$, the $n\nth$ tensor power $(V^*)^{\otimes n} \cong (V^{\otimes n})^*$ of its dual $V^*$ carries two actions of $B_n$: a left action naturally arising from the structure of the category, and a right action as the adjoint of the left action on $V^{\otimes n}$.  It is straightforward to see that these actions are thrown onto each other by inversion in $B_n$.  As such, one may identify $T^{\co}(V)$ as the (graded) dual braided Hopf algebra to $T(V^*)$:
$$T^{\co}(V)^* \cong T(V^*).$$

\subsection{Nichols algebras} \label{nichols_section} 

For an object $V \in \bC$, the Nichols algebra $\mB(V)$ generated by $V$ is the unique braided Hopf algebra whose primitives and indecomposables are both isomorphic to $V$.  That is, $\mB(V)$ is generated as an algebra by $V$, and the natural maps 
$$P(\mB(V)) \to Q(\mB(V)) \leftarrow V$$
are both isomorphisms.  We may explicitly define it as follows:

\begin{defn}

The \emph{Nichols algebra}\footnote{or \emph{Nichols-Woronowicz algebra} \cite{bazlov}, \emph{quantum symmetric algebra} \cite{rosso}, \emph{bitensor algebra} \cite{schauenburg}, or coinvariants of \emph{bialgebras of type one} \cite{nichols}, or ...} $\mB(V)$ associated to $V$ is the image in the tensor coalgebra $T^{\co}(V)$ of the tensor algebra $T(V)$ under the Hopf algebra map 
\beqn \bS: T(V) \to T^{\co}(V) \label{TV_eqn} \eeqn
which is the identity on $V$.  

\end{defn}

More briefly, $\mB(V)$ is the subalgebra of the quantum shuffle algebra $T^{\co}(V)$ generated by $V$.  Since the indecomposables of the domain and the primitives of the codomain of (\ref{TV_eqn}) are both $V$, this image has the required property.  Additionally, it was shown in \cite{schauenburg} that the degree $n$ component of the map in (\ref{TV_eqn}) -- which is a self-map of $V^{\otimes n}$ -- is given by action of the element
$$\bS_n := \sum_{\tau \in S_n} \tautilde.$$
As before, $\tautilde$ is the Matsumoto lift of $\tau$ to $B_n$; the element $\bS_n$ is called the \emph{braid} or \emph{quantum symmetrizer}.  We obtain the computation
\beqn \mB(V)_n  = T(V)_n / \ker(\bS_n). \label{nichols_defn_eqn} \eeqn
Implicit in this result is that $\sum_n \ker(\bS_n)$ forms a Hopf ideal in $T(V)$.

\section{Braided operads}  \label{br_op_sec}

(Symmetric) operads were introduced by May in \cite{may}, and have found numerous applications in topology, algebra, and mathematical physics.  A braided variant was introduced by Fiedorowicz in \cite{fiedorowicz}.  Symmetric operads are fundamental to defining algebraic structures in a symmetric monoidal category.  Similarly, braided operads are the ``right" way of encoding algebraic structures in braided monoidal categories, although this point of view has not been explored in detail.  In this section, we set up this framework and explore a number of examples, although our main new example -- the braided primitive operad -- is not introduced until section \ref{br_prim_section}.  Section \ref{op_defn_section} and part of section \ref{op_alg_section} are slightly modified treatments of the classical theory of operads and their algebras, adapted to the braided context, and are extracted from sources such as \cite{may, fiedorowicz, markl}. 

\subsection{Definition and examples} \label{op_defn_section}

Recall for $1 \leq i \leq m$, the $i\nth$ \emph{cabling map} 
$$\circ_i: B_m \times B_n \to B_{n+m-1}.$$  
For $\gamma \in B_m$ and $\rho \in B_n$, $\gamma \circ_i \rho$ is obtained by fattening the $i\nth$ strand of $\gamma$ to a tubular neighborhood, and filling that tube with $\rho$.  This is not a group homomorphism, but becomes one when restricted $B_m^i \times B_n$, where $B_m^i \leq B_m$ is the subgroup fixing $i \in \{1, \dots, m\}$ under the homomorphism $B_m \to S_m$.

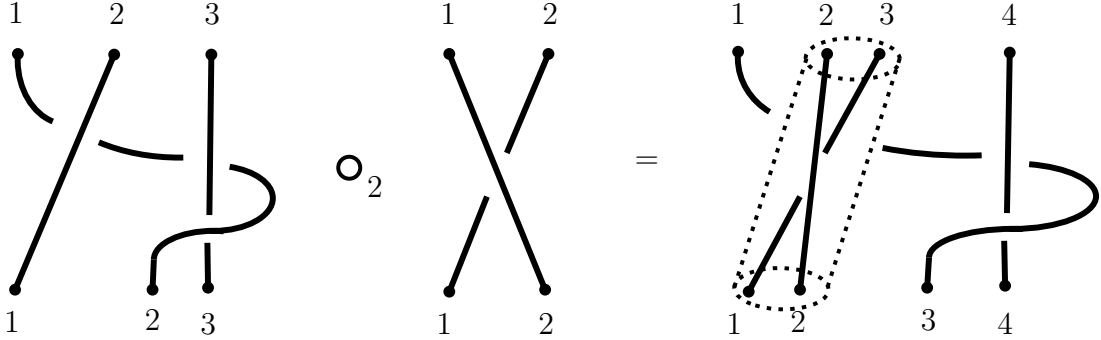
\begin{figure}[h]
\centering

\tikzset{every picture/.style={line width=0.75pt}} 

\begin{tikzpicture}[x=0.75pt,y=0.75pt,yscale=-1,xscale=1]

\draw [line width=2.25]    (80,52) -- (30,170.67) ;
\draw [line width=2.25]    (129,52) -- (128,133) ;
\draw  [draw opacity=0][line width=2.25]  (49.05,85.76) .. controls (38.92,81.78) and (31.46,69.59) .. (31.32,55.13) .. controls (31.31,53.96) and (31.34,52.81) .. (31.43,51.68) -- (56,55) -- cycle ; \draw  [line width=2.25]  (49.05,85.76) .. controls (38.92,81.78) and (31.46,69.59) .. (31.32,55.13) .. controls (31.31,53.96) and (31.34,52.81) .. (31.43,51.68) ;  
\draw  [draw opacity=0][line width=2.25]  (114.86,103.98) .. controls (114.5,103.98) and (114.14,103.99) .. (113.78,103.99) .. controls (98.29,104.09) and (83.95,101.22) .. (72.28,96.25) -- (113.49,61.99) -- cycle ; \draw  [line width=2.25]  (114.86,103.98) .. controls (114.5,103.98) and (114.14,103.99) .. (113.78,103.99) .. controls (98.29,104.09) and (83.95,101.22) .. (72.28,96.25) ;  
\draw  [draw opacity=0][line width=2.25]  (138.21,108.63) .. controls (150.79,110.59) and (160,116.95) .. (160,124.5) .. controls (160,133.61) and (146.57,141) .. (130,141) .. controls (129.4,141) and (128.81,140.99) .. (128.22,140.97) -- (130,124.5) -- cycle ; \draw  [line width=2.25]  (138.21,108.63) .. controls (150.79,110.59) and (160,116.95) .. (160,124.5) .. controls (160,133.61) and (146.57,141) .. (130,141) .. controls (129.4,141) and (128.81,140.99) .. (128.22,140.97) ;  
\draw  [draw opacity=0][line width=2.25]  (100,155.49) .. controls (100,155.49) and (100,155.49) .. (100,155.49) .. controls (100,147.47) and (113.43,140.97) .. (130,140.97) .. controls (131.77,140.97) and (133.51,141.05) .. (135.2,141.19) -- (130,155.49) -- cycle ; \draw  [line width=2.25]  (100,155.49) .. controls (100,155.49) and (100,155.49) .. (100,155.49) .. controls (100,147.47) and (113.43,140.97) .. (130,140.97) .. controls (131.77,140.97) and (133.51,141.05) .. (135.2,141.19) ;  
\draw [line width=2.25]    (127,147) -- (127.33,169.67) ;
\draw [line width=2.25]    (100,155.49) -- (99.33,170.67) ;
\draw [line width=2.25]    (249,51.67) -- (297.33,170.67) ;
\draw [line width=2.25]    (299,51.67) -- (278,101.67) ;
\draw [line width=2.25]    (249,171.67) -- (268,123.67) ;
\draw [line width=1.5]  [dash pattern={on 1.69pt off 2.76pt}]  (429.17,54) -- (392.67,169.67) ;
\draw [line width=2.25]    (531.66,51) -- (530.26,132) ;
\draw  [draw opacity=0][line width=2.25]  (410.52,80.92) .. controls (401.03,75.21) and (394.68,65.35) .. (394.52,54.13) .. controls (394.5,52.92) and (394.56,51.74) .. (394.68,50.57) -- (429.17,54) -- cycle ; \draw  [line width=2.25]  (410.52,80.92) .. controls (401.03,75.21) and (394.68,65.35) .. (394.52,54.13) .. controls (394.5,52.92) and (394.56,51.74) .. (394.68,50.57) ;  
\draw  [draw opacity=0][line width=2.25]  (517.49,102.85) .. controls (515.11,102.93) and (512.71,102.98) .. (510.3,102.99) .. controls (494.99,103.07) and (480.49,101.66) .. (467.53,99.1) -- (509.89,60.99) -- cycle ; \draw  [line width=2.25]  (517.49,102.85) .. controls (515.11,102.93) and (512.71,102.98) .. (510.3,102.99) .. controls (494.99,103.07) and (480.49,101.66) .. (467.53,99.1) ;  
\draw  [draw opacity=0][line width=2.25]  (541.43,107.33) .. controls (560.69,108.85) and (575.19,115.51) .. (575.19,123.5) .. controls (575.19,132.61) and (556.33,140) .. (533.07,140) .. controls (532.47,140) and (531.88,140) .. (531.29,139.99) -- (533.07,123.5) -- cycle ; \draw  [line width=2.25]  (541.43,107.33) .. controls (560.69,108.85) and (575.19,115.51) .. (575.19,123.5) .. controls (575.19,132.61) and (556.33,140) .. (533.07,140) .. controls (532.47,140) and (531.88,140) .. (531.29,139.99) ;  
\draw  [draw opacity=0][line width=2.25]  (490.95,154.49) .. controls (490.95,154.49) and (490.95,154.49) .. (490.95,154.49) .. controls (490.95,146.47) and (509.81,139.97) .. (533.07,139.97) .. controls (534.84,139.97) and (536.59,140.01) .. (538.31,140.08) -- (533.07,154.49) -- cycle ; \draw  [line width=2.25]  (490.95,154.49) .. controls (490.95,154.49) and (490.95,154.49) .. (490.95,154.49) .. controls (490.95,146.47) and (509.81,139.97) .. (533.07,139.97) .. controls (534.84,139.97) and (536.59,140.01) .. (538.31,140.08) ;  
\draw [line width=2.25]    (528.86,146) -- (529.32,168.67) ;
\draw [line width=2.25]    (490.95,154.49) -- (490.01,169.67) ;
\draw [line width=1.5]  [dash pattern={on 1.69pt off 2.76pt}]  (476.17,54) -- (439.67,169.67) ;
\draw  [dash pattern={on 1.69pt off 2.76pt}][line width=1.5]  (437.04,61.77) .. controls (427.08,57.85) and (425.92,51.26) .. (434.46,47.06) .. controls (443,42.86) and (458,42.64) .. (467.96,46.56) .. controls (477.92,50.49) and (479.08,57.07) .. (470.54,61.27) .. controls (462,65.47) and (447,65.7) .. (437.04,61.77) -- cycle ;
\draw  [dash pattern={on 1.69pt off 2.76pt}][line width=1.5]  (401.04,177.77) .. controls (391.08,173.85) and (389.92,167.26) .. (398.46,163.06) .. controls (407,158.86) and (422,158.64) .. (431.96,162.56) .. controls (441.92,166.49) and (443.08,173.07) .. (434.54,177.27) .. controls (426,181.47) and (411,181.7) .. (401.04,177.77) -- cycle ;
\draw [line width=2.25]    (439.56,51.67) -- (425.89,170.67) ;
\draw [line width=2.25]    (466,51.67) -- (438.41,101.67) ;
\draw [line width=2.25]    (400,171.67) -- (425.87,123.67) ;
\draw  [fill={rgb, 255:red, 0; green, 0; blue, 0 }  ,fill opacity=1 ] (529.19,51) .. controls (529.19,49.63) and (530.3,48.52) .. (531.66,48.52) .. controls (533.03,48.52) and (534.14,49.63) .. (534.14,51) .. controls (534.14,52.37) and (533.03,53.48) .. (531.66,53.48) .. controls (530.3,53.48) and (529.19,52.37) .. (529.19,51) -- cycle ;
\draw  [fill={rgb, 255:red, 0; green, 0; blue, 0 }  ,fill opacity=1 ] (28.95,51.68) .. controls (28.95,50.31) and (30.06,49.2) .. (31.43,49.2) .. controls (32.79,49.2) and (33.9,50.31) .. (33.9,51.68) .. controls (33.9,53.05) and (32.79,54.15) .. (31.43,54.15) .. controls (30.06,54.15) and (28.95,53.05) .. (28.95,51.68) -- cycle ;
\draw  [fill={rgb, 255:red, 0; green, 0; blue, 0 }  ,fill opacity=1 ] (77.52,52) .. controls (77.52,50.63) and (78.63,49.52) .. (80,49.52) .. controls (81.37,49.52) and (82.48,50.63) .. (82.48,52) .. controls (82.48,53.37) and (81.37,54.48) .. (80,54.48) .. controls (78.63,54.48) and (77.52,53.37) .. (77.52,52) -- cycle ;
\draw  [fill={rgb, 255:red, 0; green, 0; blue, 0 }  ,fill opacity=1 ] (96.86,170.67) .. controls (96.86,169.3) and (97.97,168.19) .. (99.33,168.19) .. controls (100.7,168.19) and (101.81,169.3) .. (101.81,170.67) .. controls (101.81,172.03) and (100.7,173.14) .. (99.33,173.14) .. controls (97.97,173.14) and (96.86,172.03) .. (96.86,170.67) -- cycle ;
\draw  [fill={rgb, 255:red, 0; green, 0; blue, 0 }  ,fill opacity=1 ] (27.52,170.67) .. controls (27.52,169.3) and (28.63,168.19) .. (30,168.19) .. controls (31.37,168.19) and (32.48,169.3) .. (32.48,170.67) .. controls (32.48,172.03) and (31.37,173.14) .. (30,173.14) .. controls (28.63,173.14) and (27.52,172.03) .. (27.52,170.67) -- cycle ;
\draw  [fill={rgb, 255:red, 0; green, 0; blue, 0 }  ,fill opacity=1 ] (126.52,52) .. controls (126.52,50.63) and (127.63,49.52) .. (129,49.52) .. controls (130.37,49.52) and (131.48,50.63) .. (131.48,52) .. controls (131.48,53.37) and (130.37,54.48) .. (129,54.48) .. controls (127.63,54.48) and (126.52,53.37) .. (126.52,52) -- cycle ;
\draw  [fill={rgb, 255:red, 0; green, 0; blue, 0 }  ,fill opacity=1 ] (124.86,169.67) .. controls (124.86,168.3) and (125.97,167.19) .. (127.33,167.19) .. controls (128.7,167.19) and (129.81,168.3) .. (129.81,169.67) .. controls (129.81,171.03) and (128.7,172.14) .. (127.33,172.14) .. controls (125.97,172.14) and (124.86,171.03) .. (124.86,169.67) -- cycle ;
\draw  [fill={rgb, 255:red, 0; green, 0; blue, 0 }  ,fill opacity=1 ] (526.85,168.67) .. controls (526.85,167.3) and (527.96,166.19) .. (529.32,166.19) .. controls (530.69,166.19) and (531.8,167.3) .. (531.8,168.67) .. controls (531.8,170.03) and (530.69,171.14) .. (529.32,171.14) .. controls (527.96,171.14) and (526.85,170.03) .. (526.85,168.67) -- cycle ;
\draw  [fill={rgb, 255:red, 0; green, 0; blue, 0 }  ,fill opacity=1 ] (487.54,169.67) .. controls (487.54,168.3) and (488.65,167.19) .. (490.01,167.19) .. controls (491.38,167.19) and (492.49,168.3) .. (492.49,169.67) .. controls (492.49,171.03) and (491.38,172.14) .. (490.01,172.14) .. controls (488.65,172.14) and (487.54,171.03) .. (487.54,169.67) -- cycle ;
\draw  [fill={rgb, 255:red, 0; green, 0; blue, 0 }  ,fill opacity=1 ] (437.08,51.67) .. controls (437.08,50.3) and (438.19,49.19) .. (439.56,49.19) .. controls (440.92,49.19) and (442.03,50.3) .. (442.03,51.67) .. controls (442.03,53.03) and (440.92,54.14) .. (439.56,54.14) .. controls (438.19,54.14) and (437.08,53.03) .. (437.08,51.67) -- cycle ;
\draw  [fill={rgb, 255:red, 0; green, 0; blue, 0 }  ,fill opacity=1 ] (463.52,51.67) .. controls (463.52,50.3) and (464.63,49.19) .. (466,49.19) .. controls (467.37,49.19) and (468.48,50.3) .. (468.48,51.67) .. controls (468.48,53.03) and (467.37,54.14) .. (466,54.14) .. controls (464.63,54.14) and (463.52,53.03) .. (463.52,51.67) -- cycle ;
\draw  [fill={rgb, 255:red, 0; green, 0; blue, 0 }  ,fill opacity=1 ] (423.42,170.67) .. controls (423.42,169.3) and (424.53,168.19) .. (425.89,168.19) .. controls (427.26,168.19) and (428.37,169.3) .. (428.37,170.67) .. controls (428.37,172.03) and (427.26,173.14) .. (425.89,173.14) .. controls (424.53,173.14) and (423.42,172.03) .. (423.42,170.67) -- cycle ;
\draw  [fill={rgb, 255:red, 0; green, 0; blue, 0 }  ,fill opacity=1 ] (397.52,171.67) .. controls (397.52,170.3) and (398.63,169.19) .. (400,169.19) .. controls (401.37,169.19) and (402.48,170.3) .. (402.48,171.67) .. controls (402.48,173.03) and (401.37,174.14) .. (400,174.14) .. controls (398.63,174.14) and (397.52,173.03) .. (397.52,171.67) -- cycle ;
\draw  [fill={rgb, 255:red, 0; green, 0; blue, 0 }  ,fill opacity=1 ] (392.2,50.57) .. controls (392.2,49.2) and (393.31,48.09) .. (394.68,48.09) .. controls (396.05,48.09) and (397.15,49.2) .. (397.15,50.57) .. controls (397.15,51.93) and (396.05,53.04) .. (394.68,53.04) .. controls (393.31,53.04) and (392.2,51.93) .. (392.2,50.57) -- cycle ;
\draw  [fill={rgb, 255:red, 0; green, 0; blue, 0 }  ,fill opacity=1 ] (294.86,170.67) .. controls (294.86,169.3) and (295.97,168.19) .. (297.33,168.19) .. controls (298.7,168.19) and (299.81,169.3) .. (299.81,170.67) .. controls (299.81,172.03) and (298.7,173.14) .. (297.33,173.14) .. controls (295.97,173.14) and (294.86,172.03) .. (294.86,170.67) -- cycle ;
\draw  [fill={rgb, 255:red, 0; green, 0; blue, 0 }  ,fill opacity=1 ] (246.52,171.67) .. controls (246.52,170.3) and (247.63,169.19) .. (249,169.19) .. controls (250.37,169.19) and (251.48,170.3) .. (251.48,171.67) .. controls (251.48,173.03) and (250.37,174.14) .. (249,174.14) .. controls (247.63,174.14) and (246.52,173.03) .. (246.52,171.67) -- cycle ;
\draw  [fill={rgb, 255:red, 0; green, 0; blue, 0 }  ,fill opacity=1 ] (296.52,51.67) .. controls (296.52,50.3) and (297.63,49.19) .. (299,49.19) .. controls (300.37,49.19) and (301.48,50.3) .. (301.48,51.67) .. controls (301.48,53.03) and (300.37,54.14) .. (299,54.14) .. controls (297.63,54.14) and (296.52,53.03) .. (296.52,51.67) -- cycle ;
\draw  [fill={rgb, 255:red, 0; green, 0; blue, 0 }  ,fill opacity=1 ] (246.52,51.67) .. controls (246.52,50.3) and (247.63,49.19) .. (249,49.19) .. controls (250.37,49.19) and (251.48,50.3) .. (251.48,51.67) .. controls (251.48,53.03) and (250.37,54.14) .. (249,54.14) .. controls (247.63,54.14) and (246.52,53.03) .. (246.52,51.67) -- cycle ;
\draw  [line width=1.5]  (193,109.11) .. controls (193,106.05) and (195.48,103.57) .. (198.54,103.57) .. controls (201.6,103.57) and (204.08,106.05) .. (204.08,109.11) .. controls (204.08,112.17) and (201.6,114.65) .. (198.54,114.65) .. controls (195.48,114.65) and (193,112.17) .. (193,109.11) -- cycle ;

\draw (206.08,112.11) node [anchor=north west][inner sep=0.75pt]   [align=left] {2};
\draw (341,102) node [anchor=north west][inner sep=0.75pt]   [align=left] {=};
\draw (25,24) node [anchor=north west][inner sep=0.75pt]   [align=left] {1};
\draw (76,25) node [anchor=north west][inner sep=0.75pt]   [align=left] {2};
\draw (124,25) node [anchor=north west][inner sep=0.75pt]   [align=left] {3};
\draw (243,25) node [anchor=north west][inner sep=0.75pt]   [align=left] {1};
\draw (294.55,25) node [anchor=north west][inner sep=0.75pt]   [align=left] {2};
\draw (389.25,25) node [anchor=north west][inner sep=0.75pt]   [align=left] {1};
\draw (433.65,25.07) node [anchor=north west][inner sep=0.75pt]   [align=left] {2};
\draw (464.25,25) node [anchor=north west][inner sep=0.75pt]   [align=left] {3};
\draw (526.05,25.27) node [anchor=north west][inner sep=0.75pt]   [align=left] {4};
\draw (23,180) node [anchor=north west][inner sep=0.75pt]   [align=left] {1};
\draw (94,179) node [anchor=north west][inner sep=0.75pt]   [align=left] {2};
\draw (122,181) node [anchor=north west][inner sep=0.75pt]   [align=left] {3};
\draw (241,181) node [anchor=north west][inner sep=0.75pt]   [align=left] {1};
\draw (292.55,181) node [anchor=north west][inner sep=0.75pt]   [align=left] {2};
\draw (387.25,181) node [anchor=north west][inner sep=0.75pt]   [align=left] {1};
\draw (419.65,180.07) node [anchor=north west][inner sep=0.75pt]   [align=left] {2};
\draw (485.25,179) node [anchor=north west][inner sep=0.75pt]   [align=left] {3};
\draw (524.05,181.27) node [anchor=north west][inner sep=0.75pt]   [align=left] {4};

\end{tikzpicture}

\caption{Cabling a 2-strand braid onto the second strand (with strands read from the top) in a 3-strand braid.}
\end{figure}

This cabling construction motivates and underlies Fiedorowicz's notion of a braided operad in \cite{fiedorowicz}. We will adapt and extend that definition here.  Let $(\bD, \otimes, \one)$ be a symmetric\footnote{It may be meaningfully possible to extend these constructions to a braided monoidal category; we will not need such an elaboration.} monoidal category.  Our main examples in this paper will be the categories of sets and $k$-vector spaces, with their usual monoidal structures given by product and tensor product, although much of this is valid in categories of topological spaces, spectra, or chain complexes.  Subsequently, we will work in the categories of pro-objects in these categories.  In any case, we will require that $\otimes$ commutes with colimits in $\bD$. 

\begin{defn} \label{brop_defn}

A \emph{braided sequence} in $\bD$ is a sequence of objects $\CC = \{\CC(n), n \in \Z_{\geq 0}\}$, equipped with a right action of the braid group $B_n$ on $\CC(n)$ for each $n$.  The collection $\CC$ is a \emph{braided operad} if it is equipped with structure maps
$$\circ_i: \CC(m) \otimes \CC(n) \to \CC(m+n-1), \mbox{ where } 1 \leq i\leq m.$$
These are required to satisfy the axioms:

\begin{enumerate}

\item (Associativity) For each $m$, $n$, and $p$, and indices $1 \leq j \leq m$ and $1 \leq i \leq m+n-1$, there is an equality of the map 
$$\circ_i(\circ_j \otimes \id_{\CC(p)}): \CC(m) \otimes \CC(n) \otimes \CC(p) \to \CC(m+n+p-2)$$
with the following, depending upon the values of $i$ and $j$:
\begin{itemize}
\item $\circ_{j+p-1}(\circ_i \otimes \id_{\CC(n)}) (\id_{\CC(m)} \otimes \tau)$, if $1 \leq i \leq j-1$.
\item $\circ_j(\id_{\CC(m)} \otimes \circ_{i-j+1})$, if $j \leq i \leq j+n-1$.
\item $\circ_j(\circ_{i-n+1} \otimes \id_{n}) (\id_{\CC(m)} \otimes \tau)$, if $j+n \leq i$.
\end{itemize}
Here $\tau$ is the symmetry of $\otimes$ in $\bD$.

\item (Equivariance) If $\gamma \in B_m$ and $\rho \in B_n$, there is an equality of maps $\CC(m) \otimes \CC(n) \to \CC(m+n-1)$:
$$\circ_i (\gamma \otimes \rho) = (\gamma \circ_i \rho) \circ_{\sigma(i)}$$
where $\sigma(i)$ is defined via the action $B_n \to S_n$ on the set $\{1, \dots, n\}$, and $\gamma \circ_i \rho \in B_{m+n-1}$ is gotten by cabling.

\end{enumerate}

Further, $\CC$ is said to be \emph{unital} if there is a map $1:\one \to \CC(1)$ with the property that the diagrams
$$\xymatrix{
\CC(n) = \CC(n) \otimes \one \ar[dr]_-{\mathrm{id}} \ar[r]^-{\mathrm{id} \otimes 1} & \CC(n) \otimes \CC(1) \ar[d]^-{\circ_i} & \CC(1) \otimes \CC(n) \ar[d]_-{\circ_1} & \ar[dl]^-{\mathrm{id}} \one \otimes \CC(n) = \CC(n) \ar[l]_-{1 \otimes \mathrm{id}} \\
 & \CC(n) & \CC(n) & }$$
commute for every $i$.  

We\footnote{The wealth of adjectives in the literature associated to properties of $\CC(0)$ and $\CC(1)$ is both extensive and inconsistent; see Remark 0.5 of \cite{may-zhang-zou} for a partial summary.  Our use of ``reduced" is the same as in that reference, whereas ``unitary" originates in \cite{fresse}.  Algebras for unitary operads come equipped with a unit for the operations defined by the operad; see section \ref{op_alg_section}.} say that $\CC$ is \emph{reduced} if $\CC(0) = 0$, and that $\CC$ is \emph{unitary} if $\CC(0) = \one$. 

\end{defn}

One is meant to regard $\CC(n)$ as an abstract family of $n$-ary operations, and the operation $\circ_i: \CC(m) \otimes \CC(n) \to \CC(m+n-1)$ as inserting the second $n$-ary factor into the $i\nth$ input of the first $m$-ary factor.  The above is a braided adaptation of Markl's definition \cite{markl} of a ``pseudo-operad," rather than May's original definition \cite{may} (which would give precisely Fiedorowicz's definition of a braided operad).  These are equivalent if the operad is unital, and Markl's definition is slightly more general if not.  In particular, the composite of all of the $\circ_i$ defines a family of maps
$$\gamma: \CC(m) \otimes \left( \CC(n_1) \otimes \dots \otimes \CC(n_m)\right) \to \CC(n_1 + \dots + n_m)$$
which forms the basis of May's definition of an operad.

Braided operads in $\bD$ form a well-structured category which we will denote by $\BrOp(\bD)$.  Some basic notions:

\begin{itemize}

\item A \emph{morphism} of braided operads $\varphi: \CC \to \DD$ is collection of morphisms 
$$\varphi(n): \CC(n) \to \DD(n)$$
in $\bD$ which are $B_n$-equivariant, preserve $\circ_i$, and preserve the unit (if present). 

\item A \emph{suboperad} $\CC \leq \DD$ is a morphism $\varphi: \CC \to \DD$ which is monic on each $\CC(n)$ (presuming that $\bD$ is a category in which this notion makes sense; e.g., is abelian).  In concrete categories $\bD$, $\CC(n) \leq \DD(n)$ is simply a collection of subobjects in $\bD$ which are closed under the action of $B_n$ and all of the operad structure maps.  

\item A \emph{left} (resp. \emph{right}) \emph{module} $\II$ for an operad $\CC$ is a collection of $B_n$-objects $\II(n)$ and maps
$$\circ_i: \CC(m) \otimes \II(n) \to \II(m+n-1)\; \mbox{ (resp. } \circ_i: \II(m) \otimes \CC(n) \to \II(m+n-1))$$
for $i=1, \dots, m$, satisfying the same associativity and equivariance constraints as for operads (with suitable replacements of $\CC$ by $\II$).  

\item Further, $\II$ is a left (resp. right) \emph{ideal} if it is equipped with monic $B_n$-maps $\varphi(n): \II(n) \to \CC(n)$ which throw the above structure maps (relating $\CC$ and $\II$) onto the structure maps for $\CC$.  Equivalently, $\II(n)$ is closed under the action of $B_n$ on $\CC(n)$, and the operad structure maps of $\CC$ restrict to those on $\II$.  Further, $\II$ is a \emph{two-sided ideal} if these structure maps make it simultaneously a left and right ideal.

\item Lastly, if $\bD$ is an abelian category (so we can talk about quotient objects), for a left (resp. right) ideal $\II$ in $\CC$, we may define a \emph{quotient module} $\CC/\II$ by 
$$(\CC/\II)(n) := \CC(n) / \II(n),$$
with structure maps descended from $\CC$; this is a left (resp. right) module for $\CC$.  If $\II$ is a two-sided ideal, $\CC/\II$ is an operad in its own right, and the quotient map $\CC \to \CC/\II$ is a map of operads.

\end{itemize}

\begin{rem} It is possible to equip the category of braided sequences with the structure of a monoidal category; we will explore this in future work, \cite{braided_operads}.  There is a formulation of these notions wherein $\CC$ becomes a monoid with respect to this tensor product, and $\II$ is a module for this monoid.  However, the monoidal structure on this category is very far from being symmetric (or even braided), reflecting the fact that composition of functors is not commutative.  %

This makes the difference between left and right modules very substantial.  In particular, elements of left ideals may be substituted into elements of an operad to return elements of the ideal; on the other hand elements of the operad may be substituted into elements of right ideals. In this paper, we will entirely focus on \emph{left} ideals and modules. \end{rem}

\begin{defn} \label{unit_defn} If $\CC$ is a reduced operad with $\CC(1) = 0$, then we define $\CC_*$ to be the braided sequence
$$\CC_*(n) = \left\{\begin{array}{ll} \CC(n), & n \neq 1 \\ \one, & n=1. \end{array} \right.$$ 
This becomes a unital operad with unit given by the identity map on $\CC_*(1)$.  The natural map $\CC \to \CC_*$ is one of operads.

In contrast, if $\CC$ is any operad and $m\geq 0$, write $\CC_{>m}$ for the braided sequence
$$\CC_{>m}(n) = \left\{\begin{array}{ll} \CC(n), & n > m \\ 0, & n \leq m. \end{array} \right.$$
Restriction from $\CC$ yields an operad structure on $\CC_{>m}$; if $m \geq 0$, the result is reduced.

\end{defn}

Note that if $\CC$ is a nonunital suboperad of a unital operad $\DD$, $\CC_*$ is the suboperad generated by $\CC$ and $\DD(1)$.

\begin{exmps} Several braided analogues of familiar symmetric operads; for the first three, the ambient category is $\bD = \Vect_k$:
\begin{enumerate} 

\item The \emph{braided associative operad} $\BrAss$ has $\BrAss(n) = k[B_n]$ for $n \geq 0$, with structure maps given by cabling; it is unital and unitary.  Write $\mu \in \BrAss(2)$ for the class of the unit in $B_2$; we'll see in Proposition \ref{assoc_prop} that $\mu$ is an associative, unital operation.

\item The \emph{braided Lie operad} is the reduced, nonunital suboperad $\BrLie$ of $\BrAss$ generated by the operation 
$$[-,-]_{\sigma} := \mu - \mu \circ \sigma.$$

\item The \emph{braided commutative operad} has $\BrCom(n) = k$ for every $n \geq 0$, with the trivial $B_n$ action.  It is unital and unitary, and is the quotient of $\BrAss$ by the operadic ideal generated by $\BrLie$.

\item If $X$ is an object of a braided monoidal category $\bC$, the \emph{endomorphism operad} $\End_X$ is defined by
$$\End_X(n) := \Hom_{\bC}(X^{\otimes n}, X).$$
The $n\nth$ braid group acts on $\End_X(n)$ by precomposition, and the structure map $\circ_i$ comes from substitution of $g$ into the $i\nth$ tensor factor:
$$f \circ_i g = f \circ (\id_X \otimes \dots \otimes \id_X \otimes g \otimes \id_X \otimes \dots \otimes \id_X).$$
If $\bC$ is enriched over a symmetric monoidal category $\bD$, then this is a braided operad in $\bD$.  It is unital (via $\id_X \in \End_X(1)$), and rarely\footnote{For instance, if $\bC$ is a fusion category over an algebraically closed field, then $\End_X$ is unitary precisely when $X$ splits off a single factor of the tensor unit.} unitary.

\end{enumerate}

\end{exmps}

A braided operad $\DD$ is \emph{symmetric} if the $B_n$ action on $\DD(n)$ factors through $S_n$.  The \emph{symmetrization} of a braided operad $\CC$ is the symmetric operad given by the coinvariants of the pure braid group action, $\CC(n)_{PB_n}$.  The symmetrizations of the the braided associative, Lie, and commutative operads are the familiar associative, Lie, and commutative operads.  

If $\bD$ is tensored over simplicial sets, we note that a variant of symmetrization given by the \emph{homotopy orbits} $\CC(n)_{hPB_n}$ under the pure braid group action produces a symmetric operad with a map to the little disks operad $E_2$.  It seems likely that such an association defines an equivalence between the category of braided operads in $\bD$ and that of symmetric operads in $\bD$ fibering over $E_2$, given a suitable model structure on these categories.  For the purposes of this paper, such a formulation will not be relevant, since our focus is on how these operads produce operations on objects in braided monoidal categories.

\subsection{Algebras for braided operads} \label{op_alg_section}

Let $\CC$ be a braided operad in a symmetric monoidal category $\bD$, and let $\bC$ be a braided monoidal category, enriched and tensored over $\bD$.  While some of the definitions and constructions in this section make sense in this generality, for the rest of section \ref{br_op_sec} we will also assume that $\bC$ and $\bD$ are abelian and $k$-linear.

A \emph{$\CC$-algebra} is an object $A$ in $\bC$, equipped with a map of braided operads $\CC \to \End_A$. This amounts to a collection of structure maps 
$$\theta: \CC(n) \otimes_{B_n} A^{\otimes n} \to A$$
in $\bC$ which are associative with respect to $\CC$'s structure maps.  The algebra $A$ is said to be \emph{abelian} if all non-unit-multiple structure maps are 0. 

\begin{prop} \label{assoc_prop} The data of an algebra over $\BrAss$ in $\bC$ is the same as that of an associative algebra object $A \in \bC$ with unit. \end{prop}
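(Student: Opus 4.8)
The plan is to establish a bijection between $\BrAss$-algebra structures on $A$ and unital associative algebra structures on $A$, by exploiting the fact that $\BrAss(n) = k[B_n]$ is the ``free'' object on a single associative operation. First I would unwind what a map of braided operads $\BrAss \to \End_A$ amounts to. Since $\BrAss(n) = k[B_n]$ and the $B_n$-action is by right multiplication, a $B_n$-equivariant map $\BrAss(n) \to \End_A(n)$ is determined by where it sends the unit $1 \in B_n$; equivalently, the structure map $\theta_n \colon \BrAss(n) \otimes_{B_n} A^{\otimes n} \to A$ is determined by its restriction $\theta_n(1 \otimes -) \colon A^{\otimes n} \to A$, since $k[B_n] \otimes_{B_n} A^{\otimes n} \cong A^{\otimes n}$. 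Thus the data of a $\BrAss$-algebra structure reduces to a sequence of maps $m_n \colon A^{\otimes n} \to A$ (for all $n \geq 0$), with $m_0 \colon \one \to A$ a unit and $m_1 = \id_A$ forced by unitality of the operad.

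Next I would identify the constraints imposed by the operad axioms. The key observation is that $k[B_n]$ is generated as an operad (in the pointed sense, with the distinguished element $1 \in B_2 = \BrAss(2)$ playing the role of $\mu$) by $\mu$ alone: every element of $B_n$ is a product of the $\sigma_i$, and under cabling the iterated $\circ_i$-compositions of $\mu$ already hit a $k$-basis of $k[B_n]$ once one also allows the $B_n$-action — indeed $\mu \circ_1 \mu \circ_1 \cdots \colon \BrAss(2)^{\otimes (n-1)} \to \BrAss(n)$ lands on the identity braid, and its $B_n$-translates span $k[B_n]$. Consequently a $\BrAss$-algebra structure is determined by $m_2 \colon A \otimes A \to A$, and the associativity axiom of Definition \ref{brop_defn} applied to $\mu \circ_1 \mu$ versus $\mu \circ_2 \mu$ forces $m_2(m_2 \otimes \id) = m_2(\id \otimes m_2)$; the unitality diagrams force $m_2(m_0 \otimes \id) = \id = m_2(\id \otimes m_0)$. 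This gives one direction: a $\BrAss$-algebra yields a unital associative algebra $(A, m_2, m_0)$.

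For the converse, given a unital associative $(A, m, \eta)$ I would define $\theta_n \colon k[B_n] \otimes_{B_n} A^{\otimes n} \to A$ by sending the class of $1 \otimes (a_1 \otimes \cdots \otimes a_n)$ to the iterated product $m^{(n-1)}(a_1, \dots, a_n)$ (well-defined by associativity), and $1 \in B_0$ to $\eta$. One must check this is well-defined on the coinvariants — but here there is genuinely something to verify, because $B_n$ does not act trivially on $A^{\otimes n}$; rather it acts through the braiding $\sigma$ on $A$, which in a general braided monoidal category need not be symmetric. Well-definedness on coinvariants requires $\theta_n(\gamma \cdot \xi \otimes v) = \theta_n(\xi \otimes \gamma^{-1} v)$ — and this is exactly the compatibility between $m$ being an algebra map for the braided tensor product $\mu_{A \otimes A} = (\mu_A \otimes \mu_A)(\id \otimes \sigma \otimes \id)$ from Section \ref{br_Hopf_sec} and the $B_n$-action. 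I would verify the generating case $\gamma = \sigma_i$: the claim reduces to $m^{(n-1)} \circ \sigma_i$ (acting in position $i$) equalling $m^{(n-1)}$ after the $\sigma_i$ is absorbed into $A^{\otimes n}$, which is automatic since we are quotienting by $B_n$; the content is that the resulting map is still $B_n$-equivariant, which follows because $\theta_n$ factors through $k[B_n] \otimes_{B_n} A^{\otimes n}$ by construction. Finally I would check the operad axioms (associativity of the $\circ_i$ and equivariance) hold for this $\theta$ — these follow mechanically from associativity of $m$ and the definition of cabling — and that the two constructions are mutually inverse.

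\textbf{Main obstacle.} The delicate point is the well-definedness of $\theta_n$ on the $B_n$-coinvariants $k[B_n] \otimes_{B_n} A^{\otimes n}$ in the braided (non-symmetric) setting: one has to confirm that quotienting $A^{\otimes n}$ by the braid-group action, where the action is via $\sigma$ rather than a flip, does not collapse the iterated multiplication map, i.e. that $m^{(n-1)}$ descends consistently. This is where the hypothesis that $\mu$ is compatible with the braided product on $A \otimes A$ (equivalently, that $\Delta$ in the bialgebra setting, or here just the algebra structure) does the work, and it is the step I would write out most carefully; the operad-axiom bookkeeping, while tedious, is routine.
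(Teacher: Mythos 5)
Your construction is the same as the paper's: the multiplication is the image of the identity braid $\mu = 1 \in B_2 \subseteq \BrAss(2)$, associativity comes from $\mu \circ_1 \mu = \mu \circ_2 \mu$ (cabling identity braids yields the identity braid), the unit comes from $\BrAss(0)$, and the converse sends $\gamma \in B_n$ to ``act by $\gamma$ via the braiding, then multiply.'' Your added observation that $\BrAss$ is generated by $\mu$ and the nullary class, so that the two constructions are mutually inverse, is a worthwhile point the paper leaves implicit.

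However, your ``main obstacle'' is misdiagnosed, and the justification you attach to it is incorrect. Well-definedness of $\theta_n$ on $k[B_n] \otimes_{B_n} A^{\otimes n}$ is automatic: $k[B_n]$ is free of rank one as a right $B_n$-module, so this balanced tensor product is canonically $A^{\otimes n}$, and the assignment $\gamma \otimes v \mapsto m^{(n-1)}(\gamma \cdot v)$ is balanced simply because the braiding genuinely defines a $B_n$-action on $A^{\otimes n}$ (the braid relation for $\sigma$). No compatibility between $m$ and the braided product $\mu_{A \otimes A}$ on $A \otimes A$ enters; indeed the compatibility you invoke --- that $m$ be an algebra map for $\mu_{A \otimes A}$ --- amounts to braided commutativity of $A$ and fails for general associative algebra objects, so if it were genuinely needed the proposition would be false as stated. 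The step that actually deserves care in the converse (and which both you and the paper pass over quickly) is the compatibility of ``braid, then multiply'' with the operadic compositions, i.e.\ that $(m^{(m-1)} \circ \gamma) \circ_i (m^{(n-1)} \circ \rho) = m^{(m+n-2)} \circ (\gamma \circ_i \rho)$ in $\End_A$; this uses associativity of $m$ together with naturality of the braiding (so that the cabled braid acts on $A^{\otimes m+n-1}$ compatibly with collapsing the $i$\nth{} block to a single copy of $A$). With that check substituted for your coinvariance worry, your argument is fine and agrees with the paper's proof.
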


\begin{proof} To see this, take $A$ to be a $\BrAss$-algebra; multiplication $\mu: A \otimes A \to A$ is given by the unit $\mu = 1 \in B_2 \subseteq \BrAss(2)$.  Cabling $\mu$ with itself gives the unit in $B_3$, regardless of the order: $\mu \circ_1 \mu = \mu \circ_2 \mu$.  This ensures that $\mu$ is associative.  The unit for the multiplication is given as the image of $1 \in \BrAss(0)$ in $A$ under the structure map 
$$\one = k \otimes \one = \BrAss(0) \otimes_{B_0} A^{\otimes 0} \to A$$

Conversely, if $A$ is a unital, associative algebra, then one may give $A$ a $\BrAss$-algebra structure: for any $\gamma \in B_n$, define $\gamma: A^{\otimes n} \to A$ by acting on $n$-fold tensors according to the braiding in the category $\bC$, and then multiplying the result together.

\end{proof}

Similarly, an algebra over $\BrCom$ is a \emph{braided commutative}, associative algebra with unit.  That is, the multiplication $\mu$ on $A$ satisfies $\mu = \mu \circ \sigma$.

These examples highlight the fact that algebras for unitary operads are equipped with elements that serve as units for the operations that the operad defines.  In contrast, algebras for the reduced forms $\CC_{>0}$ of these operads should be thought of as $\CC$-algebras which are not required to have such a unit.

\begin{defn} \label{free_obj_defn}

If $\CC$ is a braided sequence in $\bD$, the \emph{Artin functor\footnote{We thank Dan Petersen for the suggestion of this terminology, which extends the notion of a Schur functor for symmetric sequences to the braided context.}} $\CC[-]: \bC \to \bC$ that $\CC$ defines is given by 
\beqn \CC [V] := \bigoplus_{n=0}^\infty \CC(n) \otimes_{B_n} V^{\otimes n}. \label{free_obj_eqn} \eeqn
where $V$ an object in $\bC$.

\end{defn}

\begin{prop}

If $\CC$ is a unital braided operad, $\CC[V]$ is the free $\CC$-algebra generated by $V$.  That is, for $\CC$-algebras $A$ in $\bC$, there is a natural bijection
$$\Hom_{\Alg_{\CC}(\bC)}(\CC[V], A) = \Hom_{\bC}(V, A)$$

\end{prop}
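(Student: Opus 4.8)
The plan is to establish the adjunction $\CC[-] \dashv (\text{forgetful})$ by constructing the unit and counit and checking the triangle identities, or equivalently by directly producing the natural bijection and its inverse. First I would observe that $V$ includes into $\CC[V]$ via the summand $\CC(1) \otimes_{B_1} V^{\otimes 1}$, using the operad unit $1: \one \to \CC(1)$; call this map $\iota_V: V \to \CC[V]$. This is the candidate unit of the adjunction. Given a $\CC$-algebra $A$ with structure maps $\theta_n: \CC(n) \otimes_{B_n} A^{\otimes n} \to A$ and a morphism $f: V \to A$ in $\bC$, the plan is to define $\tilde f: \CC[V] \to A$ on the $n$\nth{} summand as the composite
$$\CC(n) \otimes_{B_n} V^{\otimes n} \xrightarrow{\id \otimes f^{\otimes n}} \CC(n) \otimes_{B_n} A^{\otimes n} \xrightarrow{\theta_n} A.$$
One must check $f^{\otimes n}$ descends to the $B_n$-coinvariants: this holds because $f$ is a morphism of braided vector spaces, so $f^{\otimes n}$ intertwines the $B_n$-actions on $V^{\otimes n}$ and $A^{\otimes n}$ coming from the braidings. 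Then $\tilde f \circ \iota_V = \theta_1 \circ (1 \otimes f) = f$ by the left-unit axiom for the $\CC$-algebra structure (the operad unit acts as the identity operation).

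Next I would verify that $\tilde f$ is a map of $\CC$-algebras, i.e.\ that it commutes with the structure maps. This is the heart of the argument: one unwinds the $\CC$-algebra structure on $\CC[V]$ (which is built from the operad composition maps $\circ_i$, or equivalently the $\gamma$ maps of May's definition) and checks that applying a $\CC$-operation to elements of $\CC[V]$ and then mapping to $A$ agrees with mapping to $A$ first and then applying the operation. This reduces, via the associativity axiom for the operad, to a bookkeeping identity: an operation $c \in \CC(m)$ applied to elements $c_j \otimes \underline{v_j}$ of $\CC[V]$ (with $c_j \in \CC(n_j)$, $\underline{v_j} \in V^{\otimes n_j}$) yields $\gamma(c; c_1, \dots, c_m) \otimes (\underline{v_1} \otimes \cdots \otimes \underline{v_m})$, and applying $\theta$ after $f^{\otimes(\sum n_j)}$ gives the same as $\theta_m(c; \tilde f(c_1 \otimes \underline{v_1}), \dots)$ by the associativity of the $\CC$-algebra structure on $A$. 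The symmetry $\tau$ of $\bD$ enters in keeping track of how $\CC(n)$-factors and $V$-tensor-factors interleave, but since $\bD$ is symmetric this is harmless.

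Finally I would check the two directions are mutually inverse. Given $g: \CC[V] \to A$ a map of $\CC$-algebras, restricting along $\iota_V$ gives $g \circ \iota_V: V \to A$, and I claim $\widetilde{g \circ \iota_V} = g$: both are $\CC$-algebra maps, and since $\CC[V]$ is generated as a $\CC$-algebra by the image of $\iota_V$ — because every element of $\CC(n) \otimes_{B_n} V^{\otimes n}$ is obtained by applying the $n$\nth{} structure operation to $n$ elements of $\iota_V(V)$, using the operad unit to produce a presentation — a $\CC$-algebra map out of $\CC[V]$ is determined by its restriction to $\iota_V(V)$. Conversely $\widetilde{(-)}$ followed by restriction recovers $f$ as checked above, and naturality in both $V$ and $A$ is routine. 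The main obstacle I anticipate is the verification that $\tilde f$ respects the $\CC$-algebra structure: it requires setting up carefully what the $\CC$-algebra structure on the free object $\CC[V]$ actually is (in terms of the $\circ_i$ or $\gamma$) and then invoking the operad associativity axioms of Definition \ref{brop_defn} in precisely the right pattern, with the interleaving symmetries $\tau$ tracked correctly; everything else is formal.
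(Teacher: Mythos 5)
Your proof is correct and takes essentially the same approach as the paper, which merely records the $\gamma$-induced structure maps on $\CC[V]$ (the maps in (\ref{free_structure_eqn})) and leaves this standard free-algebra verification implicit; your argument fills in exactly those details (unit $\iota_V$, extension $\tilde f = \theta \circ (\id \otimes f^{\otimes n})$, compatibility via operad associativity, uniqueness via generation by $\iota_V(V)$ using the unit axiom). One small correction of phrasing: the $B_n$-equivariance of $f^{\otimes n}$ holds for \emph{any} morphism $f\colon V \to A$ in $\bC$ by naturality of the braiding in the braided monoidal category, not because $f$ is a ``morphism of braided vector spaces'' --- the objects here live in $\bC$, which need not be a category of braided vector spaces.
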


The $\CC$-algebra structure on $\CC[V]$ is defined via May's operad structure maps $\gamma$. Specifically, the maps 
\beqn \CC(m) \otimes [(\CC(n_1) \otimes_{B_{n_1}} V^{\otimes n_1}) \otimes \dots \otimes (\CC(n_m) \otimes_{B_{n_m}} V^{\otimes n_m})] \to \CC(\textstyle{\sum} n_i) \otimes_{B_{\sum n_i}} V^{\otimes \sum n_i} \label{free_structure_eqn} \eeqn
which collect the powers of $V$ together (using the symmetry of the tensoring of $\bC$ over $\bD$) and apply $\gamma$ to $\CC(m) \otimes (\CC(n_1) \otimes \dots \CC(n_m))$ are components of the $\CC$-algebra structure maps.

More generally, if $\II$ is a left module for $\CC$, the same structure maps (\ref{free_structure_eqn}) with $\CC(n_i)$ replaced by $\II(n_i)$ make $\II[V]$ a $\CC$-algebra.  This need not enjoy any similar universal property as when $\II = \CC$.

\begin{exmps}
Free algebras for the basic examples of braided operads:
\begin{enumerate}

\item The free braided associative algebra $\BrAss[V]$ is the tensor algebra on $V$, $T(V)$:
$$\BrAss[V] = \bigoplus_{n=0}^\infty k[B_n] \otimes_{B_n} V^{\otimes n} = \bigoplus_{n=0}^\infty V^{\otimes n} = T(V).$$

\item The free algebra $\BrLie_*[V]$ for the unital form of the braided Lie operad is the smallest subspace of $T(V)$ which contains $V$ and is closed under $[-,-]_{\sigma}$ and the $B_n$ action in degree $n$. 

\item The free braided commutative algebra $\BrCom[V]$ is the quotient of $T(V)$ by the ideal generated by the relation $a \otimes b = \sigma(a \otimes b)$.  This ideal is the same as the ideal generated by $\BrLie[V]$.

\end{enumerate}
\end{exmps}

\begin{rem} \label{quot_note}

We will use without comment the fact that if $\CC$ is an operad and $\II$ is an ideal in $\CC$, then the Artin functor associated to the quotient module $\CC/\II$ satisfies $(\CC/\II)[V] = \CC[V]/\II[V]$ since both are given by 
$$\left(\bigoplus_{n=0}^\infty \CC(n) \otimes_{B_n} V^{\otimes n}\right)/\left(\bigoplus_{n=0}^\infty \II(n) \otimes_{B_n} V^{\otimes n}\right) \cong \bigoplus_{n=0}^\infty (\CC(n)/\II(n)) \otimes_{B_n} V^{\otimes n}.$$

\end{rem}

\subsection{Modules for algebras over an operad}

Algebras over operads also have modules: 

\begin{defn} If $\CC$ is an operad and $A$ is a $\CC$-algebra in $\bC$, an \emph{$A$-module} $M$ is an object of $\bC$ equipped with maps\footnote{Recall in this formula that $B_n^i < B_n$ is the subgroup where the $i\nth$ strand is pure.}
$$\lambda: \CC(n) \otimes_{B^n_{n}} (A^{\otimes n-1} \otimes M) \to M$$
which are associative in the sense that for each $i =1, \dots, m-1$, the following diagrams commute: 
$$\xymatrix{
(\CC(m) \otimes \CC(n)) \otimes A^{\otimes n+m-2} \otimes M \ar[r]^-{\circ_i \otimes 1} \ar[d]_-{1 \otimes \tau \otimes 1} & \CC(m+n-1) \otimes A^{\otimes n+m-2} \otimes M \ar[dd]^-{\lambda} \\ 
\CC(m) \otimes A^{\otimes{i-1}} \otimes (\CC(n) \otimes A^{\otimes n}) \otimes A^{\otimes m-i-1} \otimes M \ar[d]_-{1 \otimes \theta \otimes 1} & \\
\CC(m) \otimes A^{\otimes m-1} \otimes M \ar[r]_-{\lambda} & M
}$$
where $\tau$ braids $A^{\otimes i}$ past $\CC(n)$.  If $i=m$, instead we require that 
$$\xymatrix{
(\CC(m) \otimes \CC(n)) \otimes A^{\otimes n+m-2} \otimes M \ar[r]^-{\circ_m \otimes 1} \ar[d]_-{1 \otimes \tau \otimes 1} & \CC(m+n-1) \otimes A^{\otimes n+m-2} \otimes M \ar[dd]^-{\lambda} \\ 
\CC(m) \otimes A^{\otimes {m-1}} \otimes (\CC(n) \otimes A^{\otimes n-1} \otimes M) \ar[d]_-{1 \otimes  \lambda} & \\
\CC(m) \otimes A^{\otimes m-1} \otimes M \ar[r]_-{\lambda} & M.
}$$
If $\CC$ is unital, then we further require that
$$\xymatrix{\one \otimes M \ar[r]^-{\cong} & \CC(1) \otimes M \ar[r]^-{\lambda} & M}$$
is the identity.

\end{defn}

Note that every $\CC$-algebra $A$ is a module over itself (with $\lambda = \theta$).  An $A$-submodule $I \leq A$ is called an \emph{ideal in $A$} (not to be confused with the notion of an ideal in an operad).  It is easy to verify that the kernel of a map $A \to B$ of $\CC$-algebras is an ideal.

\begin{exmps}

Recall that an algebra $A$ over $\BrAss$ is the same thing as an associative algebra.  Modules $M$ over $A$ over $\BrAss$ are the same thing as \emph{bimodules} over $A$ in the usual sense.  One can see this via the structure map
\beqn \label{l_r_eqn} \xymatrix{(A\otimes M ) \oplus (M \otimes A) \cong \BrAss(2) \otimes_{B_2^2}(A \otimes M) \ar[r]^-{\lambda} & M}\eeqn
since $B_2^2 = PB_2$ is index 2 in $B_2$.

An algebra $A$ over $\BrCom$ is a braided commutative, associative algebra.  A module $M$ for $A$ over $\BrCom$ is (by forgetting to $\BrAss$) an $A$-bimodule.  However, the bimodule structure is braided commutative (the braided analogue of a symmetric bimodule):
$$\lambda(\sigma(a \otimes m)) = \lambda(a \otimes m)$$

Ideals in $\BrAss$ and $\BrCom$-algebras are the same thing as two-sided ideals in the usual sense.  An ideal $I$ in a $\BrLie$-algebra $L$ is a sub-object which is closed under $[\ell, -]$ for every $\ell \in L$.

\end{exmps}

\subsection{Presentations of algebras} 

\begin{defn} Let $\CC$ be a unital braided operad.  A \emph{presentation} of a $\CC$-algebra $A$ in a braided category $\bC$ is an epimorphism
$$\pi: \CC[V] \twoheadrightarrow A$$
of $\CC$-algebras.  The kernel $R = \ker(\pi)$ is the ideal of \emph{relations} in $\CC[V]$.  This is a \emph{minimal} presentation if $R \cap V = 0$.

\end{defn}

The free-forgetful adjunction ensures the existence of (very non-minimal) presentations $\CC[A] \twoheadrightarrow A$.  If $\bC$ is semisimple, then minimal presentations exist: for any presentation $\pi: \CC[V] \twoheadrightarrow A$, one obtains an exact sequence
$$0 \to V \cap \ker(\pi) \to V \to \pi(V) \to 0.$$
Let $W \leq V$ be a subobject carried isomorphically onto $\pi(V)$.  Then the restriction of $\pi$ to $\CC[W] \leq \CC[V]$ is a minimal presentation.

\subsection{Enveloping algebras}

One can basechange an algebra for an operad $\CC$ to a $\DD$-algebra along a map $\CC \to \DD$ of operads:

\begin{defn} \label{gen_enveloping_defn} 

Let $\DD$ be a unital braided operad, and $i:\CC \to \DD$ a map of operads.  For a $\CC$-algebra $L$, the \emph{$\DD$-enveloping algebra} $U_{\CC}^{\DD}(L)$ is the quotient of the free $\DD$-algebra $\DD[L]$ by the $\DD$-ideal generated by the image of the map
$$\iota - (j \circ \theta): \CC[L] \to \DD[L].$$
Here $\iota = i[L]$, $\theta: \CC[L] \to L$ is the $\CC$-algebra structure map, and $j: L \hookrightarrow \DD[L]$ is the inclusion coming from the unit of $\DD$.

\end{defn}

\begin{exmps} Some examples of enveloping algebras:

\begin{enumerate}

\item If $\CC$ is the \emph{identity operad}
$$\CC(n) = \left\{\begin{array}{ll} \one, & n=1 \\ 0, & \mbox{else} \end{array} \right.$$
and $\DD$ is a unital operad, then a $\CC$-algebra $L$ is simply an object in $\bC$, and $U_{\CC}^{\DD}(L) = \DD[L]$ is the free $\DD$-algebra on $L$.

\item If $\CC = \BrLie$ and $\DD = \BrAss$, then $U_{\CC}^{\DD}(L)$ is a braided analogue of the enveloping algebra of a Lie algebra.  This is literally the case when $L$ is drawn from the symmetric monoidal category of vector spaces.

\item It is possible to make this construction for maps $\CC \to \DD$ which are not injective.  For instance, if $\CC = \BrAss$, $\DD = \BrCom$, $i$ the natural quotient, and $A$ an associative algebra, 
$$U_{\CC}^{\DD}(A) = A / [A, A]_{\sigma}$$
where $[A, A]_{\sigma}$ is the two-sided ideal generated by the braided brackets $[a, b]_{\sigma}$.

\end{enumerate}

\end{exmps}

\begin{prop} \label{univ_prop}

Definition \ref{gen_enveloping_defn} extends to give a functor
$$U_{\CC}^{\DD}: \Alg_{\CC}(\bC) \to \Alg_{\DD}(\bC)$$
which is left adjoint to the forgetful functor: for a $\DD$-algebra $M$ in $\bC$, there is a natural isomorphism
$$\Hom_{\Alg_{\CC}(\bC)}(L, M) \cong \Hom_{\Alg_{\DD}(\bC)}(U_{\CC}^{\DD}(L), M).$$

\end{prop}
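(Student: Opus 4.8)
The plan is to deduce the adjunction directly from the universal property of the free $\DD$-algebra together with the defining presentation of $U_{\CC}^{\DD}(L)$ as a quotient of $\DD[L]$; functoriality of $U_{\CC}^{\DD}$ will then come essentially for free. Fix a $\CC$-algebra $L$ and a $\DD$-algebra $M$. First I would recall that, by the universal property of the free $\DD$-algebra (the proposition following Definition~\ref{free_obj_defn}, applied to the unital operad $\DD$), a $\DD$-algebra map $\DD[L] \to M$ is the same datum as a morphism $f \colon L \to M$ in $\bC$; write $\tilde f \colon \DD[L] \to M$ for the map attached to $f$, so that $\tilde f \circ j = f$, where $j \colon L \hookrightarrow \DD[L]$ is the unit map, and $\tilde f = \theta_M \circ \DD[f]$, with $\theta_M \colon \DD[M] \to M$ the structure map of $M$.

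Next I would determine when $\tilde f$ descends to the quotient $U_{\CC}^{\DD}(L)$. Since $\tilde f$ is a map of $\DD$-algebras, its kernel is a $\DD$-ideal; hence $\tilde f$ annihilates the $\DD$-ideal generated by a subobject $S \subseteq \DD[L]$ if and only if it annihilates $S$ itself, and in that case the induced map on the quotient is again a map of $\DD$-algebras. Taking $S$ to be the image of $\iota - j\circ\theta$, this says that $\DD$-algebra maps $U_{\CC}^{\DD}(L) \to M$ are in bijection with those $f$ for which the composite $\tilde f \circ (\iota - j\circ\theta) \colon \CC[L] \to M$ vanishes. The key computation is the evaluation of this composite. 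Writing $\iota = i[L]$ and using naturality of the transformation $i[-]\colon \CC[-] \Rightarrow \DD[-]$ at $f$, one gets $\tilde f \circ \iota = \theta_M \circ \DD[f] \circ i[L] = \theta_M \circ i[M] \circ \CC[f]$, and $\theta_M \circ i[M]$ is exactly the $\CC$-algebra structure map of $M$ obtained by restriction along $i$; call it $\theta^{\CC}_M$. On the other hand $\tilde f \circ j \circ \theta = f \circ \theta_L$, where $\theta_L = \theta \colon \CC[L] \to L$. Hence
$$\tilde f \circ (\iota - j\circ\theta) \;=\; \theta^{\CC}_M \circ \CC[f] \;-\; f \circ \theta_L \colon \CC[L] \longrightarrow M,$$
and this vanishes precisely when $f$ intertwines the $\CC$-structure maps on every summand $\CC(n)\otimes_{B_n} L^{\otimes n}$, i.e.\ precisely when $f$ is a morphism of $\CC$-algebras.

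Combining the two steps gives a bijection between $\Hom_{\Alg_{\DD}(\bC)}(U_{\CC}^{\DD}(L), M)$ and the set of $\CC$-algebra maps $L \to M$, that is, $\Hom_{\Alg_{\CC}(\bC)}(L, M)$. This bijection is manifestly natural in $M$ (it was cut out by a condition not involving $M$, and the free--forgetful bijection for $\DD$ is natural), so by the standard parametrized-adjoint argument it upgrades $U_{\CC}^{\DD}$ to a functor left adjoint to the forgetful functor; alternatively one checks functoriality by hand, noting that naturality of $\iota$ and of $\theta$ implies that a $\CC$-algebra map $L \to L'$ carries the defining ideal of $U_{\CC}^{\DD}(L)$ into that of $U_{\CC}^{\DD}(L')$, so the induced map $\DD[L]\to\DD[L']$ descends.

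I expect the only genuinely non-formal point to be the existence and good behaviour of the quotient appearing in Definition~\ref{gen_enveloping_defn}: that the $\DD$-ideal generated by a subobject of a $\DD$-algebra exists, that the quotient of a $\DD$-algebra by a $\DD$-ideal is again a $\DD$-algebra with the quotient map a $\DD$-algebra epimorphism, and that $\DD$-algebra maps out of such a quotient factor uniquely. These rely on $\bC$ being abelian and on $\otimes$ commuting with colimits (so that $\DD[-]$ and the relevant relative tensor products behave well), and that is the step I would write out most carefully; everything else is diagram-chasing the two universal properties against one another.
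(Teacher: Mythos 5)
Your proposal is correct and follows essentially the same route as the paper: identify $\DD$-algebra maps out of $U_{\CC}^{\DD}(L)$ with maps out of the free algebra $\DD[L]$ that equalize $\iota$ and $j\circ\theta$, then use freeness of $\DD[L]$ to see that this equalizing condition on the restriction to $L$ is exactly the condition of being a $\CC$-algebra map. The only difference is one of detail: you spell out the computation $\tilde f \circ \iota = \theta^{\CC}_{M}\circ \CC[f]$ and the passage from the ideal to its generating subobject, and you derive functoriality from naturality of the bijection, whereas the paper checks functoriality directly via $\DD[f]\circ(\iota - j\circ\theta) = (\iota - j\circ\theta)\circ\CC[f]$ — a cosmetic variation, not a different argument.
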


\begin{proof}

By construction, $U_{\CC}^{\DD}$ does take values in $\Alg_{\DD}(\bC)$.  If $f: L \to L'$ is a morphism of $\CC$-algebras, the map $\DD[f]: \DD[L] \to \DD[L']$ naturally factors through the quotient to give an induced map of enveloping algebras, since
$$\DD[f] \circ(\iota - (j \circ \theta)) = (\iota - (j \circ \theta)) \circ \CC[f].$$
As $U_{\CC}^{\DD}$ is a quotient of the functor $\DD[-]$, it is straightforward to verify that this association is also functorial.

We may identify $\Hom_{\Alg_{\DD}(\bC)}(U_{\CC}^{\DD}(L), M)$ with the subset of $\Hom_{\Alg_{\DD}(\bC)}(\DD[L], M)$ consisting of $\DD$-maps $\varphi: \DD[L] \to M$ which equalize the two maps $\iota$ and $j \circ \theta: \CC[L] \to \DD[L]$.  Since $\DD[L]$ is free on $L$, these $\varphi$ are determined by their restriction to $L$, an element of $\Hom_{\bC}(L, M)$.  Under this identification, the equalizing subset is carried to the set of $\CC$-algebra maps, $\Hom_{\Alg_{\CC}(\bC)}(L, M) \subseteq \Hom_{\bC}(L, M)$.

\end{proof}

From a presentation of a $\CC$-algebra $L \in \proC$, we obtain a presentation of $U_{\CC}(L)$:

\begin{prop} \label{gen_rel_cor} 

If we present a $\CC$-algebra $L$ as the quotient of a free $\CC$-algebra by an ideal of relations:
$$0 \to R \to \CC[V] \to L \to 0,$$
there is an isomorphism
$$U^{\DD}_{\CC}(L) \cong \DD[V]/(R)$$
where $(R) \subseteq \DD[V]$ is the ideal generated by the image of the map $R \subseteq \CC[V] \to \DD[V]$.  

\end{prop}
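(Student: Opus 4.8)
The plan is to prove the isomorphism via universal properties: I will show that $U^\DD_\CC(L)$ and $\DD[V]/(R)$ corepresent the same functor on $\Alg_\DD(\bC)$ and then invoke the Yoneda lemma. Fix a $\DD$-algebra $M$, regarded as a $\CC$-algebra by restriction along $i : \CC \to \DD$. By Proposition \ref{univ_prop}, $\Hom_{\Alg_\DD(\bC)}(U^\DD_\CC(L), M) \cong \Hom_{\Alg_\CC(\bC)}(L, M)$. On the other hand, $\Hom_{\Alg_\DD(\bC)}(\DD[V]/(R), M)$ is the set of $\DD$-algebra maps $\DD[V] \to M$ killing the ideal $(R)$, and since $\DD[V]$ is free on $V$ this is the set of $\bC$-maps $f : V \to M$ whose $\DD$-algebra extension $h_f : \DD[V] \to M$ kills $(R)$.

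The link between the two descriptions is the natural transformation $\iota = i[-] : \CC[-] \to \DD[-]$. The first thing I would establish is the factoring lemma: for $f : V \to M$ in $\bC$, the $\CC$-algebra map $g_f : \CC[V] \to M$ adjoint to $f$ equals $h_f \circ \iota_V$. Indeed $h_f \circ \iota_V$ is a composite of maps compatible with the operad structures (with $\DD[V]$ and $M$ viewed as $\CC$-algebras via $i$), hence a $\CC$-algebra map, and it restricts to $f$ on $V$; so it agrees with $g_f$ by freeness of $\CC[V]$. Granting this, a $\CC$-algebra map $L \to M$ is the same as a $\bC$-map $f : V \to M$ with $g_f$ vanishing on $R = \ker(\pi : \CC[V] \twoheadrightarrow L)$, and $g_f|_R = (h_f \circ \iota_V)|_R = 0$ holds iff $h_f$ kills $\iota_V(R)$, iff $h_f$ kills the $\DD$-ideal $(R)$ that $\iota_V(R)$ generates, the last equivalence because the kernel of a $\DD$-algebra map is a $\DD$-ideal. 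Chaining the natural bijections and applying Yoneda gives the isomorphism, which by unwinding is the one induced from the composite $V \hookrightarrow \CC[V] \twoheadrightarrow L \hookrightarrow \DD[L] \twoheadrightarrow U^\DD_\CC(L)$.

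Two facts used implicitly should be recorded: that the quotient of a $\CC$-algebra by an ideal has the expected mapping-out property, $\Hom_{\Alg_\CC}(A/I, -) = \{\varphi \in \Hom_{\Alg_\CC}(A, -) : \varphi|_I = 0\}$ (applied to $\CC[V] \twoheadrightarrow L$ and to $\DD[V] \twoheadrightarrow \DD[V]/(R)$), and that $U^\DD_\CC$ carries a free $\CC$-algebra to the corresponding free $\DD$-algebra, $U^\DD_\CC(\CC[X]) \cong \DD[X]$, which follows by composing adjunctions. With the latter in hand there is an even shorter route: $U^\DD_\CC$ is a left adjoint by Proposition \ref{univ_prop}, hence preserves colimits; $L$ is the coequalizer of the inclusion and zero maps $\CC[R] \rightrightarrows \CC[V]$, so $U^\DD_\CC(L)$ is the coequalizer of $\DD[R] \rightrightarrows \DD[V]$, which is precisely $\DD[V]/(R)$. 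I do not expect a genuine obstacle; the argument is a diagram chase through adjunctions. The one point demanding care is the factoring lemma together with keeping ``ideal'' consistently meaning an ideal in an algebra (an $A$-submodule) rather than an ideal in an operad, so that the generated-ideal and quotient manipulations are the familiar ones.
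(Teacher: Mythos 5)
Your proposal is correct and follows essentially the same route as the paper: identify $\Hom_{\Alg_{\DD}}(U^{\DD}_{\CC}(L), M)$ with $\CC$-algebra maps out of $\CC[V]$ killing $R$, and then with $\DD$-algebra maps out of $\DD[V]$ killing $(R)$, the paper leaving implicit exactly the factoring lemma and the kernel-is-an-ideal point you spell out. Your alternative coda (left adjoints preserve coequalizers, applied to $\CC[R]\rightrightarrows\CC[V]$) is a sound compression of the same adjunction argument rather than a genuinely different proof.
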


\begin{proof}

In the case of a free $\CC$-algebra, both $U_{\CC}^{\DD}(\CC[V])$ and $\DD[V]$ have the same universal property with respect to maps $V \to A$ in $\proC$ from $V$ to a $\DD$-algebra; this gives a natural algebra isomorphism $U_{\CC}^{\DD}(\CC[V]) \cong \DD[V]$.  Adding relations in $\CC[V]$ has a predictable consequence: there are natural isomorphisms
\begin{eqnarray*}
\Hom_{\Alg_{\DD}(\proC)}(U_{\CC}^{\DD}(L), A) & = & \Hom_{\Alg_{\CC}(\proC)}(L, A) \\
 & = & \{f \in \Hom_{\Alg_{\CC}(\proC)}(\CC[V], A) \; \mbox{such that} \; f|_R = 0\}
\end{eqnarray*}
But this can be identified with the subspace of $\Hom_{\Alg_{\DD}(\proC)}(\DD[V], A)$ of maps which vanish on $(R)$, giving the result.  

\end{proof}

In a similar fashion, one may more generally show that $U_{\CC}^{\DD}(L/I) \cong [U_{\CC}^{\DD}(L)]/(I)$ for any $\CC$-ideal $I \leq L$.

\section{Profinite braided operads and algebras}

\subsection{Pro-operads and profinite completion} \label{pro-operad_section}

We assume that $\bD$ is a symmetric monoidal category in which coinvariants of group actions exist; as usual, the blanket assumption that $\bD$ is abelian suffices.  

\begin{defn} \label{pro_op_defn} A \emph{braided pro-operad} in $\bD$ is a braided operad object in the category $\proD$ of pro-objects in $\bD$.  

\end{defn}

Concretely, this amounts to a collection of pro-objects $\CC(n) = \{\CC_j(n)\}$ (indexed over elements $j$ in inverse systems $J_n$ which depend upon $n$), where each $\CC_j(n)$ supports an action of $B_n$, and the structure maps of the inverse system are $B_n$-equivariant. Further, the operad structure maps are elements 
$$\circ_i \in \varprojlim_{\ell \in J_{n+m-1}} \left[ \varinjlim_{(j, k) \in J_n \times J_m} \Hom(\CC_j(n) \otimes \CC_k(m), \CC_\ell(n+m-1))\right].$$
The ordering here is important: for each $\ell$, this amounts to a map in $\bD$
\beqn \label{prof_sub_eqn} \CC_j(n) \otimes \CC_k(m) \to \CC_\ell(n+m-1)\eeqn
for  ``sufficiently large" $j$ and $k$ (where ``sufficiently large" depends upon $\ell$).

\begin{rem} In \cite{arone-ching}, Arone and Ching study (symmetric) pro-operads in spectra.  Their use of this term differs substantially from ours: they study inverse systems in a category of operads, whereas we are studying operad objects in a category of inverse systems.  There is a natural comparison functor from the former to the latter, but it seems doubtful that it is an equivalence. \end{rem}

There is a natural functor
$$\const_*: \BrOp(\bD) \to \BrOp(\proD)$$
which carries an operad $\CC$ to the pro-operad whose $n\nth$ term is the constant inverse system on $\CC(n)$.  This is induced by the symmetric monoidal functor $\const: \bD \to \proD$ which carries objects to constant inverse systems.  Since it is monoidal, it indeed carries operads to operads.

\begin{defn} 

A braided pro-operad $\CC$ will be called a \emph{profinitely braided operad} if, for each $j$ and $n$, the action of $B_n$ on $\CC_j(n)$ factors through a finite quotient of $B_n$ (which is allowed to depend upon $j$).  These constitute the objects of a full subcategory 
$$\BrOphat(\proD) \subseteq \BrOp(\proD).$$

\end{defn}

\begin{prop} \label{adjunction_prop}

There is a functor 
$$\xymatrix{\BrOp(\proD) \ar[r]^-{\widehat{}} &  \BrOphat(\proD)}$$
which participates in an adjunction 
$$\Hom_{\BrOphat(\proD)}(\widehat{\CC}, \DD) \cong \Hom_{\BrOp(\proD)}(\CC, \DD)$$
when $\CC \in \BrOp(\proD)$ and $\DD \in \BrOphat(\proD)$.

\end{prop}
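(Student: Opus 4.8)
The plan is to realise $\widehat{\CC}$ as a levelwise ``profinite braid completion'' built out of coinvariants. For each $n$, let $\mathcal{N}(B_n)$ be the set of finite-index normal subgroups of $B_n$, ordered by reverse inclusion (a cofiltered poset, since the intersection of two such subgroups is again one), and for $N\in\mathcal{N}(B_n)$ write $\CC_j(n)_N$ for the $N$-coinvariants of $\CC_j(n)$, an object on which the residual $B_n$-action factors through the finite group $B_n/N$. I would then set
$$\widehat{\CC}(n):=\bigl\{\,\CC_j(n)_N \;:\; j\in J_n,\ N\in\mathcal{N}(B_n)\,\bigr\},$$
a pro-object in $\bD$ indexed by the cofiltered category $J_n\times\mathcal{N}(B_n)^{\mathrm{op}}$, and let $\eta_\CC(n)\colon\CC(n)\to\widehat{\CC}(n)$ be the $B_n$-equivariant pro-morphism assembled from the quotient maps $\CC_j(n)\to\CC_j(n)_N$. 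By construction each $\widehat{\CC}(n)$ is profinitely braided, so what remains is to equip $\widehat{\CC}$ with an operad structure, to see that $\eta_\CC$ is a map of operads, and to verify the required universal property.

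Descending the structure maps $\circ_i$ is the step I expect to be the main obstacle. Fix $1\le i\le m$ and a target index $(\ell,L)$ of $\widehat{\CC}(m+n-1)$; for sufficiently large $(j,k)$ I would study the composite
$$f\colon\ \CC_j(m)\otimes\CC_k(n)\ \xrightarrow{\ \circ_i\ }\ \CC_\ell(m+n-1)\ \longrightarrow\ \CC_\ell(m+n-1)_L$$
and show it factors through $\CC_j(m)_{K_1}\otimes\CC_k(n)_{K_2}$ for suitable $K_1\in\mathcal{N}(B_m)$, $K_2\in\mathcal{N}(B_n)$. The equivariance axiom is what makes this work: substituting $\gamma x$ for $x$ with $\gamma$ in the finite-index point-stabilizer $B_m^i\le B_m$, or $\rho y$ for $y$ with $\rho\in B_n$, changes $\circ_i(x\otimes y)$ only by the action of the cabled braids $\gamma\circ_i\one$ and $\one\circ_i\rho$ in $B_{m+n-1}$, and $(\gamma,\rho)\mapsto\gamma\circ_i\rho$ restricts to an honest group homomorphism $B_m^i\times B_n\to B_{m+n-1}$. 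Pulling $L$ back along the cabling embeddings $B_m^i\hookrightarrow B_{m+n-1}$ and $B_n\hookrightarrow B_{m+n-1}$ into position $i$, and replacing the first preimage by its normal core in $B_m$, produces finite-index normal $K_1\le B_m$ (contained in $B_m^i$) and $K_2\le B_n$ whose position-$i$ cablings lie in $L$; hence $f$ annihilates the kernels of $\CC_j(m)\to\CC_j(m)_{K_1}$ and of $\CC_k(n)\to\CC_k(n)_{K_2}$ after tensoring, and so factors as claimed because $\otimes$ is right exact. Carrying this out compatibly in $(\ell,L)$ defines the structure maps $\circ_i$ of $\widehat{\CC}$; associativity, equivariance and (if present) unitality then pass to the quotients directly from those of $\CC$, and by construction $\circ_i^{\widehat{\CC}}\circ(\eta_\CC\otimes\eta_\CC)=\eta_\CC\circ\circ_i^{\CC}$, so $\eta_\CC\colon\CC\to\widehat{\CC}$ is a morphism of braided pro-operads.

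With $\eta_\CC$ in hand I would establish the universal property: for $\DD\in\BrOphat(\proD)$, every morphism $\psi\colon\CC\to\DD$ in $\BrOp(\proD)$ factors uniquely through $\eta_\CC$. For each index $l$ of $\DD(n)$ choose $N_l\in\mathcal{N}(B_n)$ acting trivially on $\DD_l(n)$; for large $j$ the $B_n$-equivariant component $\CC_j(n)\to\DD_l(n)$ of $\psi(n)$ factors uniquely through $\CC_j(n)_{N_l}$ by the universal property of coinvariants, and since $(j,N_l)$ is an index for $\widehat{\CC}(n)$ these factorizations assemble, compatibly in $l$, into a pro-morphism $\widehat{\psi}\colon\widehat{\CC}\to\DD$ with $\widehat{\psi}\circ\eta_\CC=\psi$; it is the unique such, because the quotient maps $\CC_j(n)\to\CC_j(n)_N$ are epimorphisms. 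That $\widehat{\psi}$ preserves $\circ_i$, the unit, and the braid actions then follows by checking the defining identities after precomposing with those epimorphisms and their tensor products (again epic, as $\otimes$ is right exact), where they reduce to the corresponding identities for $\psi$, $\CC$ and $\DD$.

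Finally, the assignments $\psi\mapsto\widehat{\psi}$ and $\phi\mapsto\phi\circ\eta_\CC$ are visibly mutually inverse and natural, yielding the bijection
$$\Hom_{\BrOp(\proD)}(\CC,\DD)\ \cong\ \Hom_{\BrOphat(\proD)}(\widehat{\CC},\DD)$$
for $\CC\in\BrOp(\proD)$ and $\DD\in\BrOphat(\proD)$; applying the universal property to a morphism $\CC\to\CC'$ composed with $\eta_{\CC'}$ upgrades $\widehat{(-)}$ to a functor, automatically left adjoint to the (full) inclusion $\BrOphat(\proD)\hookrightarrow\BrOp(\proD)$. The only step that is not formal bookkeeping is the descent of the $\circ_i$ in the second paragraph, which rests on the cabling map $B_m^i\times B_n\to B_{m+n-1}$ being a group homomorphism together with the stability of ``finite index'' under preimage.
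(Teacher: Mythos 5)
Your proposal is correct and follows essentially the same route as the paper: define $\widehat{\CC}(n)$ as the pro-system of coinvariants indexed over finite-index normal subgroups of $B_n$, descend the $\circ_i$ using the fact that cabling is a homomorphism on $B_m^i\times B_n$ together with a normal-core/finite-index argument guaranteeing suitable $K_1, K_2$ exist, and obtain the adjunction by factoring any equivariant map to a profinitely braided target through coinvariants. The only cosmetic difference is that the paper phrases the existence of $K_1,K_2$ via finite generation of braid groups and a cofinal subsystem rather than normal cores of pulled-back subgroups, which is the same underlying argument.
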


If $\CC \in \BrOp(\bD)$ is a braided operad, we will confuse $\CC$ with $\const_*(\CC)$, and refer to 
$$\widehat{\CC}:= \widehat{\const_*(\CC)}$$
as its \emph{profinite completion}.  

\begin{proof}

Consider $\CC$ in $\BrOp(\proD)$; regard this as consisting of the concrete data given below Definition \ref{pro_op_defn}.  For each $n$, we let 
$$\Quot_n := \{ B_n \twoheadrightarrow Q\}$$
denote the inverse system of finite quotient groups of $B_n$ (maps in the system are factorizations of the quotient map).  For such a quotient $Q$, let $\ker_Q \leq B_n$ be the kernel of the homomorphism $B_n \twoheadrightarrow Q$.  This allows us to identify $\Quot_n$ with the inverse system of finite index, normal subgroups of $B_n$; this is made into a poset by inclusion.

For each $n$ consider the object of $\proD$ indexed by the inverse system $J_n \times \Quot_n$, whose value on $(j, Q)$ is
$$\CC_{j, Q}(n) := \CC_j(n)_{\ker_Q},$$
the coinvariants of the action of $\ker_Q$ on $\CC_j(n)$.

We claim that the collection $\widehat{\CC}(n) := \{\CC_{j, Q}(n)\}$ assemble into an operad in $\proD$; by construction this operad will be profinitely braided.  The structure maps
\beqn \label{profinite_structure_eqn} \circ_i: \CC_{j,P}(n) \otimes \CC_{k, Q}(m) \to \CC_{\ell, R}(n+m-1)\eeqn
are defined by descending the structure maps (\ref{prof_sub_eqn}) on $\CC$ through the coinvariants.  Note that the maps in (\ref{prof_sub_eqn}) are equivariant with respect to the cabling homomorphism on the braid groups $\circ_i: B_n^i \times B_m \to B_{n+m-1}$ (recall that in $B_n^i$, the $i\nth$ strand is pure).  Thus the maps in (\ref{profinite_structure_eqn}) are well defined if 
\beqn \label{sub_inc_eqn}\ker_P \times \ker_Q \subseteq [\circ_i^{-1}(\ker_R)].\eeqn
Since we are working with inverse systems, we may simply restrict to the cofinal subsystem of $P$ and $Q$ with this property.  However, for (\ref{profinite_structure_eqn}) to actually define a morphism in the category $\proD$, we must verify that this subsystem is nonempty (once we do, it is clear that it is cofinal).

This is the case: since $\ker_R$ is finite index in $B_{n+m-1}$, the subgroup $T:= [\circ_i^{-1}(\ker_R)] \subseteq B_n^i \times B_m$ is finite index.  $B_n^i$ is finite index in $B_n$, so in fact $T$ is finite index in $B_n \times B_m$.  Let 
$$T_1 := T \cap [B_n \times \{1\}] \leq B_n \; \mbox{ and } \; T_2 := T \cap [\{1 \} \times B_m] \leq B_m.$$
These are finite index subgroups.  The braid groups are finitely generated, so there exists finite index subgroups $U_i \leq T_i$ which are normal in the ambient braid group.  If we let $P = B_n/U_1$ and $Q = B_m/U_2$, these quotients have the desired property (\ref{sub_inc_eqn}).

We conclude that (\ref{profinite_structure_eqn}) defines morphisms $\circ_i: \widehat{\CC}(n) \otimes \widehat{\CC}(m) \to \widehat{\CC}(n+m-1)$ in $\proD$.  These equip $\widehat{\CC}$ with the structure of an operad: all of the axioms of Definition \ref{brop_defn} are satisfied because they hold for $\CC$. It is clear that the assignment $\CC \mapsto \widehat{\CC}$ is functorial.  

The unit $\CC \to \widehat{\CC}$ of the claimed adjunction is given by the collection of quotient maps 
$$\CC_j(n) \to \CC_j(n)_{\ker_Q}$$
for each $n$ and for each $j$ in the inverse system defining $\CC(n)$.  It is easy to see that this is a map of operads.

With that in hand, verifying the existence of the adjunction between profinite completion and the forgetful functor is straightforward.  It relies on the fact that every map of braided operads $f: \CC \to \DD$ is in particular a collection of $B_n$-equivariant maps.  If the $B_n$-action on $\DD_k(n)$ factors through a finite quotient $Q$, and if $f(n): \CC_j(n) \to \DD_k(n)$, then $f(n)$ factors through $\CC_j(n)_{\ker_Q}$; thus $f$ factors uniquely through $\widehat{\CC}$.

\end{proof} 

\begin{defn} The \emph{profinite braided associative operad} $\BrAsshat$ is the profinite completion of $\BrAss$.  Its $n\nth$ term is the inverse system $\BrAsshat(n) = \{k[Q], \; Q \in \Quot_n\}$ of group rings of finite quotients $Q$ of $B_n$. \end{defn}

\begin{defn} A braided sequence $\CC = \{\CC(n)\}$ in $\pro-\bD$ will be called \emph{strict} or \emph{essentially strict} if each $\CC(n)$ is, in the sense of Definition \ref{strict_defn}. \end{defn}

Note that if $\CC$ is an operad in $\bD$, then its profinite completion $\CChat$ is always strict.

\subsection{Algebras for the profinite completion of an operad} \label{profinite_algebra_section}

Let $\CC$ be a profinitely braided operad in $\pro-\bD$.  If $\bC$ is enriched and tensored over $\bD$, then one may generally consider algebras for $\CC$ in $\pro-\bC$.  If $A$ is such an algebra, its structure maps amount to a consistent family of the form
$$\CC_j(n) \otimes A_{i_1} \otimes \dots \otimes A_{i_n} \to A_i$$
where $j \in J_n$ is an element of the inverse system for $\CC(n)$, and $i_\ell$ and $i \in I$ are elements of the system for $A$. Let us write $\Alg_{\CC}(\pro-\bC)$ for the category of such algebras; morphisms in this category are morphisms in $\pro-\bC$ that preserve all $\CC$-operations.

While the flexibility of the full generality of algebras of pro-objects can be helpful, we will often focus on pro-constant algebras.  This forms a full subcategory  
$$\Alg_{\CC}(\bC) \subseteq \Alg_{\CC}(\pro-\bC)$$
whose objects are pro-constant.

Now consider an operad $\OO$ in $\bD$ and its profinite completion $\OOhat$ in $\proD$.  The unit of the adjunction in Proposition \ref{adjunction_prop} ensures that an $\OOhat$-algebra $A$ may be regarded as a $\OO$-algebra, but the converse need not be the case. Such an algebra structure would amount to a family of liftings
$$\xymatrix{
 & \Hom_{\bC}(A^{\otimes n}, A)^{\ker_Q} \ar[d]^-{\subseteq} \\
\OO(n) \ar[r]_-{\theta} \ar@{.>}[ur] & \Hom_{\bC}(A^{\otimes n}, A)
}$$
for sufficiently large quotients $Q$ of $B_n$.  That is, the operations $\OO(n)$ defines on $A$ must be $\ker_Q$-invariant; this need not be the case for an arbitrary $\OO$-algebra $A$.  There is, however, a universal way to rectify this, using Definition \ref{gen_enveloping_defn}:

\begin{defn} For a unital operad $\OO$, and $\OO$-algebra $A$ in $\proC$, the \emph{profinite completion} $\Ahat$ of $A$ is the enveloping algebra $\Ahat:= U_{\OO}^{\OOhat}(A)$. Further, $A$ is said to be \emph{complete} if the natural map $A \to \Ahat$ of $\OO$-algebras is an isomorphism.  \end{defn}

In this case, that the universal property of the enveloping algebra reads 
$$\Hom_{\Alg_{\OOhat}(\proC)}(\Ahat, B) \cong \Hom_{\Alg_{\OO}(\proC)}(A, B)$$
for $\OOhat$-algebras $B$ in $\proC$.  That is, $\Ahat$ is the initial $\OOhat$-algebra receiving an $\OO$-algebra map from $A$.  It follows that $A$ is complete if and only if its structure as an $\OO$-algebra extends to an $\OOhat$-algebra structure.

\begin{prop} \label{pro_const_alg_prop}

If $A$ is a finitely braided $\OO$-algebra, then it is complete.  In particular, the $\OO$-algebra structure on $A$ extends to the structure of an algebra over $\OOhat$. 

\end{prop}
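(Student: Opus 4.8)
The plan is to reduce the statement to the criterion recorded immediately before it: a (pro-constant) $\OO$-algebra $A$ is complete if and only if its $\OO$-algebra structure map $\theta\colon \OO \to \End_A$ extends to a morphism of operads $\OOhat \to \End_A$, i.e.\ to an $\OOhat$-algebra structure on $A$. Here $\End_A$ denotes the endomorphism operad of $A$; since $A$ is finitely braided it is in particular pro-constant, so this endomorphism operad (formed in $\proC$) is the constant pro-operad $\const_*(\End_A)$ on the honest endomorphism operad of $A$ in $\bC$. Thus everything comes down to producing the extension, and the finite-braiding hypothesis is exactly what makes this possible.

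The key observation is that $\const_*(\End_A)$ is a \emph{profinitely braided} operad. Indeed, for each $n$ the relevant inverse system is constant with value $\End_A(n) = \Hom_{\bC}(A^{\otimes n}, A)$, on which $B_n$ acts by precomposition with the braiding action of $B_n$ on $A^{\otimes n}$. Because $A$ is finitely braided, that latter action factors through a finite quotient $B_n \twoheadrightarrow B_n / N_n$, and hence so does the precomposition action on $\End_A(n)$. Therefore $\const_*(\End_A)$ lies in $\BrOphat(\proD)$.

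With this in hand, I would apply the adjunction of Proposition~\ref{adjunction_prop} with $\CC = \const_*(\OO)$ (so that $\OOhat = \widehat{\const_*(\OO)}$, as in the text) and $\DD = \const_*(\End_A)$. The $\OO$-algebra structure on $A$ is a morphism $\theta\colon \OO \to \End_A$ in $\BrOp(\bD)$; applying $\const_*$ and invoking the adjunction produces a unique morphism $\OOhat \to \const_*(\End_A)$ in $\BrOphat(\proD)$ whose restriction along the unit $\OO \to \OOhat$ is $\theta$. Such a morphism is exactly an $\OOhat$-algebra structure on $A$ extending the given one, so by the completeness criterion quoted above the natural map $A \to \Ahat = U_{\OO}^{\OOhat}(A)$ is an isomorphism, which is the assertion.

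Concretely, the extension amounts to the observation that each $B_n$-equivariant map $\theta(n)\colon \OO(n) \to \End_A(n)$ has target on which $N_n$ acts trivially, so it descends through the coinvariants $\OO(n)_{N_n}$ — one of the terms of the inverse system defining $\OOhat(n)$ — and the descended maps respect the operad structure because $\theta$ does. I do not expect a genuine obstacle here: the only fiddly point is matching the finite quotients $N_n$ against the quotients appearing in the cabled structure maps of $\OOhat$, and this is handled by exactly the cofinality argument already carried out in the proof of Proposition~\ref{adjunction_prop}. The substance of the proposition is the identification of $\const_*(\End_A)$ as profinitely braided; everything else is formal.
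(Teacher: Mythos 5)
Your proposal is correct and is essentially the paper's own argument: the whole content is that finite braiding makes the $B_n$-action on $A^{\otimes n}$ (hence, by precomposition, on $\End_A(n)$) factor through a finite quotient $Q$, so the structure maps descend through the coinvariants $\OO(n)_{\ker_Q}$ along a cofinal family in $\Quot_n$, yielding the $\OOhat$-algebra structure and hence completeness by the criterion stated just before the proposition. The only difference is packaging: you route through the adjunction of Proposition~\ref{adjunction_prop} applied to $\const_*(\End_A)$ viewed as a profinitely braided operad, whereas the paper performs the same descent directly on the structure maps $\theta: \OO(n) \otimes_{B_n} A^{\otimes n} \to A$.
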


\begin{proof}

Consider the $\OO$-algebra structure map $\theta: \OO(n) \otimes_{B_n} A^{\otimes n} \to A$.  Since $A$ is finitely braided, for sufficiently large $Q$, $\ker_Q$ acts trivially on $A^{\otimes n}$, so this may be rewritten as
$$\theta: \OO(n)_{\ker_Q} \otimes_{B_n} A^{\otimes n} = \OO(n)_{\ker_Q} \otimes_{Q} A^{\otimes n} \to A.$$
The collection of finite index subgroups of $B_n$ contained in $\ker_Q$ is cofinal in $\Quot_n$, so this makes $A$ an $\OOhat$-algebra.

\end{proof}

\begin{notation}

For $V \in \proC$, write $\That(V):= \widehat{T(V)}$ for the profinite completion of the $\BrAss$-algebra $T(V)$.  Since $T(V) = \BrAss[V]$, it is immediate that $\That(V) = \BrAsshat[V]$, but the former notation is more compact.

\end{notation}

\begin{exmp}

For $V \in \proC$, $T(V)$ is complete if and only if $V$ is finitely braided: for $T(V) \to \That(V)$ to be an isomorphism, we must have that $V^{\otimes n} \to V^{\otimes n}_{\ker_Q}$ is an isomorphism for every $n$ and sufficiently large $Q \in \Quot_n$.

\end{exmp}

Free algebras on finitely braided objects are also recognizable:

\begin{prop} \label{fin_br_same_monad_prop}

For finitely braided $V \in \bC$, the free $\OOhat$-algebra on $V$ is pro-constant, and isomorphic to $\OO[V]$. 

\end{prop}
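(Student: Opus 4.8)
The plan is to compute the free $\OOhat$-algebra on $V$ one graded piece at a time and observe that the finite-braiding hypothesis forces every piece to stabilise. Since $\OO$ is unital, so is $\OOhat$ (as $B_1$ is trivial, $\Quot_1$ is a point and $\OOhat(1)=\OO(1)$, so the unit of $\OO$ also serves $\OOhat$), and hence the pro-analogue of the free-algebra construction of Definition \ref{free_obj_defn} identifies the free $\OOhat$-algebra on $V\in\proC$ with the Artin object $\OOhat[V]=\bigoplus_{n\geq 0}\OOhat(n)\otimes_{B_n}V^{\otimes n}$, computed in $\proC$; by the construction in the proof of Proposition \ref{adjunction_prop}, $\OOhat(n)$ is the pro-object $\{\OO(n)_{\ker_Q}\}_{Q\in\Quot_n}$ of $\ker_Q$-coinvariants. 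Note that $\OO[V]$ itself is typically \emph{not} finitely braided (e.g.\ $T(V)=\BrAss[V]$), so Proposition \ref{pro_const_alg_prop} does not apply directly and some genuine computation is needed.

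The heart of the argument is to show that each summand $\OOhat(n)\otimes_{B_n}V^{\otimes n}$ is pro-constant on $\OO(n)\otimes_{B_n}V^{\otimes n}$. First I would use that $V$ is finitely braided to fix a finite-index normal subgroup $K=K_n\trianglelefteq B_n$ through whose quotient the $B_n$-action on $V^{\otimes n}$ factors, and observe that $\{Q\in\Quot_n : \ker_Q\subseteq K\}$ is cofinal in $\Quot_n$ — given any $Q$, pass to $B_n/(\ker_Q\cap K)$. So it is enough to evaluate the summand on this cofinal subsystem. For such $Q$ I would invoke the elementary fact that, for a right $B_n$-object $M$ and a left $B_n$-object $N$ whose action factors through $B_n/K$, there is a canonical isomorphism $M\otimes_{B_n}N\cong M_K\otimes_{B_n/K}N$ (compute $B_n$-coinvariants in the two stages $K$ then $B_n/K$; valid since $\otimes$ commutes with colimits in $\bD$). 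Applied with $M=\OO(n)_{\ker_Q}$ and $N=V^{\otimes n}$, and using that coinvariants compose so that $(\OO(n)_{\ker_Q})_K=\OO(n)_K$, this gives $\OO(n)_{\ker_Q}\otimes_{B_n}V^{\otimes n}\cong\OO(n)_K\otimes_{B_n/K}V^{\otimes n}\cong\OO(n)\otimes_{B_n}V^{\otimes n}$, the second isomorphism being the same fact applied to $\OO(n)$. Since both transition maps of the subsystem factor through $\OO(n)_K$, these identifications are compatible with the pro-structure, so the restricted pro-object is literally constant.

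To finish I would assemble the pieces: a direct sum (even of infinitely many) pro-constant objects is pro-constant on the direct sum of the values, because the constant-object functor $\const\colon\bD\to\proD$ is a left adjoint (to $\varprojlim$) and hence preserves coproducts, so $\OOhat[V]\cong\const\bigl(\bigoplus_{n}\OO(n)\otimes_{B_n}V^{\otimes n}\bigr)=\const(\OO[V])$. Finally one checks this respects algebra structures: restricting the free $\OOhat$-algebra structure along the operad unit $\OO\to\OOhat$ makes $\OOhat[V]$ an $\OO$-algebra receiving $V$, and the resulting $\OO$-algebra map $\OO[V]\to\OOhat[V]\cong\const(\OO[V])$ coming from freeness of $\OO[V]$ is the unique endomorphism of $\OO[V]$ fixing $V$, i.e.\ the identity, so the transported $\OO$-structure is the standard one. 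I do not expect a genuine obstruction; the only place demanding care is the bookkeeping of indexing categories — ensuring that levelwise constancy, assembled over all $n$, really does yield a pro-constant coproduct — but this follows from the fact that products and coproducts in $\proD$ are indexed on products of the underlying diagram categories.
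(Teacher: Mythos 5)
Your argument is correct and is essentially the paper's own: the paper proves this by applying Lemma \ref{tech_const_lem} with $\CC=\OOhat$, whose proof is exactly your computation — restrict to the cofinal subsystem of $Q\in\Quot_n$ with $\ker_Q$ acting trivially on $V^{\otimes n}$, identify each term $\OO(n)_{\ker_Q}\otimes_{B_n}V^{\otimes n}$ with $\OO(n)\otimes_{B_n}V^{\otimes n}$ via iterated coinvariants, and assemble over $n$ using the product indexing of direct sums in $\proD$. You have merely inlined that lemma (and added the harmless verification that the identification respects the algebra structures), so the approach is the same.
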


We will need the following:

\begin{lem} \label{tech_const_lem}

Let $\OO$ be an operad in $\bD$, and $\CC \leq \OOhat$ be an essentially strict braided subsequence of its profinite completion.  Then if $V \in \bC$ is finitely braided, the value of the Artin functor $\CC[V]$ is pro-constant.

\end{lem}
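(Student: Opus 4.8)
The plan is to reduce to a single arity and then argue by cofinality. Since $\CC[V] = \bigoplus_{n \geq 0} \CC(n) \otimes_{B_n} V^{\otimes n}$ and a direct sum of pro-constant pro-objects is pro-constant, it suffices to show that each $\CC(n) \otimes_{B_n} V^{\otimes n}$ is pro-constant. Fix $n$. As $V$ is finitely braided, choose a finite-index normal subgroup $N \trianglelefteq B_n$ acting trivially on $V^{\otimes n}$; the subsystem $\Quot_n^{N} := \{Q \in \Quot_n : \ker_Q \subseteq N\}$ is cofinal in $\Quot_n$, so it is enough to control $\CC(n) \otimes_{B_n} V^{\otimes n}$ over $\Quot_n^{N}$.

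First I would fix a presentation of $\CC(n)$ as a system of $B_n$-subobjects of the levels of $\OOhat(n) = \{\OO(n)_{\ker_Q}\}_{Q \in \Quot_n}$ (as built in the proof of Proposition \ref{adjunction_prop}). Set $\CC_Q(n) := \im(\CC(n) \to \OO(n)_{\ker_Q}) \leq \OO(n)_{\ker_Q}$. Each $\CC_Q(n)$ is a $B_n$-subobject (the ambient action factoring through $Q$), and for $Q \geq Q'$ (i.e. $\ker_Q \subseteq \ker_{Q'}$) the transition maps $\CC_Q(n) \twoheadrightarrow \CC_{Q'}(n)$ are epimorphisms by construction, so $\{\CC_Q(n)\}$ is strict. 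The canonical map $\CC(n) \to \{\CC_Q(n)\}$ over $\OOhat(n)$ is monic; since $\OOhat(n)$ is itself strict, essential strictness of $\CC(n)$ forces this map to be a pro-isomorphism. We may therefore replace $\CC(n)$ by $\{\CC_Q(n)\}$, which does not change $\CC(n) \otimes_{B_n} V^{\otimes n}$ up to pro-isomorphism.

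The core is a level-by-level comparison over $\Quot_n^{N}$. For $Q \in \Quot_n^{N}$, the subgroup $\ker_Q$ acts trivially on both $\CC_Q(n)$ and $V^{\otimes n}$, so $B_n$ acts through $Q$ on $\CC_Q(n) \otimes V^{\otimes n}$ and $\CC_Q(n) \otimes_{B_n} V^{\otimes n} = \CC_Q(n) \otimes_Q V^{\otimes n}$, and likewise with $\OO(n)_{\ker_Q}$ in place of $\CC_Q(n)$. A direct coinvariants computation, using that $\ker_Q$ acts trivially on $V^{\otimes n}$, identifies $\OO(n)_{\ker_Q} \otimes_Q V^{\otimes n}$ with $(\OO(n) \otimes V^{\otimes n})_{B_n} = \OO(n) \otimes_{B_n} V^{\otimes n}$, naturally and compatibly with transition maps; hence $\OOhat(n) \otimes_{B_n} V^{\otimes n}$, restricted to $\Quot_n^{N}$, is the constant pro-object on $\OO(n) \otimes_{B_n} V^{\otimes n}$. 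It remains to show that for $Q \geq Q'$ in $\Quot_n^{N}$ the transition map $\CC_Q(n) \otimes_Q V^{\otimes n} \to \CC_{Q'}(n) \otimes_{Q'} V^{\otimes n}$ is an isomorphism. Surjectivity follows from strictness of $\{\CC_Q(n)\}$ and right-exactness of $- \otimes_Q V^{\otimes n}$. For injectivity, tensor the inclusion $\CC_Q(n) \hookrightarrow \OO(n)_{\ker_Q}$ by $- \otimes_Q V^{\otimes n}$: since $k[Q]$ is semisimple (we are in characteristic zero, or at any rate $\car k \nmid |Q|$) this stays monic, so $\CC_Q(n) \otimes_Q V^{\otimes n}$ embeds compatibly into the constant pro-object $\OO(n) \otimes_{B_n} V^{\otimes n}$, forcing the transition map to be monic. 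Thus all transition maps of $\CC(n) \otimes_{B_n} V^{\otimes n}$ over the cofinal subsystem $\Quot_n^{N}$ are isomorphisms, so the pro-object is pro-constant, as desired.

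I expect the main obstacle to lie in the second step: justifying that essential strictness really does permit the replacement of $\CC(n)$ by a strict, levelwise $B_n$-equivariant system of subobjects of $\OOhat(n)$, so that the comparison with the constant pro-object $\OO(n) \otimes_{B_n} V^{\otimes n}$ is genuinely level-by-level. The semisimplicity input needed for injectivity is mild and available in the paper's setting, and the coinvariants manipulations are routine; the only real care is in organizing the cofinality and the ambient inclusion into $\OOhat(n)$ so that these routine facts apply.
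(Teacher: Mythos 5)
Your overall skeleton matches the paper's proof (fix an arity $n$, pass to the cofinal subsystem of $Q\in\Quot_n$ with $\ker_Q$ acting trivially on $V^{\otimes n}$, use essential strictness to stabilize the levelwise subobjects of $\OOhat(n)$, then assemble the direct sum over the product indexing), but the step you yourself flag as the main risk is a genuine gap, and it is exactly the point where the paper spends its use of the hypothesis. Your object $\CC_Q(n):=\im(\CC(n)\to\OO(n)_{\ker_Q})$ is not well defined as stated: a map out of a pro-object to a level is only represented by maps from deeper levels, whose images form a decreasing family that need not stabilize, so there is no canonical single subobject to call the image. Moreover, neither ``$\{\CC_Q(n)\}$ is strict by construction'' nor ``essential strictness of $\CC(n)$ plus strictness of $\OOhat(n)$ forces $\CC(n)\to\{\CC_Q(n)\}$ to be a pro-isomorphism'' is justified; both amount to a Mittag-Leffler-type stabilization of the images of the transition maps, which is precisely what must be extracted from essential strictness. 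The paper does this directly: it writes the levelwise transition maps as $(\CC(n)_{\ker_P})_{K^P_Q}\subseteq\CC(n)_{\ker_Q}$ and uses essential strictness to conclude these inclusions are equalities for sufficiently large $Q$; your argument presupposes the conclusion of that step rather than proving it.

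The second deviation is the injectivity mechanism. Lemma \ref{tech_const_lem} is stated in Section \ref{profinite_algebra_section}, where $\bD$ is only assumed abelian symmetric monoidal with coinvariants; the characteristic-zero, $k$-linear standing assumption only enters in the following subsection (it is an input to Proposition \ref{subop_prop}, not to this lemma). Your injectivity step needs $-\otimes_Q V^{\otimes n}$ to preserve the monomorphism $\CC_Q(n)\hookrightarrow\OO(n)_{\ker_Q}$, i.e.\ Maschke-type semisimplicity of $k[Q]$ for \emph{all} finite quotients $Q$ of $B_n$, which forces $\car(k)=0$ and $\bD=\Vect_k$ -- hypotheses the lemma does not carry. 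The paper's proof needs no exactness input at all: once the levelwise system has stabilized, the identity $\CC(n)_{\ker_P}\otimes_{B_n}V^{\otimes n}=(\CC(n)_{\ker_P})_{K^P_Q}\otimes_Q V^{\otimes n}=\CC(n)_{\ker_Q}\otimes_Q V^{\otimes n}$ shows the terms of the tensored system are literally equal above some $Q$, giving both surjectivity and injectivity at once. So even where your extra hypotheses hold, the embedding-into-$\OO(n)\otimes_{B_n}V^{\otimes n}$ detour is unnecessary; and without them, your proof establishes a weaker statement than the lemma. (Your final assertion that an infinite direct sum of pro-constant objects is pro-constant is also stated without the paper's justification via the product indexing and the levels $\prod_n Q_n$, though your eventual-constancy conclusion does supply what is needed there.)
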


\begin{proof}

Recall that $\OOhat(n)$ is the inverse system of coinvariants
$$\{\OO(n)_{\ker_Q}, \; Q \in \Quot_n\},$$
where $\ker_Q =\ker(B_n \twoheadrightarrow Q)$.  Since $\CC(n)$ is a subsequence of $\OOhat(n)$, $\CC(n)$ is indexed over the same inverse system.  We will write 
$$\CC(n)_{\ker_Q} \leq \OOhat(n)_{\ker_Q}$$
for the corresponding term in the inverse system for $\CC(n)$, even though it may not be the $\ker_Q$-coinvariants of some subobject of $\OO(n)$.

If $P \twoheadrightarrow Q$ is a map in $\Quot_n$, write $K^P_Q$ for its kernel.  Then the iterated coinvariants satisfy $(\OOhat(n)_{\ker_P})_{K^P_Q} = \OOhat(n)_{\ker_Q}$.  This yields an inclusion
\beqn \label{ess_strict_eqn} (\CC(n)_{\ker_P})_{K^P_Q} \subseteq \CC(n)_{\ker_Q}.\eeqn
Since $\CC(n)$ is essentially strict, this inclusion is an equality for sufficiently large $Q$.

Now $\CC_n[V]$ is the pro-object $\{\CC(n)_{\ker_Q} \otimes_{B_n} V^{\otimes n} \}$, again indexed over $Q \in \Quot_n$.  Once $Q$ is large enough that $\ker_Q$ acts trivially on $V^{\otimes n}$, the corresponding term is
$$\CC(n)_{\ker_Q} \otimes_{B_n} V^{\otimes n} = \CC(n)_{\ker_Q} \otimes_{Q} V^{\otimes n}.$$
If $Q$ is also large enough that (\ref{ess_strict_eqn}) is an equality, we see that for each $P \twoheadrightarrow Q$,
$$\CC(n)_{\ker_P} \otimes_{P} V^{\otimes n} = (\CC(n)_{\ker_P})_{K^P_Q} \otimes_Q V^{\otimes n} =  \CC(n)_{\ker_Q} \otimes_{Q} V^{\otimes n},$$
and so this system is pro-constant: $\CC_n[V] \cong \CC(n)_{\ker_Q} \otimes_Q V^{\otimes n}$.  

This in turn implies that 
$$\CC[V] = \bigoplus_{n=0}^{\infty} \CC_n[V]$$
is pro-constant.  Recall that direct sums of pro-objects are indexed on the products of the indexing categories of their terms, so $\CC[V]$ is indexed on $\prod_n \Quot_n$.  For each $n$, we have established that there is some $Q_n \in \Quot_n$ such that the part of the system $\CC_n[V]$ lying over $Q_n$ is constant.  Thus $\CC[V]$ is pro-constant, being constant above $\prod_n Q_n$.

\end{proof}

\begin{proof}[Proof of Proposition \ref{fin_br_same_monad_prop}]

We apply the previous Lemma with $\CC = \OOhat$ to see that $\OOhat[V]$ is pro-constant.  Further, in degree $n$, the argument of the lemma shows that for $Q$ large enough that the $B_n$ action on $V^{\otimes n}$ factors through $Q$,
$$\OOhat_n[V] = \OO(n)_{\ker_Q} \otimes_Q V^{\otimes n} \cong \OO(n) \otimes_{B_n} V^{\otimes n} = \OO_n[V].$$

\end{proof}

\subsection{The value of the Artin functor for subsequences of $\BrAsshat$}

In this section we assume that $k$ is a field of characteristic zero, and $\bC$ is a $k$-linear, abelian, braided monoidal category.

\begin{prop} \label{subop_prop} 

If $\CC$ is an essentially strict braided subsequence of $\BrAsshat$ and $V \in \bC$ is finitely braided, then the value of the Artin functor $\CC[V]$ is pro-constant, and may be identified with a summand of $T(V)$.

\end{prop}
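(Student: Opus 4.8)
The plan is to combine Lemma~\ref{tech_const_lem} with the characteristic-zero semisimplicity of $k[S_n]$. First I would invoke Lemma~\ref{tech_const_lem} directly with $\OO = \BrAss$: since $\CC$ is an essentially strict braided subsequence of $\BrAsshat = \widehat{\BrAss}$ and $V$ is finitely braided, that lemma immediately gives that $\CC[V]$ is pro-constant. Concretely, in degree $n$, once $Q \in \Quot_n$ is large enough that the $B_n$-action on $V^{\otimes n}$ factors through $Q$ and the essential strictness inclusion (\ref{ess_strict_eqn}) is an equality, one has $\CC_n[V] \cong \CC(n)_{\ker_Q} \otimes_Q V^{\otimes n}$, and this stabilizes. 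So the only real content is the identification of $\CC[V]$ with a summand of $T(V) = \BrAss[V] = \bigoplus_n k[B_n] \otimes_{B_n} V^{\otimes n}$.

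For that, the key step is to produce, for each $n$, a $B_n$-equivariant splitting of the surjection $k[B_n] \twoheadrightarrow k[B_n]_{\ker_Q} = k[Q]$ whose image contains $\CC(n)_{\ker_Q}$ compatibly with the operad/sequence structure. Here I would use that $\ker_Q$ has finite index in $B_n$, so $k[Q] = k[B_n]_{\ker_Q} = k[B_n] \otimes_{k[\ker_Q]} k$, and since $|B_n/\ker_Q|$ is invertible in $k$ (characteristic zero), the averaging idempotent $e_Q := \tfrac{1}{[B_n:\ker_Q]}\sum_{g \in \ker_Q} g$ (or rather its counterpart over cosets) gives a $B_n$-equivariant section $k[Q] \hookrightarrow k[B_n]$ realizing $k[Q]$ as a direct summand of the regular representation $k[B_n]$ as a $B_n$-bimodule. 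Tensoring with $V^{\otimes n}$ over $B_n$: the term $\CC(n)_{\ker_Q}\otimes_{B_n} V^{\otimes n}$ sits inside $k[Q]\otimes_{B_n}V^{\otimes n}$ which is a summand of $k[B_n]\otimes_{B_n}V^{\otimes n} = V^{\otimes n}$, i.e.\ a summand of $T(V)_n$. Since $\CC(n) \leq \BrAsshat(n)$ is a braided \emph{subsequence} (closed under the $B_n$-action), $\CC(n)_{\ker_Q}$ is a $B_n$-submodule of $k[Q]$, hence a $B_n$-summand (again by semisimplicity of $k[Q]$, as $Q$ is finite and $\car k = 0$), so $\CC(n)_{\ker_Q}\otimes_{B_n} V^{\otimes n}$ is itself a summand of $V^{\otimes n}$. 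Summing over $n$ yields that $\CC[V]$ is a summand of $T(V)$.

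The main obstacle I anticipate is bookkeeping the dependence on $Q$: the splitting above is constructed at a fixed finite level $Q$, and one must check that as $Q$ varies the summands are compatible so that the pro-constant object $\CC[V]$ really maps isomorphically onto a single summand of the (pro-constant, since $V$ is finitely braided, hence $T(V)$ is complete in each degree up to passing to large $Q$) object $T(V)$ — not merely onto a summand degreewise. But this is exactly handled by the stabilization already proved in Lemma~\ref{tech_const_lem}: for $Q$ large, both $\CC_n[V]$ and $T(V)_n = V^{\otimes n}$ have stabilized, the transition maps are isomorphisms, and the split injection $\CC(n)_{\ker_Q}\otimes_{B_n}V^{\otimes n} \hookrightarrow V^{\otimes n}$ is then independent of the choice of large $Q$ up to canonical isomorphism. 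The only subtlety worth spelling out is that the idempotent cutting out the summand commutes with the transition maps $k[P]\twoheadrightarrow k[Q]$ for $P\twoheadrightarrow Q$, which follows because $e_Q$ is the image of $e_P$ under the algebra map $k[P]\to k[Q]$ when $\ker_P \subseteq \ker_Q$; I would note this and conclude.
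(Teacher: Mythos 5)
Your overall strategy is the paper's: pro-constancy via Lemma \ref{tech_const_lem} with $\OO = \BrAss$, and the summand claim via characteristic-zero semisimplicity at a finite level $Q$. But one intermediate step is genuinely wrong: there is no $B_n$-equivariant splitting of $k[B_n] \twoheadrightarrow k[Q]$, and the ``averaging idempotent'' $e_Q = \frac{1}{[B_n:\ker_Q]}\sum_{g\in\ker_Q} g$ does not exist, because $\ker_Q$ is a finite-index subgroup of the infinite group $B_n$ and hence infinite, so the sum is not an element of $k[B_n]$. In fact $k[B_n]$ has no zero divisors, hence no nontrivial idempotents at all; and already for $B_2=\Z$ a $k[\Z]$-linear section of $k[t^{\pm 1}]\to k[t^{\pm 1}]/(t^m-1)$ would require a nonzero element of the domain $k[t^{\pm 1}]$ annihilated by $t^m-1$, which is impossible. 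The same problem undercuts your closing remark that ``$e_Q$ is the image of $e_P$'': those elements are not defined.

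Fortunately that step is superfluous. Once $Q$ is large enough that the $B_n$-action on $V^{\otimes n}$ factors through $Q$, the canonical map $V^{\otimes n}=k[B_n]\otimes_{B_n}V^{\otimes n}\to k[Q]\otimes_{B_n}V^{\otimes n}=k[Q]\otimes_{Q} V^{\otimes n}$ is already an isomorphism, so no section of $k[B_n]\to k[Q]$ is needed. What remains of your argument is exactly the paper's proof: $\CC(n)_{\ker_Q}$ is a $B_n$- (equivalently $Q$-) submodule of $k[Q]$, which is semisimple by Maschke since $\car(k)=0$ and $Q$ is finite, so $k[Q]=\CC(n)_{\ker_Q}\oplus\rho$, and tensoring over $Q$ with $V^{\otimes n}$ exhibits $\CC_n[V]\cong\CC(n)_{\ker_Q}\otimes_{Q} V^{\otimes n}$ as a summand of $T_n(V)=V^{\otimes n}$. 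Your compatibility-in-$Q$ worry also evaporates: the inclusions $\CC(n)_{\ker_Q}\hookrightarrow k[Q]$ commute with the transition maps of the inverse systems simply because $\CC$ is a subsequence of $\BrAsshat$, and pro-constancy (already supplied by Lemma \ref{tech_const_lem}) makes the degreewise identification at a single sufficiently large $Q$ suffice; no compatible family of idempotents is required.
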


\begin{proof}

The first claim is an immediate consequence of Lemma \ref{tech_const_lem}, with $\OO = \BrAss$.  Further, the characteristic zero assumption (along with Maschke's theorem) ensures a splitting of the regular representation $k[Q]$ into the sum of $\CC(n)_{\ker_Q}$ and a complementary $Q$-representation $\rho$.  Then for $Q$ large enough that the action of $B_n$ on $V^{\otimes n}$ factors through $Q$,
$$T_n (V) =V^{\otimes n} \cong k[Q] \otimes_{Q} V^{\otimes n} =  \left(\CC(n)_{\ker_Q} \otimes_{Q} V^{\otimes n}\right) \oplus (\rho \otimes_{Q} V^{\otimes n}).$$
which gives the claimed splitting.

\end{proof}

\begin{prop} \label{ideal_prop} 

If $\CC$ is an essentially strict braided subsequence of $\BrAsshat$ and $\II$ is the (operadic) left ideal in $\BrAsshat$ generated by $\CC$, then $\II[V]$ coincides with the two-sided (ring theoretic) ideal in $\That(V)$ generated by $\CC[V]$. 

\end{prop}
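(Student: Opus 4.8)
Write $\DD:=\BrAsshat$, so that $\DD[V]=\BrAsshat[V]=\That(V)$ is pro-constant (since $V$ is finitely braided) and, by Proposition \ref{subop_prop}, $\CC[V]$ is a genuine subobject of $\That(V)$; let $(\CC[V])$ denote the two-sided ring ideal of $\That(V)$ that it generates. The plan is to prove the inclusions $(\CC[V])\subseteq\II[V]$ and $\II[V]\subseteq(\CC[V])$ in turn. I will use the characteristic-zero hypothesis only in the mild form that $-\otimes_{B_n}V^{\otimes n}$ is exact on subquotients of $\DD(n)$ — at each finite level it is $-\otimes_{k[Q]}V^{\otimes n}$ with $k[Q]$ semisimple — so that a braided subsequence $\EE\le\DD$ produces a genuine subobject $\EE[V]\le\DD[V]$ and the elementary computation of Remark \ref{quot_note} identifies the Artin functor of a module quotient with the corresponding quotient of Artin functors.

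For $(\CC[V])\subseteq\II[V]$, note first that $\CC\subseteq\II$ because $\II$ is by definition the smallest left ideal of $\DD$ containing $\CC$; hence $\CC[V]\subseteq\II[V]$. Next, $\II[V]$ is a two-sided ideal of $\That(V)$: since $\II$ is a left ideal, the quotient $\DD/\II$ is a left $\DD$-module, so by the remark following (\ref{free_structure_eqn}) the Artin functor $(\DD/\II)[V]$ is a $\DD$-algebra, and in particular an associative algebra (Proposition \ref{assoc_prop}); and by Remark \ref{quot_note} it is canonically $\That(V)/\II[V]$. The natural map $\That(V)=\DD[V]\to(\DD/\II)[V]\cong\That(V)/\II[V]$ is thus a map of $\DD$-algebras, hence an algebra homomorphism, so its kernel $\II[V]$ is a two-sided ideal. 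Containing $\CC[V]$, it contains $(\CC[V])$.

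For $\II[V]\subseteq(\CC[V])$, let $q\colon\That(V)\to B:=\That(V)/(\CC[V])$ be the quotient. As a quotient of the finitely braided algebra $\That(V)$, the algebra $B$ is itself finitely braided, hence a $\DD$-algebra by Proposition \ref{pro_const_alg_prop}; being complete, it receives the algebra map $q$ as a map of $\DD$-algebras. Now $\II(N)$ is spanned, as a $B_N$-object, by the single substitutions $d\circ_i c$ with $d\in\DD(m)$, $c\in\CC(n)$, $1\le i\le m$ and $m+n-1=N$ (nested substitutions collapse to this form by operadic associativity), so $\II[V]$ is spanned by elements $\omega=(d\circ_i c)\otimes_{B_N}v$. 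Using the free-algebra structure map (\ref{free_structure_eqn}) together with $d\circ_i c=\gamma(d;1,\dots,1,c,1,\dots,1)$, such an $\omega$ equals the value $\theta_{\DD[V]}\bigl(d\otimes_{B_m}(y_1\otimes\cdots\otimes y_m)\bigr)$ of the $\DD$-algebra structure map of $\That(V)$ on an $m$-tuple in which one entry is $y_i=c\otimes_{B_n}u$ for the appropriate sub-word $u\in V^{\otimes n}$ of $v$, and every other $y_j$ is a single letter of $V$ (the $y_j$ altogether using up all $N$ letters of $v$). Applying the $\DD$-algebra map $q$ yields $q(\omega)=\theta_B\bigl(d\otimes_{B_m}(q(y_1)\otimes\cdots\otimes q(y_m))\bigr)$; but $q(y_i)=q(c\otimes_{B_n}u)=0$ since $y_i\in\CC[V]\subseteq\ker q$, so the tensor $q(y_1)\otimes\cdots\otimes q(y_m)$ vanishes in $B^{\otimes m}$ and $q(\omega)=0$. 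Hence $\omega\in\ker q=(\CC[V])$, which gives $\II[V]\subseteq(\CC[V])$ and so the desired equality.

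The one nontrivial piece of bookkeeping — the main obstacle, such as it is — is the identification of a spanning element $(d\circ_i c)\otimes_{B_N}v$ of $\II[V]$ with the value of the $\DD$-algebra structure map on a tuple carrying $c\otimes_{B_n}u$ in a single slot and letters of $V$ elsewhere; this amounts to chasing the free-algebra structure map (\ref{free_structure_eqn}) and the conversion between the $\circ_i$ and May's $\gamma$, plus the trivial observation that $B_N$-translates of $d\circ_i c$ are irrelevant once $-\otimes_{B_N}V^{\otimes N}$ has been applied. Once that dictionary is in place both inclusions are formal, the crucial point being merely that the structure map of any $\DD$-algebra annihilates a tensor one of whose factors is zero.
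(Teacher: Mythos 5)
Your strategy is essentially the paper's computation in disguise: the paper identifies $\II_n[V]$ as the image of $\bigoplus_{m,i}\BrAsshat(m)\otimes_{B_m^i}(V^{\otimes i-1}\otimes\CC[V]_{n-m+1}\otimes V^{\otimes m-i})$ — which is your description of $\II[V]$ via single substitutions $d\circ_i c$, with $B_N$-translates absorbed by $-\otimes_{B_N}V^{\otimes N}$ — and then observes that this image is exactly the degree-$n$ component of the two-sided ideal. Your first inclusion (via the left-module structure on $\DD/\II$, the remark after (\ref{free_structure_eqn}), and Remark \ref{quot_note}) is fine. The genuine gap is in the second inclusion: you assert that $\That(V)$ is finitely braided and hence that $B=\That(V)/(\CC[V])$ is finitely braided, so that Proposition \ref{pro_const_alg_prop} makes $B$ a $\BrAsshat$-algebra and $q$ a $\BrAsshat$-algebra map. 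That premise is not available. Finite braiding of $V$ only controls finite tensor powers (Proposition \ref{powers_fb_prop}); $T(V)=\bigoplus_n V^{\otimes n}$ is an infinite sum, and the paper explicitly warns that infinite sums and their quotients need not be finitely braided even when the generating object is (see the hypotheses of Proposition \ref{sums_prop} and the remark after Proposition \ref{enveloping_defn_prop}: ``when $L$ is finitely braided, it need not be the case that $U_{\CC}(L)$ is, being a quotient of an infinite sum''). No property {\bf F} or semisimplicity assumption is in force in this proposition, so Proposition \ref{pro_const_alg_prop} does not apply to $B$ as you invoke it.

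The conclusion you need — that the $\BrAsshat$-operations descend to $B$ so that $q(\theta(d\otimes y_1\otimes\cdots\otimes y_m))=\theta_B(d\otimes q(y_1)\otimes\cdots\otimes q(y_m))$ vanishes when some $y_i\in\CC[V]$ — is true, but it must be argued, and the argument is not a triviality to be outsourced: it is the actual content of the proposition. The repair is degreewise: the ideal $(\CC[V])$ is homogeneous, each graded piece of $\That(V)$ is a finite tensor power of $V$ (finitely braided by Proposition \ref{powers_fb_prop}), and on such a piece an element of $\BrAsshat(m)$ acts as a finite $k$-linear combination of ``cabled braid, then concatenate'' maps; by naturality of the braiding, a braid carries a tensor with one factor in $\CC[V]$ to a sum of tensors each with some factor in $\CC[V]$, and concatenation then lands in the two-sided ideal. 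Once you say this, you have reproduced the paper's direct identification, and the detour through $B$ and $q$ is no longer doing any work; either keep the quotient formulation with this justification, or argue the containment $\II[V]\subseteq(\CC[V])$ directly as the paper does.
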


\begin{proof}

By construction, $\II(n)$ is the image of the map
$$\sum \circ_i: \bigoplus_{m=1}^{n+1} \bigoplus_{i=1}^m \Ind_{B_m^i \times B_{n-m+1}}^{B_n}(\BrAsshat(m) \otimes \CC(n-m+1)) \to \BrAsshat(n).$$
Therefore $\II_n[V]$ is the image in $\That_n(V) = \BrAsshat_n[V]$ of this map after tensoring over $B_n$ with $V^{\otimes n}$. The domain may be identified with
$$\bigoplus_{m=1}^{n+1} \bigoplus_{i=1}^m \BrAsshat(m) \otimes_{B_m^i} (V^{\otimes i-1} \otimes  \CC[V]_{n-m+1} \otimes V^{\otimes m-i}).$$
This maps precisely to the degree $n$ component of the two-sided ideal generated by $\CC[V]$.

\end{proof}

\begin{prop} \label{ker_prop}

Let $f \in k[B_n]$, and write $\ker(f) \leq \BrAsshat(n)$ for the kernel of left multiplication by $f$; this is a (right) braided subsequence of $\BrAsshat(n)$.  Assume that $k$ has characteristic zero.

\begin{enumerate}

\item The pro-object $\ker(f) \leq \BrAsshat(n)$ is strict. \label{strict_part}

\item For any $V \in \proC$, the natural map \label{ker_part} 
$$\ker(f) \otimes_{B_n} V^{\otimes n} \to \BrAsshat(n) \otimes_{B_n} V^{\otimes n} = \That_n(V)$$
carries the domain isomorphically onto $\ker(f: \That_n(V) \to \That_n(V))$.

\item In particular, if $V$ is finitely braided, 
$$\ker(f) \otimes_{B_n} V^{\otimes n} \cong \ker(f: V^{\otimes n} \to V^{\otimes n}).$$

\end{enumerate}

\end{prop}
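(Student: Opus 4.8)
The plan is to prove the three parts in order, with parts \ref{strict_part} and \ref{ker_part} doing the real work and part (3) being an immediate specialization. First, recall that $\BrAsshat(n)$ is the inverse system $\{k[Q] : Q \in \Quot_n\}$ of group rings of finite quotients of $B_n$, with connecting maps the surjections $k[P] \twoheadrightarrow k[Q]$ induced by $P \twoheadrightarrow Q$. For $f \in k[B_n]$, left multiplication by $f$ makes sense on each $k[Q]$ once $Q$ is large enough that $f$ descends to $k[Q]$ (equivalently, once $\ker_Q$ lies in the support-closure; in any case it is defined for a cofinal system of $Q$), and it commutes with the right $B_n$-action, so $\ker(f)$ is a right braided subsequence whose term over $Q$ is $\ker(f : k[Q] \to k[Q])$.

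For part \ref{strict_part}, I would use Maschke's theorem: since $\car(k) = 0$ and each $Q$ is finite, $k[Q]$ is semisimple, so left multiplication by $f$ on $k[Q]$ — a $k[Q]$-linear endomorphism of a semisimple module for the right action — has image a direct summand, and hence $\ker(f : k[Q] \to k[Q])$ is also a direct summand of $k[Q]$ as a right $Q$-module. Given a connecting map $P \twoheadrightarrow Q$, the induced map $k[P] \to k[Q]$ is a surjection of right group-ring modules; I must check it carries $\ker(f)$ in $k[P]$ onto $\ker(f)$ in $k[Q]$. Surjectivity of $k[P] \to k[Q]$ and commutation with left-multiplication-by-$f$ give that the image of $\ker(f : k[P]) $ lands in $\ker(f: k[Q])$; for the reverse inclusion, lift a complement — i.e., write $k[P] = \ker(f) \oplus M$ with $f$ acting injectively on $M$, and observe that the composite $M \hookrightarrow k[P] \twoheadrightarrow k[Q]$ together with $\ker(f:k[P]) \twoheadrightarrow$? — more cleanly: since the quotient map $k[P] \to k[Q]$ intertwines left multiplication by $f$, and since by semisimplicity $\ker(f:k[P]) = (1 - e_P) \cdot k[P]$ for a suitable idempotent, one reduces to checking idempotents map compatibly, which follows because left-multiplication-by-$f$ commutes with the quotient. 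So the connecting maps restrict to epimorphisms and $\ker(f)$ is strict.

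For part \ref{ker_part}, the point is to compute $\ker(f) \otimes_{B_n} V^{\otimes n}$. Pick $Q$ large enough (and within the cofinal strict subsystem from part \ref{strict_part}, and large enough that all relevant structure is stable). By Maschke, $k[Q] = \ker(f:k[Q]) \oplus \im(\text{mult by } f \text{ on } k[Q])$? — not quite what I want; rather use the rank-nullity splitting: left multiplication by $f$ on the semisimple right-$Q$-module $k[Q]$ fits into a split short exact sequence $0 \to \ker(f) \to k[Q] \xrightarrow{f} \im(f) \to 0$ of right $Q$-modules, and also $k[Q] = \ker(f) \oplus C$ for a complement $C$ on which $f$ acts injectively, with $f(C) = \im(f)$. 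Tensoring over $B_n$ (equivalently over $Q$, since everything factors through $Q$) with $V^{\otimes n}$ is exact on the split sequence, giving
$$\That_n(V) = k[Q] \otimes_Q V^{\otimes n} = \big(\ker(f) \otimes_Q V^{\otimes n}\big) \oplus \big(C \otimes_Q V^{\otimes n}\big),$$
and left multiplication by $f$ on $\That_n(V)$ kills the first summand and is injective on the second (since $f : C \to \im(f) \subseteq k[Q]$ is a split injection of right $Q$-modules, it stays injective after $- \otimes_Q V^{\otimes n}$). Hence $\ker(f : \That_n(V) \to \That_n(V)) = \ker(f) \otimes_Q V^{\otimes n} = \ker(f) \otimes_{B_n} V^{\otimes n}$, which is the claim. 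For general $V \in \proC$ one runs this termwise over the cofinal system of $Q$ and assembles the pro-object; strictness from part \ref{strict_part} guarantees the resulting inverse system of kernels is the honest kernel in $\proC$. Finally, part (3) follows from part \ref{ker_part} together with Proposition \ref{ker_prop}'s setup and the fact (as in Proposition \ref{subop_prop} / Lemma \ref{tech_const_lem}) that for finitely braided $V$ one has $\That_n(V) \cong V^{\otimes n}$, so $\ker(f : V^{\otimes n} \to V^{\otimes n}) = \ker(f) \otimes_{B_n} V^{\otimes n}$.

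The main obstacle is part \ref{strict_part} — specifically, verifying cleanly that the connecting maps $k[P] \to k[Q]$ restrict to \emph{surjections} $\ker(f : k[P]) \twoheadrightarrow \ker(f : k[Q])$ rather than merely maps between them; the natural way is to track Maschke idempotents and use that left-multiplication-by-$f$ is central to the right-module structure and compatible with the quotient, but one should be careful that $f$ itself need not descend to every $Q$, so the whole discussion must be confined to the cofinal subsystem of sufficiently large $Q$. Once strictness is in hand, part \ref{ker_part} is a formal consequence of exactness of $- \otimes_{B_n} V^{\otimes n}$ on split exact sequences of finite-group representations in characteristic zero, and part (3) is immediate.
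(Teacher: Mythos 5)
Parts (2) and (3) of your proposal are essentially the paper's own argument: since $\car(k)=0$ and $Q$ is finite, $k[Q]$ is semisimple, so tensoring the exact sequence $0 \to \ker(f) \to k[Q] \xrightarrow{\;f\;} k[Q]$ of right $k[Q]$-modules with any module $M$ is exact (your split-complement bookkeeping is just a hands-on version of this flatness statement), giving $\ker(f)\otimes_{k[Q]}M = \ker(f\colon M \to M)$; applied termwise to $M = V^{\otimes n}_{\ker_Q}$ this is part (2), and part (3) follows by taking $Q$ large enough to trivialize the $\ker_Q$-action. Two incidental corrections: $f$ descends to \emph{every} $k[Q]$, since $k[B_n]\to k[Q]$ is a ring homomorphism, so your restriction to a ``cofinal subsystem where $f$ is defined'' is based on a misconception (though harmless); and kernels of level maps of pro-objects are computed levelwise, so strictness is not what lets you ``assemble'' the kernel in $\proC$.

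The genuine gap is in part (1), and you flag it yourself. You never actually prove that the connecting map $\pi\colon k[P]\to k[Q]$ carries $\ker(f\colon k[P])$ \emph{onto} $\ker(f\colon k[Q])$. The sentence ``one reduces to checking idempotents map compatibly, which follows because left-multiplication-by-$f$ commutes with the quotient'' is not a proof: commutation only shows that the image of an idempotent $e_P$ generating $\ker(f\colon k[P])$ is an idempotent annihilated by $f$, i.e.\ it re-proves the inclusion $\pi(\ker(f\colon k[P]))\subseteq \ker(f\colon k[Q])$ you already had; since the Maschke complement (hence $e_P$) is highly non-unique, there is no reason its image generates all of $\ker(f\colon k[Q])$ without a further input (e.g.\ the unique $S$-symmetric idempotent of \cite{prs}, or the central-idempotent decomposition $k[P]\cong k[Q]\times B$ of semisimple rings). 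The efficient fix — and exactly what the paper does — is to apply the tensor-exactness you already use in part (2) to the left $k[P]$-module $M=k[Q]$: it gives $\ker(f\colon k[P])\otimes_{k[P]}k[Q] \cong \ker(f\colon k[Q])$, and the left-hand side maps to $k[Q]$ with image precisely $\pi(\ker(f\colon k[P]))$ (which is already a right $k[Q]$-submodule), so the restriction of $\pi$ is surjective and strictness follows immediately.
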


\begin{proof}

For each finite quotient $Q$ of $B_n$, there is a tautologically exact sequence of $k[Q]$-modules
$$\xymatrix@1{ 0 \ar[r] & \ker(f) \ar[r] & k[Q] \ar[r]^-{f} & k[Q] \ar[r] & \cok(f) \ar[r] & 0}$$
Since the characteristic of $k$ is $0$ and $Q$ is finite, if $M$ is a $k[Q]$-module, the operation $-\otimes_{k[Q]} M$ is exact, yielding an exact sequence
$$\xymatrix@1{ 0 \ar[r] & \ker(f) \otimes_{k[Q]} M \ar[r] & M \ar[r]^-{f} & M \ar[r] & \cok(f) \otimes_{k[Q]} M  \ar[r] & 0,}$$
so $\ker(f) \otimes_{k[Q]} M = \ker(f: M \to M)$.  

If $Q \twoheadrightarrow R$ is a map in $\Quot_n$, then applying this to $M=k[R]$ gives the first result.  Applying it to $M = V^{\otimes n}$ gives an isomorphism
$$\ker(f) \otimes_{B_n} (V^{\otimes n}) = \ker(f) \otimes_{Q} (V^{\otimes n}_{\ker_Q}) \cong \ker(f: V^{\otimes n}_{\ker_Q} \to V^{\otimes n}_{\ker_Q}),$$
which is the term of $\ker(f: \That_n(V) \to \That_n(V))$ indexed by $Q$.  The third result follows from the second by taking $Q$ sufficiently large to trivialize the action of $\ker_Q$ on $V^{\otimes n}$.

\end{proof}

\subsection{Monads} \label{monad_sec}

If $\OO$ is a braided operad in $\bD$, and $\bC$ is a braided monoidal category enriched and tensored over $\bD$, we consider the corresponding Artin functor $\OO[-]: \bC \to \bC$.  The $\OO$-algebra structure maps $\gamma$ assemble in (\ref{free_structure_eqn}) to a natural transformation 
$$\Gamma: \OO[-] \circ \OO[-] \to \OO[-].$$
The associativity axiom in the definition of an operad implies that $\Gamma$ is also associative.  

If $\OO$ is a unital operad, then $\OO[-]$ becomes a \emph{monad} or \emph{triple}: a monoid object in the category of endofunctors of $\bC$, where the monoidal product is composition, and the unit is the identity.  Furthermore, $\OO$-algebras $A$ are equivalent to \emph{algebras} for this monad: they are equipped with a suitably associative and unital operation
$$\mu_A: \OO[A] \to A.$$
In the case at hand, the components of a $\OO[-]$-algebra structure map $\OO[A] \to A$ are precisely the maps defining a $\OO$-algebra structure on $A$.  See, e.g., \cite{may} for a discussion of these facts in the symmetric monoidal setting; this adapts to the braided context, mutatis mutandis.

Not every monad on $\bC$ arises from the Artin functor associated to a braided operad $\OO$ in $\bD$.  Furthermore, it is quite conceivable that there could be multiple operads in $\bD$ whose associated monads are isomorphic, if $\bC$ is in some sense too small to fully ``test'' operads in $\bD$.  Proposition \ref{detect_prop} partly addresses this: if $\bC$ has a suitably rich family of finitely braided vector spaces amongst its objects, then the $B_n$ action on $\OO(n)$ should be determined by $\OO[-]$ (see Lemma \ref{prim_tensor_lemma} for an example of this).

If we restrict our attention to the full subcategory of finitely braided objects in $\bC$, Proposition \ref{fin_br_same_monad_prop} implies that the natural map $\OO \to \OOhat$ is an isomorphism of monads.   As such, we should only ever be able to recover information about $\OOhat$ (and not $\OO$) from the restriction of $\OO[-]$ to this subcategory.

\section{Operads for primitives in braided Hopf algebras}

With the exception of subsection \ref{topology_section}, in this section the category $\bD$ will be $\Vect_k$, so $\pro-\bD$ (the category from which we draw our operads) will be inverse systems of vector spaces over a field $k$ of characteristic zero.  The category $\bC$ will be abelian, enriched and tensored over $\Vect_k$; it is in particular a $k$-linear, abelian, braided monoidal category.  Most of the constructions in this section will naturally produce algebras in $\pro-\bC$.  Using results like Proposition \ref{subop_prop}, we will may be able to reduce these to pro-constant objects.

\subsection{The monad of primitives} \label{monad_prim_sec}

Consider the endofunctor 
$$PT: \bC \to \bC \; \mbox{ given by } \; V \mapsto P(T(V))$$
defined by the primitives in the tensor algebra on $V$, equipped with the Hopf structure as given in section \ref{tensor_section}.

\begin{prop} \label{PT_prop}

$PT$ is a monad, and if $A$ is a braided Hopf algebra, $P(A)$ is an algebra for $PT$.

\end{prop}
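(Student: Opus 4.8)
The plan is to exhibit $PT$ as the monad attached to an adjunction. Let $\mathrm{BrHopf}(\bC)$ denote the category of braided Hopf algebras in $\bC$, let $P\colon \mathrm{BrHopf}(\bC)\to\bC$ be the primitives functor, and let $T\colon\bC\to\mathrm{BrHopf}(\bC)$ be the tensor algebra equipped with the Hopf structure of Section~\ref{tensor_section}, in which $V$ is primitive. Note first that $P$ really is a functor: a bialgebra map $f\colon B\to A$ satisfies $\Delta_A f=(f\otimes f)\Delta_B$, hence carries $\ker(\Delta_B-\id\otimes\one-\one\otimes\id)$ into $\ker(\Delta_A-\id\otimes\one-\one\otimes\id)$, so $f(P(B))\subseteq P(A)$. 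I claim that $T$ is left adjoint to $P$. Granting this, the composite $P\circ T=PT$ is a monad in the standard way, and for any braided Hopf algebra $A$ the object $P(A)$ — being the value of the right adjoint — is canonically a $PT$-algebra via the comparison functor $\mathrm{BrHopf}(\bC)\to\bC^{PT}$ into the Eilenberg–Moore category.

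So everything reduces to a natural bijection
$$\Hom_{\mathrm{BrHopf}(\bC)}(T(V),A)\;\cong\;\Hom_{\bC}(V,P(A)).$$
Given $g\colon V\to P(A)$, I would compose with $P(A)\hookrightarrow A$ and use the freeness of $T(V)=\BrAss[V]$ as an associative algebra object (the universal property of the Artin functor of the unital operad $\BrAss$) to get a unique algebra map $\tilde g\colon T(V)\to A$. Then $\tilde g$ is a coalgebra map, hence a bialgebra map, hence a Hopf map (antipodes being automatic on the connected algebra $T(V)$, and on any bialgebra map between Hopf algebras): indeed $\Delta_A\tilde g$ and $(\tilde g\otimes\tilde g)\Delta_{T(V)}$ are both algebra maps $T(V)\to A\otimes A$ — here $A\otimes A$ carries the braided product, for which $\tilde g\otimes\tilde g$ is multiplicative by naturality of the braiding — and they agree after restriction to the generating subobject $V$, since $V$ is primitive in $T(V)$ and $g(V)\subseteq P(A)$; so they agree. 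Compatibility with counits is identical, both $\epsilon_A\tilde g$ and $\epsilon_{T(V)}$ vanishing on $V$ because primitives are annihilated by the counit. Conversely, a Hopf map $f\colon T(V)\to A$ restricted to $V\hookrightarrow T(V)$ lands in $P(A)$, since $V$ is primitive and $f$ is a coalgebra map. These assignments are mutually inverse by the uniqueness clause in the freeness of $T(V)$, and naturality in $V$ and $A$ is routine.

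Unwinding the adjunction, the unit $V\to P(T(V))$ of the monad is the inclusion $V\hookrightarrow T(V)$ (which factors through the primitives by the definition of the Hopf structure on $T(V)$), the multiplication $PT(PT(V))\to PT(V)$ is $P$ applied to the algebra map $T(P(T(V)))\to T(V)$ extending $P(T(V))\hookrightarrow T(V)$, and the $PT$-algebra structure map on $P(A)$ is $P$ applied to the counit $\epsilon_A\colon T(P(A))\to A$, i.e.\ the algebra map extending $P(A)\hookrightarrow A$; the monad and algebra axioms then hold formally. The only real work is categorical bookkeeping: verifying, at the level of morphisms in the abstract braided monoidal category $\bC$ rather than by element-chasing, that a coalgebra map preserves primitives, that primitives are killed by the counit, that an algebra map out of $T(V)$ is determined by its restriction to $V$, and that $\tilde g\otimes\tilde g$ is multiplicative for the braided tensor product. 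I expect that to be the main — if mild — obstacle.
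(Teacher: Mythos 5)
Your proposal is correct, and it proves the same statement by a mild but genuine repackaging of the paper's argument. The paper constructs the relevant maps by hand: the algebra map $M_A: T(P(A)) \to A$ extending the inclusion of the primitives, the observation that it is a Hopf map because $P(A)$ is primitive, the structure map $\mu_A = P(M_A)$, the unit given by the inclusion $V \hookrightarrow P(T(V))$, and then a direct verification of associativity via a commuting square of Hopf algebras restricted to primitives. You instead establish the full adjunction between the tensor-algebra functor $T$ (valued in braided Hopf algebras) and the primitives functor $P$, and then invoke the general facts that an adjunction induces a monad on the source and that the right adjoint's values are canonically algebras via the Eilenberg--Moore comparison; unwinding this recovers exactly the paper's formulas, with your adjunction counit at $A$ being the paper's $M_A$ and the comparison structure map being $\mu_A = P(M_A)$. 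The key content is shared: what the paper compresses into ``since $P(A)$ is primitive, this is a Hopf map,'' you spell out as the check that $\Delta_A \circ \tilde g$ and $(\tilde g \otimes \tilde g)\circ \Delta_{T(V)}$ are algebra maps into the braided product $A \otimes A$ agreeing on the generators, which is correct in the categorical setting by naturality of the braiding. What your route buys is that the monad and algebra axioms come for free from category theory, at the cost of setting up the category of braided Hopf algebras and verifying the adjunction (including the converse direction, that a Hopf map out of $T(V)$ restricts to a map into $P(A)$); the paper's route avoids that overhead but must verify associativity explicitly. The only caveats are cosmetic: your ``$\epsilon_A: T(P(A)) \to A$'' overloads the symbol used for the coalgebra counit (it is the adjunction counit, the paper's $M_A$), and one should note, as you implicitly do, that the counit of a bialgebra kills primitives and that a bialgebra map between Hopf algebras automatically respects antipodes, so no separate antipode check is needed.
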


\begin{proof}

If $A$ is a braided Hopf algebra, the universal property of the tensor algebra gives a unique algebra map $M_A: T(P(A)) \to A$ which is induced by the inclusion $P(A) \hookrightarrow A$.  Since $P(A)$ is primitive, this is a Hopf map.  So if we restrict this map to the primitives in the tensor algebra, its image lies in $P(A)$, giving a map
$$\mu_A:= P(M_A): P(T(P(A))) \to P(A).$$
Taking $A=T(V)$ defines the monadic composition $\mu: PT \circ PT \to PT$.  For general $A$, this gives the algebra structure map.

The unit $\eta: \id \to PT$ is given by the natural inclusion $V \hookrightarrow PT(V)$; it is straightforward to verify that $\mu \circ (PT\eta) = \id_{PT} = \mu \circ (\eta PT)$.  Associativity of $\mu$ will follow from commutativity of the following diagram of Hopf algebras, after restricting to primitives:
$$\xymatrix{
T(P(T(P(A)))) \ar[rr]^-{T(P(M_A))} \ar[d]_-{M_{T(P(A))}} & & T(P(A)) \ar[d]^-{M_A} \\
T(P(A)) \ar[rr]_-{M_A} & & A.
}$$
This, in turn, can be verified by restriction to $P(T(P(A))) \leq T(P(T(P(A))))$, where it is the obvious assertion that $\mu_A = \id_{P(T(P(A)))} \circ \mu_A$.

\end{proof}

\subsection{The braided primitive operad} \label{br_prim_section}

\begin{defn}

The \emph{braided primitive operad\footnote{This terminology will be justified in Corollary \ref{br_prim_operad_cor}.}} is the profinitely braided subsequence $\BrPrim \leq \BrAsshat$ given by
$$\BrPrim(n) := \left\{\begin{array}{ll}
0, & n=0, 1\\
\bigcap \ker[\bS_{p,q}: \BrAsshat(n) \to \BrAsshat(n)], & n>1. 
\end{array} \right.$$
Here the intersection is over all $p+q=n$ with $p, q>0$.  Further, $\bS_{p,q} \in k[B_n]$ acts by left-multiplication; $\BrPrim(n)$ is then a \emph{right} $B_n$-representation.  

\end{defn}

\begin{rem}

The reader is likely to wonder why we have used the profinite completion $\BrAsshat$ and not $\BrAss$ in the above definition.  One reason is that all of our algebras will be finitely braided, so, following the discussion in section \ref{monad_sec}, this operad is adapted to our setting.  Another is that in characteristic zero, the homological algebra of finite quotients of braid groups is much more tractable than that of the braid group itself.  But most glaringly, it turns out that there are no zero divisors in $\BrAss(n) = k[B_n]$ (see \cite{rolfzen_zhu}), so replacing $\BrAsshat$ by $\BrAss$ above yields the zero operad when taking the kernel of left multiplication by $\bS_{p,q}$.  We will discuss a nontrivial, non-profinite analogue of $\BrPrim$ in future work \cite{braided_operads}.

\end{rem}

Recall that if $\CC$ is a reduced operad with $\CC(1) = 0$, $\CC_*$ is its unital enhancement (Definition \ref{unit_defn}).

\begin{lem} \label{prim_tensor_lemma}

An element $\alpha \in \BrAsshat(n)$ lies in $\BrPrim_*(n)$ if and only if for every finitely braided vector space $V$ and any $v_1, \dots v_n \in V$, $\alpha(v_1, \dots, v_n) \in T(V)$ is primitive. 

\end{lem}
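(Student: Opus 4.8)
The plan is to recognize the primitivity of $\alpha(v_1,\dots,v_n)$ as the vanishing of the shuffle operators $\bS_{p,q}$ that define $\BrPrim$, and then to convert ``vanishing on all finitely braided $V^{\otimes n}$'' into ``vanishing in $\BrAsshat(n)$'' by the detection results of Section~\ref{Bn_section}. First I would dispose of the two degenerate degrees. For $n=0$ both sides assert $\alpha=0$: we have $\BrPrim_*(0)=0$, and $\alpha\cdot 1\in T(V)_0$ is primitive exactly when $\alpha=0$ (apply $\Delta-\id\otimes 1-1\otimes\id$). For $n=1$, $B_1$ is trivial, so $\BrAsshat(1)=k=\one=\BrPrim_*(1)$, while $\alpha(v)=\alpha v$ lies in $V\subseteq T(V)$, which is primitive by construction; thus both sides hold for every $\alpha$. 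Henceforth $n\ge 2$, where $\BrPrim_*(n)=\BrPrim(n)$ by Definition~\ref{unit_defn}.

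The key elementary observation is that a homogeneous $x\in T(V)_n=V^{\otimes n}$ with $n\ge 2$ is primitive if and only if $\bS_{p,q}(x)=0$ for all $p+q=n$ with $p,q>0$. Indeed $\Delta$ is degree-preserving, its $(p,q)$-component for $p,q>0$ is $\bS_{p,q}$ by Theorem~\ref{shauenburg_thm}, and its $(n,0)$- and $(0,n)$-components are $x\mapsto x\otimes 1$ and $x\mapsto 1\otimes x$ because $\epsilon$ kills positive degree and $(\id\otimes\epsilon)\circ\Delta=\id=(\epsilon\otimes\id)\circ\Delta$; so $\Delta(x)=x\otimes 1+1\otimes x$ precisely when the middle components vanish. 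Combining this with the identity $\bS_{p,q}\bigl(\alpha(v_1,\dots,v_n)\bigr)=(\bS_{p,q}\alpha)(v_1,\dots,v_n)$ and the fact that the pure tensors $v_1\otimes\cdots\otimes v_n$ span $V^{\otimes n}$, the right-hand side of the lemma becomes: \emph{for every finitely braided $V$ and every $p+q=n$ with $p,q>0$, the operator $\bS_{p,q}\alpha\in\BrAsshat(n)$ acts as zero on $V^{\otimes n}$.}

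One direction is now immediate: if $\alpha\in\BrPrim(n)=\bigcap_{p,q}\ker(\bS_{p,q})$, then each $\bS_{p,q}\alpha$ is already $0$ in $\BrAsshat(n)$, hence acts as zero on every $V^{\otimes n}$, so $\alpha(v_1,\dots,v_n)$ is always primitive. The converse must be run backwards, and I expect this to be the main obstacle: from ``$\bS_{p,q}\alpha$ annihilates $V^{\otimes n}$ for every finitely braided $V$'' one must deduce ``$\bS_{p,q}\alpha=0$ in $\BrAsshat(n)$,'' which then places $\alpha$ in $\bigcap_{p,q}\ker(\bS_{p,q})=\BrPrim(n)$. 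This is a profinite strengthening of Proposition~\ref{detect_prop}, namely that the natural map $\BrAsshat(n)\to\prod_V\End(V^{\otimes n})$, over finitely braided $V$, has trivial kernel. I would obtain it along the lines of the proofs of Propositions~\ref{detect_prop} and~\ref{ker_prop}: restrict to rack-type $V=k[Q]$ with $Q$ a finite quotient of $F_n$, where $V^{\otimes n}=k[Q^{\times n}]$ is a permutation $B_n$-representation containing the image of the free $B_n$-set $\OO_n$; as $Q$ ranges over all such quotients the associated finite quotients of $B_n$ are cofinal in $\Quot_n$ (residual finiteness of $F_n$), and the characteristic-zero, Maschke-type bookkeeping of Proposition~\ref{ker_prop} pins down, term by term over $\Quot_n$, the element of the strict pro-object $\BrAsshat(n)=\{k[Q]\}_{Q\in\Quot_n}$ from its action on these tensor powers. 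Verifying this cofinality-and-detection step precisely — that vanishing on all the $k[Q^{\times n}]$ forces the relevant components of $\bS_{p,q}\alpha$ to vanish — is, I believe, the only real content; granting it, $\bS_{p,q}\alpha=0$ in $\BrAsshat(n)$ for every admissible $p,q$, so $\alpha\in\BrPrim(n)$, completing the proof.
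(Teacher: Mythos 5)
Your reduction and the forward direction coincide with the paper's: primitivity of a homogeneous element of $T(V)_n$ is equivalent to the vanishing of all $\bS_{p,q}$ with $p,q>0$ (Theorem \ref{shauenburg_thm}), and $\alpha\in\BrPrim(n)=\bigcap_{p+q=n}\ker(\bS_{p,q})$ then immediately makes every $\alpha(v_1,\dots,v_n)$ primitive (the paper routes this through part \ref{ker_part} of Proposition \ref{ker_prop}). The converse is likewise, as in the paper, a contrapositive detection argument. But there your proposal stops short of a proof: the paper concludes by applying Proposition \ref{detect_prop} to the nonzero element $\bS_{p,q}\cdot\alpha$, obtaining a finitely braided $V$ and $v_1,\dots,v_n$ on which it acts nontrivially, hence a non-primitive $\alpha(v_1,\dots,v_n)$; you instead assert that a ``profinite strengthening'' (injectivity of $\BrAsshat(n)\to\prod_V\End(V^{\otimes n})$) is what is really needed and leave its verification aside. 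You are right that this is where all the content sits --- it is exactly the step the paper disposes of with Proposition \ref{detect_prop} --- but an admittedly unverified sketch is a gap, not a proof.

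Moreover, the sketch as described would not go through. Its load-bearing claim is that, as $Q$ ranges over finite quotients of $F_n$, the associated finite quotients of $B_n$ (through which the Hurwitz action on $k[Q^{\times n}]$ factors) are cofinal in $\Quot_n$ ``by residual finiteness of $F_n$.'' Residual finiteness only yields that these quotients separate the elements of $B_n$ (their kernels intersect trivially), which is all the proof of Proposition \ref{detect_prop} uses; it does not imply that every finite quotient of $B_n$ is dominated by one of them, and no such cofinality statement is proved or used anywhere in the paper. Even granting cofinality, your plan still needs the nonzero component of $\bS_{p,q}\alpha$ at such a quotient to act nontrivially on $k[Q^{\times n}]$: that is automatic for a regular representation but not for a general faithful permutation representation in characteristic zero, and the finite-support trick in the proof of Proposition \ref{detect_prop} (choose $Q$ \emph{after} fixing the finitely many braids in the support) has no direct analogue for an element of the completion, whose component changes with the quotient. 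So either close the converse the way the paper does, by applying Proposition \ref{detect_prop} to $\bS_{p,q}\cdot\alpha$, or, if you insist on proving the completed detection statement, you need a genuinely different argument than the cofinality route you outline.
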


\begin{proof}

The result is tautological for $n=1$.  For $n>1$, let $X(n) \leq \BrAsshat(n)$ for the subspace of all $\alpha$ with the property that for every finitely braided $V$ and any $v_1, \dots v_n \in V$, $\alpha(v_1, \dots, v_n)$ is primitive.  Theorem \ref{shauenburg_thm} and part \ref{ker_part} of Proposition \ref{ker_prop} then imply that $\BrPrim(n) \leq X(n)$.

Consider an element $\alpha \in \BrAsshat(n)$ lying in the complement of $\BrPrim(n)$; then for at least one $p+q=n$, it is true that $\bS_{p,q} \cdot \alpha \neq 0 \in \BrAsshat(n)$.  By Proposition \ref{detect_prop}, the map
$$k[B_n] \to \prod_V \End(V^{\otimes n})$$
is injective.  This ensures that there is \emph{some} finitely braided $V$ and $v_1, \dots, v_n \in V$ with 
$$\bS_{p,q} \cdot \alpha(v_1, \dots v_n) \neq 0 \in V^{\otimes n}.$$
Then (again by Theorem \ref{shauenburg_thm}) $\alpha(v_1, \dots v_n)$ is not primitive, so $\alpha \notin X(n)$.  That is: there is no element lying in $X(n) \setminus \BrPrim_*(n)$.

\end{proof}

Consider a finitely braided Hopf algebra $A$ in the sense of Takeuchi, that is, a Hopf algebra of braided vector spaces.  Then $A$ is in particular an algebra for the braided associative operad $\BrAsshat$.  With that action, we have the following slight strengthening of the previous result:

\begin{thm} \label{br_lie_prim_thm}

An element $\alpha \in \BrAsshat(n)$ lies in $\BrPrim_*(n)$ if and only if for every finitely braided Hopf algebra $A$ and for every $n$-tuple $a_1, \dots, a_n \in P(A)$ of primitive elements of $A$, $\alpha(a_1, \dots, a_n) \in P(A)$. 

\end{thm}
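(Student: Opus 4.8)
\emph{Plan.} Both implications will be deduced from Lemma~\ref{prim_tensor_lemma}, of which this is advertised as a strengthening, plus one structural input: for any braided Hopf algebra $A$, the algebra map $M_A\colon T(P(A))\to A$ extending the inclusion $P(A)\hookrightarrow A$ is a morphism of braided Hopf algebras (this is precisely what is used in the proof of Proposition~\ref{PT_prop}); being a braided algebra map it intertwines the operadic $\BrAsshat$-actions, and being a coalgebra map it carries primitives to primitives.

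\emph{The ``if'' direction} ($\alpha(a_1,\dots,a_n)\in P(A)$ always $\Rightarrow\alpha\in\BrPrim_*(n)$): For a finitely braided vector space $V$, the tensor Hopf algebra $T(V)$ with the braided structure of Section~\ref{tensor_section} is again finitely braided — the only braidings present on $T(V)$ beyond those on $V$ are the mixed block braidings $V^{\otimes a}\otimes V^{\otimes b}\to V^{\otimes b}\otimes V^{\otimes a}$, which are block‑twist type and hence of order bounded independently of $a$ and $b$. Applying the hypothesis with $A=T(V)$ and $a_i=v_i\in V\subseteq P(T(V))$ gives $\alpha(v_1,\dots,v_n)\in P(T(V))$ for all finitely braided $V$ and all $v_i\in V$, so $\alpha\in\BrPrim_*(n)$ by Lemma~\ref{prim_tensor_lemma}.

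\emph{The ``only if'' direction} ($\alpha\in\BrPrim_*(n)\Rightarrow\alpha(a_1,\dots,a_n)\in P(A)$): Let $A$ be a finitely braided Hopf algebra and $a_1,\dots,a_n\in P(A)$. The primitives $P(A)$ form a sub-braided-vector-space of $A$ (as is implicit in forming $T(P(A))$ in Proposition~\ref{PT_prop}), and are finitely braided since $P(A)^{\otimes m}$ is a subobject of $A^{\otimes m}$ for each $m$. By Lemma~\ref{prim_tensor_lemma} applied to $V=P(A)$, the element $\alpha(a_1,\dots,a_n)$ computed in $T(P(A))$ is primitive. Pushing forward along $M_A$, compatibility with the $\BrAsshat$-action gives $M_A(\alpha(a_1,\dots,a_n))=\alpha(a_1,\dots,a_n)$ computed in $A$, and preservation of primitives gives $\alpha(a_1,\dots,a_n)\in P(A)$.

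\emph{Main obstacle.} The genuinely new content sits in the ``only if'' direction and is light: one needs that $M_A$ respects both the operad structure and the coproduct, which is the computation already performed for Proposition~\ref{PT_prop}, together with the (immediate) observation that $P(A)$ inherits finite braiding from $A$. The one delicate point is in the ``if'' direction: the claim that $T(V)$ is finitely braided whenever $V$ is, since the mixed braidings live in braid groups $B_{a+b}$ of unbounded rank and one must check their images in $\Aut(V^{\otimes(a+b)})$ have order bounded uniformly in $a,b$. If one prefers to sidestep this, the ``if'' direction can be run by contrapositive exactly as in the proof of Lemma~\ref{prim_tensor_lemma}: from $\bS_{p,q}\alpha\neq0$ one produces, via Proposition~\ref{detect_prop}, a finitely braided $V$ and $v_i\in V$ with $\bS_{p,q}\alpha(v_1,\dots,v_n)\neq0$, and then locates a finitely braided (e.g.\ finite-dimensional, living in a fusion category with property~{\bf F}) Hopf algebra witnessing the failure of primitivity.
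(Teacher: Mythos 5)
Your argument coincides in substance with the paper's proof: the ``only if'' direction is exactly the paper's (set $V=P(A)$, which inherits finite braiding from $A$, apply Lemma \ref{prim_tensor_lemma} inside $T(P(A))$, and push forward along the Hopf algebra map $T(P(A))\to A$ that is the identity on $V$), and the ``if'' direction, like the paper's, amounts to applying the hypothesis to tensor algebras and citing Lemma \ref{prim_tensor_lemma} again. The paper does not even raise the point you flag; its converse is the single observation that the hypothesis applies in particular to tensor algebras.

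Two caveats about your ``if'' direction as written, though. First, the claim that the mixed block braidings on $T(V)$ have order bounded independently of $a,b$ is asserted rather than proved, and even if granted it would not suffice: Definition \ref{fin_braid_defn} requires the image of all of $B_m$ in $\Aut(T(V)^{\otimes m})$ to be finite, and bounded orders of the generators' images do not imply a finite image. Second, your fallback halts at its only nontrivial step (``locates a finitely braided Hopf algebra''), and the parenthetical ``finite-dimensional'' is a dead end: truncations of $T(V)$ are not Hopf quotients, and the Nichols quotient $\mB(V)$ cannot witness the failure, since $P(\mB(V))=V$ means it cannot separate $\mW$ from $\BrPrim$. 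The gap does close with tools already in the paper: by the last sentence of Proposition \ref{detect_prop} the witnessing braided vector space may be taken of rack type, $V=k[Q]$ for a finite group $Q$; then $T(V)$ is a direct sum of objects of $\fin(\YD_Q^Q)$, which is finitely braided (Proposition \ref{YD_profinite_prop}) and, in characteristic zero, semisimple with finitely many simple objects, so Proposition \ref{sums_prop} shows $T(V)$ is itself a finitely braided Hopf algebra; Theorem \ref{shauenburg_thm} together with $\bS_{p,q}\cdot\alpha(v_1,\dots,v_n)\neq 0$ then certifies $\alpha(v_1,\dots,v_n)\notin P(T(V))$. With that substitution your sidestep becomes a complete argument, and one that is, if anything, more careful than the printed proof.
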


\begin{proof}

Again, this is tautological if $n=1$, so we take $n>1$.  Assume that $\alpha \in \BrPrim_*(n) = \BrPrim(n)$, and let $V = P(A)$.  Then there is a map of Hopf algebras $T(V) \to A$ which is the identity on $V$.  Thus it suffices to show that $\alpha(a_1, \dots a_n)$ is primitive in $T(V)$, rather than $A$.  However, this is true by Lemma \ref{prim_tensor_lemma}.  Conversely, if it is known that $\alpha$ preserves primitives in all braided Hopf algebras, it is in particular true that it does so in tensor algebras.  Then again by Lemma \ref{prim_tensor_lemma}, this implies that $\alpha \in \BrPrim(n)$. 

\end{proof}

\begin{cor} \label{br_prim_operad_cor}

$\BrPrim_*$ is a reduced, unital, strict suboperad of $\BrAsshat$.

\end{cor}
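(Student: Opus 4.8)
The plan is to verify that $\BrPrim_*$ inherits each required property from the structure already established. The properties to check are: (i) $\BrPrim_*$ is a braided subsequence of $\BrAsshat$, i.e.\ each $\BrPrim_*(n)$ is closed under the right $B_n$-action; (ii) it is closed under the operadic structure maps $\circ_i$, hence a suboperad; (iii) it is reduced, i.e.\ $\BrPrim_*(0) = 0$; (iv) it is unital, with unit the identity of $\BrPrim_*(1) = \one$; and (v) it is strict as a pro-object.

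For (i), I would observe that $\BrPrim(n)$ is defined as an intersection of kernels of \emph{left} multiplication by the elements $\bS_{p,q} \in k[B_n]$, and left multiplication commutes with the right $B_n$-action on $\BrAsshat(n)$; hence each such kernel, and therefore their intersection, is a right $B_n$-subrepresentation. Adjoining the unit $\one$ in degree $1$ to form $\BrPrim_*$ does not disturb this. For (iii) and (iv), these are immediate from the definitions: $\BrPrim(0) = 0$ by fiat, and $\BrPrim_*(1) = \one$ with unit map given by Definition~\ref{unit_defn}, which is exactly the structure making $\BrPrim_*$ unital. For (v), strictness follows from the remark after Proposition~\ref{adjunction_prop} together with Proposition~\ref{ker_prop}\eqref{strict_part}: each $\ker(\bS_{p,q}) \leq \BrAsshat(n)$ is strict, a finite intersection of strict pro-objects (over a common cofinal subsystem) is strict, and $\BrPrim_*$ is built from these.

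The substantive point is (ii), closure under $\circ_i$. Here I would invoke Theorem~\ref{br_lie_prim_thm}, which characterizes $\BrPrim_*(n)$ as precisely those $\alpha \in \BrAsshat(n)$ such that $\alpha(a_1, \dots, a_n) \in P(A)$ for every finitely braided Hopf algebra $A$ and all primitives $a_i \in P(A)$. Given $\alpha \in \BrPrim_*(m)$ and $\beta \in \BrPrim_*(n)$, the operation $\alpha \circ_i \beta$ acts on a tuple of primitives $(a_1, \dots, a_{m+n-1})$ by first applying $\beta$ to the block $(a_i, \dots, a_{i+n-1})$ and then feeding the result, together with the remaining $a_j$, into $\alpha$. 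Since $\beta \in \BrPrim_*(n)$, the intermediate element $b := \beta(a_i, \dots, a_{i+n-1})$ is primitive; so the tuple $(a_1, \dots, a_{i-1}, b, a_{i+n}, \dots)$ consists of primitives, and since $\alpha \in \BrPrim_*(m)$, applying $\alpha$ lands back in $P(A)$. This holds for every finitely braided Hopf algebra $A$, so by the converse direction of Theorem~\ref{br_lie_prim_thm}, $\alpha \circ_i \beta \in \BrPrim_*(m+n-1)$. The main obstacle — really the only nontrivial part — is being careful that the compatibility between the operadic composition $\circ_i$ in $\BrAsshat$ and the substitution of outputs-into-inputs in $\End_A$ is exactly the content of the $\BrAsshat$-algebra structure map on $A$, so that the element-level argument above genuinely computes $(\alpha \circ_i \beta)(a_1, \dots, a_{m+n-1})$; this is guaranteed because $A$ being a $\BrAsshat$-algebra means the structure map $\BrAsshat \to \End_A$ is a map of operads, hence intertwines $\circ_i$ with substitution. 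Combining (i)–(v) yields that $\BrPrim_*$ is a reduced, unital, strict suboperad of $\BrAsshat$.
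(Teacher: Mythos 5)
Your proposal is correct and follows essentially the same route as the paper: closure under $\circ_i$ via the characterization in Theorem \ref{br_lie_prim_thm} (substituting the primitive output of $\beta$ into $\alpha$), closure under the right $B_n$-action because left multiplication by $\bS_{p,q}$ commutes with it, reducedness and unitality by definition, and strictness from Proposition \ref{ker_prop}. The only difference is cosmetic: you spell out that the intersection of the kernels remains strict, which the paper leaves implicit.
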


\begin{proof}

Strictness follows from part \ref{strict_part} of Proposition \ref{ker_prop}.  That $\BrPrim_*$ is reduced and unital is definitional.

Let $\alpha \in \BrPrim_*(n)$ and $\beta \in \BrPrim_*(m)$; we must show that the $i\nth$ cabling $\alpha \circ_i \beta$ is in $\BrPrim_*(n+m-1)$.  By the previous theorem, it is enough to show that for any finitely braided Hopf algebra $A$ and elements $a_1, \dots, a_{i-1}, b_1, \dots, b_m, a_{i+1}, \dots a_n \in P(A)$, 
$$\alpha(a_1, \dots, a_{i-1}, \beta(b_1, \dots, b_m), a_{i+1}, \dots a_n) \in P(A).$$
But this is true: since $\beta \in \BrPrim_*(m)$, $\beta(b_1, \dots, b_m) \in P(A)$, so all $n$ elements being substituted into $\alpha$ are all primitive.  Thus its output is also primitive.

Notice also that $\BrPrim_*(n)$ is closed under the right action of $B_n$: if $\bS_{p, q} \alpha = 0$, then $\bS_{p, q} (\alpha g) = 0$ for any $g \in B_n$.  These properties are sufficient to yield the result.

\end{proof}

\begin{defn} Algebras for $\BrPrim_*$ will be called \emph{braided primitive algebras}. \end{defn}

\begin{prop} \label{prim_prop}

The monad that $\BrPrim_*$ defines on the category of finitely braided objects in $\bC$ is isomorphic to the monad $PT$ of section \ref{monad_prim_sec}.

\end{prop}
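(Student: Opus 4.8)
The plan is to realise both monads as ``primitives in a (completed) tensor algebra'', with monad multiplication coming from the map that multiplies out a word, and then to match them on the nose after restricting to finitely braided objects. I would start at the level of underlying functors. For finitely braided $V$, Proposition \ref{subop_prop} gives that $\BrPrim_*[V]$ is pro-constant with $\BrPrim_*[V]_n=\BrPrim_*(n)\otimes_{B_n}V^{\otimes n}$. Since $\BrPrim_*(n)$ is, by definition, the intersection of the kernels of left multiplication by the $\bS_{p,q}$ inside $\BrAsshat(n)$, the exactness argument of Proposition \ref{ker_prop} (applied to $\BrAsshat(n)\to\bigoplus_{p+q=n}\BrAsshat(n)$, $\alpha\mapsto(\bS_{p,q}\alpha)$) identifies $\BrPrim_*(n)\otimes_{B_n}V^{\otimes n}$ with $\bigcap_{p+q=n,\,p,q>0}\ker(\bS_{p,q}\colon V^{\otimes n}\to V^{\otimes n})$ sitting inside $T(V)_n=V^{\otimes n}$. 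By Theorem \ref{shauenburg_thm} this intersection is precisely $P(T(V))_n$ for $n>1$ (the $(n,0)$ and $(0,n)$ components of $\Delta$ account for the $x\otimes 1$ and $1\otimes x$ terms), while $T(V)_1=V=P(T(V))_1$ and $P(T(V))_0=0=\BrPrim_*(0)\otimes(\cdots)$. Summing over $n$ produces a natural isomorphism $\BrPrim_*[V]\xrightarrow{\ \sim\ }P(T(V))=PT(V)$, realised as the corestriction of the canonical map $\BrPrim_*[V]\to\BrAsshat[V]=\That(V)=T(V)$, the last equality holding because $V$ is finitely braided (the Example after Proposition \ref{pro_const_alg_prop}).

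Next I would verify that this isomorphism respects the monad structures. Compatibility with units is immediate, as both units are the degree-$1$ inclusion of $V$. For the multiplications: because $\BrPrim_*$ is a suboperad of $\BrAsshat$, the functor $\BrPrim_*[-]$ is a submonad of $\BrAsshat[-]=\That(-)$, so its monad multiplication is the restriction of the ``multiply out'' map $\That(\That(-))\to\That(-)$. On the other side, Proposition \ref{PT_prop} writes the monad multiplication of $PT$ as $P(M_{T(V)})$, where $M_{T(V)}\colon T(P(T(V)))\to T(V)$ is the unique algebra map restricting to the inclusion $P(T(V))\hookrightarrow T(V)$; the universal property of the tensor algebra shows this $M_{T(V)}$ is exactly the composite $T(P(T(V)))\xrightarrow{T(\mathrm{incl})}T(T(V))\to T(V)$ of $T$ applied to the inclusion with the multiply-out map. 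Hence both monad multiplications are induced by ``multiply out the word'', and the square expressing naturality of the multiply-out map with respect to the completion comparison $T(-)\to\That(-)$ (a Hopf map, so it restricts to primitives) intertwines them.

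The step that needs care -- and the one I expect to be the main obstacle -- is that $\BrPrim_*[V]$, although a summand of $T(V)$, need not itself be finitely braided, so one cannot simply re-run the computation of the first paragraph with $V$ replaced by $\BrPrim_*[V]$: feeding $\BrPrim_*[V]$ back in produces $P(\That(\BrPrim_*[V]))$, with a genuine completion, rather than $P(T(P(T(V))))=PT(PT(V))$. The clean way around this is to observe that as an endofunctor of $\pro\text{-}\bC$ the functor $\BrPrim_*[-]$ is \emph{literally} $W\mapsto P(\That(W))$ (again by Proposition \ref{ker_prop}(2) together with Theorem \ref{shauenburg_thm} applied to $\That(W)$), with monad structure the completed multiply-out restricted to primitives; then one checks that on the full subcategory of finitely braided objects $\That(W)=T(W)$, whence $\BrPrim_*[-]$ restricts there to $PT$, monad multiplication and all. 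I would write out this last identification carefully, since it is precisely the place where the completed tensor algebra $\That$ must be traded for the honest tensor algebra $T$; everything else is bookkeeping with the universal properties and the submonad inclusion $\BrPrim_*[-]\hookrightarrow\BrAsshat[-]$ already in hand.
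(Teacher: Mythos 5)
Your proof is correct and follows essentially the same route as the paper: the identification of underlying functors via Propositions \ref{subop_prop}, \ref{ker_prop} and Theorem \ref{shauenburg_thm}, followed by the observation that both monad multiplications are (co)restrictions of the (completed) tensor-algebra multiplication, with units both given by the degree-one inclusion of $V$. The additional care in your last paragraph --- that $P(T(V))$ need not itself be finitely braided, so the comparison at the middle stage of the multiplication requires viewing $\BrPrim_*[-]$ as $P\circ\That$ on $\proC$ before trading $\That$ for $T$ --- is a legitimate sharpening of a point the paper's proof passes over quickly and only makes explicit later (Corollary \ref{infinite_prim_prop}).
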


\begin{proof}

We first show an isomorphism $\BrPrim_*[V] \cong P(T(V))$ of underlying objects in $\bC$.  From Proposition \ref{subop_prop}, we know that $\BrPrim_*[V]$ is pro-constant, and equivalent to a summand of $T(V)$.  In degree 1, it is $V$; in degree $n>1$ it is the subspace 
\begin{eqnarray*}
\im(\BrPrim_*(n) \otimes_{k[B_n]} V^{\otimes n} \to V^{\otimes n}) & = & \im \left(\bigcap_{p+q=n} \ker(\bS_{p,q}) \otimes_{k[B_n]} V^{\otimes n} \to V^{\otimes n}\right) \\
 & = & \bigcap_{p+q=n} \ker(\bS_{p,q}: V^{\otimes n} \to V^{\otimes n})
\end{eqnarray*}
In the second line, we have used part \ref{ker_part} of Proposition \ref{ker_prop}.  The result then follows from Theorem  \ref{shauenburg_thm}.

We conclude by observing that this isomorphism equates the monadic compositions of $PT$ and $\BrPrim_*[-]$.  To see this, note that both are subfunctors of the tensor algebra functor.  The previous argument shows that these are in fact the \emph{same} subfunctor of $T(-)$.  In both cases, the monadic compositions are defined as a (co)restriction of the tensor algebra monad, and hence agree. 

\end{proof} 

The following Corollary is an immediate consequence of this result, Proposition \ref{PT_prop}, and the fact that algebras for an operad are the same as those of its associated monad.  Alternatively, one can easily modify one direction of the argument of Theorem \ref{br_lie_prim_thm} to hold in a braided category $\bC$ instead of for braided vector spaces.

\begin{cor} \label{prim_alg_cor}

If $A$ is a finitely braided Hopf algebra in $\bC$, $P(A)$ is an algebra for $\BrPrim_*$.

\end{cor}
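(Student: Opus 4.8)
The plan is to deduce the statement directly from the monad comparison of Proposition \ref{prim_prop}, with only a small amount of extra bookkeeping. First I would recall from Proposition \ref{PT_prop} that for \emph{any} braided Hopf algebra $A$ the primitives $P(A)$ carry the structure of an algebra over the monad $PT$, via the structure map $\mu_A = P(M_A)\colon P(T(P(A))) \to P(A)$ induced by the canonical algebra (hence Hopf, since $P(A)$ is primitive) map $M_A\colon T(P(A)) \to A$ extending the inclusion $P(A) \hookrightarrow A$. So the only thing that needs to be checked before invoking Proposition \ref{prim_prop} is that $P(A)$ is a finitely braided object of $\bC$.

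This is immediate: $P(A) = \ker(\Delta - \id\otimes 1 - 1\otimes\id)$ is a subobject of $A$, and since the braiding $\sigma$ is natural, the map $P(A)^{\otimes n} \to A^{\otimes n}$ induced by this inclusion is $B_n$-equivariant. Because $A$ is finitely braided, the $B_n$-action on $A^{\otimes n}$ factors through a finite quotient, and hence so does the $B_n$-action on the subobject $P(A)^{\otimes n}$ (one could alternatively appeal to reasoning of the type in Proposition \ref{sums_prop}, but the direct argument suffices). With $P(A)$ finitely braided, Proposition \ref{prim_prop} identifies the monad $\BrPrim_*[-]$ with $PT$ on the full subcategory of finitely braided objects of $\bC$; transporting the $PT$-algebra structure on $P(A)$ across this isomorphism of monads yields a $\BrPrim_*[-]$-algebra structure on $P(A)$. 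Since, as recorded in Section \ref{monad_sec}, algebras for the unital operad $\BrPrim_*$ are the same as algebras for its associated monad, this exhibits $P(A)$ as a braided primitive algebra, as required.

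The alternative route, as mentioned in the text, is to imitate one direction of the proof of Theorem \ref{br_lie_prim_thm}. Since $A$ is finitely braided it is a $\BrAsshat$-algebra by Proposition \ref{pro_const_alg_prop}, so it comes with structure maps $\BrAsshat(n)\otimes_{B_n} A^{\otimes n} \to A$; restricting along the suboperad inclusion $\BrPrim_*(n) \hookrightarrow \BrAsshat(n)$ (Corollary \ref{br_prim_operad_cor}) and along $P(A)^{\otimes n} \to A^{\otimes n}$, one must see that the composite lands in the subobject $P(A) \subseteq A$. This follows because the map factors through the Hopf map $M_A\colon T(P(A))\to A$, and the computation in the proof of Proposition \ref{prim_prop} shows that the image of $\BrPrim_*(n)\otimes_{B_n} P(A)^{\otimes n}$ in $T(P(A))$ consists of primitives, which $M_A$ carries to primitives; associativity and unitality of the resulting $\BrPrim_*$-structure maps are then inherited from those of the $\BrAsshat$-algebra $A$. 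The only real obstacle either way is the (routine) verification that the structure maps genuinely corestrict to the subobject $P(A)$ and satisfy the operad axioms, and in both approaches this is routed through $M_A$ together with the already-established fact that $\BrPrim_*$ is a suboperad of $\BrAsshat$, so no serious difficulty arises.
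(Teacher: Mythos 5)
Your argument is correct and takes essentially the same route as the paper, which deduces the corollary as an immediate consequence of Proposition \ref{prim_prop}, Proposition \ref{PT_prop}, and the fact that algebras for an operad agree with algebras for its associated monad, and which offers the same alternative you sketch via one direction of Theorem \ref{br_lie_prim_thm}. Your explicit check that $P(A)$ is finitely braided as a subobject of $A$ is a reasonable piece of bookkeeping that the paper leaves implicit.
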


\subsection{The structure of $\BrPrim$} \label{brprim_structure_section}

In this section we explore the structure of the operad $\BrPrim$, obtaining a partial understanding.  Specifically, we begin the process of finding a presentation of the operad in terms of generators and relations, but much remains to be done.

First we identify $\BrPrim(2)$.  Note that $B_2 = \Z$ is freely generated by the braiding $\sigma$; its finite quotients $\Quot_2$ are all of the form $\Z/m = \langle \sigma \, | \, \sigma^m \rangle$.  The set of these for which $m=2n$ is a cofinal subsystem of $\Quot_2$.

\begin{defn} The \emph{norm bracket} is the element
$$b_{2n} = \frac{1}{2n} \sum_{i=0}^{2n-1} (-1)^i \sigma^i \in k[\sigma |  \sigma^{2n} = 1] = k[\Z/2n]$$

\end{defn}

\begin{prop} 

The collection $b = \{b_{2n}\}$ defines an element of $\BrPrim(2)$.  Further, $\BrPrim(2)$ is pro-constant, and isomorphic to the line $k\{b\}$ generated by $b$.  Finally, as a $B_2$-representation, $\BrPrim(2)$ is isomorphic to the sign representation.

\end{prop}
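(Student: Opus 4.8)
The plan is to reduce everything to an explicit computation in the group rings $k[\Z/2n]$, exploiting that $B_2 = \Z = \langle\sigma\rangle$ is abelian. First I would note that for $n=2$ there is only one shuffle to consider: both elements of $S_2$ are $(1,1)$-shuffles, so $\bS_{1,1} = 1 + \sigma \in k[B_2]$; hence the term of $\BrPrim(2)$ indexed by a finite quotient $Q$ of $B_2$ is $\ker(1+\sigma : k[Q]\to k[Q])$, with $1+\sigma$ acting by left multiplication. Since the quotients $\Z/2n$ form a cofinal subsystem of $\Quot_2$, it suffices to understand the inverse system $\{\,\ker(1+\sigma : k[\Z/2n]\to k[\Z/2n])\,\}_n$, whose transition maps are the surjections $k[\Z/2m]\to k[\Z/2n]$ induced by $\Z/2m \twoheadrightarrow \Z/2n$ when $n\mid m$ (sending $\sigma\mapsto\sigma$).

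Next I would compute each kernel. Writing $k[\Z/2n] = k[x]/(x^{2n}-1)$, the value $x=-1$ is a \emph{simple} root of $x^{2n}-1$ (the derivative $2n x^{2n-1}$ is nonzero at $-1$, using $\car k = 0$), so $\gcd(1+x,\,x^{2n}-1) = 1+x$ and the annihilator of $1+x$ in this commutative ring is the principal ideal generated by $h := (x^{2n}-1)/(x+1) = \sum_{i=0}^{2n-1}(-1)^{i+1}x^i$. Because $(1+x)h\equiv 0$, multiplication by $x$ sends $h$ to $-h$, so that ideal is the single line $k\{h\}$; thus the kernel is one-dimensional, spanned by $-\tfrac1{2n}h(\sigma)$, which is exactly $b_{2n}$. (Equivalently one may invoke Maschke to split $k[\Z/2n]$ into a product of fields, and observe that $1+\sigma$ is invertible on every factor except the one indexed by $d=2$, where it vanishes.)

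Then I would verify compatibility along the inverse system: under $k[\Z/2m]\to k[\Z/2n]$ with $n\mid m$, grouping exponents as $i = 2nq + r$ and using $(-1)^{2nq}=1$, $\sigma^{2nq}=1$ shows $b_{2m}\mapsto b_{2n}$. Hence $b=\{b_{2n}\}$ is a genuine element of the pro-object $\varprojlim_n k[\Z/2n]$, each component lying in the relevant kernel, so $b\in\BrPrim(2)$; and since every transition map restricts to an isomorphism of the one-dimensional spaces $k\{b_{2n}\}$, the constant system on $k\{b\}$ maps isomorphically onto $\BrPrim(2)$, which is therefore pro-constant and equal to $k\{b\}$. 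For the final assertion I would compute the right $B_2$-action directly: reindexing $\sum_{i=0}^{2n-1}(-1)^i\sigma^{i+1}$ and folding the $\sigma^{2n}$-term back to $\sigma^0$ (legitimate since $(-1)^{2n}=1$) gives $b_{2n}\cdot\sigma = -b_{2n}$, so $\sigma$ acts by $-1$; as $\sigma\mapsto(1\,2)\in S_2$ has sign $-1$, this identifies $\BrPrim(2)$ with the sign representation.

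There is no deep obstacle here; once one is in $k[\Z/2n]$ the computation is elementary. The main thing to get right is the bookkeeping with the pro-structure — checking that the $b_{2n}$ are genuinely compatible under the transition maps, so that $b$ is an element of the pro-object rather than merely a compatible-looking family of generators, and that those maps are isomorphisms, which is what promotes the levelwise computation to the pro-constancy statement. A secondary point worth stating carefully is \emph{why} the annihilator of $1+\sigma$ is exactly one-dimensional and not merely contains $b_{2n}$; the simple-root observation (or the Maschke decomposition) settles this.
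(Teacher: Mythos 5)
Your proof is correct and follows essentially the same route as the paper: reduce to the cofinal subsystem $k[\Z/2n]$, identify $\ker(1+\sigma)$ as the line spanned by $b_{2n}$, check compatibility of the $b_{2n}$ under the transition maps to get pro-constancy, and verify $b_{2n}\cdot\sigma=-b_{2n}$ for the sign representation. The paper labels the kernel computation ``straightforward''; your simple-root/annihilator (or Maschke) argument is exactly the missing detail, so nothing further is needed.
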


\begin{proof}

It is a straightforward computation to show that on $k[\Z/2n]$,
$$\ker(\bS_{1,1}) = \ker(1+\sigma) = k\{b_{2n}\}.$$
Further, if $m|n$, the image of $b_{2n}$ in $k[\Z/2m]$ is precisely $b_{2m}$.  Thus the collection assembles into a well-defined element $b$ in the inverse system, $\BrPrim(2)$.  Since each map in the system is an isomorphism, the system is pro-constant, rank 1, and clearly generated by $b$.  By definition, the action of $B_2$ on $\BrPrim(2)$ equalizes $\sigma$ and $-1$; this is the sign representation.

\end{proof}

\begin{defn} 

An inverse system $V = \{V_j\}_{j \in J}$ of $B_n$-representations is \emph{pro-cyclic} if there exists an element $v \in \varprojlim_{j} V_j$ whose image in $V_h$ is a generator as a $k[B_n]$-module for a cofinal subsystem of $H \subseteq J$.

\end{defn}

For instance, $\BrPrim(2)$ is pro-cyclic, by the above result.  Indeed, it is actually cyclic, in the sense that it is isomorphic to a constant cyclic system. 

\begin{defn} 

Consider a braided operad $\CC$ and a left module $\MM$ for $\CC$.  Assume that $\CC(0) = 0 = \CC(1)$ and $\MM(0) = 0$.  The \emph{indecomposables} of $\MM$ are given by the braided sequence $Q_{\CC}(\MM)$ whose $n\nth$ term is the cokernel of the map
$$\left(\sum_{m=2}^{n-1} \sum_{i=1}^m \circ_i\right): \bigoplus_{m=2}^{n-1} \bigoplus_{i=1}^m \Ind_{B_m^i \times B_{n-m+1}}^{B_n} (\CC(m) \otimes \MM(n-m+1)) \to \MM(n)$$

\end{defn}

That is, classes in $Q_{\CC}(\MM)(n)$ are represented by elements which cannot be obtained by operadic composition from terms of arity less than $n$.  Note that we may regard $\CC$ as a left module over itself, and thereby define the indecomposables of $\CC$ as $Q(\CC) := Q_{\CC}(\CC)$.

\begin{thm} \label{pro-cyclic_thm}

For every $n\geq 2$, both $\BrPrim(n)$ and $Q(\BrPrim)(n)$ are nonzero, pro-cyclic $k[B_n]$-modules.

\end{thm}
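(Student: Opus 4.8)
The plan is to prove pro-cyclicity for both $\BrPrim(n)$ and $Q(\BrPrim)(n)$ from one general principle about pro-objects of cyclic modules, and to obtain nonvanishing from a single explicit example. The principle I would record is: if $X=\{X_j\}_{j\in J}$ is a pro-object in finite-dimensional right $k[B_n]$-modules all of whose transition maps $X_j\to X_{j'}$ are epimorphisms and all of whose terms $X_j$ are cyclic $k[B_n]$-modules, then $X$ is pro-cyclic. Since $B_n$ is finitely generated, $\Quot_n$ is countable and admits a cofinal sequence $\cdots\twoheadrightarrow Q_2\twoheadrightarrow Q_1$; in the cases at hand $J=\Quot_n$, so it is enough to build a coherent family of generators along the induced chain $\cdots\to X_2\to X_1$. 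I would construct $v=(v_i)\in\varprojlim X_i$ inductively: pick any generator $v_1$ of $X_1$; given a generator $v_i$ of $X_i$ with transition map $\pi\colon X_{i+1}\twoheadrightarrow X_i$, the fibre $\pi^{-1}(v_i)$ is a nonempty affine subspace, and since $v_i$ generates $X_i=X_{i+1}/\ker\pi$, every $w\in\pi^{-1}(v_i)$ satisfies $w\,k[B_n]+\ker\pi=X_{i+1}$. Decomposing $X_{i+1}$ into isotypic components $S^{m_S}$ and using that $X_{i+1}$ is itself cyclic (so $m_S\le\dim S$), one checks component-wise that the already-prescribed ``$S^{m_S-a_S}$ part'' of $\pi^{-1}(v_i)$ has full rank (because $v_i$ generates $X_i$) and extends generically to a full-rank $m_S$-configuration, so $\pi^{-1}(v_i)$ meets the Zariski-open locus of generators of $X_{i+1}$; here one uses that $k$ is infinite, which holds in characteristic zero. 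Choosing $v_{i+1}$ in this intersection completes the induction, and $v$ witnesses pro-cyclicity.

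Applying this is now quick. For a finite quotient $Q$ of $B_n$, the $Q$-term $\BrPrim(n)_{\ker_Q}$ is $\bigcap_{p+q=n}\ker\big(\bS_{p,q}\cdot\colon k[Q]\to k[Q]\big)$, which is closed under right multiplication and hence a right ideal of the semisimple algebra $k[Q]$; being a direct summand, it is a cyclic right $k[Q]$-module. The transition maps are epimorphisms by strictness of $\BrPrim_*$ (Corollary \ref{br_prim_operad_cor}, or Proposition \ref{ker_prop}(\ref{strict_part})), so the principle yields pro-cyclicity of $\BrPrim(n)$. For $Q(\BrPrim)(n)=\cok(\phi)$, the $Q$-term is a quotient of the cyclic module $\BrPrim(n)_{\ker_Q}$, hence cyclic, and a cokernel whose target is strict is again strict, so its transition maps are epimorphisms; thus $Q(\BrPrim)(n)$ is pro-cyclic as well.

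For nonvanishing it suffices, since $Q(\BrPrim)(n)$ is a quotient of $\BrPrim(n)$, to show $Q(\BrPrim)(n)\neq0$ for $n\ge2$. I would take $V$ to be the one-dimensional braided vector space $k$ with braiding $\sigma=\zeta\cdot\id$ for $\zeta$ a primitive $n$-th root of unity; this $V$ is finitely braided. In $T(V)=k[x]$ one has $\Delta(x^m)=\sum_{j}\binom{m}{j}_{\zeta}\,x^{j}\otimes x^{m-j}$, so $x^m$ is primitive precisely when $1+\zeta+\cdots+\zeta^{m-1}=0$, which fails for $2\le m\le n-1$ (as $\zeta^m\neq1$) but holds for $m=n$. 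By Theorem \ref{shauenburg_thm} together with Proposition \ref{ker_prop}, $\BrPrim_*(m)\otimes_{B_m}V^{\otimes m}\cong P(T(V))_m$, so this space is $0$ for $2\le m\le n-1$ and equals $k\{x^n\}\neq0$ for $m=n$. Applying the exact functor $-\otimes_{B_n}V^{\otimes n}$ ($V$ finitely braided, $\car k=0$) to the defining presentation of $Q(\BrPrim)(n)$, each summand of the source $\bigoplus_{m,i}\Ind^{B_n}_{B_m^{i}\times B_{n-m+1}}\big(\BrPrim(m)\otimes\BrPrim(n-m+1)\big)\otimes_{B_n}V^{\otimes n}$ contains, by the projection formula, the tensor factor $\BrPrim(n-m+1)\otimes_{B_{n-m+1}}V^{\otimes(n-m+1)}\cong P(T(V))_{n-m+1}$, which vanishes since $2\le n-m+1\le n-1$; hence the source is $0$ and $Q(\BrPrim)(n)\otimes_{B_n}V^{\otimes n}\cong P(T(V))_n\neq0$, forcing $Q(\BrPrim)(n)\neq0$ (and in particular $\BrPrim(n)\neq0$). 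Thus $\BrPrim$, unlike $\BrLie$ or the classical Lie operad, has indecomposables in every arity.

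The main obstacle is the pro-cyclicity principle itself. Strictness, the identification of $\BrPrim_*[V]$ with $P(T(V))$, and the projection-formula vanishing are each either cited directly from the material above or amount to a one-line $q$-binomial computation; the real work is upgrading ``each level is cyclic'' to ``there is one coherent cyclic generator,'' which needs the inductive lifting along a cofinal chain and, at each step, the fact that a full-rank partial configuration inside a cyclic module extends generically to a full-rank generator --- precisely where the infiniteness of $k$ and the genuine cyclicity (not mere finite generation) of the levels are used.
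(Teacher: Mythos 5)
Your proposal is correct, and the interesting half of it --- pro-cyclicity --- takes a genuinely different route from the paper. The paper gets a coherent generator with no choices at all: applying Theorem 1 of \cite{prs} to the right ideal $\BrPrim(n)_{\ker_Q} \leq k[Q]$ produces the unique antipode-symmetric idempotent generator $g_{n,Q}$, and uniqueness forces compatibility under the Hopf maps $k[Q] \to k[P]$, so the family assembles into a single element $g_n$ of the limit. You instead prove an abstract principle: a pro-object of finite-dimensional modules over the (semisimple) group algebras with epimorphic transition maps (strictness, via Proposition \ref{ker_prop}) and cyclic levels (a right ideal of a semisimple algebra is $e\,k[Q]$, hence cyclic) is pro-cyclic, by lifting a generator step by step along a countable cofinal chain in $\Quot_n$. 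Your lifting step is fine, and in fact needs less than you invoke: after splitting into isotypic components and normalizing the surjection to a coordinate projection, lifting a generator is just extending a linearly independent tuple inside a simple module, possible exactly because cyclicity bounds the multiplicity by the dimension over the endomorphism division algebra --- no Zariski-genericity or infiniteness of $k$ is required. The trade-off is that your generator is non-canonical, whereas the paper's argument delivers the distinguished idempotents $g_n$ (and later $h_n$, Definition \ref{h_n_defn}) that are used downstream, e.g.\ in the comparison with the symmetric Garsia idempotent and in the presentation results; your principle, on the other hand, is more general and would apply verbatim to any strict subsequence of $\BrAsshat$ with cyclic levels. For nonvanishing you use the same one-dimensional $V$ with braiding a primitive $n$th root of unity; your direct $q$-binomial computation replaces the citation of \cite{etw}, and your right-exactness/projection-formula argument is just a formalization of the paper's ``a decomposable operation would factor through the vanishing groups $P(T(V))_m$, $2\le m<n$.'' One small imprecision: ``$x^m$ is primitive precisely when $[m]_\zeta=0$'' is false in general (take $m=2n$); what you need, and what is true, is that $\binom{n}{j}_\zeta=0$ for all $0<j<n$ when $\zeta$ is a primitive $n$th root of unity in characteristic zero, together with $\binom{m}{1}_\zeta=[m]_\zeta\neq0$ for $2\le m\le n-1$, so the two facts you actually use are correct within the range where you use them.
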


This result owes a lot to conversations with Calista Bernard.

\begin{proof}

It suffices to show that $Q(\BrPrim)(n)$ is nonzero, and that $\BrPrim(n)$ is pro-cyclic, since these two claims imply the other two.  

To see that $Q(\BrPrim)(n) \neq 0$, it suffices (by Proposition \ref{prim_prop}) to exhibit a braided vector space $V$ with the property that $P(TV)_m = 0$ if $2 \leq m < n$, but $P(TV)_n \neq 0$.  For this implies that there is some element $\theta \in \BrPrim(n)$ and $v_1, \dots v_n$ with $\theta(v_1, \dots, v_n) \neq 0$.  However, if $\theta$ were decomposable, it would be possible to write $\theta(v_1, \dots, v_n)$ in terms of $\BrPrim$-operations applied to elements of $P(TV)_m$ with $m<n$.

Since $\car(k) = 0$ we may basechange to a field containing a primitive $n\nth$ root of unity, $\zeta$.  Then the braided vector space $V = k$ with braiding given by multiplication with $\zeta$,
$$\sigma = \zeta: k = V \otimes V \to V \otimes V = k$$
works.  Propositions 3.11 and 3.12 of \cite{etw} show that the indecomposables of the quantum shuffle algebra on $V^*$ are 
$$Q(T^{\co}(V^*))_m = \left\{ \begin{array}{ll} k, & m=1, n \\ 0, & \mbox{else.} \end{array}\right.$$
Dualizing this result, we have $P(T(V))_m = Q(T^{\co}(V^*))^*_m$ nonzero precisely when $m$ is 1 or $n$.

For the second claim, we recall Theorem 1 of \cite{prs}.   For any finite group $Q$, let $I \leq k[Q]$ be a right ideal.  Call an element $e \in k[Q]$ an \emph{$I$-idempotent} if $e^2=e$, and $e k[Q] = I$.  Then \cite{prs} prove that there is an $I$-idempotent $g$ uniquely characterized by the fact that $S(g) = g$, where $S$ denotes the antipode in the Hopf algebra $k[Q]$.

We apply this in the following setting: $Q \in \Quot_n$ is a finite quotient of $B_n$, and 
$$I = \BrPrim(n)_{\ker_Q} = \bigcap_{p+q=n} \ker(\bS_{p,q} \cdot -: k[Q] \to k[Q]) \leq k[Q]$$
Write $g_{n, Q} \in \BrPrim(n)_{\ker_Q}$ for the corresponding idempotent.  Consider a map $f: Q \to P$ in $\Quot_n$; since $f$ is a group homomorphism, it induces a Hopf map $f: k[Q] \to k[P]$.  In particular, we see that
\begin{itemize}
\item $f(g_{n, Q})^2 = f(g_{n, Q}^2) = f(g_{n, Q})$,
\item $f(g_{n, Q}) k[P] = f(g_{n, Q} k[Q]) = f(\BrPrim(n)_{\ker_Q}) = \BrPrim(n)_{\ker_P}$, and\footnote{For the last equality, use part \ref{strict_part} of Proposition \ref{ker_prop}.} finally 
\item $S(f(g_{n, Q})) = f(S(g_{n, Q})) = f(g_{n, Q})$.
\end{itemize}
That is, $f(g_{n, Q}) \in k[P]$ is an $S$-symmetric $\BrPrim(n)_{\ker_P}$-idempotent.  By uniqueness, we must have $f(g_{n, Q}) = g_{n, P}$, so the collection $\{g_{n, Q}\}$ assemble into an element 
$$g_n \in \varprojlim_Q \BrPrim(n)_{\ker_Q}$$
whose image in $\BrPrim(n)_{\ker_Q}$ is $g_{n, Q}$.  Thus $\BrPrim(n)$ is pro-cyclic.

\end{proof}

Following Patras-Reutenauer-Schocker, we define $g_n$ to be the \emph{Garsia idempotent}.  This is not just because its construction is inspired by the construction in that paper.  In fact, the image of $g_n$ in $k[S_n]$ is precisely the Garsia idempotent as described in \cite{prs}.  

To see this, we note that Ree's Theorem 2.2 \cite{ree} implies that the $n\nth$ term $\Lie(n)$ of the Lie operad is precisely
$$\Lie(n) = \bigcap_{p+q=n} \ker(\bS_{p,q} \cdot -: k[S_n] \to k[S_n]) \leq k[S_n].$$
We conclude:

\begin{prop}

The symmetrization of $\BrPrim$ is $\Lie$.

\end{prop}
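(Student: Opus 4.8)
The plan is to unwind the definition of symmetrization, show that the resulting symmetric operad is, in each arity, the subspace of $k[S_n]$ cut out by Ree's annihilation conditions, and then quote Ree's theorem in the form recalled just above. By definition, the symmetrization passes to coinvariants for the pure braid group action; for the pro-operad $\BrPrim$ this is carried out levelwise on the defining inverse system, so the symmetrization in arity $n$ is the pro-object $\{(\BrPrim(n)_{\ker_Q})_{PB_n}\}_{Q \in \Quot_n}$ with its residual $S_n = B_n/PB_n$-action, and the first thing to establish is that this pro-object is pro-constant. The key structural observation is that $PB_n$ is normal of finite index $n!$ in $B_n$, so that for any $Q \in \Quot_n$ the subgroup $\ker_Q \cap PB_n$ is again normal of finite index; hence the finite quotients $Q$ with $\ker_Q \subseteq PB_n$ are cofinal in $\Quot_n$. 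Restricting attention to these, the image $N_Q := PB_n/\ker_Q$ of $PB_n$ in $Q$ is a finite normal subgroup with $Q/N_Q = S_n$, and the right $PB_n$-action on $\BrPrim(n)_{\ker_Q} \subseteq k[Q]$ factors through $N_Q$.

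Next I would compute these coinvariants. Since $\car(k)=0$ and $N_Q$ is finite, $(-)_{N_Q}$ is exact and hence commutes with the intersection of kernels defining $\BrPrim(n)_{\ker_Q}$; applying it to the left $k[Q]$-module map $(\bS_{p,q}\cdot -)_{p+q=n}\colon k[Q] \to \bigoplus_{p+q=n} k[Q]$ (whose kernel is $\BrPrim(n)_{\ker_Q}$, and which commutes with the right $N_Q$-action since it is left multiplication) identifies $(\BrPrim(n)_{\ker_Q})_{N_Q}$ with the intersection of the kernels of the induced maps on $(k[Q])_{N_Q}$. Because $N_Q$ is normal, $(k[Q])_{N_Q} \cong k[Q/N_Q] = k[S_n]$ as a left module over $k[B_n]$ via the projection $\pi\colon k[B_n] \to k[S_n]$, and left multiplication by $\bS_{p,q}$ descends to left multiplication by $\pi(\bS_{p,q})$. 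Here I would use that the composite of the Matsumoto section with $B_n \to S_n$ is the identity, which forces $\pi(\bS_{p,q}) = \sum_\tau \tau$, the sum over $(p,q)$-unshuffles in $S_n$ --- exactly the operator denoted $\bS_{p,q}$ on $k[S_n]$. This yields, for every $Q$ in the cofinal subsystem, $(\BrPrim(n)_{\ker_Q})_{PB_n} = \bigcap_{p+q=n}\ker(\bS_{p,q}\cdot-\colon k[S_n]\to k[S_n])$ with all transition maps the identity, so the symmetrization is pro-constant with this value; by Ree's Theorem 2.2 of \cite{ree}, recalled above, that value is $\Lie(n)$.

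Finally I would match the operad structures. Symmetrization is functorial, and applied to $\BrPrim_* \hookrightarrow \BrAsshat$ it embeds the symmetrization of $\BrPrim_*$ into the symmetrization of $\BrAsshat$; the same cofinality and coinvariants computation ($(k[Q])_{PB_n}\cong k[S_n]$) shows the latter is pro-constantly the associative operad with its substitution-of-permutations structure. Thus the symmetrization of $\BrPrim_*$ is the suboperad of the associative operad whose arity-$n$ term is $\Lie(n) \subseteq k[S_n]$, which is precisely the standard realization of the Lie operad inside the associative operad, so the two coincide; in arity $1$ this reads $\one = \Lie(1)$ for $\BrPrim_*$, while for $\BrPrim$ itself one reads $\Lie$ in its reduced nonunital form, matching $\BrPrim(1)=0$. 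The main obstacle --- and the only place the hypotheses are genuinely used --- is the middle step: commuting the pure-braid coinvariants past the intersection-of-kernels without losing information, which hinges on the characteristic-zero assumption together with the fact that the braid action factors through a finite group at each level, and on the cofinality bookkeeping needed to upgrade a levelwise identification to the pro-constancy of the symmetrization.
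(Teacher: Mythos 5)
Your argument is correct and follows essentially the same route as the paper, which simply observes that passing to pure-braid coinvariants turns the defining kernels of the $\bS_{p,q}$ in $\BrAsshat(n)$ into the corresponding kernels in $k[S_n]$ and then quotes Ree's Theorem 2.2 to identify the result with $\Lie(n)$. You have merely made explicit the details the paper leaves implicit (cofinality of quotients with $\ker_Q \subseteq PB_n$, exactness of finite-group coinvariants in characteristic zero, and the matching of operad structures inside the associative operad), all of which are sound.
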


It is clear from this and Theorem 1 in \cite{prs} that the image of $g_n$ in $\Lie(n)$ is the (symmetric) Garsia idempotent.

It is worth highlighting a very substantial difference between $\Lie$ and $\BrPrim$: the indecomposables $Q(\Lie)$ are concentrated in arity 2, where it is the sign representation:
$$Q(\Lie)(n) = \left\{\begin{array}{ll} k\{[-,-]\}, & n=2 \\ 0, & \mbox{else.} \end{array} \right.$$
That is, the Lie operad is generated by the bracket alone.  In contrast, the previous Theorem shows that $\BrPrim$ has indecomposable operations $g_n$ in every arity.  We note that $g_2$ is the norm bracket defined above, and it is clear that its image in $\Lie(2)$ is $\frac{1}{2}[-,-]$.  One must conclude that the higher (braided) Garsia idempotents $g_n$, while indecomposable in $\BrPrim(n)$, become decomposable in $\Lie(n)$, since the symmetric Garsia idempotents are decomposable.

In some sense, Theorem \ref{pro-cyclic_thm} begins the process of presenting the operad $\BrPrim$ by finding a minimal set of generators, $\{g_n, \, n\geq 2\}$ (although it does not describe these operations for $n>2$).  Pro-cyclicity gives a partial description of the $B_n$ action on these generators.  It does not begin to identify the relations (such as the Jacobi relation in $\Lie$) that they satisfy.  We hope to determine these in future work. 

\subsection{The topology of $\BrPrim$} \label{topology_section}

In this section, we give a topological description of the braided primitive operad in terms of covering spaces of the little disk operad.  From a topologist's point of view, this is undoubtedly the ``correct" definition of $\BrPrim$, in that it encodes more homotopy theoretic information, and so naturally fits into more general homotopical contexts (e.g., differential graded settings).  

Let $E_2$ be the little disks operad; for background, we refer the reader to the classic text \cite{may} as well as the profusion of more recent writing on operads.  We recall the definition of $E_2(n)$ briefly.  

Write $D \subseteq \C$ for the closed unit disk.  A \emph{TD-map} $g:D \to D$ is of the form $g(z) = a z + b$ with $a \in \R_{>0}$, and $b \in \C$.  Then $E_2(n)$ is the space of $n$-tuples of TD-maps whose images may only overlap on their boundary: 
$$E_2(n):=\left\{f: D^{\sqcup n} \to D \; | \; f=\bigsqcup_{i=1}^n f_i, \mbox{ where }  f_i(\Int(D)) \cap f_j(\Int(D)) = \emptyset, i \neq j \right\}.$$
This is topologized as a subspace of $(\R_{>0} \times \C)^{\times n}$, and has the homotopy type of the ordered configuration space $\PConf_n(\C)$ of $n$ points in $\C$.

Because a TD-map is uniquely determined by its image, we think of $E_2(n)$ as the space of $n$ little disks in a unit disk.  Operadic composition is by composition of functions:
$$(f_1, \dots, f_n) \circ_i (g_1, \dots, g_m) = (f_1, \dots, f_{i-1}, f_i \circ (\bigsqcup_{j=1}^m g_j), f_{i+1}, \dots, f_n).$$

The \emph{$A_\infty$ operad} (or $E_1$) may be identified with the subspace where the center of the little disks lie on the real line:
$$E_1(n) = \left\{\bigsqcup_{i=1}^n f_i \in E_2(n) \; | \; b_i \in \R \right\}.$$
This subspace decomposes into a disjoint union of components indexed by $S_n$:
$$E_1(n) = \bigsqcup_{\tau \in S_n} E_1(n)^\tau,$$ 
where $\tau \in S_n$ corresponds to the subspace with $b_{\tau(i)}<b_{\tau(i+1)}$ for each $1 \leq i < n$.  Each of the $E_1(n)^{\tau}$ is contractible.

Let $\Ct_2(n)$ be the universal cover of $E_2(n)$.  Since the fundamental group of $E_2(n)$ is the $n\nth$ pure braid group, this collection forms a braided sequence in the category of topological spaces.  In Example 3.1 of \cite{fiedorowicz}, Fiedorowicz defines\footnote{Actually, Fiedorowicz does this for the operad of little rectangles, which is equivalent to the operad of little disks.} a braided operad structure on $\Ct_2$.  The operad structure maps $\circ_i$ are lifts of the corresponding maps in $E_2$ to the universal cover.  

Lifting these maps is always possible since $\Ct_2(n)$ is simply connected, however, there are many possible choices of lifts.  Since $E_1(n)^1$ is contractible, its preimage in $\Ct_2(n)$ decomposes into a disjoint union of components carried homeomorphically onto $E_1(n)^1$.  Picking such a component $\Ct_1(n)^1$ entirely arbitrarily, the operad structure map $\circ_i$ on $\Ct_2(n)$ is uniquely specified by insisting that $\circ_i$ carry $\Ct_1(n)^1 \times \Ct_1(m)^1$ to $\Ct_1(n+m-1)^1$.

\begin{defn} Let $E_2^{\fin}$ be the profinite completion (in the sense of section of Proposition \ref{adjunction_prop}) of $\Ct_2$.  That is, $E_2^{\fin}$ is the profinitely braided operad of topological spaces where $E_2^{\fin}(n)$ is the inverse system of orbit spaces
$$\{\Ct_2(n)_{\ker_Q}\; | \; Q \in \Quot_n\}.$$
Equivalently, $E_2^{\fin}(n)$ is the inverse system of finite sheeted, normal covers of $E_2(n)$.
\end{defn}

\begin{rem} It is tempting to notate $E_2^{\fin}$ as $\widehat{E_2}$ (or perhaps $\widehat{\Ct_2}$), following the notation of Proposition \ref{adjunction_prop}.  This clashes with the notation for the profinite completion $\widehat{E_2}$ of the operad $E_2$ studied in papers such as \cite{horel}.  This is quite different; indeed there is a fibre sequence of braided pro-operads 
$$E_2^{\fin} \to E_2 \to \widehat{E_2}$$
corresponding to the short exact sequence of towers of fundamental groups
$$1 \to \{\ker_Q\} \to B_n \to \{Q\} \to 1.$$

\end{rem}

Since homology is a lax monoidal functor, the homology $H_*(E_2^{\fin}, k)$ forms a profinitely braided operad of graded $k$-modules; its $n\nth$ term is the inverse system $\{H_*(\Ct_2(n)_{\ker_Q}, k)\}$.  It is possible to identify suboperads of $H_*(E_2^{\fin})$ by restricting the allowed degrees.  

\begin{defn} Write $H_0(E_2^{\fin})$ for the profinitely braided operad given by 
$$H_0(E_2^{\fin}(n)) = \{H_0(\Ct_2(n)_{\ker_Q}, k)\}.$$
Similarly, write $H_{\topp}(E_2^{\fin}(n)) =  \{H_{n-1} (\Ct_2(n)_{\ker_Q}, k)\}$. \end{defn}

In principle, it is always possible to identify homology suboperads $H_{d(n-1)}(E_2^{\fin}(n))$ for a fixed integer $d$.  However, since the homological dimension of $B_n$ is $n-1$, only $d=0$ or $1$ yield non-trivial operads.

\begin{thm}

For all commutative rings $k$, there is an isomorphism of profinitely braided operads $H_0(E_2^{\fin}, k) \cong \BrCom$.  Furthermore, if $k$ is a field of characteristic zero, $H_{\topp}(E_2^{\fin}, k)\cong \Sigma \BrPrim$.

\end{thm}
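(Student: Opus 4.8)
\emph{Proof proposal.} The first isomorphism is elementary and holds over any commutative ring. For each $n$ and $Q\in\Quot_n$, the orbit space $\Ct_2(n)_{\ker_Q}$ is a quotient of the connected space $\Ct_2(n)$, hence connected, so $H_0(\Ct_2(n)_{\ker_Q},k)\cong k$ with trivial $B_n$-action, and every transition map of the pro-system is an isomorphism; thus $H_0(E_2^{\fin}(n))$ is pro-constant on the trivial $B_n$-representation $k$. Fiedorowicz's cabling maps \cite{fiedorowicz} on $\Ct_2$ are continuous, so on $H_0$ they are forced to be the canonical isomorphisms $k\otimes k\cong k$, and the operad unit $\one\to H_0(\Ct_2(1),k)=k$ is the identity. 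This is exactly the data of $\BrCom$, so $H_0(E_2^{\fin},k)\cong\BrCom$.

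For the second isomorphism, assume $k$ is a field of characteristic zero and write $d=n-1$. The plan is to pass to group homology. Since $E_2(n)\simeq\PConf_n(\C)$ is a $K(PB_n,1)$ and the $B_n$-action exhibits $\Ct_2(n)$ as the universal cover of the unordered configuration space $\mathrm{UConf}_n(\C)\simeq K(B_n,1)$, the orbit space $\Ct_2(n)_{\ker_Q}$ is a finite cover of $\mathrm{UConf}_n(\C)$ with deck group $Q$, that is, a $K(\ker_Q,1)$. Shapiro's lemma then gives a natural isomorphism
$$H_d(\Ct_2(n)_{\ker_Q},k)\;\cong\;H_d(B_n;\,k[Q]),$$
group homology with coefficients in the regular representation of $Q$ pulled back along $B_n\twoheadrightarrow Q$; the commuting right $Q$-action on $k[Q]$ reproduces the deck-transformation action under this isomorphism, and this is exactly the right $B_n$-action that $\BrPrim(n)\le\BrAsshat(n)$ carries. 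The cabling maps of $\Ct_2$ induce on group homology the maps compatible with the cabling homomorphisms $B_m^i\times B_{n'}\to B_{m+n'-1}$, which I will match with operadic composition in $\BrPrim$ below.

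The core of the argument is the computation of $H_d(B_n;M)$. I would use a length-$d$ free resolution of $k$ over $k[B_n]$ — for instance one coming from the Fox--Neuwirth / Salvetti cell structure on $\mathrm{UConf}_n(\C)$, whose degree-$j$ term is free on the compositions of $n$ into $j$ positive parts. The key point is that its top differential $k[B_n]\to\bigoplus_{p+q=n}k[B_n]$ is, up to the orientation signs that by definition get absorbed into the operadic suspension $\Sigma$, the total shuffle map assembled from left multiplication by the elements $\bS_{p,q}$ of Theorem \ref{shauenburg_thm}. Since $\car(k)=0$, tensoring with $k[Q]$ over $k[B_n]$ is exact (Maschke, exactly as used in Proposition \ref{ker_prop}), so
$$H_d(B_n;\,k[Q])\;\cong\;\Big(\,\bigcap_{p+q=n}\ker\big(\bS_{p,q}\cdot-\;:\;k[Q]\to k[Q]\big)\,\Big)\otimes\mathrm{sgn}_n$$
in homological degree $d=n-1$; by definition of $\BrPrim$ this is precisely $\Sigma\BrPrim(n)_{\ker_Q}$. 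These identifications are natural in $Q$, so they assemble to an isomorphism of pro-objects $H_{\topp}(E_2^{\fin}(n))\cong\Sigma\BrPrim(n)$. Finally one checks multiplicativity: the operad structure on $H_{\topp}(E_2^{\fin})$ is induced by cabling on $\Ct_2$, which under the identification above becomes cabling of group rings, i.e.\ the operad structure of $\BrAsshat$ restricted to the suboperad $\BrPrim$ (Corollary \ref{br_prim_operad_cor}), while compatibility of the orientation signs with operadic composition is the statement that $\Sigma$ is a functor on operads. (Alternatively one could identify both sides via Proposition \ref{prim_prop}, comparing free algebras with primitives in tensor coalgebras.)

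As a consistency check, applying the symmetrization functor $\CC(n)\mapsto\CC(n)_{PB_n}$ turns $E_2^{\fin}$ into $E_2$ and $\BrPrim$ into $\Lie$ (established above), so the two statements reduce to the classical facts $H_0(E_2,k)\cong\mathrm{Com}$ and $H_{\topp}(E_2,k)\cong\Sigma\Lie$ — the latter being, via Shapiro's lemma, Ree's theorem $\Lie(n)=\bigcap_{p+q=n}\ker(\bS_{p,q}\cdot-:k[S_n]\to k[S_n])$ \cite{ree}; in arity $2$ it recovers $H_1(\PConf_2(\C),k)=k$ with trivial $S_2$-action, matching that $\Sigma\BrPrim(2)$ is the sign representation of $B_2$ tensored with sign, i.e.\ trivial, in degree $1$. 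I expect the main obstacle to be the core computation: producing a $B_n$-equivariant $(n-1)$-dimensional cell model for the configuration space and identifying its top boundary map with the signed total shuffle map, together with the orientation bookkeeping that makes the comparison operadic and yields $\Sigma\BrPrim$ rather than $\BrPrim$ on the nose. The other ingredients — contractibility of $\Ct_2(n)$, Shapiro's lemma, Maschke's theorem, and the kernel computations of Proposition \ref{ker_prop} — are standard or already in hand, and the relevant shuffle complex appears implicitly in the Hurwitz-space homology computations of \cite{etw}.
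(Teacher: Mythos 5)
Your proposal is essentially the paper's own argument. The $H_0$ part is identical, and for the top homology the paper likewise reduces to $H_{n-1}(B_n;k[Q])$ (via $\Ct_2(n)_{\ker_Q}\simeq Q_{hB_n}$, i.e.\ Shapiro, exactly as you say), identifies it with $\bigcap_{p+q=n}\ker(\bS_{p,q})$ twisted by the sign representation, and then checks that cabling induces operadic composition in $\BrAsshat$, corestricted to $\BrPrim$ -- the same multiplicativity check you sketch. The only real difference is packaging of the key computational input: the paper passes by compactly supported Poincar\'e duality to $H^{n+1}_c(\Conf_n,k[Q])$ and quotes Theorem 3.3 of \cite{etw}, whose complex (in degrees $n+1,\dots,2n$, indexed by compositions) has the signed total shuffle map $\sum_\tau(-1)^\tau\tautilde$, i.e.\ $\bS_{p,q}$ on $L\otimes\epsilon$, as the relevant differential; your plan to use a length-$(n-1)$ Fox--Neuwirth/Salvetti free resolution whose top differential is that same signed shuffle map is the Poincar\'e-dual formulation of the identical input, and the step you flag as the ``main obstacle'' is precisely what that citation supplies. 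One correction: your claim that ``tensoring with $k[Q]$ over $k[B_n]$ is exact'' is false -- $k[Q]$ is not flat over $k[B_n]$ (e.g.\ $\ker_Q$ has cohomological dimension $n-1$, so higher $\Tor$'s are generally nonzero), and Proposition \ref{ker_prop} uses semisimplicity of $k[Q]$, not flatness over $k[B_n]$ -- but nothing is lost, since for a resolution concentrated in degrees $\le n-1$ the top homology of the tensored complex is automatically the kernel of the top differential, with no exactness needed.
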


Here $\Sigma$ denotes operadic suspension.  Specifically, if $\CC$ is a graded, braided operad, then as graded, braided sequences,
$$(\Sigma\CC(n))_j = \CC(n)_{j-n+1} \otimes \epsilon,$$
where $\epsilon = k$ is equipped with the sign representation of $B_n$.  The operad structure maps are those of $\CC$, twisted by the sign representation.  If $A$ is a $\CC$-algebra, then the desuspension $\Sigma^{-1} A$ (with $(\Sigma^{-1}A)_j = A_{j+1}$) is an algebra over $\Sigma \CC$.  Twisting by the sign representation implements the Koszul sign rule for grading shifts.

\begin{proof}

For the first assertion, we note that each $\Ct_2(n)_{\ker_Q}$ is connected, so $H_0(E_2^{\fin}(n))$ is the constant inverse system $k$.  Further, the action of the braid group is clearly trivial on $H_0$, so indeed $H_0(E_2^{\fin})$ is isomorphic to $\BrCom$ as a braided sequence.  The operad structure maps $\circ_i: H_0(E_2^{\fin}(n)) \otimes_k H_0(E_2^{\fin}(m)) \to H_0(E_2^{\fin}(n+m-1))$ are the isomorphism $k \otimes_k k \to k$, reflecting the fact that a generator of these modules is the class of a point.  This is equal to the operad structure map on $\BrCom$.

The second claim is more substantial.  For a finite quotient $Q$ of $B_n$, we note that $\Ct_2(n)_{\ker_Q} = K(\ker_Q, 1)$.  This can alternatively be presented as the homotopy orbit space $Q_{hB_n} = Q \times_{B_n} EB_n$, so there is an isomorphism
$$H_{n-1}(\Ct_2(n)_{\ker_Q}, k) = H_{n-1}(B_n, k[Q])$$
since $Q$ is discrete.  By (compactly supported) Poincar\'e duality, this is isomorphic to $H^{n+1}_c(\Conf_n, k[Q])$, where $k[Q]$ is regarded as a local system on $\Conf_n = K(B_n, 1)$.  

The results of section 3.2 (and in particular Theorem 3.3) of \cite{etw} give a chain complex which computes $H_*(\Conf_n \cup \{\infty\}, \mathcal{L})$ for any local system $\LL$ corresponding to a $B_n$-representation $L$.  Dualizing this complex (which is concentrated in degrees $n+1$ through $2n$), we see that $H^{n+1}_c(\Conf_n, \LL)$ is the kernel of the map
$$d_{n+1}: L \to \bigoplus_{p+q = n} L \; \mbox{ given by } \; d^{p,q}_{n+1}(\ell) = \sum_{\tau} (-1)^{\tau}\tautilde(\ell)$$
where the sum is over $(p,q)$-unshuffles $\tau$, and $(-1)^{\tau}$ denotes its sign.  We recognize the $(p,q)$ component of the differential as the map
$$d^{p, q}_{n+1} = \bS_{p,q}: L \otimes \epsilon \to L \otimes \epsilon$$
Taking $L = k[Q]$, we conclude that as a braided sequence, 
\begin{eqnarray*}
H_{n-1}(\Ct_2(n)_{\ker_Q}, k) & \cong & \ker(d_{n+1})\\
 & = & \bigcap_{p+q = n} \ker(\bS_{p,q}: k[Q] \otimes \epsilon \to k[Q] \otimes \epsilon)\\
 & = & \Sigma \BrPrim(n)_Q.
\end{eqnarray*} 

This shows that the underlying inverse systems of $k[B_n]$-modules are isomorphic; we must show that the composition in these operads is the same.  On the topological side, one verifies that the composition map 
$$\circ_i: \Ct_2(n)_{\ker_P} \times \Ct_2(m)_{\ker_Q} = (P \times Q)_{h(B_n^i \times B_m)} \to R_{hB_{n+m-1}} = \Ct_2(n+m-1)_{\ker_R}$$
is induced by cabling $P \times Q \to R$ (for $P$ and $Q$ large enough for this to be defined), along with the induced map of homotopy quotients, coming from the cabling homomorphism $B_n^i \times B_m \to B_{n+m-1}$.  Consequently in the above description of the top homology of these spaces, $\circ_i$ is induced by cabling in the module $k[P] \otimes k[Q] \to k[R]$.  This is precisely operadic composition in $\BrAsshat$; since $\BrPrim$ is a suboperad and the map corestricts, the result follows.

\end{proof}

\begin{rem} If we let $\Ct_1(n) \subseteq \Ct_2(n)$ be the preimage of $E_1(n)$, then $\Ct_1$ forms a braided suboperad of $\Ct_2$.  The components of $\Ct_1(n)$ are contractible, and in bijection with $B_n$; one easily establishes an isomorphism of braided operads
$$H_0(\Ct_1) \cong \BrAss.$$
This allows us to regard $\Ct_1$ as a braided analogue of the $A_{\infty}$ operad.  Writing $\widehat{\Ct}_1 =: E_1^{\fin}$ for the profinite completion, we similarly have 
$$H_0(E_1^{\fin}) \cong \BrAsshat.$$
The natural map $E_1^{\fin} \to E_2^{\fin}$ induces in $H_0$ the projection $\BrAsshat \to \BrCom$.

\end{rem}

\subsection{The Woronowicz ideal and Nichols module}  \label{w_section}

In in section 3 of \cite{woronowicz}, Woronowicz gives a description of the ideal of relations in $\mB(V)$.  Following that work and its elucidation in section 3.2 of \cite{bazlov}, we make the following definition:

\begin{defn}

Define the \emph{Woronowicz ideal} $\mW$ as 
$$\mW(n) = \ker\left(\bS_n: \BrAsshat(n) \to \BrAsshat(n)\right).$$
The \emph{Nichols module} is defined to be the quotient $\BrAsshat$-module
$$\Nic := \BrAsshat / \mW.$$  

\end{defn}

These terms are justified by the following result:

\begin{prop} \label{nic_prop}

$\mW$ is a strict left ideal in $\BrAsshat$.  For any finitely braided $V \in \bC$, there is an isomorphism of algebras 
$$\Nic[V] \cong \mB(V).$$

\end{prop}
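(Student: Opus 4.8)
The plan is to treat the three assertions of the proposition separately: strictness, the operadic left ideal property, and the algebra isomorphism, with essentially all of the combinatorial content concentrated in the middle one.

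\emph{Strictness and the braided subsequence structure.} Since $\mW(n)=\ker\!\big(\bS_n\cdot-:\BrAsshat(n)\to\BrAsshat(n)\big)$ is, by definition, the kernel of left multiplication by the element $\bS_n=\sum_{\tau\in S_n}\tautilde\in k[B_n]$, part \ref{strict_part} of Proposition \ref{ker_prop} (with $f=\bS_n$) shows immediately that the pro-object $\mW(n)$ is strict. It is closed under the right $B_n$-action because $\bS_n\alpha=0$ forces $\bS_n(\alpha g)=(\bS_n\alpha)g=0$ for $g\in B_n$, and $\mW(0)=\mW(1)=0$ since $\bS_0=\bS_1=1$. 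So $\mW$ is a strict braided subsequence of $\BrAsshat$, and what remains for the first sentence of the proposition is to check it is a left ideal.

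\emph{The left ideal property.} I must show that for $\gamma\in\BrAsshat(m)$, $\alpha\in\mW(n)$ and $1\le i\le m$, the cabled element $\gamma\circ_i\alpha$ satisfies $\bS_N\cdot(\gamma\circ_i\alpha)=0$ in $\BrAsshat(N)$, $N=m+n-1$. The key input is a factorization of the quantum symmetrizer through a parabolic subgroup, in the spirit of Woronowicz's analysis \cite{woronowicz} (see also section 3.2 of \cite{bazlov}). Let $W=S_{\{i,\dots,i+n-1\}}\le S_N$ be the Young subgroup on the block of strands $i,\dots,i+n-1$, and write $S_N=\bigsqcup_{c\in C}cW$ with $c$ ranging over the minimal-length coset representatives, so that $\ell(cw)=\ell(c)+\ell(w)$ for $w\in W$. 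Since the Matsumoto section is multiplicative on length-additive products, this gives in $k[B_N]$ the identity
$$\bS_N=\Big(\sum_{c\in C}\tilde c\Big)\cdot\bS_n^{(i)},$$
where $\bS_n^{(i)}$ is the image of $\bS_n\in k[B_n]$ under the parabolic group embedding $B_n\hookrightarrow B_N$ placing $B_n$ on strands $i,\dots,i+n-1$. In the cabling picture one isotopes $\alpha$ to the top of the tube, exhibiting $\gamma\circ_i\alpha=\hat\alpha\cdot\bar\gamma$ with $\hat\alpha$ the image of $\alpha$ under that same embedding and $\bar\gamma\in B_N$ the cable of $\gamma$. As the embedding is an algebra map, $\bS_n^{(i)}\hat\alpha=\widehat{\bS_n\alpha}=0$, whence $\bS_N\cdot(\gamma\circ_i\alpha)=(\sum_c\tilde c)\,\bS_n^{(i)}\hat\alpha\,\bar\gamma=0$. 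All of these are honest identities in $k[B_N]$, hence hold in each finite quotient and so in $\BrAsshat(N)$; and $\bS_n\alpha=0$ in $\BrAsshat(n)$ implies $\widehat{\bS_n\alpha}=0$ in $\BrAsshat(N)$ because the embedding is realized by operad structure maps of $\BrAsshat$. I expect this factorization to be the main obstacle — not for depth, but because the bookkeeping of cabling conventions and the compatibility of Matsumoto lifts with parabolic embeddings must be set up with care.

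\emph{The algebra isomorphism.} Fix $V\in\bC$ finitely braided. Because $\mW$ is a strict (hence essentially strict) left ideal, Remark \ref{quot_note} gives $\Nic[V]=(\BrAsshat/\mW)[V]=\BrAsshat[V]/\mW[V]$, and $\BrAsshat[V]=\That(V)=T(V)$ since $V$ is finitely braided (Proposition \ref{fin_br_same_monad_prop}). In degree $n$, part \ref{ker_part} of Proposition \ref{ker_prop} (with $f=\bS_n$ and $\That_n(V)=V^{\otimes n}$) identifies the image of $\mW(n)\otimes_{B_n}V^{\otimes n}$ in $T(V)_n=V^{\otimes n}$ with $\ker(\bS_n:V^{\otimes n}\to V^{\otimes n})$. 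Thus, as a graded subobject of $T(V)$,
$$\mW[V]=\bigoplus_{n\ge 0}\ker\!\big(\bS_n:V^{\otimes n}\to V^{\otimes n}\big),$$
which is exactly the kernel of the Hopf map $\bS:T(V)\to T^{\co}(V)$ of (\ref{TV_eqn}) — a two-sided (indeed Hopf) ideal, as recalled after (\ref{nichols_defn_eqn}). Since $\mB(V)$ is by definition the image of that map, with $\mB(V)_n=T(V)_n/\ker(\bS_n)$ by (\ref{nichols_defn_eqn}), the quotient $T(V)\twoheadrightarrow\Nic[V]$ has the same kernel as $\bS$; and the associative-algebra structure on $\Nic[V]$ (as a $\BrAsshat$-algebra) is the one inherited from $T(V)=\BrAss[V]$, so it matches the subalgebra structure on $\mB(V)\subseteq T^{\co}(V)$. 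This produces the algebra isomorphism $\Nic[V]\cong\mB(V)$. (Alternatively, once the left-ideal property is in hand, Proposition \ref{ideal_prop} applied with $\CC=\II=\mW$ shows directly that $\mW[V]$ is a two-sided ring ideal in $\That(V)$, which one can then combine with the degreewise computation above.)
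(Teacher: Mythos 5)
Your proposal is correct, and for the heart of the proposition — that $\mW$ is a left ideal — it takes a genuinely different route from the paper. The paper argues indirectly: writing $\II$ for the operadic left ideal generated by $\mW$, it uses Proposition \ref{ideal_prop} together with the already-known fact that $\mW[V]=\ker(\bS)\leq T(V)$ is a two-sided (Hopf) ideal to get $\II[V]=\mW[V]$ for every finitely braided $V$, and then invokes the detection result (Proposition \ref{detect_prop}) to force $\II=\mW$. You instead prove the ideal property directly inside the braid group algebras, via the length-additive coset factorization $\bS_N=\bigl(\sum_{c}\tilde c\bigr)\cdot\iota_i(\bS_n)$ for the Young subgroup on the inserted block (the one-block analogue of (\ref{sym_diag_eqn}), with the parabolic factor on the opposite side, corresponding to the opposite coset decomposition) combined with the cabling identity $\gamma\circ_i\alpha=\iota_i(\alpha)\cdot\bar\gamma$; as you note, the convention bookkeeping (which end of the tube the insertion is isotoped to, hence which side's minimal coset representatives to use) must be matched, but either matching works, and the passage to $\BrAsshat$ through finite quotients via the structure maps of Proposition \ref{adjunction_prop} is unproblematic. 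What each buys: your argument is self-contained and intrinsic to the operad — it does not need Proposition \ref{detect_prop}, i.e.\ a rich enough supply of finitely braided vector spaces, and it essentially reproves at the operadic level the Woronowicz--Schauenburg factorization underlying the two-sidedness of $\ker(\bS)$ — while the paper's argument is shorter by outsourcing exactly that input to the Nichols-algebra literature and to the detection proposition. Your treatment of strictness and of the isomorphism $\Nic[V]\cong\mB(V)$ (via Proposition \ref{ker_prop}, Proposition \ref{fin_br_same_monad_prop}, equation (\ref{nichols_defn_eqn}) and Remark \ref{quot_note}) coincides with the paper's.
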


There is no intrinsic Hopf algebra structure on $\Nic[V]$, since it is defined in purely algebraic (and not coalgebraic) terms.  That said, one can transport the Hopf structure on $\mB(V)$ across this isomorphism.  

\begin{proof}

Again, part \ref{strict_part} of Proposition \ref{ker_prop} yields strictness.  Recall equation (\ref{nichols_defn_eqn}), which identifies $\mB(V) = T(V) /I$, where $I$ is the two-sided ideal which in degree $n$ is
$$I_n = \ker(\bS_n: V^{\otimes n} \to V^{\otimes n}).$$
Part \ref{ker_part} of Proposition \ref{ker_prop} allows us to rewrite this as
$$I_n =  \ker\left(\bS_n: \BrAsshat(n) \to \BrAsshat(n)\right) \otimes_{B_n} V^{\otimes n} = \mW(n) \otimes_{B_n} V^{\otimes n}$$
so $I = \mW[V]$.  This gives the isomorphism $\mB(V) \cong \Nic[V]$, using Remark \ref{quot_note}.

To prove the first claim, we work with braided vector spaces $V$ instead of objects in $\bC$.  Write $\II$ for the left ideal generated by $\mW$.  By Proposition \ref{ideal_prop}, $\II[V] \leq T(V)$ is the two sided ideal generated by $\mW[V] = I$.  But this already a two-sided ideal, so $\II[V] = I$.

Assume that $\II \neq \mW$, so there is some $n$ and some element $\alpha \in \II(n) \setminus \mW(n)$.  Then $\bS_n(\alpha) \neq 0$, so by Proposition \ref{detect_prop} there is some $V$ and $v_1, \dots, v_n \in V$ with the property that $[\bS_n(\alpha)](v_1, \dots, v_n) \neq 0$.  Then $\alpha(v_1, \dots, v_n) \notin I_n = \II[V]_n$, which is a contradiction.  

\end{proof} 

It is a consequence of the fact that $\bS: T(V) \to T^{\co}(V)$ is a map of coalgebras that that for any $p, q \in [1, n-1]$ with $p+q = n$, 
\beqn \bS_n = (\bS_p \otimes \bS_q) \circ \bS_{p,q} \label{sym_diag_eqn} \eeqn
(see also \cite{schauenburg}).  Taking kernels, we see that
\beqn \label{prim_inclusion_eqn} \mW(n) \supseteq \BrPrim(n) \eeqn
This is an equality if $n=2$ since $\bS_2 = \bS_{1,1}$, but the inclusion is proper otherwise.  If these braided sequences were equal -- even if the left ideal generated by $\BrPrim$ equalled $\mW$ -- then Proposition \ref{nic_prop} would imply that for any $V$, $\mB(V)$ would be obtained as the quotient of $T(V)$ by the ideal generated by decomposable primitives.  This is equivalent to $V$ being of \emph{combinatorial rank} 1 (see section \ref{W_axioms_section}), but not all braided vector spaces are of this sort.

\begin{prop}

Both $\mW(n)$ and $Q_{\BrAsshat_{>1}}(\mW)(n)$ are nonzero, pro-cyclic $k[B_n]$-modules.

\end{prop}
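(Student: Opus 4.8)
The plan is to transcribe the proof of Theorem~\ref{pro-cyclic_thm}, with $\BrPrim$ replaced by $\mW$. Only two assertions need a real argument: that $\mW(n)$ is pro-cyclic, and that $Q_{\BrAsshat_{>1}}(\mW)(n)$ is nonzero. From these the rest is formal: $\mW(n)\neq 0$ is immediate since $Q_{\BrAsshat_{>1}}(\mW)(n)$ is a termwise quotient of it (alternatively, $\BrPrim(n)\subseteq\mW(n)$ by (\ref{prim_inclusion_eqn}) and $\BrPrim(n)\neq 0$ by Theorem~\ref{pro-cyclic_thm}); and $Q_{\BrAsshat_{>1}}(\mW)(n)$, being a quotient of $\mW(n)$ by a $B_n$-stable subsystem, is pro-cyclic because the image of a pro-cyclic generator of $\mW(n)$ generates each of its terms.

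For the pro-cyclicity of $\mW(n)$ I would repeat the Garsia-idempotent construction verbatim. For each $Q\in\Quot_n$ the term $\mW(n)_{\ker_Q}=\ker(\bS_n\cdot-\colon k[Q]\to k[Q])$ is a right ideal of $k[Q]$, so by Theorem~1 of \cite{prs} there is a unique idempotent $e_{n,Q}$ with $e_{n,Q}k[Q]=\mW(n)_{\ker_Q}$ and $S(e_{n,Q})=e_{n,Q}$. A surjection $f\colon Q\twoheadrightarrow P$ in $\Quot_n$ induces a Hopf map $k[Q]\to k[P]$ carrying $e_{n,Q}$ to an idempotent that is still $S$-symmetric and whose right ideal is $f(\mW(n)_{\ker_Q})=\mW(n)_{\ker_P}$ --- the last equality because the pro-object $\mW(n)$ is strict, which is part~\ref{strict_part} of Proposition~\ref{ker_prop} applied to $f=\bS_n$. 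Uniqueness forces $f(e_{n,Q})=e_{n,P}$, so the $e_{n,Q}$ assemble into an element $e_n\in\varprojlim_Q\mW(n)_{\ker_Q}$ generating every term; hence $\mW(n)$ is pro-cyclic.

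For $Q_{\BrAsshat_{>1}}(\mW)(n)\neq 0$ I would, as in Theorem~\ref{pro-cyclic_thm}, exhibit a finitely braided vector space $V$ with $\mW[V]_m=0$ for $2\le m<n$ and $\mW[V]_n\neq 0$. Applying the Artin functor to the cokernel presentation of $Q_{\BrAsshat_{>1}}(\mW)(n)$ and using Proposition~\ref{ideal_prop} to recognize the maps involved as multiplication in $T(V)=\BrAsshat[V]$ of copies of $V$ against the lower-arity spaces $\mW[V]_{n-m+1}$ (for $2\le m\le n-1$), together with exactness of $-\otimes_{B_n}V^{\otimes n}$ in characteristic zero, yields $Q_{\BrAsshat_{>1}}(\mW)(n)\otimes_{B_n}V^{\otimes n}\cong\mW[V]_n\neq 0$, which suffices. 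For $V$ I would base-change $k$ if necessary (harmless since $\car k=0$) and take the quantum line $V=k\{x\}$ with $\sigma(x\otimes x)=\zeta\,(x\otimes x)$ for a primitive $n\nth$ root of unity $\zeta$. By Proposition~\ref{ker_prop} (equivalently, by the computation in the proof of Proposition~\ref{nic_prop}) one has $\mW[V]_m=\ker(\bS_m\colon V^{\otimes m}\to V^{\otimes m})$, and $\bS_m=\sum_{\tau\in S_m}\tautilde$ acts on the one-dimensional space $V^{\otimes m}$ by the scalar $\sum_{\tau\in S_m}\zeta^{\ell(\tau)}=\prod_{j=1}^m(1+\zeta+\cdots+\zeta^{j-1})$, which is nonzero precisely when $m<n$. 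Thus $\mW[V]_m=0$ for $m<n$ while $\mW[V]_n=V^{\otimes n}\neq 0$, exactly as required.

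The step I expect to be the main obstacle is the first half of the last paragraph: carefully matching the operadic left-$\BrAsshat_{>1}$-module indecomposables of $\mW$ with what the Artin functor computes --- this is an application of Proposition~\ref{ideal_prop} plus right-exactness of $-\otimes_{B_n}V^{\otimes n}$, but the pro-object bookkeeping must be done with some care --- whereas the scalar by which $\bS_m$ acts on the quantum line is standard (the $\zeta$-factorial, i.e.\ the Poincar\'e polynomial of $S_m$ evaluated at $\zeta$), and the pro-cyclicity half is a direct copy of the corresponding part of Theorem~\ref{pro-cyclic_thm}.
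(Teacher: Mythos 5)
Your proposal is correct and follows essentially the same route as the paper: reduce to pro-cyclicity of $\mW(n)$ (handled by the identical Garsia-idempotent argument from Theorem \ref{pro-cyclic_thm}, using strictness from Proposition \ref{ker_prop}) and nonvanishing of the indecomposables via the same quantum-line braided vector space with a primitive $n\nth$ root of unity. The only cosmetic differences are that you compute directly that $\bS_m$ acts by the $\zeta$-factorial (rather than importing the nonzero class in degree $n$ from $\BrPrim(n)\subseteq\mW(n)$ and the computation cited from \cite{etw}) and that you package the indecomposability step via right-exactness of $-\otimes_{B_n}V^{\otimes n}$ instead of the paper's element-level decomposability argument.
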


\begin{proof}

As in Theorem \ref{pro-cyclic_thm}, we need to show $Q_{\BrAsshat_{>1}}(\mW)(n) \neq 0$ and that $\mW(n)$ is pro-cyclic.  The proof of the second fact is identical to that of Theorem \ref{pro-cyclic_thm}.  The proof of the first fact follows the same path: we must find a braided vector space $V$ with $\mW[V]_m$ vanishing for $1<m<n$ and nonzero for $m=n$.  

The same braided vector space as in that proof works for this argument.  Indeed, the nonzero class in $P(T(V))_n$ gives a nonzero class in $\mW[V]_n$ by (\ref{prim_inclusion_eqn}).  Further, Proposition \ref{nic_prop} shows that $\mW[V] = \ker(\bS: T(V) \to \mB(V))$.  For the $V$ in question, it is easy to see that $\ker(\bS_m) = 0$ for $m<n$.

\end{proof}

\begin{defn} \label{h_n_defn} Let $h_n \in \mW(n)$ be the pro-cyclic generators obtained in the same fashion as the Garsia idempotents $g_n$. \end{defn}

It is tempting to imagine that $h_n = g_n$.  However, this is not the case unless $n=2$, again by (\ref{prim_inclusion_eqn}).  Rather, for each $Q \in \Quot_n$, there are elements $c_{n,Q} \in k[Q]$ with 
$$g_{n, Q} = c_{n, Q} h_{n, Q}.$$
The elements $c_{n, Q}$ are not necessarily invertible, but if $V$ is a braided vector space of combinatorial rank 1, then they act invertibly on $V^{\otimes n}$ ensuring that the ideal in $T(V)$ generated by $P(V)_{>1}$ is $I$.

\begin{prop} \label{not_ideal_prop}

$\mW$ is not a right ideal in $\BrAsshat$.

\end{prop}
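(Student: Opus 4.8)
The plan is to exhibit a single explicit element witnessing the failure. Recall that $\mW$ being a right ideal would mean that for every $\alpha\in\mW(m)$ and every $\beta\in\BrAsshat(n)$ the cabling $\alpha\circ_i\beta$ lies in $\mW(m+n-1)$. I will take the smallest available piece of $\mW$, namely $\mW(2)=k\{b\}$ where $b$ is the norm bracket (here $\mW(2)=\BrPrim(2)$ by the discussion following (\ref{prim_inclusion_eqn})), cable it along the first input with the associative multiplication $\mu\in\BrAsshat(2)$ (the class of the identity braid), and show that the resulting element $b\circ_1\mu\in\BrAsshat(3)$ does \emph{not} lie in $\mW(3)=\ker(\bS_3)$.

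To detect non-membership I will evaluate on a finitely braided vector space, exactly as in the proof of Proposition \ref{nic_prop}, and use part \ref{ker_part} of Proposition \ref{ker_prop}: for finitely braided $V$ the map $\mW(3)\otimes_{B_3}V^{\otimes3}\to V^{\otimes3}$ is an isomorphism onto $\ker(\bS_3\colon V^{\otimes3}\to V^{\otimes3})$, so if $b\circ_1\mu$ were in $\mW(3)$ its image on $V^{\otimes3}$ would be annihilated by $\bS_3$. I will take $V=k$ one-dimensional with braiding $\sigma=q$ a primitive fourth root of unity, after a harmless base change of $k$ (possible since $\car(k)=0$). Then $V^{\otimes n}=k$ with every elementary braiding acting as $q$, so $V$ is finitely braided; $\bS_n$ acts by the scalar $\prod_{j=1}^{n}(1+q+\dots+q^{j-1})$, which for $n=3$ is $(1+q)(1+q+q^2)=(1+q)q=q-1\neq0$; and the cable $c:=\sigma_1\circ_1 e\in B_3$, being a positive braid word of length $2$, acts on $V^{\otimes3}$ by $q^2=-1$.

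Now under the strand-doubling homomorphism $B_2\to B_3$ one has $b\circ_1\mu=\tfrac1{2m}\sum_{j}(-1)^j c^j$ in each sufficiently large finite quotient, which on $V^{\otimes3}=k$ acts by $\tfrac1{2m}\sum_j(-1)^j(-1)^j=1$; thus $b\circ_1\mu$ sends a generator of $V^{\otimes3}$ to itself. Since $\ker(\bS_3\colon V^{\otimes3}\to V^{\otimes3})=0$, part \ref{ker_part} of Proposition \ref{ker_prop} forces $b\circ_1\mu\notin\mW(3)$, and therefore $\mW$ is not a right ideal. The only point requiring care is the operadic bookkeeping: one must check that cabling $\mu$ into the first input of the braid-group element $b$ is computed by the classical ``double the first strand'' homomorphism $B_2\to B_3$, so that $\sigma_1^{j}$ is carried to $c^{j}$; this follows from the equivariance axiom of Definition \ref{brop_defn} together with the fact that cabling a trivial braid into a fixed strand is a group homomorphism of braid groups. (Alternatively one may run the same argument inside a large enough finite quotient of $B_3$ using Proposition \ref{detect_prop}; evaluating on the diagonal $V$ is simply the most economical choice.)
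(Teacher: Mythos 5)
Your overall strategy is the same as the paper's: the witness is the identical element $b\circ_1\mu$, and non-membership in $\mW(3)=\ker(\bS_3)$ is detected by acting on a finitely braided vector space. What you do differently is the choice of test object: the paper uses the Yetter--Drinfeld module $k\{x,y,z\}\leq k[S_3]$ spanned by the transpositions and computes $\bS_3(b(xx,y))=\tfrac12(xyz-zxz+zyx-xzx)\neq 0$ by hand, whereas you use the one-dimensional braided vector space with $\sigma=q$ a primitive fourth root of unity, where everything reduces to scalars ($\bS_3$ acts by $(1+q)(1+q+q^2)=q-1\neq 0$, so $\ker(\bS_3)=0$ on $V^{\otimes 3}$, while $b\circ_1\mu$ acts by $1$). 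That is a genuinely leaner detection, and the logic ``element of $\mW(3)$ must act through $\ker(\bS_3:V^{\otimes 3}\to V^{\otimes 3})$'' via Proposition \ref{ker_prop} is sound.

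However, the one step you flag as ``the only point requiring care'' is justified incorrectly. The map $\gamma\mapsto\gamma\circ_1 e$ (double the first strand) is \emph{not} a group homomorphism $B_2\to B_3$; as the paper notes in Section \ref{op_defn_section}, cabling is a homomorphism only on $B_m^i\times B_n$, i.e.\ when the cabled strand is pure. Concretely, $\sigma^2\circ_1 e$ is a pure braid (its image in $S_3$ is trivial), whereas $c^2=(\sigma\circ_1 e)^2$ maps to a $3$-cycle in $S_3$, so $\sigma^j\circ_1 e\neq c^j$ already for $j=2$, and your identity $b\circ_1\mu=\tfrac1{2m}\sum_j(-1)^jc^j$ is false. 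The error happens to be harmless for your computation, but for a reason you did not give: on a one-dimensional braided vector space a braid acts by $q$ raised to its exponent sum, and doubling one strand of a $2$-braid doubles the exponent sum (each crossing of $\sigma^j$ becomes two crossings of the same sign), so $\sigma^j\circ_1 e$ acts by $q^{2j}=(-1)^j$ -- exactly the scalar you used for $c^j$. Equivalently, the discrepancy between $\sigma^j\circ_1 e$ and $c^j$ dies in the abelianization of $B_3$, through which the action on your $V^{\otimes 3}$ factors. Replace the false homomorphism claim by this exponent-sum observation and your proof is complete.
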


\begin{proof}

The norm bracket $b$ is an element of $\BrPrim(2) = \mW(2)$.  It suffices to show that $b \circ_1 \mu \notin \mW(3)$.  For this, we may find a finitely braided vector space $V$ and elements $v_1, v_2, v_3 \in V$ with 
$$\bS_3(b(v_1 \otimes v_2, v_3)) \neq 0 \in V^{\otimes 3}.$$

A simple example where this holds is gotten by taking $V$ to be the $S_3$ Yetter-Drinfeld module 
$$V = k\{x, y, z\} \leq k[S_3]$$
where $x = (12)$, $y = (23)$ and $z = (31)$ are the transpositions.  We omit the tensor symbol in the following computation; it is performed in $V^{\otimes 3}$.  It is easy to see that 
$$b(xx, y) = \frac{1}{4}(xxy - yzz+zzy-yxx),$$
from which one computes
$$\bS_3(b(xx, y)) = \frac{1}{2}(xyz-zxz+zyx-xzx) \neq 0.$$

\end{proof}

We conclude that $\Nic = \BrAsshat/\mW$ is not an operad.  Since $\Nic[V] = \mB(V)$, we observe that the operation $V \mapsto \mB(V)$ is not a monad on the category of finitely braided vector spaces.  This stands in contrast to the corresponding situation for symmetric vector spaces; the symmetric algebra functor $\Sym$ is indeed a monad.

\section{Hopf structures on enveloping algebras}

In this section, we introduce universal enveloping algebras for suboperads of $\BrAsshat$ and find conditions under which they support the structure of a Hopf algebra.   Classically, for Lie-algebras $L$, the coalgebraic structure on $U(L)$ is entirely mandated by the requirement that $L$ be primitive. More data is required in the braided setting; this section is devoted to identifying that data.

Throughout, $\bC$ is a $k$-linear, abelian, braided monoidal category.  Most of our constructions occur in the larger category $\proC$ of pro-objects in $\bC$, but at various points we examine conditions under which they restrict to the pro-constant objects.

\subsection{Defining and presenting enveloping algebras} 

The rest of this paper will be concerned with this special case of Definition \ref{gen_enveloping_defn}:

\begin{defn}

Let $\CC \leq \BrAsshat$ be a essentially strict suboperad, and let $L$ be a $\CC$-algebra in $\proC$.  The \emph{universal enveloping algebra} of $L$ is
$$U_{\CC}(L) := U_{\CC}^{\BrAsshat}(L).$$

\end{defn}

\begin{exmp} For an associative algebra $A$, profinite completion is a universal enveloping algebra: $\Ahat = U_{\BrAss}(A)$. \end{exmp} 

\begin{rem} \label{same_env_alg_rem} 

Notice that if $\CC(1) = 0$, the forgetful map
$$\Alg_{\CC_*}(\proC) \longrightarrow \Alg_{\CC}(\proC)$$ 
(see Definition \ref{unit_defn}) is an equality; the only operation missing on the right is the identity.  From this it is easy to see that since $\BrAsshat$ is unital, if $L$ is a $\CC$-algebra, then when regarded as a $\CC_*$-algebra, there is a natural isomorphism 
$$U_{\CC}(L) \cong U_{\CC_*}(L).$$

\end{rem}

Recall that Proposition \ref{gen_rel_cor} gives a presentation of $U_{\CC}(L)$, given a presentation of $L$: if $L =\CC[V]/R$, then
$$U_{\CC}(L) \cong \BrAsshat[V]/(R),$$
where $(R)$ is the $\BrAsshat$-ideal generated by $R$.

\begin{defn} A $\CC$-algebra $L$ is \emph{faithful} if it admits a presentation $L = \CC[V]/R$ where $R = \CC[V] \cap (R)$.  \end{defn}

It follows that the natural map $L \to U_{\CC}(L)$ is injective if and only if the $\CC$-subalgebra $R \leq \CC[V]$ of relations satisfies $R = \CC[V] \cap (R)$, so faithful $\CC$-algebras are precisely those that embed into their enveloping algebras.  In fact, if $L$ is a $\CC$-subalgebra of \emph{any} $\BrAsshat$-algebra $A$, then $L$ is faithful, seeing that the universal property of $U_{\CC}$ provides a factorization of $L \hookrightarrow A$ through $U_{\CC}(L)$.

Define
$$\Lbar:= \im(L \to U_{\CC}(L)).$$
Noting that $U_{\CC}(L)$ is a $\CC$-algebra by restriction along $\CC \subseteq \BrAsshat$, we observe that $\Lbar$ is a $\CC$-algebra, being the image of one under a map of $\CC$-algebras. This makes the following straightforward:

\begin{prop}

The operation $L \mapsto \Lbar$ defines an endofunctor of $\Alg_{\CC}(\proC)$.  There is a natural transformation $\eta_L: L \to \Lbar$ which induces a natural isomorphism $U_{\CC}(\eta_L): U_{\CC}(L) \cong U_{\CC}(\Lbar)$.  The image of the functor $L \mapsto \Lbar$ is the subcategory of faithful $\CC$-algebras, and its restriction to this subcategory is naturally equivalent to the identity.

\end{prop}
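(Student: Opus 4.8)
The plan is to establish the three clauses in turn, with the adjunction of Proposition~\ref{univ_prop} (applied in $\proC$) as the only substantive input; all the rest is formal manipulation of images in the abelian category $\proC$. For a $\CC$-algebra map $f : L \to L'$, functoriality of $U_{\CC}$ gives a $\BrAsshat$-algebra map $U_{\CC}(f) : U_{\CC}(L) \to U_{\CC}(L')$ fitting into a commuting square with the unit maps $L \to U_{\CC}(L)$ and $L' \to U_{\CC}(L')$. Hence $U_{\CC}(f)$ carries $\Lbar = \im(L \to U_{\CC}(L))$ into $\Lbar'$, which defines $\bar f : \Lbar \to \Lbar'$; since $\Lbar, \Lbar'$ are $\CC$-subalgebras of $U_{\CC}(L), U_{\CC}(L')$, the map $\bar f$ is a $\CC$-algebra map, and $\overline{g f} = \bar g \bar f$, $\overline{\id} = \id$ follow from the analogous identities for $U_{\CC}$. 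The unit $L \to U_{\CC}(L)$ factors canonically as $L \xrightarrow{\eta_L} \Lbar \hookrightarrow U_{\CC}(L)$, with $\eta_L$ the corestriction onto the image; it is a $\CC$-algebra map, and the square above exhibits $\eta$ as a natural transformation $\id \Rightarrow \overline{(\,\cdot\,)}$.

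For the isomorphism $U_{\CC}(\eta_L)$, I would show that $U_{\CC}(L)$ and $U_{\CC}(\Lbar)$ corepresent the same functor on $\Alg_{\BrAsshat}(\proC)$, with $U_{\CC}(\eta_L)$ the induced comparison map, so that it is invertible by Yoneda. By Proposition~\ref{univ_prop} this reduces to checking that
$$\eta_L^* : \Hom_{\Alg_{\CC}(\proC)}(\Lbar, B) \longrightarrow \Hom_{\Alg_{\CC}(\proC)}(L, B)$$
is a bijection for every $\BrAsshat$-algebra $B$ in $\proC$. It is injective because $\eta_L$ is an epimorphism, being the corestriction of a morphism onto its image in the abelian category $\proC$. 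For surjectivity, a $\CC$-algebra map $g : L \to B$ extends uniquely to a $\BrAsshat$-algebra map $\tilde g : U_{\CC}(L) \to B$; restricting $\tilde g$ along $\Lbar \hookrightarrow U_{\CC}(L)$ produces a $\CC$-algebra map $\Lbar \to B$ whose composite with $\eta_L$ is $\tilde g|_L = g$. (Equivalently one may write the inverse of $U_{\CC}(\eta_L)$ explicitly: the $\CC$-algebra map $\Lbar \hookrightarrow U_{\CC}(L)$ extends over $U_{\CC}(\Lbar)$ by the universal property, and the two composites with $U_{\CC}(\eta_L)$ are identities by the uniqueness clause of that property.)

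For the last clause, each $\Lbar$ is a $\CC$-subalgebra of a $\BrAsshat$-algebra, hence faithful by the remark preceding the proposition, so every object in the image of $\overline{(\,\cdot\,)}$ is a faithful $\CC$-algebra. Conversely, if $L$ is faithful then $L \to U_{\CC}(L)$ is injective, so $\eta_L : L \to \Lbar$ is an isomorphism; this shows simultaneously that every faithful $\CC$-algebra lies in the image of the functor, and that the restriction of the natural transformation $\eta$ to the subcategory of faithful $\CC$-algebras has invertible components, i.e.\ it is a natural isomorphism between the identity and $\overline{(\,\cdot\,)}$ on that subcategory.

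I expect no genuine obstacle here: the one place needing an actual argument is the surjectivity of $\eta_L^*$ in the second paragraph ("extend along the unit, then restrict to the image"), and even that is a one-line consequence of the universal property. The only thing to be careful about is that images, epimorphisms, and the enveloping-algebra adjunction all behave as expected in $\proC$, which they do since $\bC$, and hence $\proC$, is abelian.
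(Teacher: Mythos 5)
Your proposal is correct, and it follows what is evidently the intended argument: the paper states this proposition without proof (calling it "straightforward" once $\Lbar$ is known to be a $\CC$-algebra), and your route — functoriality of the epi–mono factorization in the abelian category $\proC$, the adjunction of Proposition \ref{univ_prop} to identify $\Hom(U_{\CC}(\Lbar),B)$ with $\Hom(U_{\CC}(L),B)$ via $\eta_L^*$ and conclude $U_{\CC}(\eta_L)$ is invertible, and the paper's remark that $\CC$-subalgebras of $\BrAsshat$-algebras are faithful — is exactly the standard filling-in of that claim. No gaps; the only points needing care (that $\eta_L$ is epi in $\proC$ and that faithfulness is equivalent to $L\hookrightarrow U_{\CC}(L)$) are handled correctly.
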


With this result in hand, we will focus our attention on faithful $\CC$-algebras, replacing any non-faithful $L$ with $\Lbar$.

\subsection{Profinitely braided bialgebras}

If $A$ and $B$ are algebras in $\proC$, then $A \otimes B$ supports the structure of an algebra.  Indeed, the assumption that $\proC$ is braided is essential to this fact.  If $A$ and $B$ are $\BrAsshat$-algebras, they are in particular complete associative algebras, and one may form their tensor product $A \otimes B$.  This need not support a $\BrAsshat$-algebra structure; that is, there is no guarantee that $A \otimes B$ is complete.  This is, however, easily rectified:

\begin{defn}

Let $A, B \in \Alg_{\BrAsshat}(\proC)$.  Their \emph{$\BrAsshat$-algebra tensor product} is the profinite completion
$$A \hattimes B := \widehat{A \otimes B}$$
of their tensor product regarded as algebras in $\proC$. If $I \leq A$ and $J \leq B$ are ideals, then we also define $I \hattimes J$ to be the ideal in $A \hattimes B$ generated by $I \otimes J$.

\end{defn}

We will need the following Lemma, whose proof amounts to a comparison of the universal properties of completion and quotients:

\begin{lem}

If $A$ and $B$ are $\BrAsshat$-algebras and $I \leq A$ is an ideal, then $(A/I) \hattimes B \cong (A \hattimes B)/(I \hattimes B)$.

\end{lem}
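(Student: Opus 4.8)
The plan is to reduce the statement to two facts that are essentially already available: that profinite completion of associative algebras in $\proC$ commutes with quotients, and that the (braided) algebra tensor product commutes with quotients. Write $\widehat{(-)} = U_{\BrAss}^{\BrAsshat}$ for the profinite completion functor on $\BrAss$-algebras, so that by definition $A \hattimes B = \widehat{A \otimes B}$, where $A \otimes B$ carries the braided algebra structure $(\mu_A \otimes \mu_B)(\id \otimes \sigma \otimes \id)$. The whole argument is then the ``comparison of universal properties'' promised in the statement: the adjunction defining $\widehat{(-)}$, the universal property of a quotient (maps that kill the ideal), and their interaction recorded right after Proposition \ref{gen_rel_cor}.

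First I would check that the image $J$ of the canonical map $I \otimes B \to A \otimes B$ is a two-sided ideal of $A \otimes B$. The only point needing care is that one must multiply $I$-elements past $B$-elements through the braiding; but since $I \leq A$ is a subobject and $\sigma$ is natural, $\sigma_{B,A}$ restricts to $\sigma_{B,I}\colon B \otimes I \to I \otimes B$, and then the hypothesis that $I$ is an ideal in $A$ forces $\mu_A$ to carry the resulting $A$-tensor-$I$ term back into $I$; hence $J$ is closed under left and right multiplication by $A \otimes B$. Next, because $-\otimes B$ is right exact (tensor commutes with colimits), applying it to $I \to A \to A/I \to 0$ yields the exact sequence $I \otimes B \to A \otimes B \to (A/I)\otimes B \to 0$, so $(A\otimes B)/J \cong (A/I)\otimes B$; as $J$ is an ideal, this is an isomorphism of associative algebras in $\proC$.

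Now I would apply $\widehat{(-)}$ to this isomorphism and invoke the fact, recorded just after Proposition \ref{gen_rel_cor}, that $U_{\CC}^{\DD}(L/J) \cong U_{\CC}^{\DD}(L)/(J)$ for a $\CC$-ideal $J \leq L$ — here with $\CC = \BrAss$, $\DD = \BrAsshat$, and $L = A\otimes B$. This gives
$$(A/I)\hattimes B = \widehat{(A/I)\otimes B} \;\cong\; \widehat{(A\otimes B)/J} \;\cong\; (\widehat{A\otimes B})/(J) = (A\hattimes B)/(J),$$
where $(J)$ denotes the two-sided ideal of $A \hattimes B = \widehat{A\otimes B}$ generated by the image of $J$ under $A \otimes B \to A\hattimes B$. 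Finally, since $J$ is the image of $I \otimes B$ in $A \otimes B$, the ideal $(J)$ coincides with the ideal of $A\hattimes B$ generated by the image of $I \otimes B$, which is exactly $I \hattimes B$ by definition. This identifies the right-hand side with $(A\hattimes B)/(I\hattimes B)$, as claimed, and the isomorphism is natural because every functor used is.

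\textbf{Main obstacle.} The only genuinely non-formal step is verifying that $J=\im(I\otimes B\to A\otimes B)$ is an ideal of the braided tensor product $A\otimes B$: this is where the braided (as opposed to symmetric) structure enters, and one must lean on naturality of the braiding to legitimately ``commute $I$ past $B$.'' Everything after that is the bookkeeping identification of universal properties, precisely as the statement's phrasing suggests.
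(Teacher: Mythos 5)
Your proposal is correct and follows essentially the route the paper has in mind: the paper only remarks that the proof "amounts to a comparison of the universal properties of completion and quotients," and your argument is exactly that comparison, made explicit by first identifying $(A/I)\otimes B$ with $(A\otimes B)/J$ (checking via naturality of the braiding that $J=\im(I\otimes B)$ is a two-sided ideal) and then invoking the quotient formula $U_{\CC}^{\DD}(L/J)\cong U_{\CC}^{\DD}(L)/(J)$ stated after Proposition \ref{gen_rel_cor} with $\CC=\BrAss$, $\DD=\BrAsshat$. The final identification of $(J)$ with $I\hattimes B$ is immediate from the paper's definition of $I\hattimes B$ as the ideal generated by $I\otimes B$, so no further work is needed.
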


It follows from this that for maps $f: A \to A'$, $g: B \to B'$, $\ker(f \hattimes g) = \ker(f) \hattimes B + A \hattimes \ker(g)$, as for the usual tensor product.

\begin{prop} \label{compare_tensors_prop} 

The category $\Alg_{\BrAsshat}(\proC)$ becomes monoidal under $\hattimes$, with unit $\one$, and profinite completion defines a strictly monoidal functor 
$$(\Alg_{\BrAss}(\proC), \otimes, \one) \to (\Alg_{\BrAsshat}(\proC), \hattimes, \one).$$

\end{prop}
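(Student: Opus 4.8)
The plan is to present $\Alg_{\BrAsshat}(\proC)$ as a monoidal reflective localization of $(\Alg_{\BrAss}(\proC), \otimes, \one)$. First I would recall that the latter is monoidal: if $A, B$ are associative algebras in the braided category $\proC$, then $A \otimes B$ is an associative algebra under $(\mu_A \otimes \mu_B) \circ (\id_A \otimes \sigma \otimes \id_B)$, with unit $\one$, and the associativity and unit constraints of $\proC$ are algebra maps satisfying the coherence axioms there (this is standard; cf. \cite{majid_braided}). By Proposition \ref{adjunction_prop} and the discussion of completeness that follows it, $\widehat{(-)} = U_{\BrAss}^{\BrAsshat}$ is left adjoint to the forgetful functor $\Alg_{\BrAsshat}(\proC) \to \Alg_{\BrAss}(\proC)$, which is moreover fully faithful (a $\BrAss$-algebra map between $\BrAsshat$-algebras automatically respects the $\BrAsshat$-operations, since those descend along the levelwise surjections $\BrAss(n)\otimes_{B_n} M^{\otimes n}\to\BrAsshat(n)\otimes_{B_n}M^{\otimes n}$). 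Thus $\Alg_{\BrAsshat}(\proC)$ is a reflective subcategory with reflector $\widehat{(-)}$, and $M$ is complete precisely when the unit $\eta_M : M \to \widehat M$ is an isomorphism.

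The technical heart is the assertion that, for $\BrAss$-algebras $X, Y$ in $\proC$, the canonical maps $\widehat{X \otimes Y} \to \widehat{\widehat X \otimes Y}$ and $\widehat{X \otimes Y} \to \widehat{X \otimes \widehat Y}$ (obtained by applying $\widehat{(-)}$ to $\eta_X \otimes \id$ and $\id \otimes \eta_Y$) are isomorphisms. Granting this, the Proposition is formal. Writing $A \hattimes B := \widehat{A \otimes B}$, the object $\widehat{A \otimes B \otimes C}$ is canonically identified with both $\widehat{\widehat{A \otimes B} \otimes C} = (A \hattimes B) \hattimes C$ and $\widehat{A \otimes \widehat{B \otimes C}} = A \hattimes (B \hattimes C)$, which gives the associator for $\hattimes$; the pentagon reduces to the pentagon in $\proC$ together with functoriality of $\widehat{(-)}$. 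The unit is $\widehat\one = \one$, since $\one$ is finitely braided and hence complete by Proposition \ref{pro_const_alg_prop}, and the unitors come from $\one \otimes A \cong A$ in $\proC$ together with $\widehat A = A$ for complete $A$. Finally $\widehat A \hattimes \widehat B = \widehat{\widehat A \otimes \widehat B} \cong \widehat{A \otimes B}$ by two applications of the displayed isomorphisms, $\widehat\one = \one$, and these identifications are compatible with the associators and unitors; taking them as the coherence data exhibits $\widehat{(-)}$ as a strictly monoidal functor.

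It remains to prove the displayed isomorphisms, and I would show that both sides corepresent the same functor on $\Alg_{\BrAsshat}(\proC)$. By the universal property of the braided tensor product of algebras, a $\BrAss$-algebra map $X \otimes Y \to M$ is the same datum as a pair of algebra maps $f : X \to M$, $g : Y \to M$ satisfying a braided commutation relation of the form $\mu_M(f \otimes g) = \mu_M(g \otimes f)\,\sigma_{X,Y}$, and likewise for $\widehat X \otimes Y \to M$ with $f$ replaced by $F : \widehat X \to M$. When $M$ is complete, $F \mapsto F \circ \eta_X$ is a bijection from algebra maps $\widehat X \to M$ to algebra maps $X \to M$; and, using that $\eta_X$ is an epimorphism in $\proC$ — which one sees by presenting $X = \BrAss[V]/R$ and observing that $\BrAss[V] = T(V) \to \That(V) = \BrAsshat[V]$ is a levelwise surjection onto a strict pro-object, so that $\eta_X$ factors an epimorphism — the braided commutation relation for $(F, g)$ is equivalent to that for $(F \circ \eta_X, g)$. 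Hence $\Hom_{\Alg_{\BrAss}(\proC)}(\widehat X \otimes Y, M) \cong \Hom_{\Alg_{\BrAss}(\proC)}(X \otimes Y, M)$ naturally in $M \in \Alg_{\BrAsshat}(\proC)$, and applying the adjunction between $\widehat{(-)}$ and the forgetful functor gives $\widehat{\widehat X \otimes Y} \cong \widehat{X \otimes Y}$; the other isomorphism is symmetric via the braiding. I expect the main obstacle to be exactly this lemma — pinning down the universal property of the braided tensor product of algebras and verifying that the braided commutation relation propagates along $\eta_X$, which is where epicness of $\eta_X$ and a little care about its interaction with $\otimes$ enter. Everything else is the standard formalism of monoidal reflective localizations, and the preceding Lemma on $(A/I)\hattimes B$ can be used in place of presentations wherever quotients need to be pushed through completions.
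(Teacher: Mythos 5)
Your proposal is correct, and it fills in a proof that the paper simply omits: Proposition \ref{compare_tensors_prop} is stated right after the lemma on $(A/I)\hattimes B$ with no written argument, the intent evidently being that everything follows formally from the universal property of completion (Proposition \ref{univ_prop} / the adjunction of Proposition \ref{adjunction_prop}). Your route makes that precise in the standard way: exhibit $\Alg_{\BrAsshat}(\proC)$ as a reflective subcategory of $(\Alg_{\BrAss}(\proC),\otimes,\one)$ (full faithfulness of the forgetful functor is right, since the $\BrAsshat$-operations are determined by the $\BrAss$-operations along the levelwise coinvariant epimorphisms $\BrAss(n)\otimes_{B_n}M^{\otimes n}\to\BrAsshat(n)\otimes_{B_n}M^{\otimes n}$), verify the Day-type compatibility $\widehat{X\otimes Y}\xrightarrow{\;\sim\;}\widehat{\widehat{X}\otimes Y}$ and its mirror by corepresenting both sides on complete algebras, and then let the reflection machinery produce the monoidal structure on $\hattimes$ and the strong monoidality of $\widehat{(\cdot)}$; the unit works out because $\one$ is finitely braided, hence complete by Proposition \ref{pro_const_alg_prop}. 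The key lemma's proof is sound: the canonical presentation $X=\BrAss[V]/R$ together with Proposition \ref{gen_rel_cor} shows $\eta_X\colon X\to\widehat{X}$ is epi (it is dominated by the levelwise-surjective map $T(V)\to\That(V)$ followed by the quotient by $(R)$), and this is exactly what lets the braided commutation condition transfer between $(F,g)$ and $(F\circ\eta_X,g)$. Two small points to tidy when writing it up: with the paper's convention $\mu_{A\otimes B}=(\mu_A\otimes\mu_B)(\id\otimes\sigma\otimes\id)$, the commutation condition in the universal property reads $\mu_M\circ(g\otimes f)=\mu_M\circ(f\otimes g)\circ\sigma_{Y,X}$, not with $\sigma_{X,Y}$ as you wrote (in a genuinely braided category these differ); and the transfer step also uses that $-\otimes Y$ preserves epimorphisms in $\proC$, which should be flagged as part of the standing hypotheses (it holds in all the paper's examples, and whenever $\otimes$ commutes with colimits). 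Neither affects the structure of the argument, which is a complete and arguably the canonical proof of the statement.
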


The notion of a coalgebra makes sense in a general monoidal category, so we make the following definition:

\begin{defn} \label{pro_bialg_defn}

A \emph{profinitely braided bialgebra} is a coassociative, counital coalgebra object in the monoidal category $(\Alg_{\BrAsshat}(\proC), \hattimes, \one)$.

\end{defn}

The primitives $P(A)$ and indecomposables $Q(A)$ of a profinitely braided bialgebra are defined in exactly the same way as for usual bialgebras.

We have a diagram of functors between several categories of algebras:
\beqn \label{diamond_diag} \xymatrix{
 & \Alg_{\BrAss}(\proC) \ar@<0.5ex>[dr]^-{\widehat{\cdot}} & \\
\Alg_{\BrAss}(\bC) \ar[ur] \ar@<0.5ex>[dr]^-{\widehat{\cdot}} & & \Alg_{\BrAsshat}(\proC) \ar@<0.5ex>[ul]^-{U}\\
 & \Alg_{\BrAsshat}(\bC) \ar[ur]  \ar@<0.5ex>[ul]^-{U}& 
}\eeqn
Here the functors labelled by $\widehat{\cdot}$ are given by profinite completion, left adjoint to the forgetful functors $U$. The unlabelled functors are inclusion of constant objects.  Both squares commute.

The upper three categories are monoidal with respect to $\otimes$ or $\hattimes$, although $\hattimes$ does not obviously restrict to a monoidal product on $\Alg_{\BrAsshat}(\bC)$.  Profinite completion and inclusion of constant objects are strictly monoidal functors (when this makes sense).  In contrast, the forgetful functor $U$ is only lax monoidal, being a right adjoint.

Coalgebras in the two upper left categories in (\ref{diamond_diag}) are bialgebras in the usual sense in $\bC$ or $\proC$.  Strictly monoidal functors preserve coalgebras.  Lax monoidal functors do not in general, but will for those coalgebras for which the lax monoidal transformation is assumed to be an isomorphism:

\begin{prop} \label{reverse_bialg_prop}

If $A \in \proC$ is a bialgebra in the usual sense, the profinite completion $\Ahat$ is a profinitely braided bialgebra.  Conversely, if $B$ is a profinitely braided bialgebra, and if $U(B) \otimes U(B)$ is complete, then $U(B)$ defines a bialgebra in $\proC$.  If, further, $U(B)$ is pro-constant, it is a bialgebra in $\bC$. 

\end{prop}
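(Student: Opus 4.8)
The plan is to obtain both halves of the Proposition from the monoidal adjunction around diagram~(\ref{diamond_diag}): profinite completion $\widehat{(\cdot)}\colon(\Alg_{\BrAss}(\proC),\otimes,\one)\ra(\Alg_{\BrAsshat}(\proC),\hattimes,\one)$ is \emph{strictly} monoidal by Proposition~\ref{compare_tensors_prop}, so its right adjoint $U$ is canonically lax monoidal. First I would record that, for any $\BrAsshat$-algebra $X$, the underlying $\BrAss$-algebra $U(X)$ is complete (its $\BrAss$-structure extends to the $\BrAsshat$-structure of $X$), so the counit $\widehat{U(X)}\iso X$ is an isomorphism; consequently the lax structure map $\phi_{X,Y}\colon U(X)\otimes U(Y)\ra U(X\hattimes Y)$ is, under these identifications, the completion unit $U(X)\otimes U(Y)\ra\widehat{U(X)\otimes U(Y)}$, and hence is an isomorphism of algebras exactly when $U(X)\otimes U(Y)$ is complete. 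The two standard facts I would invoke are that a strictly monoidal functor carries coalgebra objects to coalgebra objects, and that a lax monoidal functor does so for a coalgebra once the relevant comparison maps are isomorphisms.

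The forward direction is then immediate. Given a bialgebra $A\in\proC$, i.e.\ a coalgebra $(\Delta_A\colon A\ra A\otimes A,\ \epsilon_A\colon A\ra\one)$ in $(\Alg_{\BrAss}(\proC),\otimes,\one)$, applying $\widehat{(\cdot)}$ and using strict monoidality to identify $\widehat{A\otimes A}$ with $\widehat A\hattimes\widehat A$ and $\widehat\one$ with $\one$ yields maps $\widehat{\Delta_A}\colon\widehat A\ra\widehat A\hattimes\widehat A$ and $\widehat{\epsilon_A}\colon\widehat A\ra\one$ satisfying coassociativity and the counit axiom (these diagrams are just $\widehat{(\cdot)}$ applied to those for $A$). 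So $\widehat A$ is a coalgebra object in $(\Alg_{\BrAsshat}(\proC),\hattimes,\one)$ --- a profinitely braided bialgebra in the sense of Definition~\ref{pro_bialg_defn}.

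For the converse, let $B$ be a profinitely braided bialgebra with comultiplication $\Delta_B\colon B\ra B\hattimes B$ and counit $\epsilon_B\colon B\ra\one$, and set $C:=U(B)$. Since $B\hattimes B=\widehat{C\otimes C}$ and $\phi_{B,B}$ is (up to the canonical isomorphism) the completion unit of $C\otimes C$, the hypothesis that $C\otimes C$ is complete makes $\phi_{B,B}\colon C\otimes C\iso U(B\hattimes B)$ an algebra isomorphism, and I would set
\[
\widetilde{\Delta}:=\phi_{B,B}^{-1}\circ U(\Delta_B)\colon C\ra C\otimes C,\qquad
\widetilde{\epsilon}:=U(\epsilon_B)\colon C\ra U(\one)=\one.
\]
These are morphisms of associative algebras in $\proC$. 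To see that they make $C$ a coalgebra, I would apply $U$ to the coassociativity and counit identities for $(\Delta_B,\epsilon_B)$ and chase the coherence of the lax monoidal structure (naturality of $\phi$ together with the associativity constraint): this shows that $(\widetilde{\Delta}\otimes\id)\widetilde{\Delta}$ and $(\id\otimes\widetilde{\Delta})\widetilde{\Delta}$ become equal after composition with the canonical map $C^{\otimes 3}\ra U(B^{\hattimes 3})$, and that the two sides of the counit axiom agree after composition with $C^{\otimes 2}\ra U(B^{\hattimes 2})$. The second of these comparison maps is an isomorphism by hypothesis; the first is a monomorphism because the braid groups are residually finite (the mechanism already used in the proof of Proposition~\ref{detect_prop}: a nonzero element is detected in some finite quotient). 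Hence the identities descend, $\widetilde{\Delta}$ and $\widetilde{\epsilon}$ satisfy the coalgebra axioms, and $C=U(B)$ is a bialgebra in $\proC$. Finally, if $U(B)$ is pro-constant then so are $C^{\otimes 2}$ and $C^{\otimes 3}$, and since $\bC\hookrightarrow\proC$ is fully faithful $\widetilde{\Delta}$ and $\widetilde{\epsilon}$ are morphisms in $\bC$; thus $U(B)$ is a bialgebra in $\bC$.

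The main obstacle is the last step of the converse: reconciling the \emph{lax} (not strong) monoidality of $U$ with the comonoid axioms. One cannot simply ``apply $U$ to the comonoid $B$'', because preserving the coalgebra structure needs the comparison maps $C^{\otimes n}\ra U(B^{\hattimes n})$ to be well behaved for $n=2$ and $n=3$; the hypothesis ``$U(B)\otimes U(B)$ is complete'' supplies the $n=2$ case (an isomorphism), and residual finiteness of the braid groups --- rather than completeness of $C^{\otimes 3}$, which I would not attempt to prove --- supplies the weaker $n=3$ input (a monomorphism) needed to cancel it. Everything else in the argument --- strict monoidality of $\widehat{(\cdot)}$, the identification of $\phi$ with a completion unit, and commutativity of the diagrams involved --- is formal given the results of this section.
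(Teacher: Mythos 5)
Your route is the same one the paper intends: the forward direction is strict monoidality of completion (Proposition \ref{compare_tensors_prop}), and the converse is the principle that the lax monoidal functor $U$ preserves a coalgebra once the comparison map is an isomorphism, which the hypothesis supplies for $\phi_{B,B}$. The paper records the statement at exactly that level of generality and gives no further argument, so your identification of $\phi_{X,Y}$ with the completion unit, the forward direction, the counit axiom, and the pro-constant/$\bC$ step all track the intended proof. Where you go further is in isolating the coassociativity coherence, which indeed needs more than invertibility of $\phi_{B,B}$: one must cancel a postcomposition with the comparison $U(B)^{\otimes 3}\to U(B^{\hattimes 3})$, i.e.\ with the completion unit of $C^{\otimes 3}$ where $C=U(B)$.

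The gap is your justification of that cancellation. Residual finiteness of the braid groups (the mechanism of Proposition \ref{detect_prop}) does not make the unit $C^{\otimes 3}\to\widehat{C^{\otimes 3}}$ a monomorphism in $\proC$. Since $\proC$ is abelian, this map is monic iff its kernel is zero \emph{as a pro-object}, i.e.\ the system of kernels must be essentially zero; injectivity into the inverse limit -- which is all that residual finiteness buys -- is strictly weaker. Already the unit $\BrAss(n)\to\BrAsshat(n)$, i.e.\ the map from the constant system $k[B_n]$ to the system $\{k[Q]\}_{Q}$, fails to be a pro-monomorphism even though $B_n$ is residually finite: each kernel $\ker(k[B_n]\to k[Q])$ is nonzero and the transition maps in the kernel system are inclusions, so that system is not pro-zero. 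Moreover, Proposition \ref{detect_prop} is a statement about $k[B_n]$ acting on finitely braided vector spaces; it says nothing about an arbitrary algebra $C^{\otimes 3}$ mapping injectively to its completion (that is a faithfulness property of the algebra, depending on $C$, and the paper explicitly warns that tensor products of complete algebras need not be complete). So as written your patch does not establish coassociativity of $\widetilde{\Delta}$ from the stated hypothesis alone. What does close the argument is knowing that the relevant higher comparison map is also an isomorphism -- e.g.\ that $C^{\otimes 3}$ is complete -- which is exactly what holds in the situations where the paper applies this circle of ideas (Propositions \ref{powers_fb_prop}, \ref{ss_core_prop} and \ref{pc_fb_finite_prop}: $U(B)$ pro-constant and finitely braided forces all tensor powers to be complete).
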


Ultimately, we are interested in studying bialgebras in $\bC$.  However, our methods are based upon the operad $\BrAsshat$ and its suboperads, and so naturally produce profinitely braided bialgebras.  We will employ the converse statement in the above Proposition to give conditions under which these are in fact bialgebras in $\bC$.

\subsection{Primitives in profinitely braided bialgebras}

Let us examine the free $\BrAsshat$-algebra $\That(V) = \BrAsshat[V]$.  This is naturally graded, so yields a bigrading on the tensor product $\That(V) \hattimes \That(V)$.  Since $T(V)$ is a Hopf algebra, the previous result implies that $\That(V)$ is a profinitely braided bialgebra.

\begin{prop} 

In bidegree $(p, q)$ with $p+q=n$, there is an isomorphism
$$\That(V) \hattimes \That(V)_{(p,q)} \cong \BrAsshat_n[V]$$
of objects in $\proC$.  Further, the $(p,q)$-component of the diagonal on $\That(V)_n = \BrAsshat_n[V]$ is induced by multiplication by $\bS_{p, q}$ on $\BrAsshat(n)$.

\end{prop}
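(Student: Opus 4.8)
The plan is to compute both sides of the claimed isomorphism directly from the definitions of $\hattimes$ and of the profinite completion, using the grading on $\That(V) = \BrAsshat[V]$. First I would observe that since $T(V)$ is a graded Hopf algebra in $\proC$ with $V$ in degree $1$ and primitive, the ordinary tensor product $T(V) \otimes T(V)$ carries the product bigrading, and the degree $n$ part of the diagonal $\Delta \colon T(V)_n \to (T(V) \otimes T(V))_n$ decomposes into its $(p,q)$-components for $p+q=n$. By Theorem \ref{shauenburg_thm}, the $(p,q)$-component is precisely left-multiplication by $\bS_{p,q} \in k[B_n]$ viewed as a self-map of $V^{\otimes n} = V^{\otimes p} \otimes V^{\otimes q}$. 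So the statement is essentially Schauenburg's theorem repackaged through the identification of $\BrAsshat$-algebra structures, and the content is checking that passing to profinite completions (i.e., from $\otimes$ to $\hattimes$, and from $T(V)$ to $\That(V)$) does not disturb this.

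Next I would identify the bidegree $(p,q)$ piece of $\That(V)\hattimes\That(V)$. Since $\hattimes$ is defined as $\widehat{\That(V)\otimes\That(V)}$, and profinite completion is computed termwise (degreewise) as coinvariants under $\ker_Q$ for $Q \in \Quot_n$, the bidegree $(p,q)$ part is the profinite completion of $\That(V)_p \otimes \That(V)_q = V^{\otimes p}\otimes V^{\otimes q} = V^{\otimes n}$ with its natural $B_p \times B_q$-action, promoted to a $B_n$-object via the cabling/block inclusion $B_p \times B_q \hookrightarrow B_n$. But this is exactly how $\BrAsshat_n[V] = \BrAsshat(n)\otimes_{B_n} V^{\otimes n}$ arises when $V$ is finitely braided: by Lemma \ref{tech_const_lem} (or Proposition \ref{subop_prop}) both are pro-constant, and for $Q$ large enough that $\ker_Q$ acts trivially on $V^{\otimes n}$ we get $\BrAsshat(n)\otimes_{B_n}V^{\otimes n} = k[Q]\otimes_Q V^{\otimes n}$, which by Maschke contains the $B_p\times B_q$-module $V^{\otimes n}$ as the summand generated by the identity coset. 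I would make the isomorphism $\That(V)\hattimes\That(V)_{(p,q)} \cong \BrAsshat_n[V]$ explicit by sending $v_1 \otimes \cdots \otimes v_p \otimes w_1 \otimes \cdots \otimes w_q$ to $1 \otimes (v_1 \otimes \cdots \otimes v_p \otimes w_1 \otimes \cdots \otimes w_q) \in \BrAsshat(n) \otimes_{B_n} V^{\otimes n}$, and check $B_n$-equivariance of this against the respective actions (the cabling action on the left, the operad action on the right).

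Finally, with both sides identified with $\BrAsshat_n[V]$, I would trace through what the $(p,q)$-component of the diagonal becomes under these identifications. The diagonal on $\That(V)$ is the profinite completion of that on $T(V)$; its $(p,q)$-part on degree $n$ is $\bS_{p,q}$ acting on $V^{\otimes n}$ by Theorem \ref{shauenburg_thm}. Under the identification $V^{\otimes n} \cong \BrAsshat(n)\otimes_{B_n}V^{\otimes n}$ via $x \mapsto 1 \otimes x$, the map ``act by $\bS_{p,q}$ on $V^{\otimes n}$'' corresponds to ``left-multiply the $\BrAsshat(n)$-factor by $\bS_{p,q}$'' — precisely because $\bS_{p,q}\cdot(1\otimes x) = \bS_{p,q}\otimes x = 1\otimes(\bS_{p,q}\cdot x)$ in the balanced tensor product (the first equality by moving the group-ring element across $\otimes_{B_n}$, the second is a tautology of the action). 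This gives the second assertion. The main obstacle I anticipate is purely bookkeeping: keeping the cabling inclusion $B_p \times B_q \hookrightarrow B_n$, the $B_n$-actions, and the pro-system indices consistent so that the ``$1 \otimes (-)$'' map is genuinely a well-defined isomorphism of pro-objects in $\proC$ and not merely a degreewise vector-space iso — this is where Proposition \ref{subop_prop} and part \ref{strict_part}/\ref{ker_part} of Proposition \ref{ker_prop} do the real work.
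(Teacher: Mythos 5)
There is a genuine gap, and it comes in two related places. First, your argument is pinned to the finitely braided case: you invoke Lemma \ref{tech_const_lem} / Proposition \ref{subop_prop} and repeatedly take ``$Q$ large enough that $\ker_Q$ acts trivially on $V^{\otimes n}$,'' which only makes sense when $V$ is finitely braided. But the proposition is stated for an arbitrary $V \in \proC$, and that generality is the whole point: it is what feeds Corollary \ref{infinite_prim_prop} and Proposition \ref{infinite_nic_prop}, which explicitly extend Proposition \ref{prim_prop} ``beyond the finitely braided setting.'' A proof that only covers finitely braided $V$ does not establish the statement as used.

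Second, the step you label as bookkeeping is actually the mathematical content, and neither Proposition \ref{subop_prop} nor Proposition \ref{ker_prop} supplies it. The object $\That(V) \hattimes \That(V)$ is by definition the profinite completion, i.e.\ the enveloping algebra $U_{\BrAss}^{\BrAsshat}$, of the algebra $T(V)\otimes T(V)$, which is \emph{not} a free $\BrAss$-algebra; so you cannot simply declare that its completion ``is computed termwise as coinvariants under $\ker_Q$'' in each bidegree with the $B_p\times B_q$-action promoted along the block inclusion --- that is precisely what has to be proved. The paper's proof does this by presenting $T(V)\otimes T(V)$ as $T(V\oplus W)$ (with $W$ a second copy of $V$) modulo the ideal of braided commutators $[w,v]_\sigma = w\otimes v - \sigma(w\otimes v)$, so that the completion becomes $\BrAsshat[V\oplus W]/\Rhat$; one then decomposes degreewise, uses the relations to reorder all $V$-factors to the left, and identifies the bidegree $(p,q)$ term of the resulting inverse system as $(V^{\otimes p}\otimes W^{\otimes q})_{\ker_Q\cap B_{p,q}}$, where $B_{p,q}\leq B_n$ is the subgroup lying over $S_p\times S_q$ (note: this is larger than the image of $B_p\times B_q$), together with a cofinality argument to compare with $\BrAsshat_n[V]$. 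Your proposal has no substitute for this presentation-and-reordering argument, so the claimed identification of the $(p,q)$-piece is unsupported even in the finitely braided case. Once that identification is in place, your final paragraph (the diagonal is induced by $\bS_{p,q}$, by Theorem \ref{shauenburg_thm} and naturality of completion) is fine and matches the paper.
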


\begin{proof}

Note that $\That(V) \hattimes \That(V)$ is the profinite completion of $A = T(V) \otimes T(V)$.  We can present $A$ as $A = T(V \oplus W) / R$, where $W$ is a second copy of $V$, and $R$ is the ideal generated by the image of the map
$$[-,-]_{\sigma}: W \oplus V \to T(V \oplus W) \; \mbox{ given by } \; [w, v]_{\sigma} = w \otimes v - \sigma(w \otimes v).$$
Thus $\Ahat = \BrAsshat[V \oplus W] / \Rhat$, where $\Rhat$ is the ideal generated by $R$.  Since $R$ is homogenous, this decomposes into a sum over $n$ of terms of the form $\BrAsshat[V \oplus W]_n / \Rhat_n$, where $\Rhat_n$ is the image of the structure map 
$$\lambda: [\BrAsshat(n-1) \otimes_{B_{n-1}^{n-1}} (V \oplus W)^{\otimes n-2} \otimes [W, V]_{\sigma}] \to [\BrAsshat(n) \otimes_{B_n} (V \oplus W)^{\otimes n}].$$
The codomain is the inverse system given by $(V \oplus W)^{\otimes n}_{\ker_Q}$, as $Q$ ranges over $\Quot_n$.  In taking the quotient by the image of $\lambda$, one may reorder terms in the $n$-fold tensor power so that all of the $V$ terms are on the left.  Thus $\Ahat_n$ is the inverse system
$$\bigoplus_{p+q = n} (V^{\otimes p} \otimes W^{\otimes q})_{\ker_Q \cap B_{p, q}}$$
where $B_{p,q}$ is the subgroup of $B_n$ lying over $S_p \times S_q \leq S_n$.  Since the collection $\{\ker_Q \cap B_{p, q}\}$ is cofinal in $\Quot_n$ and $W = V$, the computation of $\Ahat_{(p,q)}$ follows.  Further, the diagonal on $\That(V)$ is induced by the one on $T(V)$, and so is also given by $\bS_{p,q}$.

\end{proof}

Along with part \ref{ker_part} of Proposition \ref{ker_prop}, this allows us to extend Proposition \ref{prim_prop} and Corollary \ref{prim_alg_cor} beyond the finitely braided setting: 

\begin{cor} \label{infinite_prim_prop}

There is an isomorphism of monads on $\proC$
$$\BrPrim[-] \cong P \circ \That.$$
Further, if $A$ is a profinitely braided bialgebra, then $P(A)$ is an algebra for $\BrPrim$.

\end{cor}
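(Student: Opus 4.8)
The plan is to re-run the argument of Proposition \ref{prim_prop}, but with the tensor algebra monad $T$ replaced throughout by its profinite completion $\That$. The two new ingredients are the (unlabelled) Proposition immediately above, which computes the bigraded pieces $\That(V)\hattimes\That(V)_{(p,q)}\cong\BrAsshat_n[V]$ and identifies the $(p,q)$-component of the diagonal with left multiplication by $\bS_{p,q}$, together with part \ref{ker_part} of Proposition \ref{ker_prop}, which — crucially — was proved for an \emph{arbitrary} $V\in\proC$ and not only for finitely braided ones.

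First I would compute $P(\That(V))$ directly. By definition $P(\That(V))=\ker(\Delta-\id\otimes 1-1\otimes\id)$, and the preceding Proposition shows that in internal degree $n>1$ this kernel is $\bigcap_{p+q=n,\,p,q>0}\ker(\bS_{p,q}\colon\That_n(V)\to\That_n(V))$, while in degree $n=1$ there are no nontrivial bidegrees, so all of $V$ is primitive. Now part \ref{ker_part} of Proposition \ref{ker_prop} identifies $\ker(\bS_{p,q})\otimes_{B_n}V^{\otimes n}$ with $\ker(\bS_{p,q}\colon\That_n(V)\to\That_n(V))$; since, exactly as in the proof of Proposition \ref{prim_prop}, the functor $-\otimes_{B_n}V^{\otimes n}$ preserves these intersections, I obtain $\BrPrim(n)\otimes_{B_n}V^{\otimes n}\cong P(\That(V))_n$ for $n>1$, matching $\BrPrim_*(1)=\one$ in degree $1$ (the usual unital-enhancement bookkeeping, cf.\ Remark \ref{same_env_alg_rem}). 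Assembling over $n$ gives the isomorphism of underlying endofunctors $\BrPrim[-]\cong P\circ\That$. As in Proposition \ref{prim_prop}, both functors are realized as the \emph{same} subfunctor of $\That(-)$, and on each the monad multiplication is a corestriction of the monad structure of $\That$, so the isomorphism is one of monads — once one knows $P\circ\That$ is a monad. That last point is the analogue of Proposition \ref{PT_prop}: for a profinitely braided bialgebra $A$ the universal property of the free $\BrAsshat$-algebra $\That(P(A))$ gives a canonical $\BrAsshat$-algebra map $M_A\colon\That(P(A))\to A$ restricting to the inclusion $P(A)\hookrightarrow A$, this is a map of profinitely braided bialgebras because $P(A)$ is primitive, and $\mu_A:=P(M_A)$ furnishes the multiplication after specializing $A=\That(V)$ (which is a profinitely braided bialgebra by Proposition \ref{reverse_bialg_prop}).

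The second statement is then formal: algebras for the operad $\BrPrim_*$ are the same as algebras for the monad $\BrPrim[-]$, and transporting along the isomorphism just established these coincide with algebras for $P\circ\That$; for a profinitely braided bialgebra $A$ the map $\mu_A=P(M_A)\colon P(\That(P(A)))\to P(A)$ constructed above is precisely such an algebra structure, with unitality and associativity verified by the same commuting square of (profinitely braided) bialgebras as in the proof of Proposition \ref{PT_prop}.

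I expect the only real friction to be the verification that $M_A\colon\That(P(A))\to A$ is a morphism of \emph{profinitely braided} bialgebras, i.e.\ that it respects the diagonals through the completed tensor product $\hattimes$; this is where one must invoke the compatibility of profinite completion with $\hattimes$ and with coalgebra structure (the Lemma preceding Proposition \ref{compare_tensors_prop}, together with Proposition \ref{reverse_bialg_prop}), which in particular is what guarantees that $P(M_A)$ genuinely lands in $P(A)$. The degree-$1$ discrepancy between $\BrPrim$ and $\BrPrim_*$ is a minor point, but it should be flagged explicitly rather than swept aside.
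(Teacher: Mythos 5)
Your proposal is correct and is essentially the paper's own argument: the paper proves this corollary by exactly the two ingredients you cite, namely the preceding Proposition identifying the $(p,q)$-component of the diagonal on $\That(V)_n$ with left multiplication by $\bS_{p,q}$, together with part \ref{ker_part} of Proposition \ref{ker_prop} (valid for arbitrary $V\in\proC$), used to rerun the proofs of Proposition \ref{prim_prop}, Proposition \ref{PT_prop}, and Corollary \ref{prim_alg_cor} verbatim in the profinite setting. Your explicit handling of the $\BrPrim$ versus $\BrPrim_*$ degree-one bookkeeping and of $M_A$ being a map of profinitely braided bialgebras (an agreement-on-generators argument, as in Proposition \ref{PT_prop}) fills in details the paper leaves implicit but introduces nothing new in substance.
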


\begin{defn}

A profinitely braided bialgebra $A$ is a \emph{profinitely braided Nichols algebra} if the map $P(A) \to Q(A)$ is an isomorphism.

\end{defn}

There is also an extension of Proposition \ref{nic_prop} in this setting:

\begin{prop} \label{infinite_nic_prop}

For any $V \in \proC$, the Woronowicz ideal $\mW[V] \leq \That(V)$ is a Hopf ideal, and the quotient $\Nic(V):= \That(V) / \mW[V]$ is a profinitely braided Nichols algebra.

\end{prop}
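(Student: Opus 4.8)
\emph{Approach.} The plan is to use the explicit description of the coproduct on $\That(V)$ in terms of the elements $\bS_{p,q}\in k[B_n]$ --- together with the multiplicativity identity \eqref{sym_diag_eqn} --- to check directly that $\mW[V]$ is a bi-ideal, and then to read off the primitives of the quotient from the same description. \emph{Step 1: $\mW[V]$ is a two-sided ring ideal in $\That(V)$.} By Proposition \ref{ideal_prop}, the two-sided ring ideal of $\That(V)$ generated by $\mW[V]$ equals $\II[V]$, where $\II\le\BrAsshat$ is the operadic left ideal generated by $\mW$. But the argument in the proof of Proposition \ref{nic_prop} shows that $\II=\mW$ (this is an assertion about operadic ideals in braided sequences of vector spaces, independent of $\bC$, proved via Proposition \ref{detect_prop}). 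Hence $\mW[V]=\II[V]$ is already two-sided, and $\Nic(V)=\That(V)/\mW[V]$ is an algebra in $\Alg_{\BrAsshat}(\proC)$.

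\emph{Step 2: $\mW[V]$ is a coideal, hence a Hopf ideal.} Since $\mW(0)=\mW(1)=0$, the object $\mW[V]$ is concentrated in degrees $\ge 2$, so the counit annihilates it. For the coproduct, recall that in bidegree $(p,q)$ with $p+q=n$ there is an identification $\bigl(\That(V)\hattimes\That(V)\bigr)_{(p,q)}\cong\BrAsshat_n[V]=\That_n(V)$ under which, for $0<p<n$, the $(p,q)$-component of $\Delta$ is left multiplication by $\bS_{p,q}$, while the $(n,0)$- and $(0,n)$-components are the identity. Using part \ref{ker_part} of Proposition \ref{ker_prop} to identify $\mW[V]_p\cong\ker\bigl(\bS_p\colon\That_p(V)\to\That_p(V)\bigr)$, together with the formula $\ker(f\hattimes g)=\ker(f)\hattimes B+A\hattimes\ker(g)$ and the standard fact $\ker(f\otimes g)=\ker(f)\otimes W+U\otimes\ker(g)$ over a field, one obtains under the above identification
$$\bigl(\mW[V]\hattimes\That(V)+\That(V)\hattimes\mW[V]\bigr)_{(p,q)}\;\cong\;\ker\bigl(\bS_p\otimes\bS_q\colon\That_n(V)\to\That_n(V)\bigr).$$
Now if $\alpha\in\mW[V]_n=\ker(\bS_n)$, then by \eqref{sym_diag_eqn} one has $\bS_n=(\bS_p\otimes\bS_q)\circ\bS_{p,q}$, so $\bS_{p,q}(\alpha)\in\ker(\bS_p\otimes\bS_q)$ for every $0<p<n$; together with the $(n,0)$- and $(0,n)$-contributions $\alpha\otimes 1$ and $1\otimes\alpha$, this gives $\Delta(\alpha)\in\mW[V]\hattimes\That(V)+\That(V)\hattimes\mW[V]$. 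Hence $\mW[V]$ is a bi-ideal; since $\That(V)/\mW[V]$ is connected and graded, it is automatically a braided Hopf algebra, so $\mW[V]$ is a Hopf ideal and $\Nic(V)$ is a profinitely braided bialgebra.

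\emph{Step 3: $\Nic(V)$ is a Nichols algebra.} As $\mW[V]$ lives in degrees $\ge 2$, $\Nic(V)_1=V$, and $\Nic(V)$ is generated as an algebra by $\That_1(V)=V$; hence $Q(\Nic(V))=V$. Clearly $V=\Nic(V)_1\subseteq P(\Nic(V))$. Conversely, let $\bar\alpha\in P(\Nic(V))_n$ with $n\ge 2$ and lift it to $\alpha\in\That_n(V)$. Fixing $p$ with $0<p<n$ and $q=n-p$, primitivity forces the $(p,q)$-component of $\Delta(\bar\alpha)$ to vanish in $\Nic(V)\hattimes\Nic(V)=(\That(V)\hattimes\That(V))/(\mW[V]\hattimes\That(V)+\That(V)\hattimes\mW[V])$; by the identification in Step 2 this means $\bS_{p,q}(\alpha)\in\ker(\bS_p\otimes\bS_q)$, so $\bS_n(\alpha)=(\bS_p\otimes\bS_q)\bS_{p,q}(\alpha)=0$, whence $\alpha\in\mW[V]_n$ and $\bar\alpha=0$. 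Thus $P(\Nic(V))=V=Q(\Nic(V))$ with the natural map the identity, so $\Nic(V)$ is a profinitely braided Nichols algebra.

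\emph{Main obstacle.} The one delicate point is the identification in Step 2 of the bidegree-$(p,q)$ part of $\mW[V]\hattimes\That(V)+\That(V)\hattimes\mW[V]$ inside the completed tensor product with $\ker(\bS_p\otimes\bS_q)$: this requires commuting kernels past profinite completion (using $\car(k)=0$, via Proposition \ref{ker_prop}) and past tensor products, and checking that the concatenate-and-complete identification $\bigl(\That(V)\hattimes\That(V)\bigr)_{(p,q)}\cong\That_n(V)$ is compatible with carrying $\bS_{p,q}$ and $\bS_p\otimes\bS_q$ to the operators appearing in \eqref{sym_diag_eqn}. Once that bookkeeping is in place the rest is formal. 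An alternative organization would construct the completed quantum-shuffle bialgebra $\widehat{T^{\co}(V)}$ in $\proC$ and realize $\mW[V]$ as the kernel of the (completed) bialgebra map $\bS\colon\That(V)\to\widehat{T^{\co}(V)}$, so that it is a Hopf ideal automatically; but setting up $\widehat{T^{\co}(V)}$ as a profinitely braided bialgebra appears to be of comparable difficulty.
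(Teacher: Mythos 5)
Your proof is correct and follows essentially the same route as the paper: identify $\mW[V]_n$ with $\ker(\bS_n)$ via Proposition \ref{ker_prop}, use the factorization \eqref{sym_diag_eqn} together with the bidegree-$(p,q)$ identification of $\That(V)\hattimes\That(V)$ and the kernel formula for $\hattimes$ to get the coideal property, and run the same computation in reverse to show every primitive of degree $\geq 2$ lifts into $\ker(\bS_n)=\mW[V]_n$. Your Step 1 merely makes explicit the two-sidedness of the ring ideal, which the paper leaves to Propositions \ref{nic_prop} and \ref{ideal_prop}; the "delicate bookkeeping" you flag is exactly the identification the paper performs in the proposition computing $\That(V)\hattimes\That(V)_{(p,q)}$, so no gap remains.
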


\begin{proof}

By part \ref{ker_part} of Proposition \ref{ker_prop}, 
$$\mW[V]_n = \ker(\bS_n: \That(V)_n \to \That(V)_n).$$
Recall that, as elements of $\BrAsshat(n)$, $\bS_n = (\bS_p \otimes \bS_q) \circ \bS_{p, q}$; thus the same equation holds as maps $\That(V)_n \to \That(V) \hattimes \That(V)_{(p, q)}$.  This implies that 
\begin{eqnarray*}
\bS_{p,q}(\mW[V]_n) & \subseteq & \ker(\bS_p \otimes \bS_q: \That(V) \hattimes \That(V)_{(p, q)} \to \That(V) \hattimes \That(V)_{(p, q)}) \\
 & = & \mW[V]_p \hattimes \That(V)_q + \That(V)_p \hattimes \mW[V]_q. 
 \end{eqnarray*} 
Thus $\mW[V]$ is a Hopf ideal.

Since $Q(\That(V)) = V$ and $\mW[V] \cap V = 0$, it follows that $Q(\Nic(V)) = V$.  Further, $\Nic(V)$ is graded; since $\Nic(V)_{\leq 1} = \one \oplus V$, it is clear that $P(\Nic(V))_{\leq 1} = V$.  We will show $P(\Nic(V))_{n} = 0$ for $n \geq 2$.  Consider the commuting diagram
$$\xymatrix{
\That(V)_n \ar[r]^-{\bS_{p, q}} \ar[d] & \That(V) \hattimes \That(V)_{(p, q)} \ar[d] \\
\Nic(V)_n \ar[r]_-{\Delta_{p, q}} & \Nic(V) \hattimes \Nic(V)_{(p, q)}
}$$
If $x \in P(\Nic(V))_{n}$, then it has a representative $X \in \That(V)_n$ with the property that for all $p+q = n$, with $p, q>0$, 
$$\bS_{p,q}(X) \in \mW[V]_p \hattimes \That(V)_q + \That(V)_p \hattimes \mW[V]_q = \ker(\bS_p \hattimes \bS_q).$$
Thus $X \in \ker((\bS_p \hattimes \bS_q) \circ \bS_{p, q}) = \ker(\bS_n) = \mW[V]_n$, so $x=0$.

\end{proof}

\subsection{Monoidal categories of $\CC$-algebras}

Fix a profinitely braided, essentially strict suboperad $\CC$ of $\BrAsshat$.  

\begin{defn}

For $\CC$-algebras $L$ and $M$ in $\proC$, define the \emph{$\CC$-algebra tensor product} $L \otimes_{\CC} M$ as the $\CC$-subalgebra of $U_{\CC}(L) \hattimes U_{\CC}(M)$ generated by $L \hattimes 1 + 1 \hattimes M$:
$$L \otimes_{\CC} M := \CC(L \hattimes 1 + 1 \hattimes M) \subseteq U_{\CC}(L) \hattimes U_{\CC}(M).$$

\end{defn} 

Here, the algebra $U_{\CC}(L) \hattimes U_{\CC}(M)$ becomes a $\CC$-algebra by restriction along $\CC \subseteq \BrAsshat$.  Further, we write $L \hattimes 1$ for the image in $U_{\CC}(L) \hattimes U_{\CC}(M)$ of the map
$$L \to U_{\CC}(L) \cong U_{\CC}(L) \hattimes \one \to U_{\CC}(L) \hattimes U_{\CC}(M)$$
given by multiplication with the unit in $U_{\CC}(M)$ (and similarly for $1 \hattimes M$).

\begin{exmp} If $\CC = \BrAsshat$, $U_{\CC}$ is the identity functor, and $\otimes_{\CC} = \hattimes$. \end{exmp}

\begin{thm} \label{hopf_cat_thm}

The category of faithful $\CC$-algebras in $\proC$ is monoidal with respect to $\otimes_{\CC}$.  The unit of the monoidal product is the $\CC$-algebra $\CC[0]$.

\end{thm}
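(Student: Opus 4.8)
\emph{Strategy.} The plan is to transport the monoidal structure $(\Alg_{\BrAsshat}(\proC),\hattimes,\one)$ of Proposition \ref{compare_tensors_prop} through the enveloping-algebra construction, exploiting that a faithful $\CC$-algebra $L$ is recovered from $U_\CC(L)$ as the $\CC$-subalgebra $\Lbar=L$. First the routine points. By construction $L\otimes_\CC M$ is a $\CC$-subalgebra of the $\BrAsshat$-algebra $U_\CC(L)\hattimes U_\CC(M)$, hence faithful by the remark preceding the definition of faithfulness. Given morphisms $f\colon L\to L'$ and $g\colon M\to M'$, the $\BrAsshat$-algebra map $U_\CC(f)\hattimes U_\CC(g)$ carries $L\hattimes 1+1\hattimes M$ into $L'\hattimes 1+1\hattimes M'$ (since $U_\CC(f)$ restricts to $f$ on $L\subseteq U_\CC(L)$), so it restricts to a $\CC$-algebra map $L\otimes_\CC M\to L'\otimes_\CC M'$; functoriality in $(f,g)$ is inherited from that of $U_\CC$ and $\hattimes$.

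\emph{Unit.} Since $\CC[0]$ is the free $\CC$-algebra on the zero object, Proposition \ref{gen_rel_cor} gives $U_\CC(\CC[0])\cong\BrAsshat[0]=\one$. Hence $\CC[0]\otimes_\CC M=\CC(\CC[0]\hattimes 1+1\hattimes M)\subseteq\one\hattimes U_\CC(M)=U_\CC(M)$, in which $1\hattimes M=\im(M\to U_\CC(M))=M$ by faithfulness, and $\CC[0]\hattimes 1$ is contained in this image (it is the unit when $\CC$ is unitary and $0$ when $\CC$ is reduced). The $\CC$-subalgebra generated is therefore $M$, yielding a natural isomorphism $\CC[0]\otimes_\CC M\xrightarrow{\ \sim\ }M$ and, symmetrically, $M\otimes_\CC\CC[0]\xrightarrow{\ \sim\ }M$; these restrict the unitors of $\hattimes$, so the triangle axiom is inherited.

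\emph{Associativity.} By associativity of $\hattimes$ we may form $U_\CC(L)\hattimes U_\CC(M)\hattimes U_\CC(N)$, and inside it the $\CC$-subalgebra $T_3(L,M,N)$ generated by $L\hattimes 1\hattimes 1+1\hattimes M\hattimes 1+1\hattimes 1\hattimes N$. From the inclusion $L\otimes_\CC M\hookrightarrow U_\CC(L)\hattimes U_\CC(M)$ the universal property of $U_\CC$ gives a $\BrAsshat$-algebra map $\rho_{L,M}\colon U_\CC(L\otimes_\CC M)\to U_\CC(L)\hattimes U_\CC(M)$ restricting to this inclusion on $L\otimes_\CC M$; then $\rho_{L,M}\hattimes\id_{U_\CC(N)}$, restricted to the $\CC$-subalgebra $(L\otimes_\CC M)\otimes_\CC N\subseteq U_\CC(L\otimes_\CC M)\hattimes U_\CC(N)$, is a $\CC$-algebra map onto $T_3(L,M,N)$: on the generating object it is the evident inclusion, and (since $\hattimes 1$ is a $\CC$-algebra map) the $\CC$-subalgebra generated by $(L\otimes_\CC M)\hattimes 1+1\hattimes N$ coincides with that generated by $L\hattimes 1\hattimes 1+1\hattimes M\hattimes 1+1\hattimes 1\hattimes N$. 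Symmetrically $\id_{U_\CC(L)}\hattimes\rho_{M,N}$ gives a $\CC$-algebra map $L\otimes_\CC(M\otimes_\CC N)\twoheadrightarrow T_3(L,M,N)$. Once both are isomorphisms, the associator is their composite, and the pentagon and triangle axioms hold automatically, since every structure isomorphism above is the restriction to a $\CC$-subalgebra of the corresponding coherence isomorphism in $(\Alg_{\BrAsshat}(\proC),\hattimes,\one)$, where the diagrams commute.

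\emph{The main obstacle.} What remains is injectivity of $(L\otimes_\CC M)\otimes_\CC N\to T_3(L,M,N)$, equivalently that $\rho_{L,M}\hattimes\id$ is injective on this $\CC$-subalgebra; this is the one genuinely technical step. Here I would identify $\ker\rho_{L,M}$ explicitly: the braided tensor-product description used in the computation of $\That(V)\hattimes\That(V)$ above exhibits $U_\CC(L)\hattimes U_\CC(M)$ as $U_\CC(L\otimes_\CC M)$ modulo the $\BrAsshat$-ideal generated by the braided commutators $[\,\ell\hattimes 1,\,1\hattimes m\,]_\sigma$, $\ell\in L$, $m\in M$. One then shows this ideal meets $L\otimes_\CC M$ in $0$ and, more to the point, that $(L\otimes_\CC M)\otimes_\CC N$ meets $\ker\rho_{L,M}\hattimes U_\CC(N)=(\ker\rho_{L,M})\hattimes U_\CC(N)$ trivially. (In the extreme cases this is transparent: for $\CC=\BrAsshat$ the map $\rho_{L,M}$ is an isomorphism; for $\CC$ the identity operad or $\CC=\BrLie$ the braided commutators between the two factors either vanish or lie in strictly higher degree than the generators, so they cannot collide with the $\CC$-subalgebra those generators produce.) I expect this step to require the grading/filtration bookkeeping on $\That(-)$ and its quotients developed in the earlier sections, and it is the crux of the argument.
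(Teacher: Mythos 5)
Your treatment of bifunctoriality and of the unit isomorphisms matches the paper's proof essentially step for step, and those parts are fine. The genuine gap is in the associativity step, and it is exactly where you flag it: you reduce the existence of the associator to the claim that $\rho_{L,M}\hattimes\id_{U_{\CC}(N)}$ (your $\rho_{L,M}$ is the paper's oplax comparison map $p_{L,M}$ of Proposition \ref{env_oplax_prop}) restricts to an injection on the $\CC$-subalgebra $(L\otimes_{\CC}M)\otimes_{\CC}N \subseteq U_{\CC}(L\otimes_{\CC}M)\hattimes U_{\CC}(N)$, and you do not prove this; you only propose identifying $\ker p_{L,M}$ (correctly, as the ideal of braided commutators $[1\hattimes m,\ell\hattimes 1]_{\sigma}$ -- this is the unlabelled proposition following Proposition \ref{env_oplax_prop}) and then showing the subalgebra meets $(\ker p_{L,M})\hattimes U_{\CC}(N)$ trivially, deferring that verification as ``the crux.'' Since $p_{L,M}$ is genuinely non-injective for general $\CC$ (that is precisely why $U_{\CC}$ is only oplax monoidal), and since elements of $(L\otimes_{\CC}M)\otimes_{\CC}N$ involve products of several elements of $L\otimes_{\CC}M$ formed inside $U_{\CC}(L\otimes_{\CC}M)$, nothing in your outline controls this intersection; as written, the associator is not shown to be an isomorphism and the theorem is not proved.

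The paper's proof takes a route that never confronts $\ker p_{L,M}$ at all. It works entirely inside the triple product $(U_{\CC}(L)\hattimes U_{\CC}(M))\hattimes U_{\CC}(N)$: using operadic composition (the $\CC$-subalgebra generated by $\CC(X)+Y$ coincides with the one generated by $X+Y$), it identifies $(L\otimes_{\CC}M)\otimes_{\CC}N$ with $\CC\bigl((L\hattimes 1+1\hattimes M)\hattimes 1+(1\hattimes 1)\hattimes N\bigr)$ and $L\otimes_{\CC}(M\otimes_{\CC}N)$ with $\CC\bigl(L\hattimes(1\hattimes 1)+1\hattimes(M\hattimes 1+1\hattimes N)\bigr)$, and then observes that the associativity isomorphism $\alpha$ of $\hattimes$ is a map of $\CC$-algebras carrying the first generating object onto the second, hence restricts to an isomorphism of the generated $\CC$-subalgebras; the pentagon and triangle then come for free from $\hattimes$, as in your sketch. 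So the step you single out as the main obstacle is bypassed in the paper rather than solved, and if you wish to keep your factorization through $U_{\CC}(L\otimes_{\CC}M)$ you must either supply the intersection-triviality argument or switch to the paper's identification inside the triple product.
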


\begin{rem} Note that the value $\CC[0]$ of the Artin functor on $0$ is 
$$\CC[0] = \bigoplus_{n \geq 0} \CC(n) \otimes_{B_n} 0^{\otimes n} = \CC(0).$$
Further, this is $0$ if $\CC$ is reduced, and $\one$ if $\CC$ is unitary.  Regardless of the value of $\CC(0)$, there is an isomorphism $U_{\CC}(\CC[0]) \cong \one$.

\end{rem} 

\begin{proof}

We first show that $- \otimes_{\CC} -$ is a bifunctor.  Consider maps $f: L \to L'$ and $g: M \to M'$ of $\CC$-algebras.  There is a commuting diagram
$$\xymatrix{
L \hattimes 1 + 1 \hattimes M \ar[rrr]^-{f + g} \ar[dd] \ar[dr] & & & L' \hattimes 1 + 1 \hattimes M'  \ar[dd] \ar[dl] \\
 & L \otimes_{\CC} M \ar[dl] \ar@{.>}[r]^-{f\otimes_{\CC} g} & L \otimes_{\CC} M  \ar[dr] & \\
U_{\CC}(L) \hattimes U_{\CC}(M) \ar[rrr]_-{U_{\CC}(f)\hattimes U_{\CC}(g)} & & & U_{\CC}(L') \hattimes U_{\CC}(M')
}$$
The existence of the map $f\otimes_{\CC} g$ is implied by the fact that $U_{\CC}(f) \hattimes U_{\CC}(g)$ is a map of algebras (hence $\CC$-algebras), and that it carries $L \hattimes 1 + 1 \hattimes M$ into $L' \hattimes 1 + 1 \hattimes M'$.  It is easy to see from this construction that $- \otimes_{\CC} -$ preserves composition and units.

The unit isomorphisms
$$\lambda: \CC[0] \otimes_{\CC} L \to L \; \mbox{ and } \; \rho: L \otimes_{\CC} \CC[0] \to L$$
are restrictions of the unit isomorphisms in $(\Alg_{\BrAsshat}(\proC), \hattimes, \one)$.  Explicitly, note that $U_{\CC}(\CC[0]) = \one$ is the unit for $\hattimes$, so 
$$\CC[0] \otimes_{\CC} L = \CC(0 \hattimes 1 + 1 \hattimes L) \subseteq U_{\CC}(0) \hattimes U_{\CC}(L) = \one \hattimes U_{\CC}(L)$$
is carried isomorphically onto $L$ (since $L$ is faithful) via the left unit isomorphism for $\hattimes$.  An identical argument holds for the right unit.

The associativity isomorphism is also inherited from $\hattimes$.  That is,
$$\alpha: (U_{\CC}(L) \hattimes U_{\CC}(M)) \hattimes U_{\CC}(N) \to U_{\CC}(L) \hattimes (U_{\CC}(M) \hattimes U_{\CC}(N))$$
carries $((L \hattimes 1 + 1 \hattimes M) \hattimes 1 + (1 \hattimes 1) \hattimes N$ to $L \hattimes (1 \hattimes 1) + 1 \hattimes (M \hattimes 1 + 1 \hattimes N)$.  Further, $\alpha$ is a map of $\CC$-algebras, so restricts to an isomorphism of $\CC$-algebras
$$\CC((L \hattimes 1 + 1 \hattimes M) \hattimes 1 + (1 \hattimes 1) \hattimes N) \to \CC(L \hattimes (1 \hattimes 1) + 1 \hattimes (M \hattimes 1 + 1 \hattimes N)).$$
Using the fact that $\CC$ is an operad, the source and target of this map can respectively be identified as $(L \otimes_{\CC} M) \otimes_{\CC} N$ and $L \otimes_{\CC} (M \otimes_{\CC} N)$.

Since the unit and associativity isomorphisms are inherited from $\hattimes$, they satisfy the required pentagonal and triangular coherence diagrams.

\end{proof}

\begin{prop} \label{env_oplax_prop} The enveloping algebra functor 
$$U_{\CC}: (\Alg_{\CC}(\proC), \otimes_{\CC}, \CC[0]) \longrightarrow (\Alg_{\BrAsshat}(\proC), \hattimes, \one)$$
is oplax monoidal and strictly unital.

\end{prop}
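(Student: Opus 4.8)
The plan is to extract the oplax structure maps directly from the adjunction of Proposition~\ref{univ_prop}, and then to verify the coherence axioms by restricting everything to generating subobjects, where they collapse to facts already recorded for $(\Alg_{\BrAsshat}(\proC),\hattimes,\one)$ in Theorem~\ref{hopf_cat_thm}. Throughout one works on the subcategory of faithful $\CC$-algebras, where $\otimes_{\CC}$ is monoidal. For faithful $L,M$ the defining inclusion $\iota_{L,M}\colon L\otimes_{\CC}M \hookrightarrow U_{\CC}(L)\hattimes U_{\CC}(M)$ is a morphism of $\CC$-algebras (into the $\BrAsshat$-algebra $U_{\CC}(L)\hattimes U_{\CC}(M)$ restricted along $\CC\subseteq\BrAsshat$), so its adjunct under Proposition~\ref{univ_prop} is a morphism of $\BrAsshat$-algebras
$$\phi_{L,M}\colon U_{\CC}(L\otimes_{\CC}M)\longrightarrow U_{\CC}(L)\hattimes U_{\CC}(M),$$
uniquely characterized by the requirement that its restriction along the unit $L\otimes_{\CC}M\to U_{\CC}(L\otimes_{\CC}M)$ equals $\iota_{L,M}$. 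I take the unit comparison $\phi_{0}\colon U_{\CC}(\CC[0])\to\one$ to be the canonical isomorphism of the Remark following Theorem~\ref{hopf_cat_thm}; the fact that it is an isomorphism is exactly the claim of strict unitality.

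Naturality of $\phi$ and the coherence axioms are then handled by a uniform reduction-to-generators argument. Given $f\colon L\to L'$ and $g\colon M\to M'$, both legs of the naturality square are $\BrAsshat$-algebra maps out of $U_{\CC}(L\otimes_{\CC}M)$, hence by Proposition~\ref{univ_prop} are determined by their restrictions to $L\otimes_{\CC}M$; and since $L\otimes_{\CC}M$ is by definition the $\CC$-subalgebra generated by $L\hattimes 1+1\hattimes M$, a $\CC$-algebra map out of it is in turn determined by its restriction to that generating subobject (the equalizer of two $\CC$-algebra maps being a $\CC$-subalgebra). There both legs visibly agree, sending $\ell\hattimes 1$ to $U_{\CC}(f)(\ell)\hattimes 1$ and $1\hattimes m$ to $1\hattimes U_{\CC}(g)(m)$.

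For the associativity hexagon and the two unit triangles I use that, by Theorem~\ref{hopf_cat_thm}, the associator and unitors for $\otimes_{\CC}$ are the restrictions of those for $\hattimes$. Each coherence diagram therefore compares two $\BrAsshat$-algebra maps out of $U_{\CC}$ of an iterated $\otimes_{\CC}$-product (resp.\ of $U_{\CC}(\CC[0]\otimes_{\CC}L)$, $U_{\CC}(L\otimes_{\CC}\CC[0])$), and the same two-step reduction --- first via Proposition~\ref{univ_prop} to the iterated product itself, then to its evident generating subobject --- reduces each identity to a diagram whose arrows are all induced, on the ambient $\BrAsshat$-algebras, by the associativity/unit isomorphisms of $\hattimes$ together with algebra inclusions; it commutes by naturality of those isomorphisms in $(\Alg_{\BrAsshat}(\proC),\hattimes,\one)$. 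I expect the coherence diagrams to be the only place requiring genuine bookkeeping, but since both monoidal structures borrow their constraints from $\hattimes$ and maps out of $U_{\CC}(-)$ are pinned down by their values on generators, these reduce to the monoidal coherence of $\hattimes$ already in hand; the remaining difficulty is organizational rather than mathematical.
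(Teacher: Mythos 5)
Your proposal is correct and follows essentially the same route as the paper: the comparison map $p_{L,M}\colon U_{\CC}(L\otimes_{\CC}M)\to U_{\CC}(L)\hattimes U_{\CC}(M)$ is obtained from the universal property of $U_{\CC}$ applied to the defining inclusion $L\otimes_{\CC}M\subseteq U_{\CC}(L)\hattimes U_{\CC}(M)$, and the coherences follow because $\otimes_{\CC}$ and its constraints are constructed from those of $\hattimes$. Your reduction-to-generators argument simply fills in details the paper leaves as ``easy to verify.''
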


Explicitly, there is a natural transformation
$$p_{L, M}: U_{\CC}(L \otimes_{\CC} M) \to U_{\CC}(L) \hattimes U_{\CC}(M)$$
which is induced (through the universal property of the enveloping algebra) by the inclusion of $\CC$-algebras $L \otimes_{\CC} M \subseteq U_{\CC}(L) \hattimes U_{\CC}(M)$.  It is easy to verify the coherences required for monoidality, since we constructed $\otimes_{\CC}$ in terms of $\hattimes$.

It is not generally the case that $U_{\CC}$ must be a strictly monoidal functor.  However, from the definition of multiplication on the tensor product of two braided algebras, one can show: 

\begin{prop}

The map $U_{\CC}(L \otimes_{\CC} M) \to U_{\CC}(L) \hattimes U_{\CC}(M)$ is surjective, with kernel the ideal generated by  the image of the map
$$[-, -]_{\sigma} = \mu - \mu \circ \sigma: M \otimes L \to U_{\CC}(L \otimes_{\CC} M)$$

\end{prop}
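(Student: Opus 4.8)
The map in question is the oplax structure morphism $p_{L,M}$ described just above; recall it is the $\BrAsshat$-algebra map that the universal property of $U_{\CC}$ (Proposition~\ref{univ_prop}) produces from the $\CC$-algebra inclusion $L\otimes_{\CC}M\hookrightarrow U_{\CC}(L)\hattimes U_{\CC}(M)$. Surjectivity is immediate: $U_{\CC}(L)$ is a quotient of $\That(L)=\BrAsshat[L]$, so it is generated as a $\BrAsshat$-algebra by the image of $L$, and likewise $U_{\CC}(M)$ by the image of $M$; hence $U_{\CC}(L)\hattimes U_{\CC}(M)$ is generated by $L\hattimes 1$ together with $1\hattimes M$. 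Both of these subobjects lie in $L\otimes_{\CC}M$, hence in the image of $p_{L,M}$, so $p_{L,M}$ is onto.

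Write $N$ for the ideal of $U_{\CC}(L\otimes_{\CC}M)$ generated by the image of $[-,-]_{\sigma}\colon M\otimes L\to U_{\CC}(L\otimes_{\CC}M)$. The inclusion $N\subseteq\ker p_{L,M}$ is a direct computation: by the definition of the product on a $\hattimes$ of algebras, namely $(\mu\otimes\mu)\circ(\id\otimes\sigma\otimes\id)$, one has $(1\hattimes m)(l\hattimes 1)=\mu(\sigma(m\otimes l))$ in $U_{\CC}(L)\hattimes U_{\CC}(M)$, so $p_{L,M}$ sends $\mu(m\otimes l)$ and $\mu(\sigma(m\otimes l))$ to the same element and therefore annihilates $[m,l]_{\sigma}$, hence the ideal it generates.

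For the reverse inclusion, choose presentations $L=\CC[V]/R_L$ and $M=\CC[W]/R_M$, so that $U_{\CC}(L)=\BrAsshat[V]/(R_L)$ and $U_{\CC}(M)=\BrAsshat[W]/(R_M)$ by Proposition~\ref{gen_rel_cor}. Combining the lemma $(A/I)\hattimes B\cong(A\hattimes B)/(I\hattimes B)$ with the identification of $\That(V)\hattimes\That(W)$ as $\BrAsshat[V\oplus W]$ modulo the ideal generated by the braided commutators $w\otimes v-\sigma(w\otimes v)$ ($w\in W$, $v\in V$) --- which is the computation used for $\That(V)\hattimes\That(V)$ earlier --- gives a presentation $U_{\CC}(L)\hattimes U_{\CC}(M)\cong\BrAsshat[V\oplus W]/(R_L,R_M,[W,V]_{\sigma})$. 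Put $Q:=U_{\CC}(L\otimes_{\CC}M)/N$. I would then build a $\BrAsshat$-algebra map $s\colon U_{\CC}(L)\hattimes U_{\CC}(M)\to Q$ out of this presentation by sending $V$ along $L\to L\otimes_{\CC}M\hookrightarrow U_{\CC}(L\otimes_{\CC}M)\to Q$ and $W$ along the analogous composite through $M$: the relations $R_L,R_M$ are killed because these are $\CC$-algebra maps out of $L$ and $M$ (so they factor as claimed), and the braided commutators $[W,V]_{\sigma}$ are killed because $w\otimes v-\sigma(w\otimes v)$ maps to the image of $[-,-]_{\sigma}$, which lies in $N$. Finally, $s$ and the map $\bar p\colon Q\to U_{\CC}(L)\hattimes U_{\CC}(M)$ induced by $p_{L,M}$ are mutually inverse, as one checks on generators: $\bar p$ sends the image of $l\in L$ to $l\hattimes 1$ and the image of $m\in M$ to $1\hattimes m$, while $s$ does the reverse, so both composites restrict to the identity on the generating subobjects and hence are identities. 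In particular $\bar p$ is injective, so $\ker p_{L,M}=N$.

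The only real difficulty I anticipate is the bookkeeping behind the presentation $U_{\CC}(L)\hattimes U_{\CC}(M)\cong\BrAsshat[V\oplus W]/(R_L,R_M,[W,V]_{\sigma})$: this requires carefully composing three quotients (the braided-commutator relations producing $\That(V)\hattimes\That(W)$ from $\BrAsshat[V\oplus W]$, then killing $(R_L)$ and $(R_M)$) and keeping the profinite completions consistent throughout, together with a small amount of care that ``generated as an algebra'' is stable under $\hattimes$ in $\proC$. Conceptually nothing beyond the universal properties of $U_{\CC}$ and of $\BrAsshat[-]$, and the explicit multiplication formula on $\hattimes$, is needed.
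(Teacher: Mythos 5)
Your argument is correct; note that the paper itself supplies no proof of this proposition (it is asserted to follow ``from the definition of multiplication on the tensor product of two braided algebras''), so there is no written argument to compare against, only that hint. Your route fills this in by choosing presentations $L=\CC[V]/R_L$, $M=\CC[W]/R_M$, invoking Proposition \ref{gen_rel_cor} and the computation of $\That(V)\hattimes\That(W)$ as $\BrAsshat[V\oplus W]$ modulo braided commutators, and then exhibiting an explicit inverse $s$ to the induced map $\bar p$ out of $Q=U_{\CC}(L\otimes_{\CC}M)/N$; the key computation $p_{L,M}([m,l]_{\sigma})=0$ and the generation arguments are all sound, and the one place you gloss (checking $s\circ\bar p=\mathrm{id}_Q$ on all of the images of $L$ and $M$, not just on $V$ and $W$) is recovered by observing that both sides are $\CC$-algebra maps agreeing on $V$, resp.\ $W$, which generate $L$, resp.\ $M$. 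The more intrinsic argument the paper seems to intend avoids presentations altogether: one shows directly that $U_{\CC}(L)\hattimes U_{\CC}(M)$ has the universal property of the quotient of $U_{\CC}(L\otimes_{\CC}M)$ by the braided commutation relations between the images of $M$ and $L$ (the braided tensor product of two algebras being universal among algebras receiving maps from both factors whose images braided-commute), which trades your presentation bookkeeping for the standard check that $[b,a]_{\sigma}=0$ on generators propagates to products via naturality of the braiding. Either way the content is the same; your version is more explicit but does lean on the multi-step quotient identification $U_{\CC}(L)\hattimes U_{\CC}(M)\cong\BrAsshat[V\oplus W]/(R_L,R_M,[W,V]_{\sigma})$, which, as you say, is routine given the paper's lemma on $(A/I)\hattimes B$ and its computation for $\That(V)\hattimes\That(V)$.
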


\subsection{Hopf $\CC$-algebras} \label{Hopf_C_section}

\begin{defn} \label{hopf_c_defn} A \emph{Hopf $\CC$-algebra} in $\proC$ is a coassociative, counital coalgebra\footnote{The reader may complain that ``Hopf" is inappropriate given that there is no analogue of an antipode in the definition.  However, the term ``$\CC$-bialgebra" is also inappropriate, as it suggests that $L$ has operations \emph{and} co-operations governed by $\CC$.} $L$ in the monoidal category of faithful $\CC$-algebras in $\proC$. A \emph{morphism} of Hopf $\CC$-algebras is a map $f: L \to M$ in $\proC$ which is simultaneously a map of $\CC$-algebras and of counital coalgebras.\end{defn} 

\begin{exmp} A Hopf $\BrAsshat$-algebra is precisely the same thing as a profinitely braided bialgebra, in the sense of Definition \ref{pro_bialg_defn}. \end{exmp}

In making this definition, we rely on Theorem \ref{hopf_cat_thm} which equips the category of faithful $\CC$-algebras with a monoidal structure.  Faithfulness was essential for the unit axioms to hold in that category.  As such, we cannot consider Hopf structures on $\CC$-algebras which are not faithful; this requirement will be apparent in (\ref{counit_eqn}) below.

Unpacking Definition \ref{hopf_c_defn}, a Hopf $\CC$-algebra is a faithful $\CC$-algebra $L$, equipped with a map of $\CC$-algebras $\delta: L \to L \otimes_{\CC} L$ with the property that
$$(\delta \otimes_{\CC} \id_L) \circ \delta = (\id_L \otimes_{\CC} \delta) \circ \delta.$$
Further, we ask for a counit: a $\CC$-algebra map $\epsilon: L \to \CC[0]$ which satisfies
\beqn \label{counit_eqn} (\epsilon \otimes_{\CC} \id_L) \circ \delta = \id_L = (\id_L \otimes_{\CC} \epsilon) \circ \delta. \eeqn

We note that if $\CC$ is reduced, the target of the counit is the terminal object $0$.  In that case, there is a unique choice of $\epsilon$; then (\ref{counit_eqn}) is simply a constraint on $\delta$.

\begin{defn} \label{prim_C_defn} The \emph{primitives} in a Hopf $\CC$-algebra $L$ are the subobject $P(L)$ given by the kernel of the map 
$$\delta - \id_L \otimes 1 -1 \otimes \id_L: L \to L \otimes_{\CC} L$$
Further, $L$ is said to be \emph{primitively generated} if the $\CC$-subalgebra of $L$ generated by $P(L)$ is all of $L$; that is, $\CC(P(L)) = L$. 

\end{defn}

One can form sub- and quotient Hopf $\CC$-algebras, much as in the classical setting:

\begin{defn}

Let $L$ and $M$ be Hopf $\CC$-algebras, and $f: L \to M$ is a morphism of Hopf $\CC$-algebras.  If $f$ is mono in $\proC$, we regard $L$ as a \emph{sub-Hopf $\CC$-algebra} of $M$.  If $f$ is epi, then $M$ is a \emph{quotient Hopf $\CC$-algebra} of $L$.

\end{defn}

\begin{rem} It can be difficult to verify if a sub $\CC$-algebra $L$ of a Hopf $\CC$-algebra $M$ is Hopf.  Explicitly, this amounts to lifting the map $\delta_M$ in the diagram below to a map $\delta_L$:
$$\xymatrix{
L \ar[d]_-{f} \ar@.[r]^-{\delta_L} & L \otimes_{\CC} L \ar[d]_-{f \otimes_{\CC} f} \ar[r]^-{\subseteq} & U_{\CC}(L) \hattimes U_{\CC}(L) \ar[d]^-{U_{\CC}(f) \hattimes U_{\CC}(f)} \\
M \ar[r]_-{\delta_M} & M \hattimes_{\CC} M \ar[r]_-{\subseteq} & U_{\CC}(M) \hattimes U_{\CC}(M)  \\
}$$
However, injectivity of $f$ need not imply injectivity of $U_{\CC}(f)$ or $f \otimes_{\CC} f$.  Thus constructing $\delta_L$ is not just a matter of verifying that $\delta_M$ carries $L$ into a certain subspace of $M \otimes_{\CC} M$; one may need to genuinely lift the map over a nontrivial kernel.

\end{rem}

Quotients are somewhat more straightforward:

\begin{defn} 

Let $L$ be a Hopf $\CC$-algebra, and $I \leq L$ a $\CC$-ideal.  $I$ is a \emph{Hopf $\CC$-ideal} if 
$$\delta(I) \subseteq (I) \hattimes U_{\CC}(L) + U_{\CC}(L) \hattimes (I).$$

\end{defn}

Here $(I)$ is the $\BrAsshat$-ideal in $U_{\CC}(L)$ generated by $I$.  For instance, if $I$ is generated as a $\CC$-ideal in $L$ by $I \cap P(L)$, then $I$ is certainly a Hopf $\CC$-ideal.  We defer the proof of the following result to the next section.

\begin{prop} \label{quotient_hopf_prop} If $L$ is a primitively generated Hopf $\CC$-algebra and $I \leq L$ is a Hopf $\CC$-ideal, then $L/I$ is a quotient Hopf $\CC$-algebra of $L$. \end{prop}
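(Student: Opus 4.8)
The plan is to lift the coalgebra structure to the enveloping algebra, where ordinary (profinitely braided) bialgebra arguments are available, and then descend it back to $L/I$.

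\emph{Step 1: $U_{\CC}(L)$ is a profinitely braided bialgebra and $(I)$ is a Hopf ideal in it.} The composite $L \xrightarrow{\delta} L \otimes_{\CC} L \hookrightarrow U_{\CC}(L) \hattimes U_{\CC}(L)$ is a map of $\CC$-algebras (the target being a $\CC$-algebra by restriction along $\CC \subseteq \BrAsshat$), so by the universal property of Proposition \ref{univ_prop} it extends uniquely to a $\BrAsshat$-algebra map $\Delta \colon U_{\CC}(L) \to U_{\CC}(L) \hattimes U_{\CC}(L)$; likewise the counit extends to $\epsilon \colon U_{\CC}(L) \to \one$. Uniqueness in that universal property, applied to the two $\BrAsshat$-algebra maps occurring in each coassociativity or counit identity (which agree on $L$, using that the structure constraints of $\otimes_{\CC}$ were inherited from $\hattimes$ in the proof of Theorem \ref{hopf_cat_thm}), forces $\Delta$ to be coassociative and counital. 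Now since $\Delta$ is an algebra map, $\Delta((I))$ lies in the ideal generated by $\Delta(I) = \delta(I)$, and the Hopf $\CC$-ideal hypothesis gives $\delta(I) \subseteq (I) \hattimes U_{\CC}(L) + U_{\CC}(L) \hattimes (I)$; the right-hand side equals $\ker\!\big(U_{\CC}(L) \hattimes U_{\CC}(L) \to (U_{\CC}(L)/(I)) \hattimes (U_{\CC}(L)/(I))\big)$ by the kernel formula for $\hattimes$, hence is an ideal, so $\Delta((I))$ is contained in it. Therefore $(I)$ is a Hopf ideal of $U_{\CC}(L)$, the quotient $U_{\CC}(L)/(I)$ is a profinitely braided bialgebra, and by the remark following Proposition \ref{gen_rel_cor} it is canonically identified with $U_{\CC}(L/I)$; write $\bar\Delta$ for its diagonal, and note that the projection $U_{\CC}(L) \to U_{\CC}(L/I)$ is a bialgebra map.

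\emph{Step 2: descent of $\bar\Delta$ to $L/I$.} Assume for the moment that $L/I$ is faithful, so that it sits as a $\CC$-subalgebra of $U_{\CC}(L/I)$ and $\bar\Delta|_{L/I}$ is a $\CC$-algebra map. Because $L$ is primitively generated, $L = \CC(P(L))$, and applying the surjective $\CC$-algebra map $\pi \colon L \to L/I$ gives $L/I = \CC(\pi(P(L)))$; moreover $\bar\Delta(\pi(x)) = \pi(x) \hattimes 1 + 1 \hattimes \pi(x) \in (L/I) \hattimes 1 + 1 \hattimes (L/I)$ for $x \in P(L)$. Since a $\CC$-algebra map sends the $\CC$-subalgebra generated by a set to the one generated by its image,
$$\bar\Delta(L/I) \subseteq \CC\big((L/I)\hattimes 1 + 1 \hattimes (L/I)\big) = (L/I) \otimes_{\CC} (L/I),$$
so $\bar\Delta$ corestricts to a $\CC$-algebra map $\delta_{L/I} \colon L/I \to (L/I) \otimes_{\CC} (L/I)$. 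Coassociativity, the counit axiom (\ref{counit_eqn}) (vacuous when $\CC$ is reduced, which covers the cases of interest such as $\mW_*$; if $\CC$ is not reduced one checks in addition that $\epsilon_L$ descends), and the identity $\delta_{L/I} \circ \pi = (\pi \otimes_{\CC} \pi) \circ \delta_L$ are all read off by including $(L/I)^{\otimes_{\CC} k} \hookrightarrow U_{\CC}(L/I)^{\hattimes k}$ and invoking the corresponding facts for $\bar\Delta$ and $\Delta$, which hold because $\otimes_{\CC}$ and its constraints were built from $\hattimes$. Thus $\pi$ is a surjective morphism of Hopf $\CC$-algebras, exhibiting $L/I$ as a quotient Hopf $\CC$-algebra of $L$.

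\emph{Step 3: faithfulness of $L/I$ — the main obstacle.} What remains is the genuine content: that $L \cap (I) = I$ inside $U_{\CC}(L)$. Since $L$ is primitively generated it is connected (Proposition 5.8 of \cite{ardizzoni_prim_gen}), so its coradical filtration is exhaustive with bottom term $\one$; I would induct on filtration degree, writing the reduced coproduct of an element of $L_n \cap (I)$ in terms of $L_{n-1}$, using that $(I)$ is a Hopf ideal of $U_{\CC}(L)$ to peel the class back into $I + L_{n-1}$, and disposing of the residual primitive term via $P(L) \subseteq L$ together with faithfulness of $L$ to see $P(L) \cap (I) \subseteq I$. An alternative is to pass first to $\overline{L/I} = \im(L/I \to U_{\CC}(L/I))$, which is automatically faithful, inherits a Hopf $\CC$-algebra structure by Step 2, and is again primitively generated with the same $\CC$-enveloping algebra as $L/I$, and then to argue that the surjection $L/I \twoheadrightarrow \overline{L/I}$ must be an isomorphism. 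This is the step where I expect that whatever structural control of $U_{\CC}$ on primitively generated $\CC$-algebras is developed in the following section will be needed.
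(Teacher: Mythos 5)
Your Steps 1 and 2 are, in substance, the paper's proof: the paper observes that the Hopf $\CC$-ideal hypothesis makes $(I)$ a Hopf ideal in $U_{\CC}(L)$, so that $U_{\CC}(L)/(I) \cong U_{\CC}(L/I)$ (the remark after Proposition \ref{gen_rel_cor}) is a bialgebra quotient, and then simply cites Proposition \ref{C_to_Hopf_prop}(2); your descent argument -- restrict the diagonal, use that the image of $P(L)$ is primitive and generates $L/I$ as a $\CC$-algebra, corestrict to $(L/I)\otimes_{\CC}(L/I)$, and inherit coassociativity and counitality from $\hattimes$ -- is exactly the proof of that proposition, inlined. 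So up to packaging, you have reproduced the paper's route.

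The divergence is your Step 3. The paper never proves $L \cap (I) = I$, and does not need to: faithfulness is a blanket hypothesis of Proposition \ref{C_to_Hopf_prop} which, in the paper's framework, is absorbed by the standing convention stated after the $\Lbar$ proposition -- any $\CC$-algebra is silently replaced by its image $\Lbar$ in its enveloping algebra, which is automatically faithful and has the same enveloping algebra. That is precisely your ``alternative'': set $\overline{L/I} := \im\bigl(L/I \to U_{\CC}(L/I)\bigr)$; it is faithful, is generated as a $\CC$-algebra by the (primitive) image of $P(L)$, and satisfies $U_{\CC}(\overline{L/I}) \cong U_{\CC}(L/I)$, so Proposition \ref{C_to_Hopf_prop}(2) applies to it, and the composite $L \twoheadrightarrow \overline{L/I}$ is still epi, exhibiting the asserted quotient Hopf $\CC$-algebra. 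By contrast, the coradical-filtration induction you sketch for $L \cap (I) = I$ is not carried out, is not in the paper, and is not obviously available (knowing $\Delta(x) \in (I)\hattimes U_{\CC}(L) + U_{\CC}(L)\hattimes (I)$ does not by itself return the tensor factors to $L$), so as written Step 3 is a gap -- but only against the stronger reading that $L/I$ itself, rather than its image, is faithful, a claim the paper neither makes separately nor uses. If you replace Step 3 by the $\Lbar$ convention (your own alternative), your argument is complete and coincides with the paper's.
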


Let $V$ be an object in $\proC$.  The enveloping algebra $U_{\CC}(\CC[V])]$ of the free algebra $\CC[V]$ is $\BrAsshat[V]$, so the tensor product
$$\CC[V] \otimes_{\CC} \CC[V] \leq \BrAsshat[V] \hattimes \BrAsshat[V]$$
is the $\CC$-subalgebra generated by $V \otimes 1 + 1 \otimes V$.  Since $\CC[V]$ is free, there \emph{automatically} exists a unique diagonal 
$$\delta: \CC[V] \to \CC[V] \otimes_{\CC} \CC[V]$$
in which $V$ is primitive.

More generally, if $L$ is a $\CC$-algebra, we may present it as the quotient $L = \CC[V] / R$, where $R$ is the $\CC$-ideal of relations defining $L$.  Then a Hopf structure on $L$ is uniquely specified by granting that the generators $V$ are primitive, assuming that $R$ is a $\CC$-Hopf ideal in the sense above.

\subsection{Hopf algebras from Hopf $\CC$-algebras} \label{pro_const_Hopf_sec}

Hopf $\CC$-algebras give rise to profinitely braided bialgebras, in the sense of Definition \ref{pro_bialg_defn}:

\begin{prop} \label{C_to_Hopf_prop}

Let $L$  be a faithful $\CC$-algebra in $\proC$.

\begin{enumerate}

\item If $L$ is a Hopf $\CC$-algebra, then $U_{\CC}(L)$ is a profinitely braided bialgebra.  If $L$ is primitively generated, so too is $U_{\CC}(L)$.

\item Conversely, if $U_{\CC}(L)$ admits the structure of a primitively generated, profinitely braided bialgebra wherein $L$ is generated as a $\CC$-algebra by $L \cap P(U_{\CC}(L))$, then the diagonal on $U_{\CC}(L)$ restricts to make $L$ a primitively generated Hopf $\CC$-algebra.

\end{enumerate}

\end{prop}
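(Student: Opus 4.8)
The plan is to exploit the universal properties built up in the preceding sections, in particular the oplax monoidal structure on $U_\CC$ from Proposition \ref{env_oplax_prop} and the explicit comparison map $p_{L,M}: U_\CC(L\otimes_\CC M)\to U_\CC(L)\hattimes U_\CC(M)$. For the first part, given a diagonal $\delta: L\to L\otimes_\CC L$ of $\CC$-algebras, I would define the coproduct on $U_\CC(L)$ as the composite $\Delta := p_{L,L}\circ U_\CC(\delta): U_\CC(L)\to U_\CC(L\otimes_\CC L)\to U_\CC(L)\hattimes U_\CC(L)$. Coassociativity and counitality then follow by applying the functor $U_\CC$ to the corresponding identities for $\delta$ and chasing the oplax coherence diagrams for $p$; the counit is $p_{\CC[0]}\circ U_\CC(\epsilon)$ composed with the isomorphism $U_\CC(\CC[0])\cong \one$ noted after Theorem \ref{hopf_cat_thm}. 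This makes $U_\CC(L)$ a coalgebra in $(\Alg_{\BrAsshat}(\proC),\hattimes,\one)$, i.e.\ a profinitely braided bialgebra by Definition \ref{pro_bialg_defn}. For primitive generation: the natural map $L\to U_\CC(L)$ is injective since $L$ is faithful, and by construction of $\Delta$ it carries $P(L)$ into $P(U_\CC(L))$; then since $L$ generates $U_\CC(L)$ as a $\BrAsshat$-algebra (it is a quotient of $\BrAsshat[L]$) and, if $L$ is primitively generated, $L$ is in turn generated by $P(L)$ as a $\CC$-algebra, the $\BrAsshat$-subalgebra generated by $P(U_\CC(L))$ contains $L$ and hence is everything.

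For the converse, suppose $U_\CC(L)$ carries a primitively generated bialgebra structure with diagonal $D$, and $L$ is generated as a $\CC$-algebra by $L\cap P(U_\CC(L))$. I would show that $D$ restricts to a map $\delta_L: L\to L\otimes_\CC L$. The key observation is that $D$ is a map of $\BrAsshat$-algebras, hence of $\CC$-algebras by restriction along $\CC\subseteq\BrAsshat$; it sends each element of $L\cap P(U_\CC(L))$ to $x\hattimes 1 + 1\hattimes x$, which lies in $L\hattimes 1 + 1\hattimes L\subseteq U_\CC(L)\hattimes U_\CC(L)$, hence in the $\CC$-subalgebra $L\otimes_\CC L$ generated by that subobject. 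Since these primitive elements generate $L$ as a $\CC$-algebra and $D$ is a $\CC$-algebra map, $D(L)\subseteq L\otimes_\CC L$, so $D$ corestricts to $\delta_L$. Coassociativity and the counit axiom for $\delta_L$ are inherited from those of $D$ by restriction (using that $L\otimes_\CC L\to U_\CC(L)\hattimes U_\CC(L)$ is mono on the relevant subobjects, or more simply that both sides of each identity already land in $L^{\otimes_\CC 2}$ or $L^{\otimes_\CC 3}$). Finally $L$ is primitively generated as a Hopf $\CC$-algebra: its Hopf-$\CC$ primitives $P(L) = \ker(\delta_L - \id\otimes 1 - 1\otimes\id)$ contain $L\cap P(U_\CC(L))$ by construction, and the latter already generates $L$ as a $\CC$-algebra.

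The main obstacle I anticipate is the first converse step: verifying that $D$ genuinely \emph{restricts} to a map landing in $L\otimes_\CC L$ rather than merely in $U_\CC(L)\hattimes U_\CC(L)$. This is not automatic from abstract nonsense — it relies crucially on the hypothesis that $L$ is generated as a $\CC$-algebra by elements that are primitive \emph{in $U_\CC(L)$} (not merely in some abstract sense), together with the fact that $L\otimes_\CC L$ is defined (Definition preceding Theorem \ref{hopf_cat_thm}) precisely as the $\CC$-subalgebra of $U_\CC(L)\hattimes U_\CC(L)$ generated by $L\hattimes 1 + 1\hattimes L$. One must be careful that the image of a generator is a primitive of the \emph{correct} form, i.e.\ lies in $L\hattimes 1 + 1\hattimes L$ and not in some larger space of primitives of $U_\CC(L)\hattimes U_\CC(L)$; since $D$ restricted to $L\cap P(U_\CC(L))$ is forced by the bialgebra axioms to be $x\mapsto x\otimes 1 + 1\otimes x$, this is fine, but it is the point that requires the faithfulness and primitive-generation hypotheses and should be stated explicitly. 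The remaining verifications (coassociativity, counit, functoriality, preservation of primitive generation) are then routine diagram chases using the coherences already established for $\hattimes$ and $\otimes_\CC$.
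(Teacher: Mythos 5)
Your proposal is correct and follows essentially the same route as the paper: the diagonal on $U_{\CC}(L)$ is built as $p_{L,L}\circ U_{\CC}(\delta)$ using the oplax structure of Proposition \ref{env_oplax_prop}, and the converse is handled exactly by noting that the diagonal sends $L\cap P(U_{\CC}(L))$ into $L\hattimes 1 + 1\hattimes L$ and then using that these elements generate $L$ as a $\CC$-algebra to corestrict into $L\otimes_{\CC}L$. Your extra remarks on primitive generation and on why the corestriction needs the generation hypothesis are accurate elaborations of points the paper treats briefly.
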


\begin{proof}

The first part is an application of Proposition \ref{env_oplax_prop} and the fact that oplax monoidal functors preserve coalgebras.  In more detail: $U_{\CC}(L)$ is a $\BrAsshat$-algebra by construction.  We make it a coalgebra in $\Alg_{\BrAsshat}(\proC)$ via a diagonal $\Delta$ defined as the composite
$$\xymatrix{U_{\CC}(L) \ar[r]^-{U_{\CC}(\delta)} & U_{\CC}(L \otimes_{\CC} L) \ar[r]^-p & U_{\CC}(L) \hattimes U_{\CC}(L)}$$
Here $p$ is the oplax monoidal comparison map.  The coassociativity of $\delta$ implies that of $\Delta$.  Further, since $\delta$ is a map of $\CC$-algebras, $U_{\CC}(\delta)$ is a map of algebras, and so too is $\Delta$.  The counit is $U_{\CC}(\epsilon): U_{\CC}(L) \to \one = U_{\CC}(\CC[0])$.

For the converse, we define $\delta: L \to U_{\CC}(L) \hattimes U_{\CC}(L)$ by restricting the diagonal on $U_{\CC}(L)$ to $L$.  We must show that this corestricts to 
$$\delta: L \to L \otimes_{\CC} L = L \otimes_{\CC} L = \CC(L \otimes 1 + 1 \otimes L).$$
Clearly $\delta$ takes the subspace $L \cap P(U_{\CC}(L))$ into $L \otimes 1 + 1 \otimes L$.  Since $L = \CC(L \cap P(U_{\CC}(L)))$ is generated by this subspace as a $\CC$-algebra, its image lies inside $L \otimes_{\CC} L$.  Since $\Delta$ is a map of algebras and $\CC \subseteq \BrAsshat$, certainly $\delta$ is a map of $\CC$-algebras.  Similarly, coassociativity and counitality of $\delta$ are inherited from $\Delta$.

\end{proof}

\begin{proof}[Proof of Proposition \ref{quotient_hopf_prop}]

The assumption on $I$ ensures that the ideal $(I)$ generated by $I$ in $U_{\CC}(L)$ is a Hopf ideal in the usual sense.  Therefore $U_{\CC}(L)/(I) \cong U_{\CC}(L/I)$ is a Hopf algebra quotient of $U_{\CC}(L)$.  By Proposition \ref{C_to_Hopf_prop}, then $L/I$ is a Hopf $\CC$-algebra. 

\end{proof}

\begin{prop} \label{prim_prim_prop}

If $L$ is a Hopf $\CC$-algebra, the map $L \hookrightarrow U_{\CC}(L)$ induces an isomorphism $P(L) \cong L \cap P(U_{\CC}(L))$.

\end{prop}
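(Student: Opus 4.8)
The plan is to show that, under the (monic, since a Hopf $\CC$-algebra is faithful) map $i\colon L\hookrightarrow U_{\CC}(L)$, the diagonal $\Delta$ on $U_{\CC}(L)$ restricts to the diagonal $\delta$ on $L$, and then to read off the equality of primitives from the fact that $L\otimes_{\CC}L$ sits inside $U_{\CC}(L)\hattimes U_{\CC}(L)$ as a subobject by construction. Write $j\colon L\otimes_{\CC}L\hookrightarrow U_{\CC}(L)\hattimes U_{\CC}(L)$ for the canonical inclusion built into the definition of $\otimes_{\CC}$. The first step is to establish the identity
$$\Delta\circ i = j\circ\delta\colon L\longrightarrow U_{\CC}(L)\hattimes U_{\CC}(L).$$
Recall from the proof of Proposition \ref{C_to_Hopf_prop} that $\Delta = p\circ U_{\CC}(\delta)$, where $p$ is the $\BrAsshat$-algebra map extending $j$ through the universal property of Proposition \ref{univ_prop}; thus $p\circ\eta_{L\otimes_{\CC}L}=j$, with $\eta$ the unit of that adjunction (and $\eta_L=i$). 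Naturality of $\eta$ applied to $\delta$ gives $U_{\CC}(\delta)\circ i = \eta_{L\otimes_{\CC}L}\circ\delta$, whence $\Delta\circ i = p\circ\eta_{L\otimes_{\CC}L}\circ\delta = j\circ\delta$.

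Next I would compare the two kernels. By Definition \ref{prim_C_defn}, $P(L)=\ker(\delta - c_L)$ with $c_L := (\id_L\otimes 1)+(1\otimes\id_L)\colon L\to L\otimes_{\CC}L$; and, since primitives of a profinitely braided bialgebra are defined as for ordinary bialgebras, $P(U_{\CC}(L))=\ker(\Delta - c_{U_{\CC}(L)})$ for the analogous map $c_{U_{\CC}(L)}$. Because $j$ carries $L\hattimes 1$ and $1\hattimes L$ identically into $U_{\CC}(L)\hattimes U_{\CC}(L)$, one has $j\circ c_L = c_{U_{\CC}(L)}\circ i$, and combining this with the identity above yields
$$(\Delta - c_{U_{\CC}(L)})\circ i = j\circ(\delta - c_L).$$
The subobject $L\cap P(U_{\CC}(L))$ of $L$ is by definition the pullback of $P(U_{\CC}(L))\hookrightarrow U_{\CC}(L)$ along $i$, i.e. $\ker\big((\Delta - c_{U_{\CC}(L)})\circ i\big)$. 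Since $j$ is monic, $\ker\big(j\circ(\delta - c_L)\big)=\ker(\delta - c_L)=P(L)$, so $L\cap P(U_{\CC}(L))=P(L)$ as subobjects and $i$ induces the asserted isomorphism.

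I expect the only delicate point to be the assertion that $j$ is monic. This is \emph{not} an instance of the failure flagged in the remark preceding Proposition \ref{quotient_hopf_prop}, where it is $U_{\CC}(f)$ and $f\otimes_{\CC}f$ that can fail to be monic for monic $f$; rather, $j$ is the structural inclusion of the $\CC$-subalgebra $L\otimes_{\CC}L = \CC(L\hattimes 1 + 1\hattimes L)$ of $U_{\CC}(L)\hattimes U_{\CC}(L)$, hence monic as the image of a map in the abelian category $\proC$. Once this is in hand the argument is a short diagram chase; one could alternatively run the whole proof on elements, checking directly that for $x\in L$ one has $\delta(x)=x\otimes 1+1\otimes x$ in $L\otimes_{\CC}L$ if and only if $\Delta(x)=x\hattimes 1+1\hattimes x$ in $U_{\CC}(L)\hattimes U_{\CC}(L)$.
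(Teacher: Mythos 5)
Your argument is correct and is essentially the paper's proof, just written out in more detail: faithfulness gives the injection $L\hookrightarrow U_{\CC}(L)$, the identity $\Delta\circ i=j\circ\delta$ (the paper's ``$\Delta$ is defined in terms of $\delta$'') transfers primitivity forward, and the monicity of the structural inclusion $L\otimes_{\CC}L\hookrightarrow U_{\CC}(L)\hattimes U_{\CC}(L)$ transfers it back. Your explicit kernel comparison and the clarification that $j$ is monic by construction (unlike the maps $U_{\CC}(f)$, $f\otimes_{\CC}f$ in the preceding remark) are exactly the points the paper leaves implicit.
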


\begin{proof}

By assumption, $L$ is faithful, so $L$ does indeed inject into $U_{\CC}(L)$.  Since $\Delta$ is defined in terms of $\delta$, it is clear that if $x \in L$ is primitive, so is its image in $U_{\CC}(L)$.  Conversely, $x$ is primitive if its image in $U_{\CC}(L)$ is, since $L \otimes_{\CC} L$ injects into $U_{\CC}(L) \otimes U_{\CC}(L)$.

\end{proof}

\begin{defn}

A Hopf $\CC$-algebra $L$ is \emph{primordial}\footnote{We thank Tilman Bauer for the suggestion of this terminology.} if the natural map $P(L) \to P(U_{\CC}(L))$ is surjective.

\end{defn}

Note that by Proposition \ref{prim_prim_prop}, for $L$ to be primordial is the same as for the map $P(L) \to P(U_{\CC}(L))$ to be an isomorphism.

\subsection{Finiteness criteria} 

In general, $U_{\CC}(L) \in \proC$ is a $\BrAsshat$-algebra.  However, there are criteria which ensure that it is a complete, pro-constant algebra:

\begin{prop} \label{enveloping_defn_prop}

If $L \in \bC$ is a finitely braided, pro-constant $\CC$-algebra, then $U_{\CC}(L)$ is pro-constant, and isomorphic to the quotient of $T(L)$ by the two-sided ideal generated by the image of the map $\CC[L] \to T(L)$ which is the difference of the inclusion $\CC[L] \leq T(L)$ coming from Proposition \ref{subop_prop} and the algebra structure map $\CC[L] \to L \leq T(L)$.

\end{prop}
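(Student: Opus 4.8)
The plan is to reduce the statement to a chain of results already established in the paper. First I would recall the presentation of the enveloping algebra: by Definition \ref{gen_enveloping_defn}, $U_{\CC}(L) = U_{\CC}^{\BrAsshat}(L)$ is the quotient of $\BrAsshat[L]$ by the $\BrAsshat$-ideal generated by the image of $\iota - (j \circ \theta): \CC[L] \to \BrAsshat[L]$, where $\iota = i[L]$ is induced by the inclusion $i: \CC \hookrightarrow \BrAsshat$, $\theta: \CC[L] \to L$ is the $\CC$-algebra structure map, and $j: L \hookrightarrow \BrAsshat[L]$ is the unit inclusion. Since $\BrAsshat[L] = \That(L)$, by Proposition \ref{fin_br_same_monad_prop} (applied with $\OO = \BrAss$) this free algebra is pro-constant and isomorphic to $\BrAss[L] = T(L)$ when $L$ is finitely braided. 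Similarly, by Proposition \ref{subop_prop}, since $\CC$ is an essentially strict braided subsequence of $\BrAsshat$ (being a suboperad) and $L$ is finitely braided, $\CC[L]$ is pro-constant and identified with a summand of $T(L)$; this provides exactly the inclusion $\CC[L] \leq T(L)$ referenced in the statement.

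Next I would identify the ideal. The map $\iota: \CC[L] \to \BrAsshat[L] = T(L)$ becomes, under the identifications above, precisely the summand inclusion $\CC[L] \leq T(L)$ of Proposition \ref{subop_prop}. The map $j \circ \theta: \CC[L] \to L \leq T(L)$ is the composite of the $\CC$-algebra structure map with the degree-one inclusion $L \hookrightarrow T(L)$. Thus $\iota - (j \circ \theta): \CC[L] \to T(L)$ is exactly the difference map described in the Proposition. It remains to see that the $\BrAsshat$-ideal generated by its image in $\That(L)$ corresponds, under the pro-constant identification $\That(L) \cong T(L)$, to the ordinary two-sided associative ideal generated by the image in $T(L)$. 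This is the content of Proposition \ref{ideal_prop}: the operadic left ideal in $\BrAsshat$ generated by an essentially strict subsequence gives, after evaluating the Artin functor, the two-sided ring-theoretic ideal in $\That(V)$ generated by the corresponding subspace. One applies this (together with the remark following Proposition \ref{ideal_prop}, or the discussion of $U_{\CC}^{\DD}(L/I) \cong U_{\CC}^{\DD}(L)/(I)$) to the subsequence generated by the image of $\iota - (j\circ\theta)$; alternatively one uses Proposition \ref{gen_rel_cor} directly, presenting $L$ as $\CC[V]/R$ and observing that $U_{\CC}(L) = \BrAsshat[V]/(R)$ becomes, after passing to pro-constant objects, $T(V)/(R)$ with $(R)$ the ordinary two-sided ideal.

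Finally I would assemble: pro-constancy of $U_{\CC}(L)$ follows because it is a quotient of the pro-constant object $\That(L) \cong T(L)$ by a sub-pro-object which is itself pro-constant (being generated as an ideal inside the pro-constant $T(L)$ by the pro-constant subobject $\CC[L]$; concretely one can present $L$ via a finitely braided $V$ and invoke Proposition \ref{gen_rel_cor} and Lemma \ref{tech_const_lem}). Then the quotient description matches verbatim the one in the statement.

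The main obstacle I anticipate is the bookkeeping around pro-objects: one must be careful that "the $\BrAsshat$-ideal generated by the image'' computed in $\proC$ really does become the naive two-sided ideal in the category $\bC$ after restricting to pro-constant objects, rather than some completion thereof. This is exactly where Proposition \ref{ideal_prop} (the comparison of operadic left ideals with ring-theoretic two-sided ideals) and the cofinality arguments in the proof of Lemma \ref{tech_const_lem} do the real work; the rest is unwinding definitions. A secondary, minor point is checking that the difference map $\iota - (j\circ\theta)$ in the profinite setting is carried to the stated difference map under the isomorphisms of Propositions \ref{fin_br_same_monad_prop} and \ref{subop_prop}, which amounts to naturality of those isomorphisms and poses no genuine difficulty.
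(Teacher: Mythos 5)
Your proposal is correct and takes essentially the same route as the paper's proof: Proposition \ref{subop_prop} (for $\BrAsshat$ and for $\CC$) gives pro-constancy of $\That(L)\cong T(L)$ and of $\CC[L]\leq T(L)$, pro-constancy of the quotient follows, and the two-sided ideal description is then a matter of unwinding Definition \ref{gen_enveloping_defn}. One small caveat: Proposition \ref{ideal_prop} is about the operadic left ideal of $\BrAsshat$ generated by a braided subsequence, not about the algebra ideal of $\That(L)$ generated by the image of $\iota-(j\circ\theta)$, so it does not literally apply as you cite it; but your alternative route via Proposition \ref{gen_rel_cor} together with the cofinality argument of Lemma \ref{tech_const_lem} handles this point, which the paper itself dispatches as ``unwinding the definitions.''
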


In less diagrammatic and more element-theoretic terms, the relations in $T(L)$ defining $U_{\CC}(L)$ are 
\beqn \label{env_eqn} j(c(a_1 \dots, a_n)) = i(c)(j(a_1), \dots, j(a_n)).\eeqn
Here $c \in \CC(n)$, $i(c)$ its image in $\BrAsshat(n)$, $a_i \in L$, and $j: L \to T(L)$ is the natural inclusion.

\begin{proof}

Since $\DD = \BrAsshat$ and $\CC$ are both essentially strict and $L$ is finitely braided, it follows from Proposition \ref{subop_prop} that $\DD[L] = T(L)$ and $\CC[L] \leq T(L)$ are both pro-constant objects in $\bC$.  Thus the same is then true of the coequalizer $U_{\CC}(L)$ of the (two-sided ideal generated by the) two maps $\CC[L] \to T(L)$ given by $\iota$ and $j \circ \theta$.  Further, the description of the two-sided ideal in the quotient follows by directly unwinding the definitions.

\end{proof}

We note a subtlety: when $L$ is finitely braided, it need not be the case that $U_{\CC}(L)$ is, being a quotient of an infinite sum.  However, this does hold in the following relatively common setting:

\begin{prop} \label{ss_core_prop}

If $\bC' \leq \bC$ is a finitely braided, semisimple subcategory with finitely many simple objects, and $L \in \Alg_{\CC}(\bC')$, then $U_{\CC}(L)$ is a finitely braided algebra in $\bC$.

\end{prop}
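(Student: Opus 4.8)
The plan is to realize $U_{\CC}(L)$ as a filtered colimit of objects of $\bC'$ and then invoke Proposition \ref{sums_prop} (equivalently, the fact, recorded in the example following it, that $\Ind(\bC')$ has property {\bf F}). First, since $L$ is a finitely braided, pro-constant $\CC$-algebra, Proposition \ref{enveloping_defn_prop} applies: $U_{\CC}(L)$ is pro-constant, and as an object of $\bC$ it is the quotient of the tensor algebra $T(L) = \bigoplus_{d \geq 0} L^{\otimes d}$ by the two-sided ideal $(I)$ generated by the relations (\ref{env_eqn}). In particular $U_{\CC}(L)$ is a genuine object of $\bC$, and there is a surjection $q\colon T(L) \twoheadrightarrow U_{\CC}(L)$ of algebras.

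Next I would show that $U_{\CC}(L)$ lies in $\Ind(\bC')$. Because $\bC'$ is closed under tensor products, every $L^{\otimes d}$ lies in $\bC'$, hence so does each finite truncation $T_{\leq k}(L) := \bigoplus_{0 \leq d \leq k} L^{\otimes d}$; thus $T(L) = \colim_k T_{\leq k}(L)$ exhibits $T(L)$ as a filtered colimit of objects of $\bC'$. Since $q$ is surjective, $U_{\CC}(L) = \colim_k q(T_{\leq k}(L))$, and each $q(T_{\leq k}(L))$ is a quotient of the object $T_{\leq k}(L)$ of the semisimple category $\bC'$, hence again lies in $\bC'$. Therefore $U_{\CC}(L)$ is a filtered colimit of objects of $\bC'$, so by Proposition \ref{sums_prop} it is finitely braided; since it moreover carries its algebra structure in $\bC$, it is a finitely braided algebra, as claimed.

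This deduction is short because the substance comes from the two cited results: Proposition \ref{enveloping_defn_prop} is what turns the a priori pro-object $U_{\CC}(L)$ into an honest quotient of $T(L)$ inside $\bC$, and Proposition \ref{sums_prop} --- through its characteristic-subgroup construction inside $PB_n$ --- is what upgrades ``every $\bC'$-layer is finitely braided'' to ``$U_{\CC}(L)$ is finitely braided''. I expect the point needing the most care, and hence the main obstacle, to be the passage from $T(L)$ to its quotient: one must use that $\bC'$, being semisimple, is closed under quotient objects, so that the images $q(T_{\leq k}(L))$ stay in $\bC'$ and the filtered-colimit description --- and with it the applicability of Proposition \ref{sums_prop} --- survives dividing by the ideal of relations. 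For the same reason one argues with the multiplicative truncations $T_{\leq k}(L)$, which lie in $\bC'$, rather than attempting to work with the (non-homogeneous) ideal $(I)$ directly or one tensor degree at a time.
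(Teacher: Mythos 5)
Your proposal is correct and takes essentially the same route as the paper, whose proof is the one-line observation that $U_{\CC}(L)$ is a colimit of objects of $\bC'$ together with an appeal to Proposition \ref{sums_prop}; your use of Proposition \ref{enveloping_defn_prop} and the truncations $T_{\leq k}(L)$ merely makes that colimit explicit. (If you wish to avoid justifying that the images $q(T_{\leq k}(L))$ themselves lie in $\bC'$, note that the proof of Proposition \ref{sums_prop} already handles quotients of direct sums of objects of $\bC'$, so the quotient step can be absorbed there.)
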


This follows from Proposition \ref{sums_prop} since $U_{\CC}(L)$ is a colimit of objects in $\bC'$.  Regarding bialgebra structures, we have the following criterion for finiteness:

\begin{prop} \label{pc_fb_finite_prop}

If $L$ is a Hopf $\CC$-algebra such that $U_{\CC}(L)$ is pro-constant and finitely braided, then $U_{\CC}(L)$ is a braided bialgebra in $\bC$.  If $L$ is primitively generated, then $U_{\CC}(L)$ is a primitively generated, braided Hopf algebra. 

\end{prop}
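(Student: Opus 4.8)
The plan is to deduce this from Propositions \ref{C_to_Hopf_prop} and \ref{reverse_bialg_prop}, using the finiteness hypotheses only to check that the relevant tensor products are complete. By the first part of Proposition \ref{C_to_Hopf_prop}, since $L$ is a faithful Hopf $\CC$-algebra, $U_{\CC}(L)$ is a profinitely braided bialgebra; write $B$ for this profinitely braided bialgebra, so that its underlying $\BrAss$-algebra in $\proC$ is $U(B) = U_{\CC}(L)$, with diagonal $\Delta$ as constructed in the proof of that Proposition.

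Next I would verify the completeness hypothesis of the converse direction of Proposition \ref{reverse_bialg_prop}. By assumption $U_{\CC}(L)$ is pro-constant, so we may regard it as an object of $\bC$; it is moreover finitely braided. Hence by Proposition \ref{powers_fb_prop} the tensor square $U_{\CC}(L) \otimes U_{\CC}(L)$ is again finitely braided, and it is clearly still pro-constant. By Proposition \ref{pro_const_alg_prop}, a finitely braided $\BrAss$-algebra is complete, so $U_{\CC}(L) \otimes U_{\CC}(L)$ is complete; equivalently $U_{\CC}(L) \hattimes U_{\CC}(L) \cong U_{\CC}(L) \otimes U_{\CC}(L)$. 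This is precisely the condition ``$U(B) \otimes U(B)$ is complete'' appearing in Proposition \ref{reverse_bialg_prop}, so that Proposition yields that $U(B) = U_{\CC}(L)$ is a bialgebra in $\proC$, and since it is pro-constant, a braided bialgebra in $\bC$. This proves the first assertion.

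For the second assertion, suppose $L$ is primitively generated. Then by the second part of Proposition \ref{C_to_Hopf_prop}, $U_{\CC}(L)$ is primitively generated as a profinitely braided bialgebra, i.e.\ it is generated as an algebra by $P(U_{\CC}(L))$ computed via $\Delta: U_{\CC}(L) \to U_{\CC}(L) \hattimes U_{\CC}(L)$. Because $U_{\CC}(L) \hattimes U_{\CC}(L) = U_{\CC}(L) \otimes U_{\CC}(L)$ by the completeness just established, this set of primitives is the same as the primitives of $U_{\CC}(L)$ viewed as a braided bialgebra in $\bC$, and the notion of algebra generation is unchanged; hence $U_{\CC}(L)$ is primitively generated as a braided bialgebra in $\bC$. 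Finally, a primitively generated braided bialgebra is connected (Proposition 5.8 of \cite{ardizzoni_prim_gen}), and for a connected braided bialgebra the equation \eqref{antipode_eqn} can be solved recursively in the coradical filtration, so an antipode exists and $U_{\CC}(L)$ is a braided Hopf algebra, as recalled in the discussion following \eqref{antipode_eqn}. The main obstacle is not any single step but the bookkeeping that makes all three notions of ``coalgebra'' and ``primitive'' — over $\proC$ with $\hattimes$, over $\proC$ with $\otimes$, and over $\bC$ with $\otimes$ — coincide; this hinges entirely on the identification $U_{\CC}(L) \hattimes U_{\CC}(L) \cong U_{\CC}(L) \otimes U_{\CC}(L)$, which in turn is exactly where the finitely braided and pro-constant hypotheses are used.
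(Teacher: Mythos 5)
Your argument is correct and takes essentially the same route as the paper: both use Proposition \ref{powers_fb_prop} together with completeness of finitely braided algebras to identify $U_{\CC}(L)^{\otimes 2}$ with $U_{\CC}(L)^{\hattimes 2}$, and then descend the profinitely braided bialgebra structure coming from Proposition \ref{C_to_Hopf_prop} to a bialgebra in $\bC$, obtaining the antipode from connectedness in the primitively generated case. (One minor citation slip: the fact that primitive generation of $L$ passes to $U_{\CC}(L)$ is part of the first, not the second, assertion of Proposition \ref{C_to_Hopf_prop}.)
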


\begin{proof}

The assumption that $U_{\CC}(L)$ is pro-constant and finitely braided implies (by Proposition \ref{powers_fb_prop}) that $U_{\CC}(L)^{\otimes 2}$ is, too.  Thus, both are complete algebras.  Then, by Proposition \ref{compare_tensors_prop}, the map $U_{\CC}(L)^{\otimes 2} \to U_{\CC}(L)^{\hattimes 2}$ is an isomorphism.  Proposition \ref{C_to_Hopf_prop} then makes $U_{\CC}(L)$ a bialgebra in $\bC$.

If $L$ is primitively generated, so too is $U_{\CC}(L)$, since the latter is generated by $L$ as an algebra.  Primitively generated braided bialgebras are connected, and are therefore automatically equipped with a unique antipode.

\end{proof}

The assumption that $U_{\CC}(L)$ is pro-constant and finitely braided is somewhat awkward; it is not clear whether this has an immediate reformulation just in terms of $L$.  For instance, taking $L$ to be finitely braided only ensures that $U_{\CC}(L)$ is pro-constant (by Proposition \ref{enveloping_defn_prop}); it need not be finitely braided.  That said, Proposition \ref{ss_core_prop} gives a general setting in which this does occur.

\subsection{A simple example: the braided primitive operad}

\begin{prop} \label{universal_primitive_prop} 

If $L \in \proC$ is a $\BrPrim$-algebra, $U_{\BrPrim}(L)$ admits a unique structure of a profinitely braided bialgebra in which $L$ is primitive.  

\end{prop}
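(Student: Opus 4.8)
The plan is to realize $U_{\BrPrim}(L)$ as a quotient of the free $\BrAsshat$-algebra $\That(V)$ by a Hopf ideal and then transport the bialgebra structure. Since $\BrPrim$ is reduced with $\BrPrim(1)=0$, Remark \ref{same_env_alg_rem} lets us pass to $\BrPrim_*$, so fix a presentation $L \cong \BrPrim_*[V]/R$ of $L$ as a $\BrPrim_*$-algebra in $\proC$ (one may take the tautological presentation with $V=L$). By Proposition \ref{gen_rel_cor}, $U_{\BrPrim}(L) \cong U_{\BrPrim_*}(L) \cong \That(V)/(R)$, where $(R)$ is the two-sided $\BrAsshat$-ideal of $\That(V)=\BrAsshat[V]$ generated by the image of $R$ under the inclusion $\BrPrim_*[V] \hookrightarrow \That(V)$.

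The key point is that the relations $R$ become primitive once pushed into $\That(V)$. Indeed $\That(V)$ is a profinitely braided bialgebra --- the profinite completion of the Hopf algebra $T(V)$ (Proposition \ref{reverse_bialg_prop}) --- in which $V$ is primitive, and Corollary \ref{infinite_prim_prop}, together with Proposition \ref{prim_prop}, identifies the inclusion $\BrPrim_*[V] \hookrightarrow \That(V)$ with $P(\That(V)) \hookrightarrow \That(V)$; since $R$ is a subobject of $\BrPrim_*[V]$, its image lies in $P(\That(V))$. From here I would run the familiar argument that the two-sided ideal generated by a subobject of primitives is a Hopf ideal: for $r$ in $R$ and $a,b \in \That(V)$ one expands $\Delta(arb) = \Delta(a)\,(r\otimes 1 + 1\otimes r)\,\Delta(b)$ in $\That(V)^{\hattimes 2}$ and uses naturality of the braiding $\sigma$ along the inclusion $(R)\hookrightarrow\That(V)$ --- which forces $\sigma$ to carry $\That(V)\otimes(R)$ into $(R)\otimes\That(V)$ and $(R)\otimes\That(V)$ into $\That(V)\otimes(R)$ --- to see that every summand lands in $(R)\hattimes\That(V) + \That(V)\hattimes(R)$; additivity of $\Delta$ then yields $\Delta((R)) \subseteq (R)\hattimes\That(V) + \That(V)\hattimes(R)$. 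Primitive elements are killed by the counit, so $(R)\subseteq\ker(\epsilon)$ as well. Invoking the Lemma that $(A/I)\hattimes B \cong (A\hattimes B)/(I\hattimes B)$, the diagonal and counit of $\That(V)$ descend to $U_{\BrPrim}(L) = \That(V)/(R)$, making it a profinitely braided bialgebra.

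In this structure the image of $V$ is primitive; since for any profinitely braided bialgebra the primitives form a $\BrPrim$-subalgebra (Corollary \ref{infinite_prim_prop}), and this subalgebra of $U_{\BrPrim}(L)$ contains the image of $V$, it contains the $\BrPrim$-subalgebra generated by that image, which is exactly the image $\Lbar$ of $L$; hence $L$ is primitive. For uniqueness, observe that $U_{\BrPrim}(L)=\BrAsshat[V]/(R)$ is generated as a $\BrAsshat$-algebra by the image of $V$, so any $\BrAsshat$-algebra map out of it is determined by its restriction to that image; in particular any diagonal and counit exhibiting $U_{\BrPrim}(L)$ as a profinitely braided bialgebra in which $L$ (hence the image of $V$) is primitive are forced to be $v\mapsto v\otimes 1 + 1\otimes v$ and $v \mapsto 0$, which is precisely the structure constructed above.

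\textbf{Main obstacle.} The only genuinely substantive input is the claim that the relations $R$ are primitive in $\That(V)$, and this is exactly the defining feature of the braided primitive operad (Corollary \ref{infinite_prim_prop}, resting on Theorem \ref{shauenburg_thm} and Proposition \ref{ker_prop}); granted that, the rest is the classical ``ideal generated by primitives is a Hopf ideal'' bookkeeping, the one wrinkle being that, since $\sigma$ is not a symmetry, the relevant containments of tensor-factor positions must be routed through naturality of the braiding rather than obtained by simply swapping factors.
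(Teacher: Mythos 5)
Your proposal is correct and takes essentially the same route as the paper: both arguments rest on the fact that the defining relations of $U_{\BrPrim}(L)$ are primitive in the completed tensor algebra on the generators (via Corollary \ref{infinite_prim_prop}), so the ideal they generate is a Hopf ideal, the diagonal with $V$ primitive descends, and uniqueness follows because the enveloping algebra is generated by (the image of) $L$. The only cosmetic difference is that you pass through a presentation and Proposition \ref{gen_rel_cor}, identifying $R \subseteq \BrPrim_*[V] = P(\That(V))$, whereas the paper observes directly that each relation in (\ref{env_eqn}) is a difference of two primitive elements.
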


\begin{proof}

As usual, we equip $\That(L)$ with a diagonal in which $L$ is primitive.  We must verify that the ideal of relations generated by (\ref{env_eqn}) given in Proposition \ref{enveloping_defn_prop} form a Hopf ideal to see that this descends to a bialgebra structure on $U_{\BrPrim}(L)$.  However, this is immediate: for any $a_1, \dots, a_n \in L$ and $c \in \BrPrim$, the left side of (\ref{env_eqn}) lies in $L$, and so is primitive, whereas the right side is primitive by Proposition \ref{infinite_prim_prop}.

Since $U_{\BrPrim}(L)$ is generated by $L$, the uniqueness of the coalgebra structure is immediate from declaring $L$ to be primitive.

\end{proof}

It follows from this result that the diagonal $\delta:L \to L \otimes_{\BrPrim} L$ given by
$$\delta(x) = x \otimes 1 + 1 \otimes x$$
is a map of $\BrPrim$-algebras.  The resulting coalgebra structure on $L$ is clearly coassociative and counital.

\subsection{Minimal presentations of enveloping algebras} 

Let $\II \leq \BrAsshat$ be a left ideal, and let $\DD:= \BrAsshat/\II$ be the associated quotient left module.  Since $\DD$ is not generally an operad unless $\II$ is also a right ideal, it does not have a well-defined category of algebras.  Nonetheless, the value $\DD[V]$ of the Artin functor associated to $\DD$ defines a certain class of associative algebras.  The purpose of this section is to give a presentation of sorts for these algebras, and more generally the enveloping algebra $U_{\II}(L)$ of an $\II$-algebra $L$ (which is $\DD(L)$ if $L$ is abelian).

\begin{prop}

Let $X_n \subseteq \II(n)$ be a collection that descends to $k[B_n]$-module generators for $Q_{\BrAsshat}(\II)(n)$.  Then for every finitely braided $\II$-algebra $L$, $U_{\II}(L)$ is the quotient of $T(L)$ by the ideal generated by the relations
$$j(\xi(\ell_1, \dots, \ell_n)) = i(\xi)(j(\ell_1), \dots, j(\ell_n))$$
for every $\xi \in X_n$ and $\ell_1 \otimes \dots \otimes \ell_n \in L^{\otimes n}$.

\end{prop}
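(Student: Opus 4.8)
The plan is to compare the two‑sided ideal generated by \emph{all} the relations (\ref{env_eqn}) with the two‑sided ideal generated only by the $X_n$‑relations, and to show the two coincide by an induction on arity. Unwinding the definition of the enveloping algebra (the evident extension of Definition \ref{gen_enveloping_defn} and Proposition \ref{enveloping_defn_prop} from suboperads to the left ideal $\II$; recall that $\II$, being essentially strict, has $\II[L]$ pro‑constant and $T(L)=\BrAsshat[L]$ pro‑constant for finitely braided $L$, by Proposition \ref{subop_prop}), one has $U_{\II}(L)=T(L)/J$, where $J$ is the two‑sided ideal generated by all elements $i(c)(j(\ell_1),\dots,j(\ell_n))-j(c(\ell_1,\dots,\ell_n))$ with $n\geq 2$, $c\in\II(n)$, and $\ell_1\otimes\cdots\otimes\ell_n\in L^{\otimes n}$. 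Let $J'\subseteq J$ be the two‑sided ideal generated by only those relations with $c\in X_n$. Since $J'\subseteq J$ is automatic, it suffices to prove $J\subseteq J'$, i.e.\ that every relation $i(c)(j(\vec{\ell}))=j(c(\vec{\ell}))$ already holds in $A':=T(L)/J'$.

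First I would fix $n$ and, since $L$ is finitely braided, choose $Q\in\Quot_n$ large enough that the $B_n$‑action on $L^{\otimes n}$ factors through $Q$ and that the images of $X_n$ generate $\bigl(Q_{\BrAsshat}(\II)(n)\bigr)_{\ker_Q}=\II(n)_{\ker_Q}/(\text{decomposables})$ as a $k[Q]$‑module --- possible by essential strictness, by the hypothesis on $X_n$, and because $(-)_{\ker_Q}$ is right exact. All the relevant tensor products are then pro‑constant and computed at level $Q$, so $\II(n)_{\ker_Q}$ is a genuine $k[B_n]$‑module. Set $R(n):=\{\,c\in\II(n)_{\ker_Q} : i(c)(j(\vec{\ell}))\equiv j(c(\vec{\ell}))\pmod{J'}\text{ for all }\vec{\ell}\in L^{\otimes n}\,\}$. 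Because the $\II$‑action $c\mapsto c(-)$ and the inclusion $i\colon\II\hookrightarrow\BrAsshat$ are linear and $B_n$‑equivariant and $j^{\otimes n}$ intertwines the braid actions on $L^{\otimes n}$ and $T(L)^{\otimes n}$, $R(n)$ is a $k[B_n]$‑submodule of $\II(n)_{\ker_Q}$, and it contains $X_n$ by construction of $J'$. I claim $R(n)=\II(n)_{\ker_Q}$, which I would establish by induction on $n$; the base case is the least $n$ with $\II(n)\neq 0$, for which there are no decomposables, so the claim follows from the two previous sentences.

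For the inductive step it remains to show that $R(n)$ contains every decomposable, i.e.\ every composite $b\circ_s c$ obtained by substituting $c\in\II(n-m+1)$ into the $s$‑th slot of $b\in\BrAsshat(m)$, with $2\leq m\leq n-1$ and $1\leq s\leq m$ (so the inner arity $n-m+1$ is $<n$ and the inductive hypothesis applies to $c$). Since $i\colon\II\hookrightarrow\BrAsshat$ is a map of left $\BrAsshat$‑modules, $i(b\circ_s c)=b\circ_s i(c)$ in $\BrAsshat(n)$; evaluating on $j(\ell_1),\dots,j(\ell_n)$ and applying the associativity axiom for the $\BrAsshat$‑algebra $T(L)$ (Definition \ref{brop_defn}, with the braid corrections $\tau$ absorbed into the arguments) rewrites $i(b\circ_s c)(j(\vec{\ell}))$ as $b$ applied, in $T(L)$, to the tuple obtained from $(j(\ell_1),\dots,j(\ell_n))$ by replacing the $n-m+1$ consecutive entries in positions $s,\dots,s+(n-m)$ with the single entry $i(c)(j(\ell_s),\dots,j(\ell_{s+n-m}))$. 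By the inductive hypothesis this entry is $\equiv j(\ell')\pmod{J'}$ for $\ell':=c(\ell_s,\dots,\ell_{s+n-m})\in L$, and the congruence survives the substitution because $J'$ is two‑sided and $b(-,\dots,-)$ is multilinear in $T(L)$. Finally, the compatibility of the $\II$‑action on $L$ with the left $\BrAsshat$‑module structure of $\II$ --- part of the definition of an $\II$‑algebra --- identifies $b$ applied to $(j(\ell_1),\dots,j(\ell'),\dots,j(\ell_n))$, modulo the relations already imposed in $J'$, with $j\bigl((b\circ_s c)(\ell_1,\dots,\ell_n)\bigr)$; hence $b\circ_s c\in R(n)$. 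As $\II(n)_{\ker_Q}$ is generated over $k[B_n]$ by $X_n$ together with the decomposables (this is precisely the definition of $Q_{\BrAsshat}(\II)(n)$ and the hypothesis on $X_n$), we get $R(n)=\II(n)_{\ker_Q}$; letting $n$ and $Q$ vary yields $J\subseteq J'$, and hence the proposition.

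The step I expect to be the main obstacle is the final identification above: matching ``the element $b\in\BrAsshat(m)$ acting, inside the free algebra $T(L)$, on $j$ of elements of $L$'' with ``$j$ of the $\II$‑operation $b\circ_s c$ evaluated in $L$''. This is exactly the content of the associativity built into the notion of an $\II$‑algebra, and carrying it out cleanly requires careful bookkeeping of the braid‑group corrections in the operadic associativity axiom. A perhaps cleaner packaging, which I would also consider, is to argue via the universal property of $U_{\II}$ (left adjoint to the forgetful functor to $\BrAsshat$‑algebras): show directly that any $\BrAsshat$‑algebra map out of $T(L)$ that annihilates the $X_n$‑relations automatically respects \emph{every} $\II$‑operation --- the same induction, phrased for maps rather than for the ideal $J'$, and avoiding explicit reference to the precise shape of the $\II$‑algebra axioms. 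Everything else --- linearity, $B_n$‑equivariance, the reduction to a cofinal $Q$, right‑exactness of coinvariants --- is the routine manipulation of pro‑constant objects already used in Propositions \ref{enveloping_defn_prop} and \ref{ker_prop}.
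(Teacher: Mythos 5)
Your proposal is correct and follows essentially the same route as the paper: the paper's two-sentence argument (the $X_n$-relations hold in $U_{\II}(L)$, giving a surjection $T(L)/J' \twoheadrightarrow U_{\II}(L)$, and the operadic left ideal generated by the $X_n$ is all of $\II$, forcing the two ideals to agree) is exactly what your arity induction spells out in detail. The compatibility you flag at the end --- that the $\II$-structure map on $L$ evaluated on a decomposable $b\circ_s c$ is determined by its value on $c$ via the $\BrAsshat$-action on the enveloping algebra --- is indeed the (implicitly assumed) associativity built into the paper's notion of an $\II$-algebra, and the paper's own proof relies on it just as silently.
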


As in (\ref{env_eqn}), $i: \II \to \BrAsshat$, and $j: L \to T(L)$ are the obvious inclusions. 

\begin{proof}

It is clear from Proposition \ref{enveloping_defn_prop} that these relations hold in $U_{\II}(L)$, so there is a surjective homomorphism from the construction above to $U_{\II}(L)$.  By assumption, however, the ideal of $\BrAsshat$ generated by the union of the $X_n$ is precisely $\II$, so this map is an isomorphism.

\end{proof}

In view of Theorem \ref{mm_intro}, this implies that if $A$ is a primitively generated, finitely braided Hopf algebra, then there is a ring isomorphism
$$A \cong T(P_{\mW_*}(A))/(i(h_n)(j(\ell_1), \dots, j(\ell_n)) - j(h_n(\ell_1, \dots, \ell_n)))$$
where $\ell_i \in P_{\mW_*}(A)$, and $h_n$ are the operations constructed in Definition \ref{h_n_defn}.  It is worth highlighting the particular case of a Nichols algebra, where the $\mW_*$-algebra structure on $P(A) = P_{\mW_*}(A)$ is abelian:

\begin{cor} 

If $V$ is a finitely braided object in $\bC$, then $\mB(V)$ is isomorphic as a ring to the quotient of $T(V)$ by the relations
$$h_n(v_1, \dots, v_n) = 0.$$

\end{cor}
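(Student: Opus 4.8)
The plan is to realize $\mB(V)$ as an enveloping algebra and then feed it to the preceding Proposition on minimal presentations of enveloping algebras. Regard $V$ as an abelian $\mW$-algebra, i.e. with all structure maps of arity $\geq 2$ equal to zero. Then, by the convention in the introduction to this section, $U_{\mW}(V) = \DD[V]$, where $\DD = \BrAsshat/\mW = \Nic$. Since $V$ is finitely braided, Proposition \ref{nic_prop} identifies $\Nic[V] \cong \mB(V)$, so it suffices to present $U_{\mW}(V)$.

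To do this I would apply the preceding Proposition with $\II = \mW$ and with the generating collection $X_n = \{h_n\}$. This is a legitimate choice: the Proposition just before Definition \ref{h_n_defn} shows that $\mW(n)$ is pro-cyclic, and hence so is its quotient $Q_{\BrAsshat}(\mW)(n)$, which coincides with $Q_{\BrAsshat_{>1}}(\mW)(n)$ since the indecomposables only involve insertions from arity $2 \leq m \leq n-1$; by Definition \ref{h_n_defn} a pro-cyclic generator is precisely $h_n$. The Proposition then presents $U_{\mW}(V)$ as the quotient of $T(V)$ by the two-sided ideal generated by the relations
$$j(h_n(v_1,\dots,v_n)) = i(h_n)(j(v_1),\dots,j(v_n)), \qquad n \geq 2,\ v_i \in V.$$

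The final step is to simplify these relations using abelianness. Since $h_n \in \mW(n)$ has arity $n \geq 2$, it is a non-unit operation, so it acts as $0$ on the abelian $\mW$-algebra $V$; thus $h_n(v_1,\dots,v_n) = 0$ in $V$ and the left-hand side $j(h_n(v_1,\dots,v_n))$ vanishes. The relations reduce to $h_n(v_1,\dots,v_n) = 0$, where $h_n(v_1,\dots,v_n) := i(h_n)(j(v_1),\dots,j(v_n))$ is the action of $h_n \in \BrAsshat(n)$ on the summand $V^{\otimes n} \subseteq T(V)$ — an honest element of $V^{\otimes n}$ because $V$ is finitely braided, by part \ref{ker_part} of Proposition \ref{ker_prop}. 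Combining with $U_{\mW}(V) \cong \mB(V)$ gives the claimed presentation $\mB(V) \cong T(V)/(h_n(v_1,\dots,v_n))$.

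There is no serious mathematical obstacle here; the content has already been absorbed into the preceding Proposition and into Proposition \ref{nic_prop}. The only points requiring care are bookkeeping: checking that the pro-cyclic (rather than literally cyclic) generators $h_n$ meet the hypothesis of the preceding Proposition, that $U_{\mW}(V)$ for abelian $V$ is correctly identified with $\DD[V] = \Nic[V]$, and that the notation $h_n(v_1,\dots,v_n)$ in the statement is the $V^{\otimes n}$-level action appearing on the right-hand side of the relations — all of which are handled by the cited results.
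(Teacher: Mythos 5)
Your proposal is correct and matches the paper's intended derivation: the corollary is simply the preceding Proposition applied with $\II = \mW$, generating set $X_n = \{h_n\}$, and $L = V$ equipped with the abelian $\mW$-structure, so that the left-hand sides $j(h_n(v_1,\dots,v_n))$ of the relations vanish and only $h_n(v_1,\dots,v_n)=0$ remains. Your one deviation is cosmetic — identifying $\mB(V)$ with the enveloping algebra via $U_{\mW}(V) \cong \Nic[V]$ (the abelian-algebra observation from the section's introduction together with Proposition \ref{nic_prop}) rather than via Theorem \ref{mm_intro} — which if anything fits the stated hypothesis (only $V$ finitely braided) more directly.
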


This result should only be regarded as a presentation of $\mB(V)$ when we have a fuller understanding of how the operations $h_n$ act on $V^{\otimes n}$.

\section{Cartier-Milnor-Moore type theorems} \label{cmm_section}

Throughout this section and the next, $\bC$ is a $k$-linear, abelian, braided monoidal category.  We also assume that $\proC$ (and hence $\bC$) is \emph{split}: every epimorphism splits; equivalently, every object is projective.  This is implied by but not equivalent to semisimplicity.  While some of the results in this section hold without assuming $\proC$ is split, it will be used in various constructions to lift subspaces of quotients.  Specifically, if $X \leq Y$ and $\overline{S} \leq X/Y$, then there is a subobject $S \leq X$ which is carried isomorphically to $\overline{S}$ in the quotient map to $X/Y$.

\subsection{Generalities on Cartier-Milnor-Moore type theorems} \label{mm_gen_section} 

Let $\CC \leq \BrAsshat$ be an essentially strict suboperad, and let $A \in \proC$ be a primitively generated, profinitely braided bialgebra.

\begin{defn} Let $P_{\CC}(A)$ be the $\CC$-subalgebra of $A$ generated by $P(A)$.  \end{defn}

\begin{prop} \label{P_C_functorial_prop}

The assignment $A \mapsto P_{\CC}(A)$ defines a functor from the category of profinitely braided bialgebras to the category of $\CC$-algebras. 

\end{prop}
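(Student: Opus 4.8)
The plan is to show that a morphism $f\colon A\to B$ of profinitely braided bialgebras restricts to a morphism $P_\CC(f)\colon P_\CC(A)\to P_\CC(B)$ of $\CC$-algebras, and that this assignment respects identities and composition. First I would recall that $f$, being a map of profinitely braided bialgebras, is in particular a map of $\BrAsshat$-algebras and of coalgebras; hence it is compatible with the diagonals, so it carries primitives to primitives: $f(P(A))\subseteq P(B)$. This is the key input and follows directly from the definition of $P(A)=\ker(\Delta-\id\otimes1-1\otimes\id)$ together with the fact that $f\hattimes f$ is compatible with the two diagonals. Next, since $f$ is a map of $\BrAsshat$-algebras, its restriction along $\CC\subseteq\BrAsshat$ is a map of $\CC$-algebras; a map of $\CC$-algebras carries the $\CC$-subalgebra generated by a subobject $S$ into the $\CC$-subalgebra generated by $f(S)$. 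Applying this with $S=P(A)$, and using $f(P(A))\subseteq P(B)$, we get $f(P_\CC(A))\subseteq P_\CC(B)$, so $f$ restricts to a morphism of $\CC$-algebras $P_\CC(f)\colon P_\CC(A)\to P_\CC(B)$.

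The remaining verifications are purely formal. Functoriality on identities is clear since $\id_A$ restricts to $\id_{P_\CC(A)}$. For composition, given $A\xrightarrow{f}B\xrightarrow{g}C$, the restriction of $g\circ f$ to $P_\CC(A)$ agrees with $P_\CC(g)\circ P_\CC(f)$ because both are simply the corestriction of the underlying map $g\circ f$ of objects in $\proC$; one checks that the subobject $P_\CC(A)$ lands in $P_\CC(C)$ along $g\circ f$, which follows by applying the inclusion argument of the previous paragraph twice. I would also note that $P_\CC(A)$ is indeed a well-defined $\CC$-algebra: $A$ is a $\BrAsshat$-algebra, hence a $\CC$-algebra by restriction along $\CC\subseteq\BrAsshat$, and the $\CC$-subalgebra it generates from any subobject is a $\CC$-algebra.

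One subtlety worth addressing explicitly is that ``the $\CC$-subalgebra generated by $P(A)$'' should be interpreted as the image of the structure map $\CC[P(A)]\to A$ (equivalently, the smallest sub-$\CC$-algebra of $A$ containing $P(A)$, which exists since $\proC$ is abelian and sub-$\CC$-algebras are closed under intersection). With this description, the inclusion $f(P_\CC(A))\subseteq P_\CC(B)$ is immediate from the commuting square relating the structure maps $\CC[P(A)]\to A$ and $\CC[P(B)]\to B$ via $\CC[f|_{P(A)}]$ and $f$, together with $f(P(A))\subseteq P(B)$.

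I do not expect a serious obstacle here; the statement is essentially a bookkeeping consequence of two facts established earlier (maps of bialgebras preserve primitives; maps of $\CC$-algebras preserve $\CC$-subalgebras generated by a given subobject). The only point requiring a little care is making sure the notion of ``$\CC$-subalgebra generated by a subobject'' is well-behaved in $\proC$ and functorial in the evident sense — but this is handled by the image description above and the fact, used throughout the paper, that $\CC\subseteq\BrAsshat$ allows every $\BrAsshat$-algebra and every $\BrAsshat$-algebra map to be regarded as a $\CC$-algebra (map) by restriction.
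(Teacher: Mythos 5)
Your proposal is correct and follows essentially the same route as the paper's proof: regard $f$ as a $\CC$-algebra map via $\CC\subseteq\BrAsshat$, use that bialgebra maps preserve primitives to see the generated $\CC$-subalgebra $P_{\CC}(A)$ lands in $P_{\CC}(B)$, and observe that functoriality is automatic since $P_{\CC}(f)$ is just a restriction of $f$. The extra remark on interpreting the generated subalgebra as the image of $\CC[P(A)]\to A$ is a reasonable clarification but not a departure from the paper's argument.
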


\begin{proof}

A map $f:A \to B$ of profinitely braided bialgebras is a map of $\BrAsshat$-algebras, and so may be regarded as a map of $\CC$-algebras.  Thus its restriction to $P_{\CC}(A)$ is a $\CC$-map.  Further, since $f$ carries $P(A)$ to $P(B)$, it must carry the $\CC$-subalgebra $P_{\CC}(A)$ generated by $P(A)$ into $P_{\CC}(B)$.  Let this restricted map be $P_{\CC}(f)$; being a restriction of a $\CC$-map, it is a $\CC$-map.  Since $P_{\CC}(f)$ is a restriction of $f$ for every $f$, it is clear that $P_{\CC}$ preserves units and composition.

\end{proof}

The inclusion $P_{\CC}(A) \hookrightarrow A$ of $\CC$-algebras induces a map of $\BrAsshat$-algebras
$$\mu: U_{\CC}(P_{\CC}(A)) \to A.$$
This is natural in $A$, and surjective since $A$ is primitively generated.  Note that $P_{\CC}(A)$ is always faithful, since the natural map to $U_{\CC}(P_{\CC}(A))$ factors the inclusion $P_{\CC}(A) \hookrightarrow A$.

It is reasonable to expect that the Hopf structure on $A$ restricts to one on $P_{\CC}(A)$, but this isn't necessarily true.  This holds if $P_{\CC}(A) \otimes_{\CC} P_{\CC}(A)$ is a subobject of $A \hattimes A$, but this need not be the case, as it is instead defined as a subspace of $U_{\CC}(P_{\CC}(A)) \hattimes U_{\CC}(P_{\CC}(A))$. Consider the diagram 
\beqn \label{comm_diag_eqn} \xymatrix{
P_{\CC}(A) \ar[d] \ar@.[r]^-{\delta} & P_{\CC}(A) \otimes_{\CC} P_{\CC}(A) \ar[d] \\
U_{\CC}(P_{\CC}(A)) \ar[d]_-{\mu} \ar@.[r]^-{U_{\CC}(\delta)} & U_{\CC}(P_{\CC}(A)) \hattimes U_{\CC}(P_{\CC}(A)) \ar[d]^-{\mu \otimes \mu} \\ 
A \ar[r]_-{\Delta} & A \hattimes A
}\eeqn
A Hopf structure on $P_{\CC}(A)$ amounts to a lifting $\delta$ of the restriction of $\Delta$ to $P_{\CC}(A)$; this need not exist if $\mu$ is not injective.  In fact, these two conditions are nearly equivalent:

\begin{prop} \label{mm_simple_prop}

The map $\mu$ is an isomorphism of algebras if and only if the dashed $\CC$-algebra map $\delta$ in (\ref{comm_diag_eqn}) exists (making the diagram commute), and this gives $P_{\CC}(A)$ the structure of a primitively generated, primordial Hopf $\CC$-algebra.  In that case, $U_{\CC}(P_{\CC}(A))$ becomes a primitively generated profinitely braided bialgebra, and $\mu$ an isomorphism of such structures.

\end{prop}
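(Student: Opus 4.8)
\emph{Plan.} The statement bundles an equivalence together with a package of structural consequences, so the plan is to handle the two implications separately and then read off the extra structure in either case.

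\emph{The forward implication} is essentially bookkeeping with machinery already in place. Suppose $\mu$ is an algebra isomorphism. I would transport the profinitely braided bialgebra structure of $A$ across $\mu$ to a structure on $U:=U_\CC(P_\CC(A))$, making $\mu$ an isomorphism of profinitely braided bialgebras; since $A$ is primitively generated so is $U$, and $P(U)$ corresponds to $P(A)$ under $\mu$. Because $P(A)\subseteq P_\CC(A)$ and $P(A)$ generates $P_\CC(A)$ as a $\CC$-algebra, the subobject $P_\CC(A)\cap P(U)$ generates $P_\CC(A)$, so Proposition \ref{C_to_Hopf_prop}(2) makes $P_\CC(A)$ a primitively generated Hopf $\CC$-algebra whose diagonal is, by construction, the restriction of $\Delta_U$; transporting back, this is the restriction of $\Delta_A$, which is exactly the assertion that the dashed $\delta$ makes (\ref{comm_diag_eqn}) commute. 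Finally Proposition \ref{prim_prim_prop} identifies $P(P_\CC(A))$ with $P_\CC(A)\cap P(U)$, and since $P(U)$ corresponds to $P(A)\subseteq P_\CC(A)$ this intersection is all of $P(U)$; hence $P(P_\CC(A))\to P(U)$ is an isomorphism and $P_\CC(A)$ is primordial.

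\emph{The reverse implication} is where the work lies. Assume a $\CC$-algebra map $\delta\colon P_\CC(A)\to P_\CC(A)\otimes_\CC P_\CC(A)$ making (\ref{comm_diag_eqn}) commute is given, and set $\Delta_U:=p\circ U_\CC(\delta)\colon U\to U\hattimes U$ via the oplax comparison map of Proposition \ref{env_oplax_prop}; commutativity of the lower square of (\ref{comm_diag_eqn}) says precisely that the (already surjective) algebra map $\mu$ intertwines $\Delta_U$ with $\Delta_A$. The steps are: (i) show $\delta$ endows $P_\CC(A)$ with the structure of a primitively generated Hopf $\CC$-algebra, deducing coassociativity and counitality of $\delta$ from the corresponding identities on $A$ by restriction along $P_\CC(A)\hookrightarrow A$ and its tensor powers, and primitive generation from the fact that $P_\CC(A)$ is generated as a $\CC$-algebra by $P(A)$; (ii) invoke Proposition \ref{C_to_Hopf_prop}(1) to make $U$ a primitively generated profinitely braided bialgebra with diagonal $\Delta_U$ and $\mu$ a surjection of such; (iii) prove $\mu$ is injective; (iv) observe that $\mu$ being then an isomorphism, primordiality of $P_\CC(A)$ and the fact that $\mu$ is an isomorphism of profinitely braided bialgebras follow exactly as in the forward direction.

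Step (iii) is the Cartier--Milnor--Moore heart, and I expect it to be the main obstacle. Both $A$ and $U$ are connected, being primitively generated, so carry coradical filtrations $\{A_n\}$, $\{U_n\}$ preserved by $\mu$; I would induct on $n$ to show $\mu|_{U_n}$ is injective. In the inductive step, for $x\in\ker\mu\cap U_n$ the reduced coproduct $\bar\Delta_U(x)$ lies in $(U_{n-1}\hattimes U_{n-1})\cap\ker(\mu\hattimes\mu)$; since $k$ has characteristic zero and $\proC$ is split, a monomorphism stays one after tensoring, so injectivity of $\mu$ on $U_{n-1}$ forces $\bar\Delta_U(x)=0$, i.e. $x\in P(U)$, reducing everything to the base case: $\mu$ injective on $P(U)$. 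This base case is the genuinely delicate point --- it amounts to showing $P(U)\subseteq P_\CC(A)$ (equivalently, that no new primitives are created in passing to the enveloping algebra), so that $\mu|_{P(U)}$ is a restriction of the inclusion $P_\CC(A)\hookrightarrow A$. This is precisely where the hypothesis is used: that $\delta$ is a $\CC$-algebra map landing in the subobject $P_\CC(A)\otimes_\CC P_\CC(A)$ of $U\hattimes U$ is what pins down $P(U)$. Controlling these primitives in the braided setting --- where new primitives generically do appear --- is the crux, and is also what makes steps (i) and (iii) intertwined rather than independent.
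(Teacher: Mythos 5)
Your forward direction is fine and matches the paper's (transport the bialgebra structure across $\mu$, apply Proposition \ref{C_to_Hopf_prop}, and read off primordiality from Proposition \ref{prim_prim_prop}). The problem is the reverse direction, and it is a genuine gap rather than a presentational one. You have misread what is being assumed: the ``if'' hypothesis of the proposition is not merely that a $\CC$-algebra map $\delta$ exists making (\ref{comm_diag_eqn}) commute, but that this $\delta$ makes $P_{\CC}(A)$ a primitively generated \emph{primordial} Hopf $\CC$-algebra. Primordiality is exactly the statement that $P(P_{\CC}(A)) \to P(U_{\CC}(P_{\CC}(A)))$ is surjective, i.e.\ that no new primitives appear in the enveloping algebra. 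The paper's proof uses it directly: $P(U_{\CC}(P_{\CC}(A))) = P(P_{\CC}(A)) \subseteq P_{\CC}(A)$ is carried injectively into $A$ by $\mu$ (since $P_{\CC}(A)$ is faithful and $\mu$ restricts there to the inclusion), and then injectivity of a bialgebra map on primitives gives injectivity of $\mu$ — which is the same filtration argument you sketch in step (iii). In your write-up the base case of that induction, $P(U) \subseteq P_{\CC}(A)$, is precisely the primordiality hypothesis; you identify it as ``the genuinely delicate point'' and ``the crux'' but never actually establish it, and you cannot, because without assuming primordiality it is false: the remark immediately following the proposition in the paper observes that if $\delta$ exists making $P_{\CC}(A)$ primitively generated but not primordial, then $\mu$ annihilates some primitives and cannot be an isomorphism. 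Relatedly, your step (iv) claim that primordiality ``follows exactly as in the forward direction'' is circular, since you need it to get injectivity in step (iii) in the first place.

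A secondary issue: in step (i) you try to \emph{derive} coassociativity and counitality of $\delta$ ``by restriction along $P_{\CC}(A)\hookrightarrow A$ and its tensor powers.'' This does not work as stated, because $\delta$ takes values in $P_{\CC}(A)\otimes_{\CC}P_{\CC}(A)$, which lives inside $U_{\CC}(P_{\CC}(A))\hattimes U_{\CC}(P_{\CC}(A))$ and need not inject into $A\hattimes A$ before you know $\mu$ is injective — this is exactly the failure mode the paper flags in the discussion preceding the proposition and in the remark on sub-Hopf $\CC$-algebras. In the intended reading this step is unnecessary anyway: the Hopf $\CC$-algebra structure is part of the hypothesis, and the correct proof simply feeds it into Proposition \ref{C_to_Hopf_prop} and then uses primordiality as above.
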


\begin{proof}

Assume that $\mu$ is an isomorphism; then $U_{\CC}(P_{\CC}(A)) \cong A$ admits the structure of a primitively generated profinitely braided bialgebra.  Then by Proposition \ref{C_to_Hopf_prop}, $P_{\CC}(A)$ becomes a primitively generated Hopf $\CC$-algebra.  By definition, it is primordial.

Conversely, if $\delta$ exists with the required properties, then $U_{\CC}(P_{\CC}(A))$ is a primitively generated, profinitely braided algebra, and $\mu$ is a bialgebra map.  Since $P_{\CC}(A)$ is primordial, 
$$P(U_{\CC}(P_{\CC}(A))) = P(P_{\CC}(A)) \subseteq P_{\CC}(A)$$
is carried injectively to $A$ by $\mu$, so $\mu$ is also\footnote{We are using the fact that a map of bialgebras is injective if and only if its restriction to the primitives is injective.  This is familiar in the usual context of bialgebras, and essentially the same argument (using filtration by powers of primitives) works for profinitely braided bialgebras.} injective.

\end{proof}

\begin{rem} Note that if the map $\delta$ exists making $P_{\CC}(A)$ into a primitively generated Hopf $\CC$-algebra which is \emph{not} primordial, then $\mu$ cannot be an isomorphism: there must be primitives in $U_{\CC}(P_{\CC}(A))$ which are annihilated by $\mu$. \end{rem}

\subsection{Operads with a perfect structure theory} \label{perfect_section}

As usual, for an object $V \in \proC$, we equip the free $\BrAsshat$-algebra $\That(V)$ with a bialgebra structure in which $V$ is primitive.

\begin{defn} We will say that a suboperad $\CC \leq \BrAsshat$ \emph{has a perfect structure theory for $\bC$} if, for all $V \in \proC$, for all Hopf ideals $I \leq \That(V)$, and all primitive elements $[x] \in \That(V)/I$, $[x]$ has a representative $x$ which lies $\CC[V]$. \end{defn} 

Equivalently, if $A$ is a profinitely braided bialgebra which is primitively generated by $V \leq P(A)$, then $P(A)$ is contained in the $\CC$-subalgebra of $A$ generated by $V$.  Notice that if $\CC$ has a perfect structure theory, then it must be unital, since otherwise the generating primitives $V$ will not lie in $\CC[V]$.

\begin{prop} \label{base_change_prop}

If $\CC$ has a perfect structure theory, and $\CC \subseteq \CC' \subseteq \BrAsshat$, then $\CC'$ also has a perfect structure theory.

\end{prop}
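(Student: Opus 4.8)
The plan is to treat this as a formal consequence of the functoriality of the Artin functor: enlarging the operad can only make it \emph{easier} to find a representative of a primitive inside $\CC'[V]$, since $\CC[V]$ already maps into $\CC'[V]$ compatibly with the maps to $\That(V)$.

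First I would fix the data appearing in the definition of a perfect structure theory: an object $V \in \proC$, a Hopf ideal $I \leq \That(V)$, and a primitive element $[x] \in \That(V)/I$. The inclusion of braided operads $\CC \hookrightarrow \BrAsshat$ factors as $\CC \hookrightarrow \CC' \hookrightarrow \BrAsshat$, simply because $\CC(n) \leq \CC'(n) \leq \BrAsshat(n)$ as $B_n$-subobjects for every $n$. Applying the Artin functor $-[V]$ and using its functoriality in maps of braided subsequences, the natural structure map $\CC[V] \to \That(V)$ factors as $\CC[V] \to \CC'[V] \to \That(V)$. In particular the image of $\CC[V]$ inside $\That(V)$ is contained in the image of $\CC'[V]$ inside $\That(V)$; note that this containment of images follows from commutativity of the evident triangle alone, so one does not need either of the two maps $\CC[V] \to \CC'[V]$ or $\CC'[V] \to \That(V)$ to be injective.

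Now I would invoke the hypothesis. Since $\CC$ has a perfect structure theory for $\bC$, the class $[x]$ has a representative in $\That(V)$ lying in (the image of) $\CC[V]$; by the containment just observed, this same representative lies in (the image of) $\CC'[V]$. As $V$, $I$, and $[x]$ were arbitrary, $\CC'$ has a perfect structure theory for $\bC$, and one notes for consistency that $\CC'$ is automatically unital, containing the unital operad $\CC$. I do not expect any genuine obstacle here: the only point requiring any care is the factorization of $\CC[V] \to \That(V)$ through $\CC'[V]$, which is immediate from the description of the Artin functor as $\CC \mapsto \bigoplus_n \CC(n) \otimes_{B_n} V^{\otimes n}$.
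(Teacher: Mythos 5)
Your argument is correct and is precisely the "immediate" argument the paper has in mind (the paper states the proof without details): since $\CC(n) \leq \CC'(n)$ for all $n$, the image of $\CC[V]$ in $\That(V)$ is contained in that of $\CC'[V]$, so any representative of a primitive lying in $\CC[V]$ already lies in $\CC'[V]$.
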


The proof of the above is immediate.  Note that it implies that if $\CC$ has a perfect structure theory, then so does the unital form $(\CC_{>1})_*$ of the left ideal generated by the nonunital operad $\CC_{>1}$.

\begin{prop} \label{perfect_primordial_prop} If $\CC$ has a perfect structure theory, and if $L$ is a Hopf $\CC$-algebra, then $L$ is primordial. \end{prop}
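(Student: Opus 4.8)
The plan is to reduce the statement to the reformulated version of ``perfect structure theory'': if $A$ is a profinitely braided bialgebra primitively generated by a subobject $V \leq P(A)$, then $P(A)$ is contained in the $\CC$-subalgebra of $A$ generated by $V$. I would apply this to $A := U_{\CC}(L)$, taking for $V$ the image of $P(L)$.

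First I would assemble the relevant facts. By Proposition \ref{C_to_Hopf_prop}(1), $A := U_{\CC}(L)$ is a profinitely braided bialgebra; since a Hopf $\CC$-algebra is in particular faithful, Proposition \ref{prim_prim_prop} identifies $P(L)$ with $\bar L \cap P(A)$, where $\bar L \subseteq A$ denotes the image of $L$. As $P(L) \subseteq \bar L$ automatically, it suffices to prove the reverse inclusion $P(A) \subseteq \bar L$: granting this, $P(L) = \bar L \cap P(A) = P(A)$, so the natural map $P(L) \to P(U_{\CC}(L))$ is surjective and $L$ is primordial. Note also that $\bar L$ is a $\CC$-subalgebra of $A$, being the image of the $\CC$-algebra map $L \to A$, and that $A$ is generated as a $\BrAsshat$-algebra by $\bar L$.

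Next I would verify the hypotheses of perfect structure theory for $A$. Let $V \leq P(A)$ be the image of $P(L)$. Since $L$ is generated as a $\CC$-algebra by $P(L)$, its image $\bar L$ is generated as a $\CC$-algebra by $V$; hence the $\CC$-subalgebra of $A$ generated by $V$ is precisely $\bar L$, and in particular $V$ generates $A$ as a $\BrAsshat$-algebra (because $\bar L$ does, and $\CC \subseteq \BrAsshat$). Thus $A$ is primitively generated by $V \leq P(A)$, so perfect structure theory yields $P(A) \subseteq \langle V\rangle_{\CC} = \bar L$, which together with the previous paragraph completes the argument.

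The step I expect to require the most care is precisely the middle one: checking that $A = U_{\CC}(L)$ is primitively generated by $V$ and that $\langle V \rangle_{\CC}$ recovers $\bar L$. This rests on $L$ being generated as a $\CC$-algebra by its primitives, which holds in the cases where this proposition is used (for instance for $P_{\CC}$ of a primitively generated profinitely braided bialgebra, as in Section \ref{mm_gen_section}); if one does not wish to build this into the hypothesis one must first pass to the sub-Hopf-$\CC$-algebra generated by $P(L)$. Everything else is routine bookkeeping --- monicity of $\bar L \hookrightarrow A$ (faithfulness), the compatibility of images with the formation of $\CC$-subalgebras, and the identification $P(L) = \bar L \cap P(A)$ already supplied by Proposition \ref{prim_prim_prop} --- and involves no new ideas.
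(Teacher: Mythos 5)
Your argument is essentially the paper's: the printed proof presents $L=\CC[V]/R$ and applies the definition of a perfect structure theory to $U_{\CC}(L)=\That(V)/(R)$, which is exactly your application of the reformulated definition to $A=U_{\CC}(L)$ with $V$ the image of $P(L)$, combined with Proposition \ref{prim_prim_prop} to identify $P(L)$ with $L\cap P(U_{\CC}(L))$. The caveat you flag --- that the argument needs $L$ to be generated as a $\CC$-algebra by $P(L)$ --- is equally implicit in the paper's one-line proof (the presentation must be by primitive generators for $(R)$ to be a Hopf ideal with respect to the standard diagonal on $\That(V)$), and it is satisfied in every place the proposition is invoked, so your proposal matches the intended argument.
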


\begin{proof} If $L$ is presented as $\CC[V]/R$, apply the definition to $U_{\CC}(L) = \That(V)/(R)$. \end{proof}

It is tautological that if $A$ is a profinitely braided bialgebra, and $V \subseteq P(A)$ generates $A$, then $A$ may be written as $A = \That(V)/I$ for a Hopf ideal $I \leq \That(V)$.  However, it is possible to present the ideal $I$ as being iteratively generated by primitive elements:

\begin{lem} \label{filt_ideal_lem}

There exists a filtration of $I$ by sub-Hopf ideals
$$0 = I_0 \leq I_1 \leq I_2 \leq \dots \leq I = \bigcup_{j\geq 0} I_j$$
where $I_j$ is the ideal generated by $I_{j-1}$, along with a subobject $S_j \leq I_j$ whose image in $\That(V)/I_{j-1}$ is primitive.

\end{lem}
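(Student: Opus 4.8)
The plan is to build the filtration greedily: at each stage we adjoin lifts of precisely those primitives of the current quotient that are annihilated by the surjection onto $A$. I would use throughout that $\proC$ is split, so that subobjects of a quotient always lift. First, set $I_0 = 0$ and proceed by induction. Suppose $I_{j-1} \leq I$ has been constructed as a sub-Hopf ideal of $\That(V)$, and write $B_{j-1} := \That(V)/I_{j-1}$; since $I_{j-1} \subseteq I$ there is a surjection of profinitely braided bialgebras $q_{j-1}: B_{j-1} \twoheadrightarrow A$ with kernel $I/I_{j-1}$. I would then let $S_j \leq \That(V)$ be a lift, along $\That(V) \twoheadrightarrow B_{j-1}$, of the subobject $P(B_{j-1}) \cap \ker(q_{j-1}) \leq B_{j-1}$, and define $I_j$ to be the two-sided ideal of $\That(V)$ generated by $I_{j-1}$ and $S_j$. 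By construction $S_j \leq I_j$, the image of $S_j$ in $\That(V)/I_{j-1}$ is primitive, and since $I_{j-1}, S_j \subseteq I$ and $I$ is an ideal we get $I_j \subseteq I$.

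Next I would check that each $I_j$ is again a Hopf ideal; this is routine. Since $\That(V) \to B_{j-1}$ is a bialgebra map and the preimage of a Hopf ideal along a bialgebra map is a Hopf ideal, it suffices to see that the ideal of $B_{j-1}$ generated by the primitive subobject $\bar S_j := \mathrm{im}(S_j)$ is a Hopf ideal — which is the standard computation $\Delta(axb) = \Delta(a)(\bar x \otimes 1 + 1 \otimes \bar x)\Delta(b)$ for $\bar x$ primitive, using that $\hattimes$, being the profinite completion of $\otimes$ and hence a left adjoint, commutes with the quotients and filtered colimits in play. So this construction yields an increasing chain $0 = I_0 \leq I_1 \leq \cdots \leq I$ of sub-Hopf ideals of $\That(V)$ with $S_j$ of the required form, and the only substantive point — the step I expect to be the main obstacle — is that the chain is exhaustive, $\bigcup_j I_j = I$.

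I would prove exhaustiveness by contradiction. Set $I_\infty := \bigcup_j I_j$ (again a sub-Hopf ideal of $\That(V)$, since $\hattimes$ commutes with the colimit), $B_\infty := \That(V)/I_\infty = \colim_j B_j$, and $J := I/I_\infty = \ker(B_\infty \twoheadrightarrow A)$, and suppose $J \neq 0$. Because $V \cap I = 0$ (the generating primitives inject into $A$), the image of $V$ injects into $B_\infty$ and generates it as an algebra, so $B_\infty$ is primitively generated, hence connected by Proposition 5.8 of \cite{ardizzoni_prim_gen}. Now I would invoke the standard fact that a nonzero Hopf ideal in a connected braided bialgebra contains a nonzero primitive element: using the (exhaustive) coradical filtration $B_{\infty,(\bullet)}$, if $n \geq 1$ is least with $J \cap B_{\infty,(n)} \neq 0$ and $0 \neq x$ lies in this intersection, then $\bar\Delta(x) := \Delta(x) - x \otimes 1 - 1 \otimes x$ lies in $B_{\infty,(n-1)}^{\otimes 2}$ and in $J \hattimes B_\infty + B_\infty \hattimes J$; over a field with $\proC$ split this intersection is $(J \cap B_{\infty,(n-1)}) \otimes B_{\infty,(n-1)} + B_{\infty,(n-1)} \otimes (J \cap B_{\infty,(n-1)}) = 0$ by minimality, forcing $x$ primitive and hence $n = 1$. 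Choose such a $0 \neq [y] \in J \cap P(B_\infty)$. Since filtered colimits are exact and $P(-) = \ker(\bar\Delta)$, we have $P(B_\infty) = \colim_j P(B_j)$, so $[y]$ is the image of some $p_j \in P(B_j)$; as $p_j$ maps to $q_\infty([y]) = 0$ in $A$, we get $p_j \in P(B_j) \cap \ker(q_j) = P(B_j) \cap (I/I_j)$, which is exactly the subobject lifted into $S_{j+1} \subseteq I_{j+1}$ at the next stage. Hence $p_j$ dies in $B_{j+1}$, so $[y]$ dies in $B_\infty$, contradicting $[y] \neq 0$. Therefore $J = 0$, i.e. $\bigcup_j I_j = I$, which completes the argument.
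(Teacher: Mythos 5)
Your construction is the same as the paper's: set $I_0=0$, use splitness of $\proC$ to lift $P(\That(V)/I_{j-1})\cap\ker(\pi_{j-1})$ to $S_j\subseteq\That(V)$, let $I_j$ be the ideal generated by $I_{j-1}$ and $S_j$, and prove exhaustiveness by noting that $P(\varinjlim B_j)=\varinjlim P(B_j)$ while any primitive of $B_j$ killed by the map to $A$ dies in $B_{j+1}$. The only cosmetic difference is that you run the last step as a contradiction via the coradical-filtration fact that a nonzero Hopf ideal of a connected bialgebra contains a nonzero primitive, whereas the paper invokes the equivalent criterion that a bialgebra map is injective iff it is injective on primitives; the argument is correct and essentially identical.
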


Some version of this is undoubtedly well-known; see, for instance, the introduction to \cite{diaz-kharchenko}, where it is used to define the combinatorial rank of a Hopf algebra.  We include a proof for completeness and to highlight where we use the assumption that $\proC$ is split.

\begin{proof}

We define $I_j$ inductively, with $I_0 = 0$.  Assume that $I_{j-1}$ is a Hopf ideal in $\That(V)$, and that the map $\pi_0: \That(V) \to A$ factors through the projection to $\That(V)/I_{j-1}$ and a surjective bialgebra map $\pi_{j-1}: \That(V)/I_{j-1} \to A$.  Since $\proC$ is split, there exists a subobject $S_j \leq \That(V)$ carried isomorphically to 
$$\overline{S}_j := P(\That(V)/I_{j-1}) \cap \ker(\pi_{j-1}) \subseteq \That(V)/I_{j-1}.$$
Since $\overline{S}_j$ is primitive, the ideal it generates in $\That(V)/I_{j-1}$ is a Hopf ideal.  Since $\overline{S}_j \leq \ker(\pi_{j-1})$, $\pi_{j-1}$ descends to a surjective bialgebra map
$$\pi_j: \That(V)/I_j = (\That(V)/I_{j-1})/(\overline{S_j}) \to A.$$

Note that 
$$\That(V)/\cup_j I_j = \varinjlim \That(V)/I_j.$$
This bialgebra is equipped with a surjective map $\pi = \varinjlim \pi_j$ to $A$.  Now, $\pi$ is injective if and only if it is injective when restricted to the primitives of the domain.  Those primitives are
$$P(\That(V)/\cup_j I_j) = \varinjlim P(\That(V)/I_j).$$ 
By construction, the map $P(\That(V)/I_{j-1}) \to P(\That(V)/I_j)$ kills the primitives in $\ker(\pi_{j-1})$.  Thus the colimit of primitives is carried injectively by $\pi$ into $P(A)$.  Therefore $\pi$ is an isomorphism, so $I = \ker(\pi_0) = \cup_j I_j$.

\end{proof}

\begin{prop} \label{perfect_prop}

Assume that $\CC$ has a perfect structure theory and that $A$ is a profinitely braided bialgebra primitively generated by $V \leq P(A)$.  If $L:=\CC(V)$ is the $\CC$-subalgebra of $A$ generated by $V$, then $L$ admits the structure of a primordial Hopf $\CC$-algebra, and the natural map
$$U_{\CC}(L) \to A$$
(induced by the inclusion $L \hookrightarrow A$) is an isomorphism of profinitely braided bialgebras.

\end{prop}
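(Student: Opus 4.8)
The plan is to present $A$ as a quotient $\That(V)/I$ in which the Hopf ideal $I$ is generated by elements of $\CC[V]$, and then to recognize this very presentation as the one exhibiting $A$ as an enveloping algebra via Proposition \ref{gen_rel_cor}. First I would identify $L$ with $P_{\CC}(A)$: since $V \subseteq P(A)$ we have $L = \CC(V) \subseteq \CC(P(A)) = P_{\CC}(A)$, while the equivalent formulation of the hypothesis that $\CC$ has a perfect structure theory gives $P(A) \subseteq \CC(V) = L$, whence $P_{\CC}(A) = \CC(P(A)) \subseteq L$; thus $L = P_{\CC}(A)$. In particular $L$ is a $\CC$-subalgebra of the $\BrAsshat$-algebra $A$, hence faithful, and the canonical map $\mu \colon U_{\CC}(L) \to A$ is precisely the map $U_{\CC}(P_{\CC}(A)) \to A$ appearing in Proposition \ref{mm_simple_prop}.

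Next I would apply Lemma \ref{filt_ideal_lem} to the presentation $A = \That(V)/I$ arising from the surjection $\That(V) = \BrAsshat[V] \twoheadrightarrow A$ extending $V \hookrightarrow P(A)$. This produces a filtration $0 = I_0 \leq I_1 \leq \cdots$ of $I = \bigcup_j I_j$ by sub-Hopf ideals, with $I_j$ the $\BrAsshat$-ideal generated by $I_{j-1}$ together with a subobject $S_j$ carried isomorphically onto a space $\overline S_j$ of primitives of the profinitely braided bialgebra $\That(V)/I_{j-1}$. Because $I_{j-1}$ is itself a Hopf ideal and $\CC$ has a perfect structure theory, $\overline S_j$ lies in the image of $\CC[V]$ in $\That(V)/I_{j-1}$; since $\proC$ is split we may therefore choose $S_j$ to be a subobject of $\CC[V]$ (altering the lift only changes $I_j$ by elements of $I_{j-1}$, so the ideals $I_j$ are unaffected). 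Consequently $I$ is the $\BrAsshat$-ideal generated by $S := \bigcup_j S_j$, with $S \subseteq \CC[V] \cap I$.

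Now I would present $L$ as a $\CC$-algebra. The $\CC$-algebra map $\CC[V] \to A$ extending $V \hookrightarrow P(A)$ coincides with the composite $\CC[V] \hookrightarrow \That(V) \to A$; its image is $\CC(V) = L$ and its kernel is $R := \CC[V] \cap I$, so $L \cong \CC[V]/R$. By Proposition \ref{gen_rel_cor}, $U_{\CC}(L) \cong \That(V)/(R)$, where $(R)$ is the $\BrAsshat$-ideal generated by $R$. One inclusion $(R) \subseteq I$ is clear; conversely $S \subseteq \CC[V]\cap I = R$ generates $I$, so $I \subseteq (R)$ and hence $(R) = I$. Therefore $\mu \colon U_{\CC}(L) \to A$ is an isomorphism of algebras, and the remaining assertions follow at once from Proposition \ref{mm_simple_prop}, which upgrades this to an isomorphism of profinitely braided bialgebras and supplies $L = P_{\CC}(A)$ with the structure of a primitively generated (and, by Proposition \ref{perfect_primordial_prop} or directly by \ref{mm_simple_prop}, primordial) Hopf $\CC$-algebra.

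The crux is the second step: one must verify that the inductively built generators $\overline S_j$ of the Hopf ideal $I$ can all be lifted into $\CC[V]$, and this is exactly where the perfect-structure-theory hypothesis is used at every stage of the filtration (crucially, that each truncation $I_{j-1}$ is again a Hopf ideal, so the hypothesis applies to $\That(V)/I_{j-1}$) together with the standing assumption that $\proC$ is split, which lets one promote the lifting of primitive elements to a lifting of the subobject $\overline S_j$. Once the generators are inside $\CC[V]$, the identity $(R) = I$ and the rest of the argument are purely formal.
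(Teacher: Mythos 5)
Your proposal is correct and follows essentially the same route as the paper: present $A = \That(V)/I$, use Lemma \ref{filt_ideal_lem} together with the perfect structure theory (and splitness of $\proC$) to move the generators $S_j$ into $\CC[V]$, identify $U_{\CC}(L) \cong \That(V)/(R)$ with $R = \CC[V]\cap I$ via Proposition \ref{gen_rel_cor}, and conclude $(R)=I$. The only cosmetic difference is that you first identify $L$ with $P_{\CC}(A)$ and invoke Proposition \ref{mm_simple_prop} for the Hopf $\CC$-structure and primordiality, where the paper cites Propositions \ref{C_to_Hopf_prop} and \ref{perfect_primordial_prop} directly; the content is the same.
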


\begin{proof}

We will show that $U_{\CC}(L) \to A$ is a $\BrAsshat$-algebra isomorphism.  From that, we can lift a Hopf structure to $U_{\CC}(L)$, and hence, by Proposition \ref{C_to_Hopf_prop}, a Hopf $\CC$-structure to $L$.  Then Proposition \ref{perfect_primordial_prop} ensures that $L$ is primordial.

Suppose that $A$ may be presented as $A=\That(V)/I$.  Then we can present $L$ as the quotient
$$L = \CC[V]/R, \; \mbox{ where } \; R = I \cap \CC[V].$$
Thus $U_{\CC}(L) = \That(V)/(R)$, so we must show that $(R) = I$.  Clearly $(R) \subseteq I$, so it suffices to show that $I \subseteq (R)$.  

Using the previous Lemma, we may present $I = \cup_j I_j$, where $I_0 = 0$, and $I_j$ is a Hopf ideal generated by $I_{j-1}$ and an object $S_j$ whose image in $\That(V)/I_{j-1}$ is primitive.  Assume inductively that $I_{j-1} \subseteq (R)$.  Then by definition, the image $\overline{S}_j$ of $S_j$ in $\That(V)/I_{j-1}$ lies in the image of $\CC[V]$ in this Hopf algebra.  So, up to replacing $S_j$ with an alternative lift of $\overline{S}_j$ (which does not change the ideal it generates along with $I_{j-1}$), we see that $S_j \subseteq \CC[V]$.  Thus $I_j = (I_{j-1}, S_j)$ is contained in $(R)$.

\end{proof}

\begin{thm} \label{mm_equiv_thm}

If $\CC$ has a perfect structure theory, the functors $P_{\CC}$ and $U_{\CC}$ form an inverse pair of equivalences between the categories of primitively generated, profinitely braided bialgebras and primitively generated Hopf $\CC$-algebras in $\proC$. 

\end{thm}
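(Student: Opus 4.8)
The plan is to assemble the equivalence from the pieces already established, chiefly Propositions \ref{perfect_prop}, \ref{perfect_primordial_prop}, \ref{mm_simple_prop}, \ref{C_to_Hopf_prop}, \ref{prim_prim_prop}, and \ref{env_oplax_prop}. First I would check that the two functors take values in the asserted subcategories. For $U_{\CC}$ this is immediate from part (1) of Proposition \ref{C_to_Hopf_prop}: if $L$ is a primitively generated Hopf $\CC$-algebra, then $U_{\CC}(L)$ is a primitively generated, profinitely braided bialgebra, and $U_{\CC}$ carries morphisms of Hopf $\CC$-algebras to morphisms of bialgebras because the diagonal on $U_{\CC}(L)$ is built naturally from $\delta_L$ via the oplax comparison map $p$ of Proposition \ref{env_oplax_prop}. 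For $P_{\CC}$, given a primitively generated, profinitely braided bialgebra $A$, apply Proposition \ref{perfect_prop} with $V = P(A)$, so that $L := \CC(V)$ is exactly $P_{\CC}(A)$; this gives that $P_{\CC}(A)$ carries a primordial Hopf $\CC$-algebra structure and that the natural map $\mu : U_{\CC}(P_{\CC}(A)) \to A$ is an isomorphism of profinitely braided bialgebras. One still has to see that $P_{\CC}(A)$ is primitively generated \emph{as a Hopf $\CC$-algebra}: the diagonal $\delta$ on $P_{\CC}(A)$ supplied by Proposition \ref{mm_simple_prop} lifts the restriction of $\Delta_A$ along the injective map $\mu \otimes \mu$, so each $x \in P(A)$ satisfies $\delta(x) = x \otimes 1 + 1 \otimes x$, i.e. $P(A) \subseteq P(P_{\CC}(A))$; since $P(A)$ generates $P_{\CC}(A)$ as a $\CC$-algebra by definition, so does $P(P_{\CC}(A))$. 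Functoriality of $P_{\CC}$ on morphisms is Proposition \ref{P_C_functorial_prop}, and a bialgebra map $f$ restricts to a coalgebra map on $P_{\CC}$ by restricting the compatibility of $f$ with $\Delta$ and using injectivity of the relevant $\mu \otimes \mu$.

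Second, I would produce the two natural isomorphisms of the composite functors with the identities. The isomorphism $U_{\CC} \circ P_{\CC} \cong \mathrm{id}$ is precisely the map $\mu$ of the previous paragraph, which Proposition \ref{perfect_prop} already identifies as a natural isomorphism of profinitely braided bialgebras. For $P_{\CC} \circ U_{\CC} \cong \mathrm{id}$, let $L$ be a primitively generated Hopf $\CC$-algebra; by Proposition \ref{perfect_primordial_prop} it is primordial, so by Proposition \ref{prim_prim_prop} the inclusion $L \hookrightarrow U_{\CC}(L)$ induces an isomorphism $P(L) \xrightarrow{\ \sim\ } P(U_{\CC}(L))$, and in particular $P(U_{\CC}(L))$ is the image of $P(L)$, contained in $L$. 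Hence the $\CC$-subalgebra of $U_{\CC}(L)$ generated by $P(U_{\CC}(L))$ equals the $\CC$-subalgebra generated by the image of $P(L)$, which — using faithfulness of $L$, i.e. that $L \hookrightarrow U_{\CC}(L)$ is a $\CC$-algebra map, together with the hypothesis $\CC(P(L)) = L$ — is exactly $L$. Thus $P_{\CC}(U_{\CC}(L)) = L$ as sub-$\CC$-algebras of $U_{\CC}(L)$, naturally in $L$; that this agrees with the coalgebra structure of $L$ follows because the diagonal on $U_{\CC}(L)$ was constructed from $\delta_L$, so restricting it back to $L$ recovers $\delta_L$.

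Third, I would record the coherence and triangle-identity checks needed for the pair to be mutually inverse equivalences: that $\mu$ and the inclusion $P_{\CC}(U_{\CC}(L)) = L \hookrightarrow U_{\CC}(L)$ are compatible, that the two natural isomorphisms satisfy the zig-zag identities, and that both isomorphisms are genuinely in the categories of Hopf $\CC$-algebras and of profinitely braided bialgebras respectively. These are driven entirely by the injectivity statements extracted above from Propositions \ref{mm_simple_prop} and \ref{prim_prim_prop}, together with the fact that all the comparison maps ($\mu$, $p_{L,M}$, $\mu \otimes \mu$) are natural.

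I expect the main obstacle to be the bookkeeping around the coalgebra structures: since $\otimes_{\CC}$ is defined as a sub-$\CC$-algebra of a $\hattimes$-product of enveloping algebras rather than of the bialgebras themselves, verifying that $\delta$ genuinely corestricts (on $P_{\CC}(A)$ and, dually, on $P_{\CC}(U_{\CC}(L)) = L$) and that the maps $\mu$, $U_{\CC}(\delta)$, $p$, $\mu \otimes \mu$ fit into coherent commuting diagrams takes care. However, each such verification is forced once one knows that the relevant composite $\mu$- and $p$-type maps are injective on the source, which is exactly what Propositions \ref{mm_simple_prop}, \ref{prim_prim_prop}, and \ref{perfect_prop} deliver, so no new ingredient beyond a careful diagram chase should be required.
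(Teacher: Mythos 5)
Your proposal is correct and follows essentially the same route as the paper: Proposition \ref{C_to_Hopf_prop} for the $U_{\CC}$ direction, Proposition \ref{perfect_prop} (with $V = P(A)$) for the Hopf $\CC$-structure on $P_{\CC}(A)$ and the isomorphism $\mu$, and the perfect structure theory for $P_{\CC}(U_{\CC}(L)) = L$. The only cosmetic difference is that you obtain the latter via primordiality (Propositions \ref{perfect_primordial_prop} and \ref{prim_prim_prop}) rather than re-running the presentation argument $L = \CC[V]/R$ directly, which is the same underlying fact.
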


Note that in the above formulation, since $\CC$ has a perfect structure theory, Proposition \ref{perfect_primordial_prop} ensures that the category of primitively generated Hopf $\CC$-algebras equals the subcategory of those which are primordial.

\begin{proof}

We have already established in Proposition \ref{univ_prop} that $U_{\CC}$ is a functor from $\CC$-algebras to $\BrAsshat$-algebras.  Proposition \ref{C_to_Hopf_prop} further implies that if $L$ is a primitively generated Hopf $\CC$-algebra, then $U_{\CC}(L)$ is a primitively generated, profinitely braided bialgebra.  Conversely, Proposition \ref{P_C_functorial_prop} shows that $P_{\CC}$ is a functor from profinitely braided bialgebras to $\CC$-algebras.  

We must show that $P_{\CC}(A)$ is a primitively generated Hopf $\CC$-algebra.  Presenting $A = \That(V)/I$ for $V \leq P(A)$ as in the previous Proposition, the $\CC$-subalgebra $\CC(V) \subseteq A$ generated by $V$ is contained in $P_{\CC}(A)$.  Since $\CC$ has a perfect structure theory, $P(A) \subseteq \CC(V)$, so these two subspaces agree.  The previous result then implies that $P_{\CC}(A)$ is a primordial Hopf $\CC$-algebra, and that the natural map $\mu: U_{\CC}(P_{\CC}(A)) \to A$ is an isomorphism of Hopf algebras.

To conclude, we must show that there is a natural isomorphism
$$\nu: L \to P_{\CC}(U_{\CC}(L))$$
for any primitively generated Hopf $\CC$-algebra $L$. 

The map $\nu$ is constructed as follows.  There is a natural inclusion $L \subseteq U_{\CC}(L)$, since $L$ is presumed to be faithful.  Since $L$ is primitively generated, the image of $L$ is contained in the $\CC$-subalgebra of $U_{\CC}(L)$ generated by $P(U_{\CC}(L))$.  This is precisely $P_{\CC}(U_{\CC}(L))$; the resulting (co)restriction is $\nu$.  Being the corestriction of an injection, $\nu$ is injective.

To see that $\nu$ is surjective, present $L = \CC[V]/R$, so that $U_{\CC}(L) = T(V)/(R)$.  Since $\CC$ has a perfect structure theory, $P(U_{\CC}(L))$ is contained in the $\CC$-subalgebra generated by $V$.  But this is precisely $L$.  Therefore $P_{\CC}(U_{\CC}(L)) \subseteq L$.

\end{proof}

When $\bC$ has property {\bf F}, the previous result descends to an equivalence within $\bC$, courtesy of Propositions \ref{ss_core_prop} and \ref{pc_fb_finite_prop}:

\begin{cor} \label{mm_finite_cor} If $\bC$ has property {\bf F}, then $P_{\CC}$ and $U_{\CC}$ are inverse equivalences between the categories of primitively generated Hopf algebras and Hopf $\CC$-algebras in $\bC$.  \end{cor}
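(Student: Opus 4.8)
The plan is to descend the equivalence of Theorem \ref{mm_equiv_thm} from $\proC$ to the full subcategory of pro-constant objects. Since the inclusion $\bC \hookrightarrow \proC$ is fully faithful, it suffices to check that under property {\bf F} the functors $P_{\CC}$ and $U_{\CC}$ preserve pro-constancy of the relevant objects; the natural isomorphisms $\nu$ and $\mu$ are then inherited verbatim from the proof of Theorem \ref{mm_equiv_thm}. So the real content is purely a matter of finiteness bookkeeping.

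First I would handle $U_{\CC}$. Let $L$ be a primitively generated Hopf $\CC$-algebra in $\bC$; it is pro-constant by hypothesis and finitely braided because $\bC$ has property {\bf F}. Proposition \ref{enveloping_defn_prop} then gives that $U_{\CC}(L)$ is pro-constant, and being an object of $\bC$ it is again finitely braided by property {\bf F}. Proposition \ref{pc_fb_finite_prop} now applies, so $U_{\CC}(L)$ is a primitively generated braided Hopf algebra in $\bC$. (In the common case that $L$ is drawn from a finitely braided semisimple subcategory with finitely many simple objects, Proposition \ref{ss_core_prop} supplies the finite braiding of $U_{\CC}(L)$ directly, without appeal to property {\bf F} for $U_{\CC}(L)$ itself.)

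Next I would handle $P_{\CC}$. Let $A$ be a primitively generated braided Hopf algebra in $\bC$. Property {\bf F} makes $A$ finitely braided, hence complete (Proposition \ref{pro_const_alg_prop}); by Proposition \ref{reverse_bialg_prop} it is therefore a profinitely braided bialgebra, still primitively generated since the notions of primitive coincide once $A$ and $A \otimes A \cong A \hattimes A$ agree (Propositions \ref{powers_fb_prop} and \ref{compare_tensors_prop}). The subobject $P_{\CC}(A) \le A$ lies in $\bC$ and is finitely braided, again by property {\bf F}, so it is pro-constant, and Theorem \ref{mm_equiv_thm} identifies it as a primitively generated Hopf $\CC$-algebra with $\mu : U_{\CC}(P_{\CC}(A)) \to A$ an isomorphism of braided Hopf algebras in $\bC$.

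Finally, with both functors landing in the pro-constant subcategories, the unit $\nu : L \to P_{\CC}(U_{\CC}(L))$ and counit $\mu : U_{\CC}(P_{\CC}(A)) \to A$ of Theorem \ref{mm_equiv_thm} are natural isomorphisms between objects of $\bC$, giving the asserted inverse equivalences. I expect the only genuinely delicate point to be the first one: a priori $U_{\CC}(L)$ is an infinite direct sum of subquotients of the $\CC(n) \otimes_{B_n} L^{\otimes n}$ assembled inside $\proC$, and $\otimes_{\CC}$ and $\hattimes$ are defined there, so one must be careful that property {\bf F} really does force all of these constructions --- and the comparison $\otimes \to \hattimes$ --- to collapse onto the constant subcategory. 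That is exactly what Propositions \ref{enveloping_defn_prop}, \ref{ss_core_prop}, \ref{pc_fb_finite_prop} and \ref{compare_tensors_prop} are designed to provide.
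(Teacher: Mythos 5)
Your proposal is correct and follows essentially the same route as the paper, which simply descends Theorem \ref{mm_equiv_thm} to $\bC$ courtesy of the finiteness results (the paper cites Propositions \ref{ss_core_prop} and \ref{pc_fb_finite_prop}). The only, harmless, difference is that where the paper leans on Proposition \ref{ss_core_prop} to get finite braiding of $U_{\CC}(L)$, you obtain it directly from Proposition \ref{enveloping_defn_prop} together with property {\bf F} applied to the pro-constant object $U_{\CC}(L)$, which fits the stated hypotheses at least as cleanly.
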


\section{Poincar\'{e}-Birkhoff-Witt type theorems} \label{pbw_section}

\subsection{Multiplicative filtrations} \label{filt_sec}

\begin{defn}

For a $\BrAsshat$-algebra $A \in \bC$ and a subobject $V \leq A$ which generates $A$, the \emph{$V$-filtration} of $A$ is defined by powers of $V$:
$$F_p^V(A):= \im\left(\bigoplus_{n \leq p} \BrAsshat[V]_n \subseteq \That(V) \to A \right).$$

\end{defn}

Since $V$ generates $A$, this increasing filtration is exhaustive.  It is clearly multiplicative, in the sense that multiplication restricts to $F_p^V(A) \otimes F_q^V(A) \to F_{p+q}^V(A)$.  Thus the associated graded object 
$$A^{V\gr}: = \bigoplus_{p=0}^{\infty} F_p^V(A)/F_{p-1}^V(A)$$
is a graded $\BrAsshat$-algebra.  It is generated as an algebra by $V$; since $V$ lies in degree 1, in fact 
\beqn \label{indecomp_graded_eqn} Q(A^{V\gr}) \cong V. \eeqn

If, additionally, $A$ is a profinitely braided bialgebra and if $V \leq P(A)$, $F_p^V(A)$ is a Hopf filtration.  That is, the diagonal on $A$ restricts to maps 
$$\Delta: F_p^V(A) \to \bigoplus_{m+n=p} F_m^V(A) \hattimes F_n^V(A)$$
since this is true in $\That(V)$.  Therefore $A^{V\gr}$ becomes a primitively generated, graded, profinitely braided bialgebra.

\begin{notation}  If $V = P(A)$, we will write $A^{\Pgr}:= A^{P(A) \gr}$ for the associated graded profinitely braided bialgebra. \end{notation}

In the following definition, we take $A$ to be the enveloping algebra $U_{\CC}(L)$ of a $\CC$-algebra $L$, with $V \leq L$.

\begin{defn} \label{langlerangle_defn}

Let $L^{\langle V \rangle \gr}$ be the $\CC$-subalgebra of $U_{\CC}(L)^{V \gr}$ generated by $V$.

\end{defn}

While this makes sense for general $V$, it is only a reasonable definition if $V$ generates $L$ as a $\CC$-algebra.  A further warning: even in that case, $L^{\langle V \rangle \gr}$ need not equal the associated graded of $L$ gotten by restricting the filtration of $U_{\CC}(L)$ to $L$.  Specifically: while every element $\ell \in L$ may be written as $\ell = \theta(v_1, \dots, v_n)$ for some $\theta \in \CC(n)$ and $v_i \in V$, with $n$ minimal, it may be the case that the relations in $L$ make it possible to write $\ell = \tau(w_1, \dots, w_m)$, where $\tau \in \BrAsshat(m)$ with $m < n$, and $w_i \in V$.  If this is the case, then the class of $\ell$ in $U_{\CC}(L)^{V \gr}$ is \emph{not} in the $\CC$-subalgebra generated by $V$.

If $L$ is taken to be a Hopf $\CC$-algebra, and $V \leq P(L)$, then the previous arguments ensure that $U_{\CC}(L)^{V\gr}$ is a graded Hopf algebra.  It is reasonable to hope that this implies that $L^{\langle V \rangle \gr}$ is a Hopf $\CC$-algebra, but this is not obviously the case (and may in fact be false in general).  

Specifically, there is a natural map of algebras
\beqn \beta_L^V: U_{\CC}(L^{\langle V \rangle \gr}) \to U_{\CC}(L)^{V\gr} \label{beta_eqn} \eeqn
induced by the inclusion $L^{\langle V \rangle \gr} \hookrightarrow U_{\CC}(L)^{V\gr}$.  This is surjective since the codomain is generated as an algebra by $V$, but it is not generally an isomorphism.  When $\beta_L^V$ is an isomorphism of algebras, we can transport the bialgebra structure on the codomain to the domain, and thereby define a Hopf structure on $L^{\langle V \rangle \gr}$.  This \emph{does} hold when $\CC$ has a perfect structure theory: 

\begin{prop} \label{perfect_enveloping_prop}

If $\CC$ has a perfect structure theory, the map $\beta_L^V$ of (\ref{beta_eqn}) is an isomorphism.  The resulting Hopf $\CC$-algebra structure on $L^{\langle V \rangle \gr}$ is primordial.

\end{prop}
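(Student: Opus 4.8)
The plan is to identify $\beta_L^V$ with the natural isomorphism produced by Proposition~\ref{perfect_prop}, applied to the graded profinitely braided bialgebra $A := U_{\CC}(L)^{V\gr}$. Since $L^{\langle V\rangle\gr}$ is only defined when $V$ generates $L$ as a $\CC$-algebra, and $U_{\CC}(L)$ is generated as a $\BrAsshat$-algebra by the image of $L$, the object $V$ generates $U_{\CC}(L)$, so the $V$-filtration on $U_{\CC}(L)$ is exhaustive. Because $L$ is a Hopf $\CC$-algebra and $V\le P(L)$, Proposition~\ref{prim_prim_prop} gives $V\subseteq P(U_{\CC}(L))$; hence, by the discussion in Section~\ref{filt_sec}, this filtration is a Hopf filtration and $A$ is a graded, primitively generated, profinitely braided bialgebra, connected as a graded algebra with $A_0=\one$. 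Consequently $A_1$ consists of primitives, and the composite $V\to A_1\to Q(A)$ is the isomorphism (\ref{indecomp_graded_eqn}); in particular $V$ embeds as a subobject of $P(A)$ which generates $A$.

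Next I would invoke Proposition~\ref{perfect_prop} with this bialgebra $A$ and subobject $V\le P(A)$: since $\CC$ has a perfect structure theory, the $\CC$-subalgebra $\CC(V)\subseteq A$ generated by $V$ --- which, by Definition~\ref{langlerangle_defn}, is exactly $L^{\langle V\rangle\gr}$ --- carries the structure of a primordial Hopf $\CC$-algebra, and the natural map $U_{\CC}(\CC(V))\to A$ induced by the inclusion $\CC(V)\hookrightarrow A$ is an isomorphism of profinitely braided bialgebras. But this natural map is, by construction, precisely $\beta_L^V$ of (\ref{beta_eqn}), both being induced by the inclusion $L^{\langle V\rangle\gr}\hookrightarrow U_{\CC}(L)^{V\gr}$. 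This simultaneously shows $\beta_L^V$ is an isomorphism and that the resulting Hopf $\CC$-algebra structure on $L^{\langle V\rangle\gr}$ (equivalently, the one transported along $\beta_L^V$ from the bialgebra structure on $U_{\CC}(L)^{V\gr}$) is primordial. A brief final check: every map in sight preserves the internal grading coming from the $V$-filtration, so the isomorphism is one of graded objects.

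The only step that requires genuine care is verifying that $A = U_{\CC}(L)^{V\gr}$ meets the hypotheses of Proposition~\ref{perfect_prop} --- that it is a primitively generated profinitely braided bialgebra and that $V$ occurs as a generating subobject of $P(A)$. As indicated above this rests on three inputs: that $V$ generates $L$ (hence $U_{\CC}(L)$), the Hopf-filtration formalism of Section~\ref{filt_sec}, and the identification $Q(U_{\CC}(L)^{V\gr})\cong V$ of (\ref{indecomp_graded_eqn}). Once these are in place the statement is a direct corollary of Proposition~\ref{perfect_prop}.
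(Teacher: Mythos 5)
Your argument is correct and is essentially the paper's own proof: the paper likewise observes that $U_{\CC}(L)^{V\gr}$ is a profinitely braided bialgebra primitively generated by $V$ and then applies Proposition \ref{perfect_prop}, identifying the resulting isomorphism with $\beta_L^V$. Your extra verifications (exhaustiveness of the $V$-filtration, $V\leq P(U_{\CC}(L))$ via Proposition \ref{prim_prim_prop}, and the identification $\CC(V)=L^{\langle V\rangle\gr}$) simply spell out what the paper summarizes as ``by construction.''
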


\begin{proof}

By construction, the profinitely braided bialgebra $U_{\CC}(L)^{V\gr}$ is primitively generated as an algebra by $V$.  Now apply Proposition \ref{perfect_prop}.

\end{proof}

\subsection{Iteratively filtering by powers of primitives} \label{iter_P_sec}

Let $A$ be a primitively generated, profinitely braided bialgebra.  As in the previous section, the associated graded $A^{\Pgr}$ for powers of $P(A)$ becomes a non-negatively \emph{graded}, primitively generated profinitely braided bialgebra; it is connected as a graded algebra.  This section is concerned with the result of iterating the functor $A \mapsto A^{\Pgr}$.  Consequently we allow ourselves to assume that $A$ is connected as a graded algebra, since it becomes so upon replacing it with $A^{\Pgr}$.  

As a result, $A^{\Pgr}$ admits a bigrading; we will write $A^{\Pgr}_{p, q}$ for the component of $F_p A / F_{p-1} A$ which is homogenous of degree $q$ in $A$'s original grading.  Since $A$ is connected as a graded algebra, $P(A)_q = A^{\Pgr}_{1, q}$ vanishes for $q<1$, whence $A^{\Pgr}_{p, q} = 0$ if $q<p$.

We note that in this setting, (\ref{indecomp_graded_eqn}) reads as:
$$P(A) \cong Q(A^{\Pgr});$$ 
that is, the primitives of $A$ may be identified with the indecomposables of $A^{\Pgr}$.

It is easy to see also that the image of $P(A)$ in $A^{\Pgr}$ remains primitive, although it is certainly possible\footnote{This is an aspect of the theory unique to the braided setting.  For symmetric Hopf algebras, Milnor-Moore have shown (Prop. 5.11 of \cite{milnor-moore}) that primitively generated symmetric Hopf algebras have $P(A^{\Pgr}) = P(A)$.  However, this relies deeply on the fact that the bracket of two primitives is again primitive for symmetric Hopf algebras.}
 that $P(A^{\Pgr})$ is strictly larger than $P(A)$.  This procedure may be iterated: since $A^{\Pgr}$ is a primitively generated, profinitely braided bialgebra, we can form $(A^{\Pgr})^{\Pgr}$.  This will not coincide with $A^{\Pgr}$ when the inclusion $P(A) \leq P(A^{\Pgr})$ is proper.

\begin{defn}

Let $A^{(0)} = A$, and inductively define $A^{(n)} = (A^{(n-1)})^{\Pgr}$.  The collection $A^{(0)}$, $A^{(1)}$, $A^{(2)}$, \dots will be called the \emph{primitive sequence} of graded, profinitely braided bialgebras associated to $A$.

\end{defn}

The $k\nth$ stage, $A^{(k)}$ carries a $k$-fold grading from the iteration of this procedure; if the original algebra $A$ was graded, then $A^{(k)}$ in fact is a $(k+1)$-fold graded, profinitely braided bialgebra.  We will suppress all of these gradings except the last (coming from word length with respect to $P(A^{(k-1)})$) and the original grading on $A$, if it had one.

We have already noted that $P(A) \leq P(A^{\Pgr})$; this yields a sequence of inclusions
$$P(A) = P(A^{(0)}) \leq P(A^{(1)}) \leq P(A^{(2)}) \leq \cdots$$

\begin{lem}

The inclusion $P(A^{(k)}) \leq P(A^{(k+1)})$ is an equality in degree $q \leq k+1$ (with respect to the original grading on $A$).

\end{lem}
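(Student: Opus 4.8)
The plan is to deduce the Lemma from the sharper statement, proved by induction on $k$: the object $N(A^{(k)})$ of \emph{new primitives} — a complement to $P(A^{(k)})$ inside $P(A^{(k+1)})$, which exists since $\proC$ is split, and which concretely equals $\bigoplus_{p\geq 2} P(A^{(k+1)})_p$ for the final word-length grading of $A^{(k+1)}=(A^{(k)})^{\Pgr}$ with respect to $P(A^{(k)})$ (the word-length-$1$ part of $P(A^{(k+1)})$ being exactly $P(A^{(k)})$, by \eqref{indecomp_graded_eqn}, and the word-length-$0$ part vanishing) — is concentrated in original degrees $\geq k+2$. Granting this, $N(A^{(k)})_q=0$ for $q\leq k+1$, whence $P(A^{(k+1)})_q = P(A^{(k)})_q$ there, which is the assertion. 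The base case $k=0$ is clear: a word-length-$p$ class in $A^{(1)}=A^{\Pgr}$ is represented by a product of $p$ elements of $P(A)$, each of original degree $\geq 1$ by connectedness, hence has original degree $\geq p$, so $N(A^{(0)})$, living in word lengths $p\geq 2$, is concentrated in degrees $\geq 2$.

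For the inductive step, set $B=A^{(k)}$, so $A^{(k+1)}=B^{\Pgr}$ and the inductive hypothesis says $P(B)=P(A^{(k-1)})\oplus N(A^{(k-1)})$ with the second summand concentrated in degrees $\geq k+1$. Since $B=(A^{(k-1)})^{\Pgr}$ is generated as an algebra by its word-length-$1$ part $P(A^{(k-1)})$, the filtration $F_\bullet^{P(A^{(k-1)})}(B)$ is exactly its word-length grading. The key observation is: in total original degree $q\leq k+1$, every product of $\geq 2$ elements of $P(B)$ uses only factors from $P(A^{(k-1)})$, because a single factor from $N(A^{(k-1)})$ already has degree $\geq k+1$ and a second factor of degree $\geq 1$ overshoots. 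It follows that $F_\bullet^{P(B)}(B)$ and $F_\bullet^{P(A^{(k-1)})}(B)$ agree in original degrees $\leq k$, so $A^{(k+1)}=B^{\Pgr}$ and $B$ coincide as graded coalgebras in degrees $\leq k$, giving $P(A^{(k+1)})_q=P(B)_q$ and $N(A^{(k)})_q=0$ for $q\leq k$.

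It remains to handle degree $q=k+1$. The same observation yields, for every $j\geq 1$,
$$F_j^{P(B)}(B)_{k+1} \;=\; F_j^{P(A^{(k-1)})}(B)_{k+1} + N(A^{(k-1)})_{k+1},$$
and since $N(A^{(k-1)})_{k+1}$ is word-length homogeneous in levels $\geq 2$ and consists of primitives of $B$, passing to associated graded identifies, for $p\geq 2$,
$$A^{(k+1)}_{p,\,k+1} \;\cong\; B_{p,\,k+1}\big/\,P(B)_{p,\,k+1}$$
(subscript $p$ now the word length in $B$), with its reduced coproduct induced by that of $B$ on $B_{p,k+1}$ — here one uses that all reduced-coproduct components land in degrees $\leq k$, where $B$, $A^{(k+1)}$ and their word-length gradings already agree. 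But the kernel of the reduced coproduct of $B$ on $B_{p,k+1}$ is by definition $P(B)_{p,k+1}$, so this map is injective on the displayed quotient; hence $P(A^{(k+1)})_{p,k+1}=0$ for all $p\geq 2$, i.e.\ $N(A^{(k)})_{k+1}=0$. This closes the induction.

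The routine parts are the bookkeeping with the nested word-length gradings alongside the fixed original grading, and the standard facts from Section \ref{filt_sec} that the associated graded of a Hopf filtration is again a graded bialgebra with $Q$ and $P$ as in \eqref{indecomp_graded_eqn}. The main obstacle is precisely the filtration comparison in the top degree $q=k+1$: there the $P(B)$-power and $P(A^{(k-1)})$-power filtrations of $B$ do \emph{not} literally coincide — they differ exactly by $N(A^{(k-1)})_{k+1}$ — and one must check that this discrepancy consists of primitive, word-length-homogeneous elements, so that it contributes neither to the associated-graded coproduct nor to fresh primitives. Controlling this discrepancy is what forces the sharp bound ``$q\leq k+1$'' in the statement; any slack there would propagate through the induction.
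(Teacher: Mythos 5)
Your proof is correct and follows essentially the same route as the paper's: induct on $k$, use the inductive hypothesis to show that in original degrees $\le k$ the filtration by powers of $P(A^{(k)})$ coincides with the word-length filtration coming from $P(A^{(k-1)})$ (so $A^{(k)}$ and $A^{(k+1)}$, and hence their primitives, agree there), and in degree $k+1$ exploit that the reduced coproduct lands in degrees $\le k$ to rule out new primitives. Your handling of the top degree --- identifying $A^{(k+1)}_{p,k+1}\cong B_{p,k+1}/P(B)_{p,k+1}$ for $p\ge 2$ and checking that the reduced coproduct of $B$ is injective on this quotient --- simply spells out what the paper compresses into ``these terms vanish in $A^{(k)}$ if and only if they do in $A^{(k+1)}$.''
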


\begin{proof}

We induct on $k$; when $k=0$, the claim amounts to the statement that $P(A) \to P(A^{\Pgr})$ is an isomorphism in degree $1$.  This holds for degree reasons: since $A^{\Pgr}$ is connected as a graded algebra (with respect to the second grading), the fact that the diagonal is graded implies that $A^{\Pgr}_{*, 1} = P(A^{\Pgr})_{*, 1}$.  But $A^{\Pgr}_{*, 1}$ vanishes if $*>1$, so this is concentrated in grading $*=1$: $A^{\Pgr}_{1, 1} = P(A^{\Pgr})_{1, 1}$.  By definition, though, $A^{\Pgr}_{1, 1} = P(A)_1$.

Now assume that $P(A^{(k-1)})_q \to P(A^{(k)})_q$ is an isomorphism in degrees $q \leq k$.  This implies that the primitive filtration on $A^{(k)}$ agrees with that on $A^{(k+1)}$ in degrees less than or equal to $k$.  Therefore, the same holds for the associated graded objects:
$$A^{(k)}_{p, q} = A^{(k+1)}_{p, q} \; \mbox{ if $q \leq k$.}$$
Since the diagonal on the right side of this expression is defined in terms of the one on the left, this implies that $P(A^{(k)}_{*, q}) = P(A^{(k+1)}_{*, q})$ if $q \leq k$.

Finally, in degree $q=k+1$, the function $A^{(k+1)} \to A^{(k+1)} \hattimes A^{(k+1)}$ given by $\Delta - (\id \hattimes 1 + 1 \hattimes \id)$ takes value in tensor factors lying in degrees less than or equal to $k$.  Since the primitive filtrations for $A^{(k)}$ and $A^{(k+1)}$ coincide in this range, these terms vanish in $A^{(k)}$ if and only if they do in $A^{(k+1)}$.  Thus $P(A^{(k)})_{*, k+1} = P(A^{(k+1)})_{*, k+1}$.

\end{proof}

The next result follows immediately from the previous Lemma.

\begin{cor} \label{stability_cor}

The profinitely braided bialgebras $A^{(k)}$ and $A^{(k+1)}$ are isomorphic in bidegree $(p, q)$ if $q \leq k+1$.

\end{cor}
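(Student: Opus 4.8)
The plan is to read the Corollary straight off the Lemma, by converting the statement about primitives into one about the filtration by powers of primitives and its associated graded. The starting observation is that $A^{(k+1)} = (A^{(k)})^{\Pgr}$ is by definition the associated graded of $A^{(k)}$ for the (Hopf) filtration by powers of $P(A^{(k)})$, and that this filtration preserves the original $A$-grading $q$. So it suffices to show that, through original-degree $q \leq k+1$, the filtration of $A^{(k)}$ by powers of $P(A^{(k)})$ coincides with the corresponding filtration of $A^{(k+1)}$ by powers of $P(A^{(k+1)})$; passing to associated gradeds then gives $A^{(k)}_{p,q} \cong A^{(k+1)}_{p,q}$ for $q \leq k+1$.

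First I would invoke the Lemma to obtain $P(A^{(k)})_q = P(A^{(k+1)})_q$ for all $q \leq k+1$. Next I would recall that the proof of the Lemma already establishes the coincidence $A^{(k)}_{p,q} = A^{(k+1)}_{p,q}$ in original-degrees $q \leq k$, so the two bialgebras, together with their subobjects of primitives, literally agree below degree $k$; the genuinely new input is the matching of primitives in the single remaining degree $q = k+1$. Feeding this into exactly the implication already used inside the Lemma's proof — primitives agree through degree $d$ $\Rightarrow$ the primitive filtrations agree through degree $d$ $\Rightarrow$ their associated gradeds agree through degree $d$ — but now with $d = k+1$, yields the claimed isomorphism of the underlying bigraded objects for $q \leq k+1$.

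I would then upgrade this to an isomorphism of profinitely braided bialgebras. Since $A^{(k)}$ (hence $A^{(k+1)}$) is connected as a graded algebra in the original grading, the component of $\Delta - (\id \hattimes 1 + 1 \hattimes \id)$ in original-degree $q \leq k+1$ takes values in $\hattimes$-factors of strictly smaller original-degree, as already observed in the Lemma's proof; those pieces have been identified in the previous step, so the comparison map respects the diagonals and counits in every bidegree $(p,q)$ with $q \leq k+1$.

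The main obstacle — really the only point requiring care — is the two-layer bookkeeping: the displayed word-length grading $p$ on $A^{(k)}$ is taken with respect to $P(A^{(k-1)})$, while on $A^{(k+1)}$ it is taken with respect to $P(A^{(k)})$, and these interpretations of $p$ coincide only in the range where the Lemma guarantees the relevant primitives agree. I expect the argument to close once one checks that the Lemma supplies precisely the degree $q \leq k+1$ needed for the filtration degrees to line up — which is exactly what the Lemma's inductive proof was engineered to produce — so no idea beyond the Lemma itself should be needed.
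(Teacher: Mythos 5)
Your overall strategy---turn the Lemma's statement about primitives into agreement of two filtrations on $A^{(k)}$ and then pass to associated gradeds---is exactly the deduction the paper leaves implicit (its entire proof is ``follows immediately from the previous Lemma''). But the step you flag as ``the only point requiring care'' is precisely where the argument does not close as you assert. Comparing $A^{(k)}_{p,q}$ with $A^{(k+1)}_{p,q}$ means comparing, in original degree $q$, the word-length filtration of $A^{(k)}$ by powers of $\im P(A^{(k-1)})$ (whose associated graded is $A^{(k)}$ itself) with the filtration by powers of $P(A^{(k)})$ (whose associated graded is $A^{(k+1)}$). These agree in degree $q$ exactly when $\im P(A^{(k-1)})_{q'} = P(A^{(k)})_{q'}$ for all $q' \leq q$, and the Lemma, applied at index $k-1$, supplies this only for $q' \leq k$. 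The degree-$(k+1)$ equality you feed in, $P(A^{(k)})_{k+1} = P(A^{(k+1)})_{k+1}$, concerns the wrong pair of stages: it governs the passage from $A^{(k+1)}$ to $A^{(k+2)}$, not from $A^{(k)}$ to $A^{(k+1)}$. Applied correctly, your implication yields $A^{(k+1)}_{p,q} \cong A^{(k+2)}_{p,q}$ for $q \leq k+1$, which after reindexing is only the range $q \leq k$ of the Corollary (the statement already recorded inside the Lemma's proof, and in fact all that the later applications require).

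Concretely, the obstruction in the edge degree $q = k+1$ is a primitive $\xi \in P(A^{(k)})_{p,k+1}$ with $p \geq 2$, i.e.\ one that is decomposable over $P(A^{(k-1)})$. Such a $\xi$ enlarges $A^{(k+1)}_{1,k+1} = P(A^{(k)})_{k+1}$ beyond $A^{(k)}_{1,k+1} = \im P(A^{(k-1)})_{k+1}$ and is removed from bidegree $(p,k+1)$, so the Corollary in degree $k+1$ is equivalent to the nonexistence of such $\xi$, i.e.\ to the Lemma strengthened by one degree ($P(A^{(k-1)})_q = P(A^{(k)})_q$ for $q \leq k+1$). Nothing in the Lemma as stated, nor in your proposal, rules this out; it needs a separate argument---for instance a connectivity argument in the lowest unsettled degree showing that a class of bidegree $(p,k+1)$, $p \geq 2$, which is primitive in the associated graded already has a primitive representative and hence vanishes there (this is how one checks the base case $P(A)_2 = P(A^{(1)})_2$ by hand). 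As written, your proof establishes the Corollary only for $q \leq k$; the case $q = k+1$ is a genuine gap, and the bookkeeping you hoped would ``line up'' is exactly where it fails.
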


We would like to think of the sequence $A^{(k)}$ of bialgebras as limiting onto a bialgebra $A^{(\infty)}$.  Since there are no (well-behaved) Hopf maps between these bialgebras, we cannot simply define this as a direct limit of the $A^{(k)}$.  The previous result allows us to construct this limit piecemeal, however:

\begin{defn}

The \emph{stable associated graded profinitely braided bialgebra associated to $A$}, $A^{(\infty)}$ is, as a bigraded object in $\proC$, 
$$A^{(\infty)}_{p, q} := A^{(q)}_{p, q}$$

\end{defn}

Corollary \ref{stability_cor} allows us to equip $A^{(\infty)}$ with a bigraded, profinitely braided bialgebra structure: to compute a product $xy$ of two bihomogenous elements $x \in A^{(\infty)}_{p, q}$ and $y \in A^{(\infty)}_{r, s}$, we may regard them as lying in $A^{(q+s)}$, compute their product there (which lies in $A^{(q+s)}_{p+r, q+s}$), and identify it with the corresponding element of $A^{(\infty)}_{p+r, q+s}$.  A similar construction defines the diagonal.  The verification of the axioms of a bialgebra may be done at finite stages.

\begin{rem} It is straightforward to verify that the operation $A \mapsto A^{(\infty)}$ defines an endofunctor on the category of primitively generated, profinitely braided bialgebras in $\proC$. Furthermore, it is definitional that this functor is idempotent.
\end{rem}

\begin{thm} \label{infty_Nichols_thm}

If $A$ is primitively generated, there is an isomorphism of graded, profinitely braided bialgebras
$$A^{(\infty)} \cong \Nic(P(A^{(\infty)})).$$

\end{thm}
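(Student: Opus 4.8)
The plan is to realize $A^{(\infty)}$ as the Nichols algebra on its word-length-degree-one part, and only at the end identify that part with the full space of primitives. Write $V := \bigoplus_q A^{(\infty)}_{1,q}$ for the degree-one piece of $A^{(\infty)}$ with respect to the ``powers of primitives'' grading. First I would assemble the formal properties of $A^{(\infty)}$ as a graded object in $\proC$: using Corollary~\ref{stability_cor} to identify $A^{(\infty)}$ with a finite stage $A^{(k)}$ (for $k$ large relative to the original degree) in any fixed original degree, together with the observations in Section~\ref{iter_P_sec} that $\Pgr$ preserves ``connected as a graded algebra,'' ``generated in degree one,'' ``primitively generated,'' and that $C^{\Pgr}_1$ is the (primitive) image of $P(C)$, I would conclude that $A^{(\infty)}$ is connected as a graded algebra, is generated as an algebra by $V$, and that $V$ consists of primitive elements. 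Consequently the free-$\BrAsshat$-algebra map extending the inclusion $V\hookrightarrow A^{(\infty)}$ is a surjection $\pi\colon \That(V)\twoheadrightarrow A^{(\infty)}$ which is graded, the identity on $V$, and automatically a map of profinitely braided bialgebras (both sides being generated by $V$ with $V$ primitive).

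The key step is a minimality statement for the Woronowicz ideal: \emph{every} graded Hopf ideal $J\leq \That(V)$ with $J_1=0$ satisfies $J\supseteq \mW[V]$. I would prove this by induction on degree, following the argument in the proof of Proposition~\ref{infinite_nic_prop}. For $x\in J_n$ with $n\geq 2$, coidealness of $J$ forces the $(p,q)$-component of the reduced diagonal of $x$ (which on $\That(V)$ is left multiplication by $\bS_{p,q}$) to lie in $J_p\hattimes\That(V)_q + \That(V)_p\hattimes J_q$, and by the inductive hypothesis $J_p\subseteq \mW[V]_p=\ker\bS_p$ and $J_q\subseteq \ker\bS_q$, so $\bS_{p,q}(x)\in\ker(\bS_p\hattimes\bS_q)$; since $\bS_n=(\bS_p\otimes\bS_q)\circ\bS_{p,q}$ by (\ref{sym_diag_eqn}), this gives $\bS_n(x)=0$, i.e. $x\in\mW[V]_n$. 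Applying this with $J=\ker\pi$ — which is a graded Hopf ideal with $J_1=0$, since $\pi$ is a bialgebra map and $\pi|_V$ is the injection $V=A^{(\infty)}_1\hookrightarrow A^{(\infty)}$ — we see $\pi$ factors as a surjective graded bialgebra map $\overline{\pi}\colon \Nic(V)=\That(V)/\mW[V]\twoheadrightarrow A^{(\infty)}$.

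Finally I would argue $\overline{\pi}$ is an isomorphism. By Proposition~\ref{infinite_nic_prop} (more precisely its proof), $P(\Nic(V))=\Nic(V)_1=V$. Hence the restriction of $\overline{\pi}$ to primitives is the inclusion $V=A^{(\infty)}_1\hookrightarrow A^{(\infty)}$, which is injective; by the standard fact recalled in the proof of Proposition~\ref{mm_simple_prop} — a map of connected profinitely braided bialgebras is injective iff it is injective on primitives — $\overline{\pi}$ is injective, hence an isomorphism of graded, profinitely braided bialgebras. Then $P(A^{(\infty)})=\overline{\pi}(P(\Nic(V)))=V$, so $A^{(\infty)}\cong\Nic(V)=\Nic(P(A^{(\infty)}))$, as claimed. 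The part I expect to require the most care is the first one: verifying that the bigraded object $A^{(\infty)}$, assembled only ``piecemeal'' from the finite stages $A^{(k)}$, genuinely inherits connectedness, generation in degree one, and primitivity of $V$ so that the Nichols-algebra machinery applies — together with the routine but fiddly bookkeeping with completed tensor products $\hattimes$ of pro-objects in the minimality induction.
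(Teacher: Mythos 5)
There is a genuine gap, and it sits exactly at what you call the ``key step.'' The minimality statement you formulate --- that \emph{every} graded Hopf ideal $J\leq\That(V)$ with $J_1=0$ satisfies $J\supseteq\mW[V]$ --- is false: $J=0$ is such an ideal, and correspondingly $\That(V)$ itself is a connected graded bialgebra generated by degree-one primitives that does not factor through $\Nic(V)$. In fact the induction you sketch proves the \emph{reverse} inclusion: from $\bS_{p,q}(x)\in J_p\hattimes\That(V)_q+\That(V)_p\hattimes J_q$ and $J_p\subseteq\ker\bS_p$, $J_q\subseteq\ker\bS_q$ one gets $\bS_n(x)=0$, i.e.\ $J\subseteq\mW[V]$ --- this is the standard \emph{maximality} of the Woronowicz ideal among graded Hopf ideals with trivial degree-one part, and it is consistent with Proposition \ref{infinite_nic_prop}, but it points the wrong way. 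With $\ker\pi\subseteq\mW[V]$ you obtain a surjection $A^{(\infty)}\twoheadrightarrow\Nic(V)$, not the factorization $\overline{\pi}\colon\Nic(V)\twoheadrightarrow A^{(\infty)}$ that your final injectivity argument needs; so the map whose injectivity you check at the end is never constructed.

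What is missing is precisely the nonformal input: one must show that $A^{(\infty)}$ has no primitives in word-length degree $\geq 2$, equivalently that $P(A^{(\infty)})\to Q(A^{(\infty)})$ is an isomorphism (only then does the surjection onto $\Nic(V)$ have trivial kernel, since the lowest-degree part of a nonzero graded Hopf ideal is primitive). This cannot follow from generalities about graded Hopf ideals, because it is exactly the feature that distinguishes $A^{(\infty)}$ from, say, $\That(V)$; it is where the iteration and Corollary \ref{stability_cor} must be used. The paper's proof consists of essentially only this point: since the bialgebra structure on $A^{(\infty)}$ is defined at finite stages, $P(A^{(\infty)})_{p,q}=P(A^{(q)})_{p,q}=A^{(q+1)}_{1,q}$ while $Q(A^{(\infty)})_{p,q}=Q(A^{(q)})_{p,q}=A^{(q)}_{1,q}$, and these agree by Corollary \ref{stability_cor}, so $A^{(\infty)}$ is a profinitely braided Nichols algebra on $P(A^{(\infty)})$. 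Your surrounding scaffolding (the surjection $\pi$ from $\That(V)$, primitivity of the degree-one part, injectivity detected on primitives) is fine and could be kept, but the argument only closes once you insert this stabilization computation in place of the false minimality lemma.
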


\begin{proof}

Since the bialgebra structure on $A^{(\infty)}$ is defined via Corollary \ref{stability_cor}, the primitives and indecomposables of the algebra may be computed at finite stages:
$$P(A^{(\infty)}_{p, q}) = P(A^{(q)}_{p, q}) = A^{(q+1)}_{1, q} \; \mbox{ and } \; Q(A^{(\infty)}_{p, q}) = Q(A^{(q)}_{p, q}) = A^{(q)}_{1, q}.$$
These vector spaces coincide (again by Corollary \ref{stability_cor}); so $A^{(\infty)}$ is a profinitely braided Nichols algebra. 

\end{proof}

\subsection{Filtering enveloping algebras} \label{filt_env_sec} 

Let $\CC$ be an operad with a perfect structure theory, and let $L$ be a primitively generated Hopf $\CC$-algebra.  Consider the primitive sequence $\{U_{\CC}(L)^{(n)}\}$ of the enveloping algebra, as defined in the previous section by the formula
$$U_{\CC}(L)^{(n)} =(U_{\CC}(L)^{(n-1)})^{\Pgr}.$$  
Here the filtration is taken with respect to the primitives in $U_{\CC}(L)^{(n-1)}$.  In the following, recall the construction of $L^{\langle V \rangle \gr}$ in Definition \ref{langlerangle_defn}.

\begin{defn}

The \emph{primitive sequence} $\{L^{(n)}\}$ of the Hopf $\CC$-algebra $L$ is inductively defined by setting $L^{(0)} = L$, and 
$$L^{(n)} = (L^{(n-1)})^{\langle P \rangle \gr} \leq (U_{\CC}(L)^{(n-1)})^{\Pgr} = U_{\CC}(L)^{(n)}.$$
where $P = P(U_{\CC}(L)^{(n-1)})$.

\end{defn}

Iteratively applying Proposition \ref{perfect_enveloping_prop}, we have:

\begin{prop}

For each $n$, $L^{(n)}$ is a primordial, primitively generated, Hopf $\CC$-algebra, and there is an isomorphism of graded, profinitely braided bialgebras
\beqn U_{\CC}(L^{(n)}) \cong U_{\CC}(L)^{(n)}. \label{iter_U_eqn} \eeqn  

\end{prop}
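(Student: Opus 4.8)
The plan is a straightforward induction on $n$, feeding the conclusion of Proposition \ref{perfect_enveloping_prop} back into its own hypotheses at each stage. The base case $n=0$ is immediate: $L^{(0)}=L$ is a primitively generated Hopf $\CC$-algebra by assumption, it is primordial by Proposition \ref{perfect_primordial_prop} since $\CC$ has a perfect structure theory, and $U_{\CC}(L^{(0)})=U_{\CC}(L)=U_{\CC}(L)^{(0)}$ tautologically.

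For the inductive step, suppose $L^{(n-1)}$ is a primordial, primitively generated Hopf $\CC$-algebra and that there is an isomorphism $\phi\colon U_{\CC}(L^{(n-1)})\xrightarrow{\sim} U_{\CC}(L)^{(n-1)}$ of graded, profinitely braided bialgebras. The first step is a bookkeeping identification. By Proposition \ref{prim_prim_prop} the inclusion $L^{(n-1)}\hookrightarrow U_{\CC}(L^{(n-1)})$ identifies $P(L^{(n-1)})$ with $L^{(n-1)}\cap P(U_{\CC}(L^{(n-1)}))$, and primordiality makes this all of $P(U_{\CC}(L^{(n-1)}))$; transporting along $\phi$ we may therefore identify $P:=P(U_{\CC}(L)^{(n-1)})$ with the subobject $V:=P(L^{(n-1)})\leq L^{(n-1)}$. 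Under this identification the filtration of $U_{\CC}(L)^{(n-1)}$ by powers of $P$ corresponds to that of $U_{\CC}(L^{(n-1)})$ by powers of $V$, so $\phi$ descends to an isomorphism of graded, profinitely braided bialgebras $U_{\CC}(L^{(n-1)})^{V\gr}\xrightarrow{\sim}(U_{\CC}(L)^{(n-1)})^{\Pgr}=U_{\CC}(L)^{(n)}$ carrying the $\CC$-subalgebra $(L^{(n-1)})^{\langle V\rangle\gr}$ onto $L^{(n)}$.

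Next I would apply Proposition \ref{perfect_enveloping_prop} with $L^{(n-1)}$ in the role of $L$ and $V=P(L^{(n-1)})$ in the role of $V$; this is legitimate because $V$ generates $L^{(n-1)}$ (it is primitively generated) and $V\leq P(U_{\CC}(L^{(n-1)}))$. This yields that $\beta:=\beta_{L^{(n-1)}}^{V}\colon U_{\CC}\bigl((L^{(n-1)})^{\langle V\rangle\gr}\bigr)\to U_{\CC}(L^{(n-1)})^{V\gr}$ is an isomorphism and that the transported structure makes $L^{(n)}=(L^{(n-1)})^{\langle V\rangle\gr}$ a primordial Hopf $\CC$-algebra. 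That $L^{(n)}$ is primitively generated is then a short check: it is generated as a $\CC$-algebra by the image of $V$, which is primitive in $U_{\CC}(L)^{(n)}$ (the image of $P(A)$ in $A^{\Pgr}$ stays primitive, as observed in section \ref{iter_P_sec}) and lies in $L^{(n)}$, while primordiality gives $P(U_{\CC}(L^{(n)}))\subseteq L^{(n)}$, so $L^{(n)}=\CC(\im V)\subseteq\CC(P(L^{(n)}))$. Finally, after transporting its bialgebra structure $\beta$ is an isomorphism of graded, profinitely braided bialgebras, and composing it with the descended isomorphism of the previous paragraph gives $U_{\CC}(L^{(n)})\cong U_{\CC}(L)^{(n)}$, completing the induction.

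The one genuinely delicate point --- and the place where the word \emph{primordial} in the inductive statement earns its keep --- is the identification in the second paragraph of $P(U_{\CC}(L)^{(n-1)})$ with a subobject of $L^{(n-1)}$: this is exactly what lets Proposition \ref{perfect_enveloping_prop}, which is stated for a subobject $V$ of the Hopf $\CC$-algebra itself, be applied. Everything else is formal, and grading compatibility is routine since $\phi$, $\beta$, and the passage to associated graded are all maps of graded objects.
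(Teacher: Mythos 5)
Your proof is correct and is essentially the paper's own argument: the paper disposes of this proposition with the single line ``Iteratively applying Proposition \ref{perfect_enveloping_prop}, we have:'', and your induction simply makes that iteration explicit. The bookkeeping you flag as the delicate point --- using the inductive isomorphism together with primordiality and Proposition \ref{prim_prim_prop} to identify $P(U_{\CC}(L)^{(n-1)})$ with $P(L^{(n-1)})\leq L^{(n-1)}$ so that Proposition \ref{perfect_enveloping_prop} applies --- is exactly the implicit content of the paper's iteration, and your verification of primitive generation of $L^{(n)}$ is a correct filling-in of a detail the paper leaves unstated.
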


Corollary \ref{stability_cor} implies that $U_{\CC}(L)^{(n)}$ and $U_{\CC}(L)^{(n+1)}$ are isomorphic in bidegrees $(p,q)$ when $n+1 \geq q$.  Therefore their primitives agree in the same range, and so the $\CC$-subalgebras generated by the primitives -- which are $L^{(n)}$ and $L^{(n+1)}$ -- also agree in this range.  Therefore, we may unambiguously make the following definition:

\begin{defn}

Let $L^{(\infty)}$ be the subspace of $U_{\CC}(L)^{(\infty)}$ given by $L^{(n)}$ in bidegrees $(p,q)$ for $n+1>q$.

\end{defn}

It is easy to see that $L^{(\infty)}$ becomes a graded $\CC$-algebra, where $\CC$-operations are defined on representatives at the finite stages $L^{(n)}$.  Indeed, it follows from the definition that $L^{(\infty)}$ is the $\CC$-subalgebra of $U_{\CC}(L)^{(\infty)}$ generated by $P(U_{\CC}(L)^{(\infty)})$.  Equation (\ref{iter_U_eqn}) implies that there is an isomorphism
$$U_{\CC}(L^{(\infty)}) \cong U_{\CC}(L)^{(\infty)}.$$
The bialgebra structure on the right side may then be transported to the left; then by Proposition \ref{C_to_Hopf_prop} we observe that $L^{(\infty)}$ carries a primitively generated Hopf $\CC$-structure.

\begin{prop}  \label{pbw_C_prop}

If $\CC$ has a perfect structure theory, there is an isomorphism of profinitely braided bialgebras
$$U_{\CC}(L)^{(\infty)} \cong \Nic[P(L^{(\infty)})].$$

\end{prop}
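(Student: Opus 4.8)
The plan is to deduce this directly from Theorem \ref{infty_Nichols_thm}, applied to the profinitely braided bialgebra $A = U_{\CC}(L)$, once we identify $P(A^{(\infty)})$ with $P(L^{(\infty)})$.

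First I would invoke Proposition \ref{C_to_Hopf_prop}: since $L$ is a primitively generated Hopf $\CC$-algebra, $U_{\CC}(L)$ is a primitively generated, profinitely braided bialgebra. Hence Theorem \ref{infty_Nichols_thm} applies and gives an isomorphism of graded, profinitely braided bialgebras
$$U_{\CC}(L)^{(\infty)} \cong \Nic\bigl(P(U_{\CC}(L)^{(\infty)})\bigr).$$
So the proof reduces to the identification $P(U_{\CC}(L)^{(\infty)}) \cong P(L^{(\infty)})$, compatibly with the coalgebra structure, together with the observation that $\Nic(-)$ and $\Nic[-]$ agree as functors on $\proC$ (Remark \ref{quot_note} together with the fact from Proposition \ref{nic_prop} that $\mW$ is a left ideal, so $\mW[V]$ is already two-sided, whence $\Nic[V] = \That(V)/\mW[V] = \Nic(V)$).

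For the identification of primitives, I would use the structural facts established in the paragraphs preceding the statement: $L^{(\infty)}$ is a primitively generated Hopf $\CC$-algebra, and there is an isomorphism $U_{\CC}(L^{(\infty)}) \cong U_{\CC}(L)^{(\infty)}$ of profinitely braided bialgebras. Since $\CC$ has a perfect structure theory, Proposition \ref{perfect_primordial_prop} shows that $L^{(\infty)}$ is primordial, so by the definition of primordial together with Proposition \ref{prim_prim_prop} the natural map $P(L^{(\infty)}) \to P(U_{\CC}(L^{(\infty)}))$ is an isomorphism. Transporting along $U_{\CC}(L^{(\infty)}) \cong U_{\CC}(L)^{(\infty)}$ gives $P(L^{(\infty)}) \cong P(U_{\CC}(L)^{(\infty)})$. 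Substituting this into the displayed isomorphism above yields $U_{\CC}(L)^{(\infty)} \cong \Nic[P(L^{(\infty)})]$.

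The argument is essentially bookkeeping: the one point requiring care is tracking that every isomorphism in the chain respects both the algebra and the coalgebra structure. This is automatic here, since each intermediate isomorphism is either a bialgebra isomorphism produced by a previously cited result (Theorem \ref{infty_Nichols_thm}, the displayed isomorphism $U_{\CC}(L^{(\infty)}) \cong U_{\CC}(L)^{(\infty)}$, Proposition \ref{prim_prim_prop}) or the result of applying the functor $\Nic[-]$ — which carries along the transported Hopf structure of Proposition \ref{infinite_nic_prop} — to an isomorphism of braided vector spaces. The only genuine prerequisite, namely that $L^{(\infty)}$ is a Hopf $\CC$-algebra of the kind to which Propositions \ref{perfect_primordial_prop} and \ref{prim_prim_prop} apply, is exactly what is set up in the discussion immediately before the Proposition, so there is no real obstacle beyond assembling these pieces in the right order.
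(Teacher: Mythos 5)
Your proposal is correct and takes essentially the same route as the paper: both apply Theorem \ref{infty_Nichols_thm} to $U_{\CC}(L)$ and reduce the claim to the identification $P(U_{\CC}(L)^{(\infty)}) = P(L^{(\infty)})$, that is, to the primordiality of $L^{(\infty)}$. The only (inessential) difference is in how primordiality is obtained: the paper deduces it from stability of the sequence $U_{\CC}(L)^{(n)}$ together with the fact that each $L^{(n)}$ is primordial, whereas you apply Proposition \ref{perfect_primordial_prop} directly to the Hopf $\CC$-algebra structure on $L^{(\infty)}$ established just before the statement and transport along $U_{\CC}(L^{(\infty)}) \cong U_{\CC}(L)^{(\infty)}$ — both are valid.
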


\begin{proof}

This is an immediate consequence of Theorem \ref{infty_Nichols_thm}, once we show that 
$$P(U_{\CC}(L)^{(\infty)}) = P(L^{(\infty)}).$$
Equivalently, we must show that $L^{(\infty)}$ is primordial.  This follows from stability of the sequence $U_{\CC}(L)^{(n)}$ and the fact that each $L^{(n)}$ is primordial.

\end{proof}

\begin{rem} In Theorem 4.1.3 of \cite{loday}, Loday shows that the classical Cartier-Milnor-Moore and strong form\footnote{That is, that $U(L)\cong \Sym(L)$ as coalgebras.} of the Poincar\'e-Birkhoff-Witt theorems are equivalent. There is a certain sense in which this holds in the braided setting.  Certainly the isomorphism $\mu: U_{\CC}(P_{\CC}(A)) \cong A$ of Proposition \ref{perfect_prop} is used in the proof of Proposition \ref{perfect_enveloping_prop}, which is a component of the proof of the previous result.  Conversely, the surjectivity of $\mu$ is automatic, and injectivity follows from injectivity of $\mu^{(\infty)}$.  With Proposition \ref{pbw_C_prop} in hand, it suffices to verify that $\mu^{(\infty)}$ carries $P(P_{\CC}(A)^{(\infty)})$ injectively into $A^{(\infty)}$, but this is nearly definitional.  

On the other hand, it is not the case that there is a coalgebra isomorphism $U_{\CC}(L) \cong \Nic[P(L^{(\infty)})]$.  For instance, taking $L$ to be the free $\CC$-algebra $\CC[V]$ on a finitely braided $V$, this would imply that $T(V)$ is isomorphic as a coalgebra to a Nichols algebra, which is false in general.

\end{rem}

\subsection{The tensor algebra}

Now let $V$ be an object in $\proC$, and consider the constructions of section \ref{iter_P_sec} applied to the tensor algebra $\That(V)$:

\begin{defn} Let $T^{(\infty)}(V) = (\That(V))^{(\infty)}$, and $P^{(\infty)}(V) = P(T^{(\infty)}(V))$.
\end{defn}

\begin{prop}

$P^{(\infty)}$ is a monad on $\proC$.  Further, if $A$ is a primitively generated, profinitely braided bialgebra, then $P(A^{(\infty)})$ is an algebra for the monad $P^{(\infty)}$.

\end{prop}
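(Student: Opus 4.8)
The plan is to mimic the proof of Proposition \ref{PT_prop}, inserting the stable associated graded construction at the appropriate places. Throughout I use that $(-)^{(\infty)}$ is a functorial, idempotent endofunctor of the category of primitively generated, profinitely braided bialgebras in $\proC$ (the remark preceding Theorem \ref{infty_Nichols_thm}), together with the universal property of $\That(-)$: for a profinitely braided bialgebra $B$, bialgebra maps $\That(W)\to B$ correspond to maps $W\to P(B)$ in $\proC$ (the image of $W$ being primitive). To see that $P^{(\infty)}$ is an endofunctor of $\proC$, note that a morphism $f\colon V\to W$ induces a bialgebra map $\That(f)\colon\That(V)\to\That(W)$ of primitively generated objects, hence $T^{(\infty)}(f)\colon T^{(\infty)}(V)\to T^{(\infty)}(W)$, and $P$ of this is $P^{(\infty)}(f)$. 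The unit $\eta\colon\id\to P^{(\infty)}$ is the natural inclusion $V\hookrightarrow P^{(\infty)}(V)$: the generators $V$ of $\That(V)$ are primitive and lie in original degree $1$, and the Lemma preceding Corollary \ref{stability_cor} shows that in original degree $1$ the primitives of every $\That(V)^{(n)}$ coincide with $V$, so $V$ includes naturally into $P^{(\infty)}(V)=P(T^{(\infty)}(V))$.

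For the multiplication, let $A$ be a primitively generated, profinitely braided bialgebra, so that $A^{(\infty)}$ is again such and is primitively generated. By the universal property of $\That(-)$, the identity of $P(A^{(\infty)})$ is the adjoint of a bialgebra map $M_A\colon\That(P(A^{(\infty)}))\to A^{(\infty)}$, which is surjective (as $A^{(\infty)}$ is primitively generated) and restricts to the identity on the word-length-one part $P(A^{(\infty)})$. Applying $(-)^{(\infty)}$, using $(\That(P(A^{(\infty)})))^{(\infty)}=T^{(\infty)}(P(A^{(\infty)}))$ and idempotence $(A^{(\infty)})^{(\infty)}=A^{(\infty)}$, we obtain a bialgebra map $M_A^{(\infty)}\colon T^{(\infty)}(P(A^{(\infty)}))\to A^{(\infty)}$; restricting to primitives gives
\[
\mu_A := P\bigl(M_A^{(\infty)}\bigr)\colon P^{(\infty)}\bigl(P(A^{(\infty)})\bigr)\longrightarrow P(A^{(\infty)}).
\]
Taking $A=\That(V)$, so that $A^{(\infty)}=T^{(\infty)}(V)$ and $P(A^{(\infty)})=P^{(\infty)}(V)$, defines the monadic composition $\mu\colon P^{(\infty)}\circ P^{(\infty)}\to P^{(\infty)}$; for a general primitively generated $A$ the map $\mu_A$ is the candidate $P^{(\infty)}$-algebra structure on $P(A^{(\infty)})$.

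It remains to check the monad axioms and the algebra axioms. The unit identities $\mu\circ(\eta P^{(\infty)})=\id_{P^{(\infty)}}=\mu\circ(P^{(\infty)}\eta)$ follow because $M_{\That(V)}^{(\infty)}$ restricts to the identity on the word-length-one generators. Associativity of $\mu$, and of each $\mu_A$, follows exactly as in Proposition \ref{PT_prop}: one applies $(-)^{(\infty)}$ to the evident commuting square of bialgebra maps relating $\That(P(T^{(\infty)}(P(A^{(\infty)}))))$, $\That(P(A^{(\infty)}))$ and $A^{(\infty)}$, and restricts to primitives, whereupon the square collapses to the tautology $\mu_A=\id_{P(A^{(\infty)})}\circ\mu_A$; naturality of $\mu$ and of $A\mapsto(P(A^{(\infty)}),\mu_A)$ is immediate from the functoriality of $\That(-)$, of $(-)^{(\infty)}$, and of $P$. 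I expect the main obstacle to be making the two preceding sentences rigorous: the stable object $A^{(\infty)}$ is multigraded and its product and diagonal are read off at finite stages via Corollary \ref{stability_cor}, so one must verify that applying $(-)^{(\infty)}$ to $M_A$ is compatible with these finite-stage descriptions — in particular that $M_A^{(\infty)}$ genuinely fixes the word-length-one part and that the identity appearing in the associativity tautology is the identity of the correct multigraded piece. Once this bookkeeping is in place, the remaining verifications are formal, just as in Proposition \ref{PT_prop}.
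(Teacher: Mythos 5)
Your proposal is correct and follows essentially the same route as the paper: construct $M_A\colon \That(P(A^{(\infty)}))\to A^{(\infty)}$ from the universal property of the tensor algebra, apply $(-)^{(\infty)}$ using idempotence to get $M_A^{(\infty)}$, restrict to primitives to define $\mu_A$ (with $A=\That(V)$ giving the monadic composition), and take the unit to be the inclusion of $V$ as primitives. The paper likewise leaves the associativity and unitality verifications as a routine adaptation of Proposition \ref{PT_prop}, so your extra bookkeeping remarks only make explicit what the paper elides.
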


\begin{proof}

The proof of this fact is almost identical to that of Proposition \ref{PT_prop}.  We construct the monadic composition and unit, and leave verification of associativity and unitality to the reader. 

Since $A^{(\infty)}$ is a $\BrAsshat$-algebra, there is an algebra map $M_A: \That(P(A^{(\infty)})) \to A^{(\infty)}$ which is induced by the inclusion $P(A^{(\infty)}) \hookrightarrow A^{(\infty)}$.  This is a bialgebra map, since it carries the primitive generators to primitives.  Therefore it induces a bialgebra map 
$$M_A^{(\infty)}: T^{(\infty)}(P(A^{(\infty)})) \to (A^{(\infty)})^{(\infty)} = A^{(\infty)}.$$
The restriction of $M_A^{(\infty)}$ to the primitives of these bialgebra is of the form
$$\mu_A = P(M_A^{(\infty)}): P^{(\infty)}(P(A^{(\infty)})) \to P(A^{(\infty)}).$$
Taking $A = \That(V)$ gives the monoidal composition; more generally this gives the algebra structure map.  Since $V$ is primitive in $\That(V)$ and hence $T^{(\infty)}(V)$, the natural inclusion defines a map $\eta_V: V \to P^{(\infty)}(V)$ which defines the unit of the monad.

\end{proof} 

The following is an immediate consequence of Theorem \ref{infty_Nichols_thm}.

\begin{prop} There exists an isomorphism of endofunctors of $\proC$
$$T^{(\infty)} \cong \Nic \circ P^{(\infty)}.$$
\end{prop}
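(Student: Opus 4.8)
The plan is to derive the statement by applying Theorem~\ref{infty_Nichols_thm} pointwise and then upgrading the resulting family of isomorphisms to an isomorphism of endofunctors by a naturality check.

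First I would observe that for every $V \in \proC$ the tensor algebra $\That(V)$ is a profinitely braided bialgebra that is primitively generated: it is generated as a $\BrAsshat$-algebra by $V$, and $V \subseteq P(\That(V))$ since $V$ is primitive by the construction of section~\ref{tensor_section}. Hence Theorem~\ref{infty_Nichols_thm} applies with $A = \That(V)$ and produces an isomorphism
$$\phi_V \colon T^{(\infty)}(V) = \That(V)^{(\infty)} \;\iso\; \Nic\bigl(P(\That(V)^{(\infty)})\bigr) = \Nic\bigl(P^{(\infty)}(V)\bigr),$$
so the two endofunctors $T^{(\infty)}$ and $\Nic \circ P^{(\infty)}$ agree on objects of $\proC$.

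Next I would pin $\phi_V$ down as (the inverse of) a manifestly natural map. For a primitively generated, profinitely braided bialgebra $A$, the inclusion $P(A^{(\infty)}) \hookrightarrow A^{(\infty)}$ together with the universal property of the tensor algebra (used exactly as in the proof that $P^{(\infty)}$ is a monad) yields a bialgebra map $M_{A^{(\infty)}} \colon \That(P(A^{(\infty)})) \to A^{(\infty)}$, which is surjective because $A$ is primitively generated. Since $A^{(\infty)}$ is a profinitely braided Nichols algebra — this is precisely the content extracted in the proof of Theorem~\ref{infty_Nichols_thm} — the map $M_{A^{(\infty)}}$ factors through the quotient $\That(P(A^{(\infty)})) \twoheadrightarrow \Nic(P(A^{(\infty)}))$ of Proposition~\ref{infinite_nic_prop}, inducing an isomorphism $\Nic(P(A^{(\infty)})) \iso A^{(\infty)}$ whose inverse is the isomorphism of Theorem~\ref{infty_Nichols_thm}. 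Taking $A = \That(V)$ realizes $\phi_V$ in these terms.

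Finally I would verify naturality. A morphism $f \colon V \to W$ in $\proC$ induces the bialgebra map $\That(f) \colon \That(V) \to \That(W)$ carrying $V$ to $W$; by functoriality of $(-)^{(\infty)}$ (the Remark preceding Theorem~\ref{infty_Nichols_thm}) this gives $\That(f)^{(\infty)} \colon T^{(\infty)}(V) \to T^{(\infty)}(W)$, which restricts on primitives to $P^{(\infty)}(f)$ and hence induces $\Nic(P^{(\infty)}(f))$ by Remark~\ref{quot_note}. The map $M_{(-)^{(\infty)}}$ is natural because it is built from the universal property of $\That$, the quotient to $\Nic$ is natural by Proposition~\ref{infinite_nic_prop} and Remark~\ref{quot_note}, and the formation of primitives is functorial; chasing these compatibilities gives the commuting square relating $\phi_V$, $\phi_W$, $\That(f)^{(\infty)}$, and $\Nic(P^{(\infty)}(f))$. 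Thus $\{\phi_V\}$ assembles into the desired natural isomorphism $T^{(\infty)} \cong \Nic \circ P^{(\infty)}$. The one point that really needs care — and hence the main obstacle — is confirming that the isomorphism supplied by Theorem~\ref{infty_Nichols_thm} coincides with this natural map, i.e., that the identifications $P(A^{(\infty)}_{p,q}) = Q(A^{(\infty)}_{p,q})$ made in its proof are exactly those induced by $M_{A^{(\infty)}}$; once that bookkeeping is unwound, naturality is automatic since every remaining ingredient (the tensor algebra functor, the formation of primitives, the quotient $\That \to \Nic$, and the endofunctor $(-)^{(\infty)}$) is functorial.
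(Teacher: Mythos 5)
Your proposal is correct and follows the paper's route exactly: the paper simply declares the statement an immediate consequence of Theorem \ref{infty_Nichols_thm} applied to $A = \That(V)$, which is the first step of your argument. Your additional bookkeeping — realizing the isomorphism via the natural surjection $\That(P(A^{(\infty)})) \to A^{(\infty)}$ and checking the naturality square in $V$ — is exactly the detail the paper leaves implicit in asserting an isomorphism of endofunctors, and it is sound.
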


\section{The proof of Theorems \ref{mm_intro} and \ref{pbw_intro}} \label{W_axioms_section}

Let $\bC$ be a $k$-linear, split abelian, braided monoidal category.  Let $A \in \proC$ be a primitively generated, profinitely braided bialgebra, and let $V \leq P(A)$ be a subobject which is carried isomorphically to $Q(A)$.  Let $\pi: \That(V) \to A$ be the natural surjective map of bialgebras.

\begin{lem} \label{factor_lem}

There is a map of Hopf algebras $a: A \to \Nic[Q(A)]$ with the property that $a \circ \pi$ is the natural projection
$$\That(V) \twoheadrightarrow \That(V)/\mW[V] = \Nic[V].$$

\end{lem}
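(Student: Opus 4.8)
The plan is to construct $a$ as a quotient map rather than building it by hand. The key observation is that $\mW[V] \subseteq \That(V)$ is a Hopf ideal (Proposition \ref{infinite_nic_prop}), so $\Nic[V] = \That(V)/\mW[V]$ is a profinitely braided Nichols algebra with $Q(\Nic[V]) \cong V \cong Q(A)$, and the induced map $\That(V) \to \Nic[V]$ is a Hopf map. To factor it through $\pi$, it suffices to show $\ker(\pi) \subseteq \mW[V]$. Write $I = \ker(\pi)$; this is a Hopf ideal, and $A = \That(V)/I$. Since $Q(A) \cong V$, the ideal $I$ contains no elements of degree $\leq 1$, i.e.\ $I \cap (\one \oplus V) = 0$.

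First I would use Lemma \ref{filt_ideal_lem} to present $I = \bigcup_j I_j$ as an iterated union of Hopf ideals, where $I_0 = 0$ and $I_j$ is generated by $I_{j-1}$ together with a subobject $S_j$ whose image in $\That(V)/I_{j-1}$ is \emph{primitive}. The strategy is then to prove by induction on $j$ that $I_j \subseteq \mW[V]$. For the inductive step, assuming $I_{j-1} \subseteq \mW[V]$, I need to show that $S_j \subseteq \mW[V]$, and then that the ideal generated by $S_j$ and $I_{j-1}$ stays inside $\mW[V]$ — the latter being automatic since $\mW[V]$ is already a (two-sided, by Proposition \ref{nic_prop} / Proposition \ref{ideal_prop}) ideal. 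The crux is the claim about $S_j$: an element whose image in the quotient bialgebra $\That(V)/I_{j-1}$ is primitive, and which lies in $\ker(\pi_{j-1})$ where $\pi_{j-1}: \That(V)/I_{j-1} \to A$, must already lie in $\mW[V]$.

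Here is where I expect the main obstacle. The natural tool is Proposition \ref{infinite_nic_prop}'s proof technique: $\mW[V]_n = \ker(\bS_n: \That(V)_n \to \That(V)_n)$, and $\bS_n = (\bS_p \otimes \bS_q)\circ \bS_{p,q}$. An element $x \in S_j$ of degree $n$ maps to a primitive in $\That(V)/I_{j-1}$, which is \emph{connected as a graded algebra} (it is primitively generated by $V$ sitting in degree $1$, hence connected by Proposition 5.8 of \cite{ardizzoni_prim_gen}); since it is a \emph{Nichols}-type situation one wants to conclude its only primitives are in degree $1$. But $\That(V)/I_{j-1}$ need not be a Nichols algebra — it can have extra primitives in higher degrees, which is precisely the phenomenon driving the whole paper. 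The resolution: because $x \in \ker(\pi_{j-1})$ and $A = \That(V)/I$ also has $Q(A) = V$, one uses that $\Nic[V]$ is the \emph{terminal} object among primitively-generated bialgebras generated by $V$ in degree $1$ — more precisely, $\Nic[V] = \That(V)/\mW[V]$ where $\mW[V]$ is the \emph{largest} Hopf ideal meeting $\one \oplus V$ trivially. So I would prove the lemma: \emph{if $J \subseteq \That(V)$ is any Hopf ideal with $J \cap (\one \oplus V) = 0$, then $J \subseteq \mW[V]$}. Given this, apply it to $J = I = \ker(\pi)$ directly and the factorization $a$ exists; $a$ is a Hopf map as the map induced on the quotient by a Hopf map.

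To prove that lemma, I would argue inductively on degree $n$ that $J_n \subseteq \mW[V]_n = \ker(\bS_n)$. Take $x \in J_n$. For each $p+q = n$ with $p,q > 0$, the diagonal $\Delta(x) \bmod J$ has $(p,q)$-component $\bS_{p,q}(x) \bmod J$, and since $J$ is a Hopf ideal this lies in $(J_p \otimes \That(V)_q) + (\That(V)_p \otimes J_q)$ modulo lower-filtration corrections — more carefully, $\bS_{p,q}(x) \in \mW[V]_p \hattimes \That(V)_q + \That(V)_p \hattimes \mW[V]_q$ by the inductive hypothesis $J_p \subseteq \mW[V]_p$, $J_q \subseteq \mW[V]_q$ plus the Hopf-ideal condition on $J$ (this requires also knowing $\bS_{p,q}(x)$'s correction terms vanish, which one handles by the $\bS_n = (\bS_p\otimes\bS_q)\circ\bS_{p,q}$ identity applied iteratively, exactly as in Proposition \ref{infinite_nic_prop}). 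Then applying $\bS_p \otimes \bS_q$ kills it, so $\bS_n(x) = (\bS_p\otimes\bS_q)\bS_{p,q}(x) = 0$; hence $x \in \mW[V]_n$. The base cases $n \leq 1$ are forced by $J \cap (\one\oplus V) = 0$. The one technical point to be careful about is the interaction of the completed tensor $\hattimes$ with the filtration/grading, but this is exactly the bookkeeping already carried out in the proof of Proposition \ref{infinite_nic_prop}, so I would cite and reuse it rather than redo it.
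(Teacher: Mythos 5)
Your reduction of the lemma to the inclusion $\ker(\pi) \subseteq \mW[V]$ is fine (the two are equivalent), but the key lemma you propose to prove it with --- \emph{every} Hopf ideal $J \leq \That(V)$ with $J \cap (\one \oplus V) = 0$ satisfies $J \subseteq \mW[V]$ --- is false. Take $\bC = \Vect_k$ with the trivial (symmetric) braiding, $V = k\{x,y\}$, and $J$ the ideal of $T(V)$ generated by the primitive element $xy - yx - x$. Then $J$ is a Hopf ideal, $T(V)/J$ is the enveloping algebra of the two-dimensional nonabelian Lie algebra, so by Poincar\'e--Birkhoff--Witt $J \cap (\one \oplus V) = 0$; but here $\mW[V]$ is the kernel of $T(V) \to \Sym(V)$, and $xy - yx - x$ maps to $-x \neq 0$, so $J \not\subseteq \mW[V]$. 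The correct maximality property of the Woronowicz/Nichols ideal is only among \emph{graded} Hopf ideals concentrated in degrees $\geq 2$, and your inductive argument tacitly assumes this: writing ``$J_n$'' and inducting on degree only constrains the homogeneous elements of $J$, whereas $\ker(\pi)$ is in general not a homogeneous ideal (in the example above every homogeneous constraint is satisfied while $J \not\leq \mW[V]$). Note also that the hypothesis you actually have available --- $V$ is carried isomorphically onto $Q(A)$, not merely $\ker(\pi) \cap (\one \oplus V) = 0$ --- is strictly stronger, is exactly what excludes this counterexample, and is never used in your argument; any repair must use it in an essential way, and cannot proceed degree by degree.

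The paper's proof works on the other side of $\pi$: starting from $A$ itself, it iteratively quotients by the Hopf ideals generated by the \emph{decomposable} primitives, $I_{n-1} = (P(A_{n-1}) \cap \ker(\epsilon)^2)$, producing a tower $A = A_0 \twoheadrightarrow A_1 \twoheadrightarrow \cdots$ whose colimit $A_\infty$ has $Q(A_\infty) \cong Q(A)$ and $P(A_\infty) \cong Q(A_\infty)$, hence is $\Nic[Q(A)]$; the map $a$ is the map to the colimit, and Proposition \ref{infinite_nic_prop} identifies $a \circ \pi$ with the projection onto $\That(V)/\mW[V]$. This sidesteps the gradedness issue entirely (the inclusion $\ker(\pi) \subseteq \mW[V]$ then falls out as a consequence rather than being the engine of the proof), and it is precisely the non-gradedness of $\ker(\pi)$ --- equivalently, combinatorial rank $> 1$ --- that makes the iterative construction necessary. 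Your use of Lemma \ref{filt_ideal_lem} points in a similar direction, but the inductive step you would need there (a primitive of $\That(V)/I_{j-1}$ killed by $\pi_{j-1}$ has a representative in $\mW[V]$) is again essentially the false claim above unless the isomorphism $V \cong Q(A)$ is brought to bear.
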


\begin{proof}

The argument is similar to that of Lemma \ref{filt_ideal_lem}.  To construct $a$, consider the sequence of bialgebras
$$A = A_0 \twoheadrightarrow A_1 \twoheadrightarrow A_2 \twoheadrightarrow A_3 \twoheadrightarrow \cdots$$
where $A_n$ is the quotient $A_n := A_{n-1} / I_{n-1}$ of $A_{n-1}$ by the ideal $I_{n-1}$ generated by the decomposable primitives in $A_{n-1}$: 
$$I_{n-1} = (P(A_{n-1}) \cap \ker(\epsilon)^2).$$
This is clearly a Hopf ideal, so this is in fact a sequence of bialgebras.  

The direct limit $A_\infty$ of this sequence is a quotient bialgebra of $A$ with the property that $Q(A) \to Q(A_{\infty})$ is an isomorphism (since no relations amongst $Q(A)$ are imposed in the series of quotients).  However, the primitives of $A_\infty$ are also isomorphic to $Q(A)$, by construction.  Thus $A_\infty = \Nic[Q(A)]$, and $a$ is the map to the colimit.  

Finally, $a \circ \pi$ induces the defining isomorphism $V \to Q(A)$, so Proposition \ref{infinite_nic_prop} implies that it induces the quotient by $\mW[V]$.

\end{proof}

See pg. 553 of \cite{kharchenko_skew}, section 5 of \cite{ardizzoni_mm}, or section 3 (particularly Corollary 3.17) of \cite{ardizzoni_comb} for nearly identical constructions.  In particular, we note that $A$ has \emph{combinatorial rank} $n$ (in the sense of \cite{kharchenko_comb}) if and only if $I_n=0$.

The next result is a corollary of the above proof, and gives an alternate characterization of the part of $P_{\mW_*}(A)$ built from the non-unit operations $\mW \leq \mW_*$:

\begin{prop} \label{alt_desc_prop}

Let $A$ be a primitively generated, profinitely braided  bialgebra.  Then the image of $\mW[P(A)]$ in $P_{\mW_*}(A)$ is the kernel of the Hopf map $A \to \mB(Q(A))$ obtained by iteratively quotienting by decomposable primitives.

\end{prop}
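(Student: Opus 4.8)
The plan is to identify each side of the asserted equality with an explicit subobject of $A$ and then match the two descriptions, using the maps $\pi$ and $a$ already produced in the proof of Lemma~\ref{factor_lem}. Write $M_A : \That(P(A)) = \BrAsshat[P(A)] \to A$ for the $\BrAsshat$-algebra map restricting to the inclusion $P(A)\hookrightarrow A$; it is surjective because $A$ is primitively generated, and in arity $n$ it is the $\BrAsshat$-structure map $\BrAsshat(n)\otimes_{B_n}P(A)^{\otimes n}\to A$. Restricting $M_A$ in each arity to $\mW(n)\le\BrAsshat(n)$ and summing over $n$ exhibits the image of $\mW[P(A)]$ in $P_{\mW_*}(A)$ as $M_A(\mW[P(A)])$, where $\mW[P(A)]\le\That(P(A))$ is the Artin-functor subobject of the submodule $\mW\le\BrAsshat$. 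By part \ref{ker_part} of Proposition~\ref{ker_prop} (with $f=\bS_n$) this subobject equals $\bigoplus_n\ker(\bS_n:\That_n(P(A))\to\That_n(P(A)))$, which by (\ref{nichols_defn_eqn}) and Proposition~\ref{infinite_nic_prop} is the kernel of the Nichols projection $\rho:\That(P(A))\twoheadrightarrow\mB(P(A))$. So the left-hand side of the statement is $M_A(\ker\rho)$.

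To prove $M_A(\ker\rho)\subseteq\ker a$: the map $a:A\to\mB(Q(A))$ is a morphism of profinitely braided bialgebras (Lemma~\ref{factor_lem}), hence in particular a $\BrAsshat$-algebra map, so it commutes with every operation $w\in\mW(n)$; and, being a bialgebra map, it carries $P(A)$ into $P(\mB(Q(A)))=Q(A)$. Thus for $w\in\mW(n)$ and $p_1,\dots,p_n\in P(A)$ we have $a\bigl(w(p_1,\dots,p_n)\bigr)=w\bigl(a(p_1),\dots,a(p_n)\bigr)$, which is a $\mW(n)$-operation applied to elements of $Q(A)$ and hence vanishes in $\mB(Q(A))=\That(Q(A))/\mW[Q(A)]$, since $\mW(n)\otimes_{B_n}Q(A)^{\otimes n}=\mW[Q(A)]_n$ is by construction annihilated in that quotient. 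Therefore $a$ annihilates $M_A(\ker\rho)$, which is the image of $\mW[P(A)]$ in $A$.

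For the reverse inclusion I would invoke Lemma~\ref{factor_lem} directly: its proof gives $a\circ\pi=\bigl(\That(V)\twoheadrightarrow\That(V)/\mW[V]=\Nic[V]\bigr)$, whose kernel is $\mW[V]$, so surjectivity of $\pi$ yields $\ker a=\pi(\mW[V])$. Factor $\pi=M_A\circ\iota$ with $\iota:\That(V)\to\That(P(A))$ the map induced by $V\le P(A)$. In each degree $\iota$ is $B_n$-equivariant by naturality of the braiding, so it commutes with $\bS_n$ and therefore $\iota(\mW[V])\subseteq\mW[P(A)]=\ker\rho$. Hence $\ker a=\pi(\mW[V])=M_A\bigl(\iota(\mW[V])\bigr)\subseteq M_A(\ker\rho)$, which is the image of $\mW[P(A)]$ in $A$; combined with the previous step, this gives the equality.

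The step I expect to take the most care is the bookkeeping in the first paragraph: verifying that ``the image of $\mW[P(A)]$ in $P_{\mW_*}(A)$'' really is $M_A$ of the Nichols Hopf ideal in $\That(P(A))$. This requires matching the $\mW_*$-algebra structure map of $A$ with the arity-graded components of $M_A$ and invoking Proposition~\ref{ker_prop} to replace each $\mW(n)\otimes_{B_n}P(A)^{\otimes n}$ by $\ker\bS_n$ inside $\That_n(P(A))$; once this translation is in place, everything else is a formal manipulation of the three maps $M_A$, $\rho$ and $\pi$.
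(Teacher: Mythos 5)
Your proof is correct and takes essentially the same route as the paper, which records this proposition as an immediate corollary of the proof of Lemma \ref{factor_lem}: the identity $\ker(a)=\pi(\mW[V])$ extracted there is exactly your third paragraph. The only detail the paper leaves implicit — that $\mW$-operations applied to all of $P(A)$, not just the chosen $V$, land in $\ker(a)$, because $a$ is a map of $\BrAsshat$-algebras carrying primitives into the degree-one part of $\mB(Q(A))$, where $\mW$-operations vanish — is supplied correctly in your second paragraph, and together with the reverse inclusion this sandwiches both subobjects between $\pi(\mW[V])$ and $\ker(a)$ as intended.
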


\begin{prop}

The operad $\mW_*$ has a perfect structure theory for $\bC$.

\end{prop}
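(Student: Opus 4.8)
The plan is to verify the equivalent formulation recorded immediately after the definition of a perfect structure theory: for every primitively generated, profinitely braided bialgebra $A$ and every subobject $V \leq P(A)$ that generates $A$ as an algebra, $P(A)$ is contained in the $\mW_*$-subalgebra $\langle V\rangle_{\mW_*}$ of $A$ generated by $V$. First I would reduce to the case $V \cong Q(A)$: since $\proC$ is split, the surjection $V \twoheadrightarrow Q(A)$ splits, so there is a subobject $W \leq V$ carried isomorphically onto $Q(A)$; because $\langle W\rangle_{\mW_*} \subseteq \langle V\rangle_{\mW_*}$ it suffices to show $P(A) \subseteq \langle W\rangle_{\mW_*}$, and the pair $(A,W)$ is now exactly the input of Lemma \ref{factor_lem}. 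Writing $\pi \colon \That(W) \to A$ for the canonical surjection, I note $\langle W\rangle_{\mW_*} = \pi(\mW_*[W])$, the image of the free $\mW_*$-algebra on $W$ (which contains $W$ itself, as $\mW_*$ is unital).

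The heart of the argument uses the Hopf map $a \colon A \to \Nic[W]$ supplied by Lemma \ref{factor_lem}, characterized by $a\circ\pi = \bigl(\That(W) \twoheadrightarrow \That(W)/\mW[W] = \Nic[W]\bigr)$. From this I would extract two facts. First, since $a\circ\pi$ is surjective so is $a$, whence $\ker a = \pi(\ker(a\circ\pi)) = \pi(\mW[W])$; as $\mW \subseteq \mW_*$ as braided subsequences of $\BrAsshat$ (they agree in every arity except $1$, where $\mW(1)=0 \subseteq \one = \mW_*(1)$) we have $\mW[W] \subseteq \mW_*[W]$, hence $\ker a \subseteq \pi(\mW_*[W]) = \langle W\rangle_{\mW_*}$. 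Second, by Proposition \ref{infinite_nic_prop} the quotient $\Nic[W] = \That(W)/\mW[W]$ is a profinitely braided Nichols algebra, so $P(\Nic[W]) = Q(\Nic[W])$ is the image of $W$; since $\pi$ restricts to the inclusion $W \hookrightarrow A$ and $(a\circ\pi)|_W$ is the injection $W \hookrightarrow \Nic[W]$, the restriction $a|_W \colon W \to P(\Nic[W])$ is an isomorphism.

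With these in place the proof is formal. Given $[x] \in P(A)$, the Hopf map $a$ carries it into $P(\Nic[W])$, so there is a unique $v \in W$ with $a(v) = a([x])$; then $[x] - v \in \ker a \subseteq \langle W\rangle_{\mW_*}$, and since also $v \in W \subseteq \langle W\rangle_{\mW_*}$ we conclude $[x] = v + ([x]-v) \in \langle W\rangle_{\mW_*} \subseteq \langle V\rangle_{\mW_*}$, as required. To recover the statement in its original ideal-theoretic form I would simply apply this to $A = \That(V)/I$ (primitively generated by the image of $V$), which shows every primitive class has a representative in $\mW_*[V]$. The one genuinely substantial ingredient — the construction of $a$, i.e.\ exhibiting $A$ as an iterated quotient of $\That(W)$ by decomposable primitives converging to $\mB(Q(A))$ — is already in hand as Lemma \ref{factor_lem}. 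The only remaining care needed is bookkeeping at the level of subobjects, since neither $\pi$ nor $a$ is injective; the crucial observation is that $\ker a$, being the $\pi$-image of the Hopf ideal $\mW[W] \leq \That(W)$, is absorbed into the $\mW_*$-subalgebra generated by $W$ precisely because $\mW \subseteq \mW_*$.
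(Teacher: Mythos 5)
Your proof is correct and follows essentially the same route as the paper's: both arguments rest on the map $a\colon A \to \Nic[Q(A)]$ from Lemma \ref{factor_lem}, the identification of $\ker(a)$ with the image of $\mW[W]$ (hence with a subobject of the $\mW_*$-subalgebra generated by $W$), and the fact from Proposition \ref{infinite_nic_prop} that the primitives of $\Nic[W]$ are exactly the image of $W$. Your explicit reduction to a lift $W \cong Q(A)$ and the elementwise decomposition $[x] = v + ([x]-v)$ are just a more carefully bookkept rendering of the paper's one-line conclusion $P(A) = V \oplus (P(A) \cap \ker(\epsilon)^2) \subseteq V \oplus \mW(V) = \mW_*(V)$ (the only nitpick being that $\ker a = \pi(\ker(a \circ \pi))$ follows from surjectivity of $\pi$, not of $a \circ \pi$, though you do have that surjectivity in hand).
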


\begin{proof}

We must show that for any $A$ as in the previous Lemma, $P(A)$ is contained in the $\mW_*$-subalgebra $\mW_*(V)$ generated by $V$.  The may be decomposed into the sum of $V$ with $\mW(V)$.  Again by the previous Lemma, $\mW(V) = \ker(a)$.  Restricting $a$ to $P(A)$, it carries $V$ isomorphically onto $P(\mB(V))$, and kills all of the decomposable primitives.  Thus 
$$P(A) = V \oplus (P(A) \cap \ker(\epsilon)^2) \subseteq V \oplus \mW(V) = \mW_*(V).$$

\end{proof}

As stated, Theorems \ref{mm_intro} and \ref{pbw_intro} follow by combining this result with Propositions \ref{perfect_prop} and Proposition \ref{pbw_C_prop}.  The more structured form of Theorem \ref{mm_intro} in terms of equivalences of categories employs Theorem \ref{mm_equiv_thm} or Corollary \ref{mm_finite_cor}.  Notice that this result and Proposition \ref{base_change_prop} together imply that any operad containing $\mW_*$ has a perfect structure theory, and thus analogues of Theorems \ref{mm_intro} and \ref{pbw_intro}.

Loosely, an operad $\CC$ has a perfect structure theory if $\CC[V] \leq \That(V)$ contains representatives for all primitives in all possible Hopf quotients of $\That(V)$.  The argument above shows that this holds for $\mW_*$, by showing that the latter contains those primitives, along with the ideal generated by all of the decomposable primitives.  This excess leads us to wonder if it is possible to find a smaller operad $\CC \leq \mW_*$ for which $\CC$ still retains a perfect structure theory.  

It is apparent that any candidate operad $\CC$ must contain $\BrPrim$.  Since there are nontrivial elements of $P(V)$ which become primitive modulo $(\BrPrim[V])$, this is just as clearly not sufficient for $\CC$ to have a perfect structure theory; many more operations in $\mW$ must be included.  We wonder:

\begin{ques} \label{smaller_ques}

Let $\CC$ be a unital suboperad of $\BrAsshat$ such that the left ideal $(\CC_{>1})$ generated by the non-unital form of $\CC$ contains $\mW$.  Does $\CC$ have a perfect structure theory?

\end{ques}

It bears pointing out that for subideals $\CC \leq \mW$, the assumption that $\CC_*$ has a perfect structure theory is both sufficient \emph{and} necessary for a Cartier-Milnor-Moore theorem to hold in general:

\begin{prop}

If $\CC \leq \mW$ is a left ideal in $\BrAsshat$ and $\CC_*$ does not have a perfect structure theory, then there exists a primitively generated Hopf algebra $B$ for which the map $U_{\CC_*}(P_{\CC_*}(B)) \to B$ is not an isomorphism.

\end{prop}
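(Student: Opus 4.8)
The plan is to build the required counterexample as a Nichols algebra, after first converting the abstract failure of a perfect structure theory into a concrete inequality between $\CC$ and $\mW$. Since $\CC_*$ does not have a perfect structure theory for $\bC$, by definition there are an object $V \in \proC$, a Hopf ideal $I \leq \That(V)$, and a primitive element $[x] \in \That(V)/I$ none of whose representatives in $\That(V)$ lies in $\CC_*[V]$. On the other hand, we have just established that $\mW_*$ \emph{does} have a perfect structure theory for $\bC$, so $[x]$ \emph{does} have a representative $x \in \mW_*[V]$. This single element $x \in \mW_*[V] \setminus \CC_*[V]$ shows $\CC_*[V] \subsetneq \mW_*[V]$; since $\CC_*[V] = V \oplus \CC[V]$ and $\mW_*[V] = V \oplus \mW[V]$ with the arity-$1$ summand $V$ in common, this forces the strict inclusion $\CC[V] \subsetneq \mW[V]$ of subobjects of $\That(V)$. (The hypothesis $\CC \leq \mW$ gives the reverse inclusion $\CC[V] \subseteq \mW[V]$ for free.)

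Next I would take $B := \Nic(V) = \That(V)/\mW[V]$, which by Proposition \ref{infinite_nic_prop} is a profinitely braided Nichols bialgebra; in particular $B$ is primitively generated, with $P(B) = Q(B) = V$. I claim $P_{\CC_*}(B)$ is $V$ equipped with the abelian (all non-unit operations zero) $\CC_*$-algebra structure. Indeed, the $\CC_*$-subalgebra of $B$ generated by $P(B) = V$ is the image of $\CC_*[V] = V \oplus \CC[V]$ under the projection $\That(V) \twoheadrightarrow \That(V)/\mW[V]$: the arity-$1$ part maps isomorphically onto $V \subseteq \Nic(V)$, while the arity-$\geq 2$ part $\CC[V]$ maps to zero because $\CC[V] \subseteq \mW[V]$. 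Hence every $\CC$-operation of arity $\geq 2$ vanishes on $V \subseteq \Nic(V)$, so $P_{\CC_*}(B) = V$ as an abelian $\CC_*$-algebra. This step is where $\CC \leq \mW$ is used essentially.

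Then I would compute the enveloping algebra of this abelian $\CC_*$-algebra. By Definition \ref{gen_enveloping_defn}, $U_{\CC_*}(V)$ is the quotient of $\That(V) = \BrAsshat[V]$ by the two-sided $\BrAsshat$-ideal generated by the image of $\iota - (j\circ\theta)\colon \CC_*[V] \to \That(V)$; since the structure map $\theta$ is the identity in arity $1$ and zero in higher arity, this image is exactly $\CC[V] \subseteq \That(V)$. Because $\CC$ is itself an operadic left ideal in $\BrAsshat$, Proposition \ref{ideal_prop} (with $\II = \CC$) identifies the two-sided ring ideal generated by $\CC[V]$ with $\CC[V]$ itself, so $U_{\CC_*}(P_{\CC_*}(B)) = U_{\CC_*}(V) = \That(V)/\CC[V]$. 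Under these identifications the canonical map $\mu\colon U_{\CC_*}(P_{\CC_*}(B)) \to B$ is the unique $\BrAsshat$-algebra map restricting to the inclusion $V \hookrightarrow \Nic(V)$, i.e.\ the natural projection $\That(V)/\CC[V] \twoheadrightarrow \That(V)/\mW[V]$, whose kernel is $\mW[V]/\CC[V]$. By the first step this kernel is nonzero, so $\mu$ is not an isomorphism, and $B = \Nic(V)$ is the required primitively generated Hopf algebra.

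The main obstacle is really the first step: turning "$\CC_*$ has no perfect structure theory'' into a usable object on which $\CC$ and $\mW$ genuinely differ. The key device is the equivalence (recorded after the definition) between failure of a perfect structure theory and the existence of a primitively generated profinitely braided bialgebra with a primitive escaping the $\CC_*$-subalgebra generated by generating primitives, combined with the fact that $\mW_*$ \emph{does} have a perfect structure theory; this is exactly what pins the discrepancy down to $\CC[V] \subsetneq \mW[V]$. A secondary point, only relevant if one insists that $B$ lie in $\bC$ (an honest braided Hopf algebra) rather than in $\proC$, is to arrange $V$ finitely braided; this can be done with Proposition \ref{detect_prop} together with semisimplicity of the finite group rings $k[Q]$, but it is not needed for the statement phrased over $\proC$, and it plays no role in the construction above.
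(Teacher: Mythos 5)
Your construction is correct and coincides with the paper's up to the final step: both proofs take $B = \Nic[V]$ for the $V$ witnessing the failure, observe that $\CC \leq \mW$ forces $P_{\CC_*}(B) = V$ with the abelian structure, and identify $U_{\CC_*}(P_{\CC_*}(B)) \cong \That(V)/\CC[V]$ using the left-ideal property of $\CC$. Where you diverge is in showing $\mu$ is not injective. The paper never invokes the fact that $\mW_*$ has a perfect structure theory; instead it chases the witness class $[x]$ through the commuting triangle comparing $\That(V) \to U_{\CC_*}(P_{\CC_*}(B)) \to B$ with $\That(V) \to A = \That(V)/I \to B$, where the map $a: A \to B$ is supplied by Lemma \ref{factor_lem}, and concludes that the class of $x$ in $\That(V)/\CC[V]$ is a nonzero element of $\ker(\mu)$. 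You instead cite the immediately preceding proposition (that $\mW_*$ has a perfect structure theory) to place a representative $x$ in $\mW_*[V] \setminus \CC_*[V]$, whence $\CC[V] \subsetneq \mW[V]$ and $\ker(\mu) = \mW[V]/\CC[V] \neq 0$. Your route buys an exact identification of the kernel and sidesteps the paper's assertion that $[x] \in \ker(a)$, which strictly speaking requires first subtracting the possible $V$-component of $a([x])$ (harmless, since the corrected class still has no representative in $\CC_*[V]$); the paper's route, conversely, does not lean on the perfection of $\mW_*$, so it works verbatim for any witness without reference to that result. Both arguments operate at the paper's level of rigor, e.g.\ both treat $\CC[V]$ as already a two-sided ideal of $\That(V)$ via the left-ideal hypothesis (your appeal to Proposition \ref{ideal_prop} matches the paper's implicit use of the same fact), and both use element language in $\proC$ as the paper does throughout.
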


\begin{proof}

By assumption, there exists an object $V$, a Hopf ideal $I \leq \That(V)$, and a class $[x]$ for which $[x] \in \That(V)/I =: A$ is primitive, but for which no representative $x$ lies in $\CC_*[V]$. Note that $\CC_*[V]$ contains $V$, so in particular $x \notin V$.  

Let $B = \Nic[V]$ be the profinitely braided Nichols algebra on $V$.  Then $V = P(B)$, so $P_{\CC_*}(B) = \CC_*(V)$ is the $\CC_*$-subalgebra of $\Nic[V]$ generated by $V$.  Since we have assumed that $\CC \leq \mW$, this is just $V$.  Therefore 
$$P_{\CC_*}(B) \cong V= \CC_*[V] / \CC[V], \; \mbox{ and so } \; U_{\CC_*}(P_{\CC_*}(B)) \cong T(V) / \CC[V].$$
Consider the commuting diagram of projections
$$\xymatrix{
 & \That(V) \ar[dl] \ar[dr]^-{\pi} & \\
U_{\CC_*}(P_{\CC_*}(B)) \ar[dr] & & A \ar[dl]^{a} \\
 & B= \Nic[V] &
}$$
By definition $[x] \in \ker(a)$, so every representative $x \in \That(V)$ must be in the kernel of the left composite.  However, since no representative lies in $\CC[V]$, the class of $x$ in $U_{\CC_*}(P_{\CC_*}(B))$ is nonzero, so must be nontrivial element of the kernel of the map to $B$. 

\end{proof}

\section{A comparison with existing constructions} \label{ardizzoni_section} 

There are quite a few prior approaches to modeling some aspects of the algebraic structure of the primitives in a braided Hopf algebra.  We review some of them here and compare them to the constructions in this paper. 

In \cite{majid}, Majid defines a \emph{braided Lie algebra} to be a counital coalgebra object in a braided monoidal category, equipped with a binary operation
\beqn \label{bracket_eqn} [-,-]: L \otimes L \to L\eeqn
which is a map of coalgebras and satisfies a braided Jacobi relation and a form of cocommutativity.  All of these axioms involve the diagonal $\Delta: L \to L \otimes L$, and so no part of this definition can be expressed in terms of purely operadic algebra.  That said, this definition clearly bears some resemblance to the notion of a Hopf $\BrLie$-algebra; elucidating this relationship would be interesting.

In \cite{pareigis_warsaw, pareigis}, Pareigis gives a definition of a \emph{generalized Lie algebra}; this includes color and super Lie algebras as special cases.  His constructions (along with Kharchenko's; see below) are the most directly ``operadic" of those reviewed here.  For abelian groups $G$ and a bicharacter $\chi: G \times G \to k^{\times}$, he studies the braided monoidal category of $G$-graded $k$-vector spaces; on homogenous components, the braiding is given by:
$$\sigma(x_g \otimes y_h) = \chi(g, h) y_h \otimes x_g.$$ 

A generalized Lie algebra is an object $P$ in this category with partially defined $n$-ary maps
$$[\dots]: P_{g_1} \otimes \cdots \otimes P_{g_n} \to P_{g_1 + \cdots + g_n}$$
associated to every $n\nth$ root of unity $\zeta$ and ``$\zeta$-family" $(g_1, \cdots, g_n)$.  The latter are tuples with the property that for each $i \neq j$, $\chi(g_i, g_j) \chi(g_j, g_i) = \zeta^2$.

Pareigis shows that if $A$ is a $G$-graded Hopf algebra, then $P(A)$ is a generalized Lie algebra.  In this case, the $n$-ary bracket operation is defined as
$$[x_1 \cdots x_n] = \sum_{\tau \in S_n} \left(\prod_{(i,j)} \zeta^{-1} \chi(g_{\tau(j)}, g_{\tau(i)})\right) x_{\tau(1)} \cdots x_{\tau(n)}.$$
Here the coefficient of the $\tau$ term is the product over pairs $i<j$ with $\tau(i)>\tau(j)$.  Notice that we may rewrite this as
$$[x_1 \cdots x_n] = \sum_{\tau \in S_n} \zeta^{-\inv(\tau)} \mu_n(\tautilde(x_1 \otimes \cdots \otimes x_n))$$
where $\inv(\tau)$ is the number of inversions in $\tau$, $\tautilde$ is its lift to $B_n$, and $\mu_n$ is $n$-ary multiplication. 

We are thus led to consider the \emph{Pareigis operators}
$$P^{\zeta}_n:=\sum_{\tau \in S_n} \zeta^{-\inv(\tau)} \mu_n \circ \tautilde \in \BrAsshat(n).$$
Despite Pareigis' results, it is unfortunately not the case that $P^{\zeta}_n \in \BrPrim(n)$.  One can see this even for $n=2$:
$$(1+\sigma) \cdot P^{\zeta}_2 = (1+\sigma) \cdot(1+\zeta^{-1} \sigma) = 1+(1+\zeta^{-1})\sigma + \zeta^{-1} \sigma^2 \neq 0.$$
However, if $\zeta = -1$, the above expression in $\BrAsshat(2)$ acts as zero on a $\zeta$-pair in $P(A)$.  We wonder if the $P^{\zeta}_n$ might descend to generators of the image of $\BrPrim(n)$ in $k[Q]$, where $Q$ is a quotient of $B_n$ through which $B_n$ acts on $\zeta$-families.

In \cite{kharchenko_multilinear}, Kharchenko studies a very similar setting: $G$ is an abelian group, and the \emph{quantum variables} $x_1, \dots, x_n$ define a $G$-graded module $V = k\{x_1, \dots, x_n\}$, where $x_i$ is homogenous of grading $g_{x_i} \in G$.  Further, $G$ is equipped with a family of characters $\chi^{x_i}: G \to k^{\times}$; one makes $V$ into a Yetter-Drinfeld module where $x_i \cdot g = \chi^{x_i}(g) x_i$.  He studies \emph{quantum operations} in the $x_i$: in degree $n$ these are elements 
$$W(x_1, \dots, x_n) \in T_n(V) = V^{\otimes n}$$
(these are regarded as $n$-ary operations in $x_1, \dots, x_n$) such that $W(x_1, \dots, x_n)$ is primitive.  He shows that the set of such $W$ is nonempty precisely when the characters satisfy
$$\prod_{i \neq j} \chi^{x_i}(g_{x_j}) = 1.$$
Further, he explicitly constructs such operations out of iterated skew commutators.  Consequently, we may regard these operations as elements of $\BrLie(n)$.  It is remarkable that the operation that they define in $\Hom(A^{\otimes n}, A)$ for $A$ a primitively generated Hopf algebra in $\YD_G^G$ restricts to an $n$-ary operation on primitives.  Much like Pareigis' operators, these don't appear to define elements of $\BrPrim(n)$ itself, but behave as if they are in that operad when applied to this restricted family of Hopf algebras.

More directly, Zhang-Zhang's \emph{braided m-Lie algebras} are the same as algebras over the braided Lie operad, $\BrLie$.  If $L$ is a $\BrLie$-algebra, it is equipped with a binary operation as in (\ref{bracket_eqn}) by definition.  If we write $A=U_{\BrLie}(L)$ its enveloping algebra, then $if L$ is faithful, the braided Lie bracket $[-,-]$ is the restriction to $L$ of the operation
\beqn a \otimes b \mapsto \mu(a \otimes b) - \mu(\sigma(a \otimes b)) \label{zz_eqn} \eeqn
on $A$.  This gives precisely Definition 1.1 of \cite{zhang-zhang}.  As the authors show in that article, this notion is related to Majid's braided Lie algebras, but neither definition subsumes the other.  These operations sometimes preserve the primitives in a braided Hopf algebra, but not always.

In \cite{kharchenko}, Kharchenko uses Gurevich's notion \cite{gurevich_generalized} of a \emph{generalized Lie algebra\footnote{This differs substantially from Pareigis' notion.}} or \emph{Lie $\sigma$-algebra} to study braided Hopf algebras where the braiding $\sigma$ is a symmetry (i.e., $\sigma^2 = \id$).  The formula for the bracket in a generalized Lie algebra $L$ is the same as (\ref{zz_eqn}).  In operadic language, these are precisely algebras over the (symmetric) operad $\Lie$ in a symmetric monoidal category.  Alternatively, they are algebras over $\BrLie$ (precisely as in \cite{zhang-zhang}) in a symmetric monoidal category, so their structure factors through the symmetrization of $\BrLie$, which is $\Lie$.

The novelty is that the symmetry $\sigma$ need not be given by permutation of tensor factors if the monoidal category admits a forgetful functor to vector spaces.  Kharchenko defines enveloping algebras $U(L)$ for these generalized Lie algebras and proves that in this setting every connected $\sigma$-cocommutative braided Hopf algebra (in characteristic zero) $A$ is the enveloping algebra $U(P(A))$ of its primitives (which are generalized Lie algebras with the bracket (\ref{zz_eqn})).  He also proves a version of the Poincar\'e-Birkhoff-Witt theorem for $U(L)$: $U(L)^{\gr} \cong \mB(L)$.  These results follow from our structural theorems using $\BrPrim$ in place of $\mW$.  This is only possible because in symmetric monoidal categories all braided vector spaces have combinatorial rank 1 (see the discussion after Lemma \ref{factor_lem}, above).

The most significant general structure theorems for braided Hopf algebras that we are aware of were proven by Ardizzoni in the series \cite{ardizzoni_mm, ardizzoni_prim_gen, ardizzoni_glasgow}.  While very different in language from what is presented here, these papers yield very similar results, particularly to our Theorems \ref{mm_intro} and \ref{pbw_intro}.  Our formulation is based around suboperads of $\BrAsshat$ and algebras for them.  Translating Ardizzoni's work into these terms, he works with free algebras for these operads (inside of tensor algebras) and constructs their enveloping algebras directly.

Recall from Proposition \ref{prim_prop} that $\BrPrim_*[L] \leq T(L)$ is the set of primitives $P(T(L))$.  Comparing to Definition 2.8 of \cite{ardizzoni_prim_gen}, we see that $\BrPrim[L]$ coincides with what Ardizzoni calls $E(L,c)$ (where $c$ is the braiding on $L$).  He considers a ``bracket'' $b:E(L,c) \to L$; this data is equivalent to the structure map of $L$ as a $\BrPrim$-algebra.  Furthermore, it is easy to see that Ardizzoni's universal enveloping algebra $U(L, c, b)$ (his Definition 3.5) is isomorphic to the enveloping algebra $U_{\BrPrim}(L)$.  His analogue of the Poincar\'e-Birkhoff-Witt theorem (Theorem 5.4 in this reference) holds for those $L$ which are \emph{of type $\mathcal{S}$}.  That assumption allows him to obtain an analogue of our Theorem \ref{pbw_intro} using $\CC = \BrPrim$ in place of $\mW$, and only requiring a single associated graded construction.

In \cite{ardizzoni_prim_gen}, Ardizzoni proves a restricted version of the Cartier-Milnor-Moore theorem (his Theorem 5.7): for a certain class of braided Hopf algebras $A$ (whose primitives $P(A)$ are of type $\mathcal{S}$), there is an isomorphism $A \cong U(P(A), c, b)$.  It follows from his definition of type $\mathcal{S}$ that $P(A)$ is a primordial $\BrPrim$-algebra, so one may recover this result from our Proposition \ref{mm_simple_prop}. 

In subsequent work \cite{ardizzoni_mm}, Ardizzoni extends these results to a more general setting.  For a primitively generated bialgebra $A$, he iteratively defines a family of enveloping algebras $U^{[n]}$, where $U^{[0]} = T(P(A))$, and $U^{[n]}$ is obtained from $U^{[n-1]}$ by identifying formal and explicit ``brackets" (generalizing the notion above).  This leads to a series of bialgebra surjections
\beqn \label{ard_long_eqn} U^{[0]} \twoheadrightarrow U^{[1]} \twoheadrightarrow U^{[2]} \twoheadrightarrow \cdots \twoheadrightarrow A\eeqn
The main result of \cite{ardizzoni_mm} (Theorem 6.9) is an isomorphism $U^{[\infty]} = \varinjlim U^{[n]} \to A$.  Comparing to our Theorem \ref{mm_intro} (and particularly the proof of Lemma \ref{factor_lem}, which is inspired by this construction), we conclude that Ardizzoni's enveloping algebra $U^{[\infty]}$ must be isomorphic as a bialgebra to our $U_{\mW_*}(P_{\mW_*}(A))$.  More directly, (\ref{ard_long_eqn}) shows that $A$ is the quotient of $T(P(A))$ by the kernel $K$ of the composite displayed in that equation.  We have a commuting diagram:
$$\xymatrix{
T(P(A)) \ar[d]_-{T(i)} \ar[r] & T(P(A))/K = U^{[\infty]} \ar[d]_-{\overline{T(i)}} \ar[r]^-{\cong} & A \\
T(P_{\mW_*}(A)) \ar[r] & U_{\mW_*}(P_{\mW_*}(A)) \ar[ur]_-{\cong} &
}$$
induced by the inclusion $i: P(A) \hookrightarrow P_{\mW_*}(A)$.  We conclude that $\overline{T(i)}$ is an isomorphism.  Since $P_{\mW_*}(A)$ is a much larger space than $P(A)$, Ardizzoni's presentation is more efficient than ours.

In \cite{ardizzoni_glasgow}, Ardizzoni establishes an analogue of the Poincar\'e-Birkhoff-Witt Theorem for a class of enveloping algebras $U^{[\infty]}$ as described above.  He shows that if $U^{[\infty]}$ is \emph{cosymmetric}, there is a natural bialgebra isomorphism $(U^{[\infty]})^{\gr} \cong \mB(V)$, where $V = P(U^{[\infty]})$.  This result is both stronger and narrower than our Theorem \ref{pbw_intro}.  While our result holds for all enveloping algebras, his isomorphism occurs only after taking the associated graded of a single filtration.

\bibliography{biblio}

\end{document}